\documentclass{amsart}

\usepackage[latin1]{inputenc}
\usepackage{amsfonts}
\usepackage{amsmath}
\usepackage{amsthm}
\usepackage{amssymb}
\usepackage{latexsym}
\usepackage{enumerate}

\newtheorem{theorem}{Theorem}[section]
\newtheorem{lemma}[theorem]{Lemma}
\newtheorem{proposition}[theorem]{Proposition}

\newtheorem{claim}{Claim}

\newtheorem{definition}[theorem]{Definition}

\numberwithin{equation}{section}

%repetition trick - definition beg
\newtheoremstyle{TheoremNum}
  {\topsep}{\topsep}              %%% space between body and thm
  {\itshape}                      %%% Thm body font
  {}                              %%% Indent amount (empty = no indent)
  {\bfseries}                     %%% Thm head font
  {.}                             %%% Punctuation after thm head
  { }                             %%% Space after thm head
  {\thmname{#1}\thmnote{ \bfseries #3}}%%% Thm head spec
\theoremstyle{TheoremNum}
\newtheorem{reptheorem}{Theorem}
%repetition trick - definiton end

\DeclareMathOperator\supp{supp}

\DeclareMathOperator\ext{ext}
\begin{document}

\newcommand{\cc}{\mathfrak{c}}
\newcommand{\N}{\mathbb{N}}
\newcommand{\R}{\mathbb{R}}
\newcommand{\C}{\mathbb{C}}
\newcommand{\PP}{\mathbb{P}}
\newcommand{\A}{\mathcal{A}}
\newcommand{\B}{\mathcal{B}}
\newcommand{\D}{\mathcal{D}}
\newcommand{\E}{\mathcal{E}}
\newcommand{\F}{\mathcal{F}}
\newcommand{\G}{\mathcal{G}}
\newcommand{\I}{\mathcal{I}}
\newcommand{\U}{\mathcal{U}}

\title [Indecomposable Banach spaces] % running head version
{There is no bound on sizes of indecomposable  Banach spaces}

\author{Piotr Koszmider}
\address{Institute of Mathematics, Polish Academy of Sciences,
ul. \'Sniadeckich 8,  00-656 Warszawa, Poland}
\email{\texttt{piotr.koszmider@impan.pl}}
\thanks{This research  was partially supported by   grant
PVE Ci\^encia sem Fronteiras - CNPq (406239/2013-4).}

\author{Saharon Shelah}
\address{Department of Mathematics, The Hebrew University of Jerusalem, 90194 Jerusalem,
Israel, and
Rutgers University, Piscataway, NJ 08854-8019, USA}
\email{\texttt{shelah@math.huji.ac.il}}

\author{Micha{\l} \'Swi\c etek}
\address{Faculty of Mathematics and Computer Science, Jagiellonian University,
{\L}ojasiewicza 4, 30-348 Krak\'ow, Poland}
\email{\texttt{michal.swietek@uj.edu.pl}}
\thanks{The contribution of the third named author
was a part of the doctoral internship  project conducted at the Warsaw Center of Mathematics
and Computer Science in the academic year 2014/15.}

\subjclass[2010]{46B25, 46B28, 47L10, 47B38, 03E35, 54D30,  06A40}

\begin{abstract} Assuming the generalized
continuum hypothesis we construct arbitrarily big indecomposable Banach spaces.
i.e., such that whenever they are decomposed as $X\oplus Y$, then one of the closed
subspaces $X$ or $Y$ must be finite dimensional. It requires alternative techniques
compared to those which were initiated by Gowers and Maurey or Argyros with the coauthors. This is
because  hereditarily indecomposable Banach spaces 
 always embed into $\ell_\infty$ and so their density and cardinality is bounded by the continuum
and because dual Banach spaces of densities bigger than continuum are decomposable by a result due
to Heinrich and Mankiewicz.

 The obtained Banach spaces are of the form $C(K)$ for some compact connected
Hausdorff space and have few operators in the sense that every linear bounded operator $T$
on $C(K)$  for every $f\in C(K)$ satisfies $T(f)=gf+S(f)$ where $g\in C(K)$ and $S$
is weakly compact or equivalently strictly singular.  In particular,
 the spaces carry the structure of a Banach algebra  and in the complex case
even the structure of a $C^*$-algebra.
\end{abstract}

\maketitle

\section{Introduction}

The research in the classical period of the isomorphic theory of Banach spaces led to
 questions of Lindenstrauss (\cite{lindenstrauss})
and Johnson and Lindenstrauss (\cite{johnsonlinden}), respectively, which can be phrased
as follows:
\vskip 6pt
\begin{enumerate}
\item[(A)]  {\it Is it true that every infinite dimensional Banach space has a complemented infinite
dimensional and infinite codimensional subspace?}
\vskip 6pt
\item[(B)] {\it Is it true that every infinite dimensional Banach space has a complemented infinite
dimensional subspace of density $\leq$ continuum?}
\end{enumerate}
\vskip 6pt
 Recall that a linear
closed subspace $Y$ of a Banach space $X$ is complemented in $X$ if there is 
another closed linear subspace $Z\subseteq X$ such that $Y\cap Z=\{0\}$
and $Y+Z=X$. $Y$ is complemented
in $X$ if and only if there is a bounded linear projection from $X$ onto $Y$
 (\cite{semadeni}).

The first, spectacular negative solution to question (A) (such spaces
are called indecomposable Banach spaces) was obtained by Gowers and Maurey in  \cite{gowers},
where they constructed an infinite dimensional separable Banach space which has even a stronger property 
of being hereditarily indecomposable, i.e.,  each of its 
infinite dimensional closed subspaces is indecomposable. Being hereditary
indecomposable is tightly related to having few operators in the sense that
 every operator on the space  is a 
strictly singular perturbation of a multiple of identity (see \cite{ferenczi} 
for exact description of the relation in both the real and the complex case). 
Every operator on a hereditarily indecomposable 
Banach spaces may even be a  compact perturbation of a multiple of identity
as recently proved by Argyros and Haydon (\cite{argyros-haydon}).
Many constructions of indecomposable Banach spaces followed
the paper of Gowers and Maurey, however most of
them, including nonseparable ones, were hereditarily indecomposable, which as proved
e.g., in \cite{argyros-tolias} or \cite{plichko-yost}, must embed in $\ell_\infty$
which limits their density character or cardinality to the continuum. This led
to the following question of S. Argyros:
\vskip 6pt
\begin{enumerate}
\item[(C)] {\it Is there an upper bound for densities of indecomposable Banach spaces?}
\end{enumerate}
\vskip 6pt
Assuming various additional 
properties of a Banach space  the positive answer
to   question (B) and so to question (C) has been obtained
by many authors, for a survey of this type of results see
\cite{plichko-yost}. As many hereditarily indecomposable spaces are dual Banach 
spaces (see \cite{argyros-tolias}) most relevant for us is the result of Heinrich and Mankiewicz \cite{mankiewicz},
which says that dual Banach spaces of density bigger than continuum are decomposable.
Also several new upper bounds for densities of Banach spaces
with some rigidity concerning basic sequences were recently obtained by P. Dodos, J. Lopez-Abad, S. Todorcevic 
(\cite{dodos},  \cite{jordilarge}, \cite{positional}).

In the meantime a different kind of indecomposable Banach spaces was introduced in \cite{few}
by the first named author, namely,  spaces of continuous functions\footnote{By $C(K)$ we understand the Banach 
space of all  real-valued continuous functions on a compact Hausdorff space $K$ with the
supremum norm.} with few operators, 
or with few$^*$ operators
in the sense of the following:

\begin{definition}\label{multiplier}
  Let $K$ be a compact Hausdorff space and let $T:C(K)\rightarrow C(K)$ be a 
bounded linear operator on $C(K)$. 
  \begin{enumerate}
	\item  $T$ is called a weak multiplier if $T^*=gI+S$ where $g\colon K \rightarrow \R$ is a Borel bounded function 
	  and $S$ is a weakly compact operator on $C(K)^*$,
	\item  $T$ is called a weak multiplication if $T=gI+S$ where $g\in C(K)$
	  and $S$ is a weakly compact operator on $C(K)$,
\item The Banach space $C(K)$ has few  operators (few$^*$ operators) if every linear bounded operator
on $C(K)$ is a weak multiplication (weak multiplier),
\item A point $x\in K$ is called a butterfly point if and only if there are disjoint open
$U, V\subseteq K$ such that $\overline U\cap \overline V=\{x\}$.
  \end{enumerate}
\end{definition}
  
We have the following:

\begin{theorem}[2.5., 2.7., 2.8 \cite{few}, 13 \cite{fewsur}]\label{few-theorems}
Suppose that $K$ is compact  Hausdorff. 
\begin{itemize}
\item If $C(K)$ has few operators and $K$ is connected, then $C(K)$ is indecomposable,
\item If $C(K)$ has few$^*$ operators and $K\setminus F$ is connected for any
finite $F\subseteq K$, then $C(K)$ is indecomposable,
\item If $C(K)$ has few$^*$ operators and $K$ has no butterfly points,  then 
$C(K)$ has few operators.
\end{itemize}
\end{theorem}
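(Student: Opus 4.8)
The plan is to derive the first two assertions from the single claim that every bounded linear projection $P$ on $C(K)$ has finite-dimensional range or finite-dimensional kernel, and to obtain the third by analysing the symbol of an arbitrary weak multiplier. Two facts are used throughout. First, the weak-multiplication (resp.\ weak-multiplier) representation is unique: $fI$ is weakly compact on $C(K)$ only if $f=0$ --- since, $K$ being connected, it has no isolated point, so if $f(x_{0})\ne 0$ a normalised sequence with pairwise disjoint supports inside a neighbourhood where $|f|$ is bounded away from $0$ spans a copy of $c_{0}$ on which $fI$ is an isomorphism, contradicting weak compactness --- and $fI$ is weakly compact on $M(K)=C(K)^{*}$ only if $\{x:|f(x)|\ge\varepsilon\}$ is finite for every $\varepsilon>0$, as otherwise $fI$ is an isomorphism on a copy of $\ell_{1}$ spanned by Dirac masses. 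Second, both $C(K)$ and $M(K)$ have the Dunford--Pettis property, so a weakly compact projection on either space maps weakly null sequences to norm null ones; its range is then simultaneously reflexive (the identity of the range is weakly compact) and a Schur space, hence finite dimensional. We assume $K$ is infinite, the finite case being trivial.

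For the first bullet, write a bounded projection on $C(K)$ as $P=gI+S$ with $g\in C(K)$ and $S$ weakly compact. As sums and compositions of weakly compact operators with bounded operators are weakly compact, $P^{2}=g^{2}I+S'$ with $S'$ weakly compact, while also $P^{2}=P=gI+S$; by uniqueness $g^{2}=g$, so $g$ is a continuous idempotent and, $K$ being connected, $g\equiv 0$ or $g\equiv 1$. Replacing $P$ by $I-P$ we may assume $g\equiv 0$, so $P=S$ is weakly compact, hence of finite rank; thus in every decomposition $C(K)=X\oplus Y$ one summand is finite dimensional.

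For the second bullet one runs the same scheme on adjoints. With $C(K)$ having few$^{*}$ operators and $P$ a bounded projection, $P^{*}=gI+S$ on $M(K)$ with $g\colon K\to\R$ bounded Borel and $S$ weakly compact; idempotency of $P^{*}$ and uniqueness force $g^{2}=g$ off a countable set on which $g$ tends to $\{0,1\}$, so $P^{*}=\chi_{B}I+S'$ for some Borel $B\subseteq K$ and weakly compact $S'$. The hypothesis enters here: $(Pf)(x)=\chi_{B}(x)f(x)+\langle f,S'\delta_{x}\rangle$ is continuous in $x$ for every $f\in C(K)$, and the relatively weakly compact family $\{S'\delta_{x}:x\in K\}\subseteq M(K)$ is uniformly countably additive, so there is a \emph{finite} $F\subseteq K$ with $|S'\delta_{x}|(\{y\})<1/10$ for all $x$ and all $y\notin F$; shrinking the support of $f$ about any $x_{0}\notin F$ and using uniform regularity of $\{S'\delta_{x}\}$ then shows that $x_{0}$ lies in $\intt(B)$ or in $\intt(K\setminus B)$. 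Hence $K\setminus F$ is the union of the two disjoint open sets $\intt(B)$ and $\intt(K\setminus B)$; as $K\setminus F$ is connected, one of $B$, $K\setminus B$ is contained in $F$, so $g$ differs from a constant by a weakly compact perturbation, and $P$ or $I-P$ has weakly compact adjoint --- hence is weakly compact, hence of finite rank. So $C(K)$ is indecomposable.

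Finally, for the third bullet, let $T$ be any bounded operator on $C(K)$ and write $T^{*}=gI+S$ with $g$ bounded Borel and $S$ weakly compact on $M(K)$; after a compact correction we may assume $T^{*}\delta_{x}$ carries no atom at $x$ beyond $g(x)\delta_{x}$. Once $g$ is shown continuous, $(T-gI)^{*}=T^{*}-gI=S$ is weakly compact on $C(K)^{*}$, so by Gantmacher's theorem $T-gI$ is weakly compact and $T=gI+(T-gI)$ is a weak multiplication; as $T$ was arbitrary, $C(K)$ has few operators. The key claim is therefore that \emph{every discontinuity point of $g$ is a butterfly point of $K$}: if $g$ oscillates near $x_{0}$, choose distinct points $x_{n}\to x_{0}$ and $y_{n}\to x_{0}$ (none equal to $x_{0}$) with $\lim g(x_{n})\ne\lim g(y_{n})$, and, using that the measure parts $S\delta_{x_{n}}$ and $S\delta_{y_{n}}$ cluster weakly and hence contribute negligibly off a fixed finite set, separate $\{x_{n}\}$ and $\{y_{n}\}$ by disjoint open $U$, $V$ with $\overline U\cap\overline V=\{x_{0}\}$; since $K$ has no butterfly points, $g$ is continuous. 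I expect the genuine obstacle to be precisely this last claim --- turning the measure-theoretic oscillation of the symbol into the topological butterfly configuration --- together with the bookkeeping, shared with the second bullet, by which relative weak compactness of a family of measures yields the ``clopen modulo a finite set'' structure.
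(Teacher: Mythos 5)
Note first that the paper itself does not prove Theorem \ref{few-theorems}: it is imported verbatim from \cite{few} (2.5, 2.7, 2.8) and \cite{fewsur} (Theorem 13), so your proposal has to be measured against those cited proofs. Your first two items are essentially correct and follow the same route as the cited arguments: uniqueness of the symbol ($fI$ weakly compact on $C(K)$ forces $f=0$ when $K$ has no isolated points; $hI$ weakly compact on $M(K)$ forces $\{|h|\ge\varepsilon\}$ to be finite for each $\varepsilon$, so it is a compact multiplication), idempotency of the symbol of a projection, connectedness of $K$ (respectively of $K\setminus F$, together with the Grothendieck-type uniformity of the family $\{S'\delta_x\colon x\in K\}$, which correctly yields a finite exceptional set for the atoms and the conclusion that $B$ is open and closed off a finite set), and finally the Dunford--Pettis/Gantmacher argument that a weakly compact projection has finite rank. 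The phrase ``$g^2=g$ off a countable set'' should be ``the level sets $\{|g^2-g|\ge\varepsilon\}$ are finite'', but your subsequent replacement of $g$ by $\chi_B$ absorbs this harmlessly. The normalization in item three ($T^*\delta_x$ has no atom at $x$ beyond $g(x)\delta_x$, after a compact correction) is also legitimate, by the same atom argument as in item two.

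The genuine gap is in the third item, exactly where you flag it, and it is not a bookkeeping issue: the claim that every discontinuity point of the normalized symbol $g(x)=T^*\delta_x(\{x\})$ is a butterfly point is the entire content of 2.8 of \cite{few}, and your sketch does not establish it. First, $K$ is not assumed first countable, so a discontinuity of $g$ at $x_0$ need not be witnessed by convergent sequences $x_n\to x_0$, $y_n\to x_0$ at all; one must work with the oscillation of $g$ over neighbourhoods and extract countable structures by hand before weak compactness of $S$ can be applied. Second, and more seriously, even when such sequences exist, the step ``separate $\{x_n\}$ and $\{y_n\}$ by disjoint open $U,V$ with $\overline U\cap\overline V=\{x_0\}$'' is not available as stated: disjointness of $U$ and $V$ is easy, but nothing in your argument controls $\overline U\cap\overline V$ away from $x_0$, and producing open sets whose closures meet \emph{exactly} in $\{x_0\}$ is precisely the butterfly configuration you are trying to manufacture; it does not follow from the mere existence of two disjoint sequences converging to $x_0$. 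The justification offered (``the measure parts $S\delta_{x_n}$, $S\delta_{y_n}$ cluster weakly and hence contribute negligibly off a fixed finite set'') is not a correct use of weak compactness: relative weak compactness of $\{S\delta_x\}$ gives uniform smallness on pairwise disjoint open families and on sets that are small for a control measure --- which is what you used, correctly, for the atoms in item two --- but it does not give a finite set outside of which all these measures are negligible. The cited proof has to exploit the weak multiplier property of $T$ (cf.\ the characterization recorded here as Theorem \ref{weak-multiplier}) to build the two open sets with $\overline U\cap\overline V=\{x_0\}$ from an oscillation of $g$; that construction is the missing core of your argument, so as it stands the third bullet is unproved.
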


The first constructions of an indecomposable Banach space as above 
(with few$^*$ operators in ZFC and with few operators under CH, both $K$s separable) of density
continuum  appeared
in \cite{few} and some improvements followed, among others, in \cite{plebanek} (with
few operators in ZFC for $K$ nonseparable) and  \cite{iryna} (with
few operators in ZFC for $K$ separable), for
a survey see \cite{fewsur}.
In \cite{big} and \cite{grande} the first consistent examples of 
Banach spaces  giving the negative answer
to question (B) and (A) respectively were presented.
They were  Banach spaces of the form $C(K)$ with few operators,
 however the (forcing) method was limited to the density $2^{\omega_1}$.
Note that by the classification of separable Banach spaces of
the form $C(K)$ due to Milutin, Bessaga and Pe\l czy\'nski (\cite{semadeni})
indecomposable $C(K)$s must  be nonseparable.
 On the other hand it is consistently possible to obtain  indecomposable 
$C(K)$s, with few operators of densities strictly smaller than continuum (\cite{rogerio}). 
It should be also added that the classes of strictly singular and
weakly compact operators coincide for $C(K)$ spaces (\cite{pelczynski}).

The main result of this paper is to give the negative answer to question (C)
and strengthen (compared to \cite{big} and \cite{grande}) the negative answer to question (B)
and provide new examples relevant to question (A) by proving:

\begin{theorem}\label{theorem1}  Assume the generalized continuum hypothesis.
 For every cardinal $\kappa$ there is
an indecomposable  Banach space of density bigger than $\kappa$. In particular
 it has no  infinite dimensional complemented
subspace of density smaller than $\kappa$. The spaces are (real Banach
algebras) of the form $C(K)$ with few operators where $K$ is compact Hausdorff and connected.
\end{theorem}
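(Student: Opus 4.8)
The plan is to reduce Theorem \ref{theorem1} to the construction of a compact connected Hausdorff space $K$ of arbitrarily large weight such that $C(K)$ has few operators, and then invoke the first item of Theorem \ref{few-theorems}. By that theorem, few operators plus connectedness gives indecomposability, and indecomposability immediately rules out infinite dimensional complemented subspaces of any density, in particular of density $<\kappa$; the Banach algebra structure is automatic for $C(K)$ with pointwise multiplication. So the entire content lies in building, for each $\kappa$, a connected compact $K$ with $w(K)>\kappa$ carrying few operators.

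First I would fix a cardinal $\lambda$ with $\lambda>\kappa$ and, using GCH, arrange a suitable combinatorial scaffolding on $\lambda$ (a $\diamondsuit$- or $\square$-type sequence, or a coherent system of elementary submodels) that will let me anticipate and kill all potential bad operators along a transfinite construction of length $\lambda^+$. The space $K$ will be realized as an inverse limit of an increasing continuous chain $\langle K_\alpha:\alpha<\lambda^+\rangle$ of compact connected metrizable-at-small-stages spaces, each $K_{\alpha+1}$ obtained from $K_\alpha$ by a carefully chosen connecting map (for instance gluing in arcs or taking a pushout along a quotient identifying two points so as to create butterfly points and destroy projections), with $K_\alpha$ for limit $\alpha$ the inverse limit. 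Connectedness is preserved by such inverse limits, and the weight grows to $\lambda^+>\kappa$. The key design requirement is that at stage $\alpha+1$ we diagonalize against the $\alpha$-th operator presented by our bookkeeping: given a bounded operator $T$ on $C(K_\alpha)$ (equivalently, by a gluing/amalgamation argument, on a dense subalgebra reflecting $T$), we modify the space so that on $C(K_{\alpha+1})$ the operator is forced to be a weak multiplication, using the butterfly-point mechanism of Definition \ref{multiplier} together with the fact (\cite{pelczynski}) that weakly compact and strictly singular operators coincide on $C(K)$ spaces.

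The heart of the argument, and the main obstacle, is the \emph{amalgamation/reflection step}: one must show that an arbitrary bounded operator $T$ on $C(K)$, where $K$ is the final inverse limit, is already "seen" at some stage $\alpha<\lambda^+$ — i.e. that $T$ restricted to the subalgebra coming from $K_\alpha$ lands (modulo a weakly compact error) back in that subalgebra and agrees there with a weak multiplication — so that the diagonalization carried out at that stage actually applies to $T$. This requires a closing-off argument: the set of stages $\alpha$ for which the relevant subspaces and the action of $T$ on them are "self-contained" must be club in $\lambda^+$, which is where GCH (controlling the number of operators and of Borel functions $g$ to at most $\lambda^+$ at each stage) and the combinatorial scaffolding are essential. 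A secondary technical point is to verify that the connecting maps can be chosen to simultaneously (i) preserve connectedness and the "no finite cut destroys connectedness" flavor if one wants the weak-multiplier variant, (ii) introduce enough butterfly points to convert few$^*$ operators into few operators via the third item of Theorem \ref{few-theorems}, and (iii) not reintroduce operators killed at earlier stages. Once these are in place, the first bullet of Theorem \ref{few-theorems} finishes the proof of Theorem \ref{theorem1}.
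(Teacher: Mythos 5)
Your reduction to Theorem \ref{few-theorems} is the same as the paper's, but two things in your outline go wrong. First, a concrete error: you propose that the connecting maps should \emph{create} butterfly points (e.g.\ by identifying two points) ``to convert few$^*$ operators into few operators via the third item of Theorem \ref{few-theorems}''. That item requires exactly the opposite: $K$ must have \emph{no} butterfly points, and the paper invests real effort (Lemmas \ref{ladder-nobutterfly} and \ref{ladder2-nobutterfly}) in showing its space $\nabla\mathcal{F}$ has none. Quotient identifications of the kind you describe would sabotage the passage from few$^*$ to few operators rather than enable it.

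Second, and more seriously, the heart of your plan --- ``diagonalize against the $\alpha$-th operator presented by bookkeeping'' plus a club of reflecting stages --- is precisely the part that does not work and is not what the paper does. The operators on the limit space $C(K)$ (of density $\kappa$) number $2^\kappa>\kappa$ under GCH, so they cannot be enumerated along a construction of length $\kappa$; and even granting that every such operator restricts, on a club of stages, to an operator on the stage algebra, you never say what modification at that stage forces the full operator on the limit to be a weak multiplication, nor how that modification survives all later extensions. Killing a non-weak-multiplier means (via Theorem \ref{weak-multiplier} and Lemma \ref{noncentripetal}) fixing countable witnessing data $(f_n)$, $(U_n)$ and arranging that a supremum over some $M$ exists in $C(K)$ while $\bigcup_{n\in M}U_n$ and $\bigcup_{n\notin M}U_n$ stay non-separated \emph{in the final space}; suprema added at later stages can re-separate them, and controlling this is exactly where the older constructions cap the density at the continuum, as the introduction explains. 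The paper's actual mechanism has no analogue in your sketch: the direction family $\mathcal{D}=(d_\alpha)_{\alpha<\kappa}$, the condition of Definition \ref{paracomplicated} in which the supremum taken is of $f_n d_{\eta_n,\nu_n^{\eta_n}}$ with $(\eta_n)$ a ladder converging to the construction stage, the preservation lemmas (\ref{adding-disjoint}, \ref{extending-generator}, \ref{separating-ordinal}, \ref{separating-countable}) which exploit the fresh coordinates $d_{\eta_n}$ to keep antichains non-separated, and $\diamondsuit(E^\kappa_\omega)$ used to predict countable function/open-set data (plus initial segments of the $\kappa$-indexed refinements $(U_n^\xi)$), not operators. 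The passage from this purely topological/lattice property to few$^*$ operators is then a ZFC theorem (Theorem \ref{theorem2}, via Lemmas \ref{rosenthal} and \ref{talagrand} and the c.c.c.). Without this preservation-plus-prediction machinery your outline reduces to the known method, which does not reach densities beyond the continuum.
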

\begin{proof} Use Theorems \ref{few-theorems}, \ref{theorem2}
and \ref{theorem3}.
\end{proof}
The methods of
the paper consist of a fusion of  the techniques 
of constructing spaces of continuous functions with few operators 
developed by the first named author and other authors,
in particular by I. Schlackow (\cite{iryna}) and the
 techniques of S. Shelah developed for
constructing endo-rigid Boolean algebras in \cite{endorigid86} and \cite{endorigid11}.
Both of these methods are related to rigidity of a compact $K$. For 
a compact $K$ introduce the following notions:
\begin{enumerate}[(a)]
\item $K$ is piecewise strongly rigid, if for every continuous $\phi: K\rightarrow K$
there is a partition $U_1\cup ...\cup U_k=K$ of $K$ into pairwise disjoint
clopen sets $U_1, ..., U_k$  for some $k\in \N$ such that $\phi\restriction U_i$ is either constant or the identity,
\item $K$ is  strongly rigid, if  every continuous $\phi: K\rightarrow K$
 is either constant or the identity,
\item $K$ satisfies the weak$^*$ rigidity condition, if for every $\phi: K\rightarrow M(K)$ where
$M(K)$ is space of Radon measures on $K$ with the weak$^*$ topology (induced
from $C(K)$)  the set $\{\tau(x)|(K\setminus\{x\})\mid x\in K\}$
is relatively weakly compact in the weak topology on $M(K)$.
\end{enumerate}
Assuming that $K$ has no butterfly points
condition (a) for the Stone space $K_{\mathcal A}$ of a Boolean algebra $\mathcal A$
is equivalent for  the algebra $\mathcal A$ to be endo-rigid. 
Condition (c)  is equivalent for $C(K)$ to have few$^*$ operators  (Theorem 23 of \cite{fewsur}).
For $K$ connected (c) implies (b) and (b) is equivalent to (a) (cf. \cite{centripetal}).
However (c) and (a) are not equivalent either in connected or totally disconnected situation.
 The classical space satisfying (b) and not (c)
is the Cook continuum (\cite{cook}) and arbitrarily big spaces constructed by Trnkova (\cite{trnkova}).
The former space is a metrizable continuum, so by the Milutin theorem
the corresponding $C(K)$ has as many operators as $C([0,1])$ and that is why it fails (c).
A totally disconnected  space satisfying (a) and not (c) is the Stone space
of a Boolean algebra $\mathcal A$
minimally generated in the sense of Koppelberg (\cite{minimal})
and  endo-rigid. As proved by Borodulin-Nadzieja in \cite{pbn},
the  Banach spaces $C(K_{\mathcal A})$ is not Grothendieck,
but $C(K)$s which have few$^*$ must be Grothendieck (2.4 \cite{few}).

Both types of constructions of endo-rigid Boolean algebras and 
rigid Banach spaces $C(K)$  can be traced back to the
 the papers \cite{monk} of Monk and  \cite{haydon} of Haydon   which
surprisingly present  practically the same constructions focusing on these different topics. 

So our construction needs a stronger property than the constructions from \cite{endorigid86} and \cite{endorigid11}.
The usual constructions of $C(K)$ spaces with few
or few$^*$ operators (\cite{few}, \cite{plebanek}, \cite{iryna},\cite{antonio})
consisted of obtaining the above weak$^*$ topological rigidity (c), and hence
few$^*$ operators, by constructing $K$ with asymmetric distribution of
separations, for example, in the sense that  given a sequence $\{x_n: n\in \N\}\subseteq E$
for some dense $E\subseteq K$
and a sequence  $(U_n)_{n\in \N}$ of open subsets of $K$ such that $x_n\not\in U_n$
we have $\overline{\{x_n: n\in M\}}\cap \overline{\{x_n: n\in \N \setminus M\}}
\not=\emptyset$ while $\overline{\bigcup {\{U_n: n\in M\}}}\cap \overline{\bigcup {\{U_n: n\in \N \setminus M\}}}
=\emptyset$ for some infinite and coinfinite $M\subseteq \N$ (see \cite{fewsur} Theorems 24 and 25). 
It is clear that this method puts an upper bound of the density of $C(K)$
which is related to the number of all separable compact nonhomeomorphic Hausdorff spaces.
In this paper we formulate 
a new asymmetry condition depending on additional 
parameters which incorporates the ideas of \cite{endorigid86} and \cite{endorigid11}
in the context of weak$^*$ rigidity and connected spaces:

\begin{definition} \label{paracomplicated}
Let $\kappa$ be a cardinal,  $K$ be 
a compact Hausdorff space with a open basis $\B$ and 
let $d_\alpha: K\rightarrow [0,1]$ be continuous for every $\alpha<\kappa$.
Let $d_{\alpha, 1}=d_\alpha$ and $d_{\alpha, -1}=1-d_\alpha$.
$C(K)$ is said
to have asymmetric distribution of separations in the direction of
$\mathcal D=(d_\alpha:\alpha<\kappa)$ if and only if 
  \par
  \noindent {\underline{Given}}

  \begin{enumerate}[(i)]
	\item  $(f_n)_{n\in \N}\subseteq C(K)$ such that $f_n\cdot f_m=0$
for all distinct $n, m\in \N$, $f_n: K\rightarrow [0,1]$ continuous  and 
\item a pairwise disjoint $(U_n)_{n\in \N}\subseteq\B$
	  such that 
$supp(f_n)\cap {U_n}=\emptyset\ \  \hbox{for all $n,m\in\N$},$
	\item $(\nu_n^\xi)_{n\in\N} \subseteq \{-1, 1\}$ for all $\xi\in\kappa$,
	\item $\{\, (U_n^\xi)_{n \in \N} \mid \xi \in \kappa \,\}\subseteq \B$ 
	  such that $U_{n}^{\xi} \subseteq U_n$ for every $n\in\N$ and $\xi \in \kappa$;
  \end{enumerate}

  \noindent \underline{There exist}  an increasing sequence $(\eta_n)_{n\in\N}\subseteq \kappa$ 
  and an infinite, coinfinite $M \subseteq \N$ such that
	  \begin{enumerate}[(a)]
		\item the supremum $ \bigvee_{n\in M} \big(\, f_n\cdot d_{\eta_n, \nu_n^{\eta_n}}\, \big)$ exists in $C(K)$,
		\item  $ \overline{\bigcup_{n\in M}U_{n}^{\eta_n}}\cap
\overline{\bigcup_{n\in \N\setminus M}U_{n}^{\eta_n}}\not=\emptyset$.
	  \end{enumerate}
\end{definition}

\noindent  Section 2 of the paper is devoted to proving the following theorem
(recall that a topological space has c.c.c. if it does not contain an uncountable
family of pairwise disjoint nonempty open sets):
\vskip 6pt
\begin{reptheorem}[\ref{theorem2}] Suppose that  $K$ and $\mathcal D$
are as in Definition \ref{paracomplicated}. 
If $C(K)$ has asymmetric distribution of separations in the direction of
$\mathcal D$ and $K$ is c.c.c., then $C(K)$ has few$^*$ operators.
\end{reptheorem}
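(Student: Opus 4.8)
The plan is to verify condition (c) from the Introduction, which by Theorem 23 of \cite{fewsur} is equivalent to $C(K)$ having few$^*$ operators, and to argue by contradiction. Suppose (c) fails for a weak$^*$-continuous $\phi\colon K\to M(K)$; write $\mu_x:=\phi(x)$ and $\nu_x:=\mu_x|_{K\setminus\{x\}}$, so that $\{\nu_x:x\in K\}$ is not relatively weakly compact in $M(K)$, while $\phi[K]$ is norm bounded and $x\mapsto\langle\mu_x,f\rangle$ is continuous for every $f\in C(K)$. By the Dieudonn\'e--Grothendieck theorem there are $\varepsilon>0$, points $x_n\in K$ and pairwise disjoint open $V_n$ with $|\nu_{x_n}(V_n)|\ge\varepsilon$ for all $n$. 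I would then clean this up: shrink $V_n$, using that $|\mu_{x_n}|(O)$ decreases to $|\mu_{x_n}|(\{x_n\})$ as $O$ shrinks to $\{x_n\}$, so that $x_n\notin\overline{V_n}$; take $V_n\in\B$ with pairwise disjoint closures; pick continuous $f_n\colon K\to[0,1]$ with $\supp(f_n)\subseteq V_n$ and $|\langle\mu_{x_n},f_n\rangle|\ge\varepsilon/2$ (legitimately replacing $\mu_{x_n}$ by $\nu_{x_n}$ here, since $x_n\notin\overline{V_n}$); and, by Rosenthal's lemma and passage to a subsequence, arrange $|\mu_{x_n}|\big(\bigcup_{m\ne n}\overline{V_m}\big)<\varepsilon/16$ for all $n$. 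Finally I want a pairwise disjoint family $(U_n)_{n\in\N}\subseteq\B$ with $x_n\in U_n$, $U_n\cap\overline{\bigcup_m V_m}=\emptyset$, and $\sup_{y\in U_n}|\mu_y|\big(\bigcup_{m\ne n}\overline{V_m}\big)<\varepsilon/16$.

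I expect this last requirement to be the crux of the argument. Because $\{\mu_y:y\in K\}$ itself need not be uniformly countably additive, the Rosenthal estimate --- valid at the single point $x_n$ --- does not extend to a whole neighbourhood of $x_n$ automatically, so instead of extracting first and choosing $U_n$ afterwards one has to build the sequences $(x_n)$, $(V_n)$, $(U_n)$ by one simultaneous induction, making the $V_n$ so thin that the common cluster set of the $V_n$'s and the $U_n$'s carries negligible $\mu_y$-mass near the points that will matter. I expect the c.c.c.\ hypothesis to be used exactly here, to control the clustering of the chosen open sets and to guarantee that the thinning can be performed.

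Granting the data of items (i)--(ii) of Definition \ref{paracomplicated}, I would supply the remaining parameters as follows. For each $n$ and each $\xi<\kappa$, the identity $\langle\mu_{x_n},f_nd_\xi\rangle+\langle\mu_{x_n},f_n(1-d_\xi)\rangle=\langle\mu_{x_n},f_n\rangle$ together with $|\langle\mu_{x_n},f_n\rangle|\ge\varepsilon/2$ forces one of the two summands on the left to have modulus at least $\varepsilon/4$; let $\nu_n^\xi\in\{-1,1\}$ choose that summand, so that $|\langle\mu_{x_n},f_nd_{\xi,\nu_n^\xi}\rangle|\ge\varepsilon/4$. Since $f_nd_{\xi,\nu_n^\xi}\in C(K)$, the map $y\mapsto\langle\mu_y,f_nd_{\xi,\nu_n^\xi}\rangle$ is continuous, so I can fix $U_n^\xi\in\B$ with $x_n\in U_n^\xi\subseteq U_n$ on which its modulus stays $\ge\varepsilon/4$. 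Here $\nu_n^\xi$ plays the role of the orientation that keeps the $d_\xi$-modulated copy of $f_n$ detecting the mass of $\mu_{x_n}$ --- a connected-space substitute for the clopen separations available in the totally disconnected constructions --- and $U_n^\xi$ is a basic neighbourhood of $x_n$ on which this detection persists.

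Now apply the hypothesis that $C(K)$ has asymmetric distribution of separations in the direction of $\mathcal D$ to the data just assembled, obtaining an increasing $(\eta_n)_{n\in\N}\subseteq\kappa$ and an infinite, coinfinite $M\subseteq\N$ satisfying (a) and (b). Put $h_n:=f_nd_{\eta_n,\nu_n^{\eta_n}}$ and $f:=\bigvee_{n\in M}h_n$; property (a) gives $f\in C(K)$, and since the $h_n$ are nonnegative with pairwise disjoint supports, $f=\sum_{n\in M}h_n$ pointwise, so integrating against the finite measure $|\mu_y|$ yields $\langle\mu_y,f\rangle=\sum_{n\in M}\langle\mu_y,h_n\rangle$ for every $y$. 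If $m\in M$ and $y\in U_m^{\eta_m}$, the term $\langle\mu_y,h_m\rangle$ has modulus $\ge\varepsilon/4$ by the choice of $U_m^{\eta_m}$, whereas each remaining $h_n$ is supported in $\overline{V_n}$ with $U_m\cap\overline{V_n}=\emptyset$, so the tail has modulus at most $|\mu_y|\big(\bigcup_{n\ne m}\overline{V_n}\big)<\varepsilon/16$; hence $|\langle\mu_y,f\rangle|\ge 3\varepsilon/16$. If instead $m\notin M$ and $y\in U_m^{\eta_m}$, every surviving index differs from $m$ and the same estimate gives $|\langle\mu_y,f\rangle|<\varepsilon/16$. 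Finally, taking a point $z\in\overline{\bigcup_{n\in M}U_n^{\eta_n}}\cap\overline{\bigcup_{n\in\N\setminus M}U_n^{\eta_n}}$ as in (b) and approaching it by a net inside the first set and by a net inside the second set, the continuity of $y\mapsto\langle\mu_y,f\rangle$ forces both $|\langle\mu_z,f\rangle|\ge 3\varepsilon/16$ and $|\langle\mu_z,f\rangle|<\varepsilon/16$, a contradiction. Therefore every operator on $C(K)$ is a weak multiplier, i.e.\ $C(K)$ has few$^*$ operators.
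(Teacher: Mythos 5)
Your reduction at the start (condition (c) and Theorem 23 of \cite{fewsur}, Dieudonn\'e--Grothendieck, thinning the $V_n$ so that $x_n\notin\overline{V_n}$, and choosing $\nu_n^\xi$, $U_n^\xi$ by continuity of $y\mapsto\langle\mu_y,f_nd_{\xi,\nu_n^\xi}\rangle$) is sound and parallels the paper's use of Theorem \ref{weak-multiplier}, Lemma \ref{noncentripetal} and the step $(**)$. But the final computation contains a genuine error: you assert that $f=\bigvee_{n\in M}h_n$ equals $\sum_{n\in M}h_n$ pointwise, hence $\langle\mu_y,f\rangle=\sum_{n\in M}\langle\mu_y,h_n\rangle$ for every $y$. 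The lattice supremum in $C(K)$ of a pairwise disjoint sequence agrees with the pointwise sum only off a nowhere dense set (Lemmas \ref{suprema-criterium} and \ref{domain-sup}); in exactly the situations this theorem is about, the pointwise sum is not continuous and $f$ exceeds it on part of $\overline{\bigcup_{n\in M}\supp(h_n)}\setminus\bigcup_{n\in M}\supp(h_n)$, a nowhere dense set which a Radon measure $\mu_y$ may well charge. Your tail bound only controls $|\mu_y|$ on $\bigcup_{n\neq m}\overline{V_n}$, which does not contain the cluster set of the $V_n$'s where this discrepancy lives, so neither ``$\geq 3\varepsilon/16$ on the $M$-side'' nor ``$<\varepsilon/16$ on the complementary side'' follows, and the contradiction at $z$ evaporates. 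This is not a repairable slip but the heart of the matter: one application of the asymmetry condition does not give the clean dichotomy you claim. The paper's proof concedes that for each attempt the dichotomy may fail at some point of some $U_{n_0}^{\eta_{n_0}}$, applies the asymmetry hypothesis $\omega_1$ times along an almost disjoint family $\{N_\alpha\mid\alpha<\omega_1\}$, pigeonholes to a fixed $n_0$ and $\delta$, and only then uses the c.c.c.\ (via Lemma \ref{ccc}) to intersect $m$ of the witnessing sets $V_\alpha$ and contradict $\|T\|$. In your write-up the c.c.c.\ hypothesis is never actually used except as a hope inside an unproved step, which is itself a warning sign.

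The second gap is that unproved step: the uniform estimate $\sup_{y\in U_n}|\mu_y|\bigl(\bigcup_{m\neq n}\overline{V_m}\bigr)<\varepsilon/16$ is only ``expected'' to come from a simultaneous induction, yet it is the technical crux, and as stated it is of the wrong form. For a fixed closed or Borel set $B$ the map $y\mapsto|\mu_y|(B)$ is not continuous (nor semicontinuous in the useful direction), so the Rosenthal bound at the single point $x_n$ cannot be propagated to a neighbourhood, and insisting on neighbourhoods of the prescribed points $x_n$ makes the task harder still. What is actually achievable --- and what the paper proves as Lemma \ref{rosenthal} by a page-long recursion with a sign-counting argument against $\|T\|$ --- is the weaker functional form: after passing to an infinite $M$, there are nonempty open $V_m\subseteq U_m$ (not around prescribed points) with $\sup_{x\in V_m}\sum_{n\in M\setminus\{m\}}|T(g_n)(x)|<\varepsilon$ for all $g_n\leq f_n$; the paper's argument is arranged so that this suffices, because $|T(f_n)|>2\varepsilon$ holds on all of $U_n$ rather than at a chosen point. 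So to complete your route you would have to prove an extraction lemma of this operator-theoretic kind (classical Rosenthal does not give it), and even then the functional form would not rescue the last step, because of the supremum-versus-pointwise-sum issue above. As it stands the proposal does not close, and the missing pieces are precisely the two places where the paper's proof does its real work.
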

\vskip 6pt
This is done by modifying 
proofs of previously considered asymmetric conditions and using some stronger
extraction principles (\ref{rosenthal}, \ref{talagrand}) also proved in this section.  
Section 3 is devoted to the reformulation of the existing theory
concerning the inverse limit constructions ensuring  asymmetric distribution
of separations in previously considered senses.  
While the conditions from Definition \ref{paracomplicated} can be rephrased
in the Banach algebra language (although the final result, Theorem \ref{theorem1}
concerns only the Banach space structure), the proof techniques concerning
separations and the connectedness involves the topological arguments in
$K$. So the main object in Section 3 is the concrete representation  $\nabla\F$
of the Gelfand space of the Banach algebra $[\F]$ generated by subsets $\mathcal F\subseteq C(L)$
for some extremally disconnected $L$.

In section 4 we introduce a concrete type of an inverse limit of compact spaces construction
which on the level of the space of continuous functions 
is called a ladder family (Definition \ref{ladder}). The lemmas from Section 3 are used there
to prove that if $\F\subseteq C(K)$ is a ladder family, then
$C(\nabla\F)$ is connected, has no butterfly points and  provides 
a fertile environment for both the existence of suprema and
nonseparated pairwise disjoint sequences of open sets in $\nabla\F$ needed
to obtain the properties from Definition \ref{paracomplicated}.

In Section 5 we use the combinatorial principle $\diamondsuit(E_\omega^\kappa)$
which follows, by a result of Gregory, from the generalized continuum hypothesis for any regular
uncountable cardinal $\kappa$ to perform a particular construction of
a ladder family.  The character of $\diamondsuit(E_\omega^\kappa)$ as a 
prediction tool allows us to balance the amount of
the suprema and nonseparated pairwise disjoint sequences of open sets to
obtain the conditions from Definition \ref{paracomplicated}. The main theorem
of Section 5 completing the list of all ingredients needed to obtain Theorem \ref{theorem1}
is:
\vskip 6pt
\begin{reptheorem}[\ref{theorem3}] Assume the generalized continuum hypothesis. Let $\kappa$
be the successor of a cardinal of uncountable cofinality.
There is a compact Hausdorff connected c.c.c. space $K$ of weight
$\kappa$ without a butterfly point  such that $C(K)$ has asymmetric distribution of separations
in the direction of some $\mathcal D\subseteq C_I(K)$.
\end{reptheorem}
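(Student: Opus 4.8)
The plan is to build $K$ as the Gelfand space $\nabla\F$ of a carefully constructed ladder family $\F\subseteq C(L)$, where $L$ is a fixed extremally disconnected compact space of weight $\kappa$ (for instance the Stone space of the free Boolean algebra on $\kappa$ generators, or of $\mathcal P(\omega)$-like pieces glued along the construction). By the results quoted from Sections 3 and 4, once $\F$ is a ladder family the space $\nabla\F=K$ is automatically compact Hausdorff, connected, c.c.c.\ (inherited from the c.c.c.\ of $L$ and the way the ladder coordinates are added) and without butterfly points, and it comes equipped with a distinguished family $\mathcal D=(d_\alpha:\alpha<\kappa)\subseteq C_I(K)$ of coordinate functions. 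So the entire content of the theorem reduces to arranging the \emph{asymmetric distribution of separations in the direction of $\mathcal D$}, i.e.\ conditions (a) and (b) of Definition \ref{paracomplicated}, during the transfinite construction of the ladder family.

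First I would set up the recursion of length $\kappa$ using $\diamondsuit(E_\omega^\kappa)$, which holds under GCH since $\kappa$ is the successor of a cardinal of uncountable cofinality (Gregory's theorem). At stage $\alpha<\kappa$ we have built an initial segment $\F\restriction\alpha$ of the ladder family, hence an approximation $K_\alpha$ to $K$ of weight $<\kappa$, and the diamond sequence hands us a guess for a potential ``challenge'': a quadruple $((f_n),(U_n),(\nu_n^\xi)_{\xi},(U_n^\xi)_\xi)$ of the form required by the hypotheses (i)--(iv) of Definition \ref{paracomplicated}. Since each such object is, by c.c.c., essentially coded by a subset of $K_\alpha$ of size $<\kappa$, and since everything relevant lives on a set of size $<\kappa$ which reflects into a club of stages, the diamond will correctly predict, on a stationary set of $\alpha$'s, every genuine challenge that can arise in the final $K$. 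At a predicted stage I would choose the next ladder coordinate $d_\alpha$ — and more precisely the next several coordinates indexed by an interval of ordinals past $\alpha$, which will furnish the increasing sequence $(\eta_n)_{n\in\N}$ — so as to \emph{simultaneously} (a) make the partial sum $\sum_{n\in M}f_n\cdot d_{\eta_n,\nu_n^{\eta_n}}$ converge to a genuine continuous function on $\nabla\F$, using the ladder mechanism of Section 4 that is designed precisely to create such suprema, and (b) glue the pieces $U_n^{\eta_n}$ for $n\in M$ and for $n\notin M$ so that their closures meet, i.e.\ install a non-separated pair of disjoint open sequences. The choice of the infinite coinfinite $M\subseteq\N$ is made during this step: one uses the freedom in the ladder construction (this is the ``balancing'' referred to in the overview) to route the odd-indexed and even-indexed pieces — or some other partition — into the same limit point while keeping the corresponding weighted sum summable.

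The main obstacle, and the crux of the whole paper, is exactly the tension between (a) and (b): making a supremum $\bigvee_{n\in M}(f_n\cdot d_{\eta_n,\nu_n^{\eta_n}})$ exist forces a lot of \emph{separation} in $K$ (the supports have to be pulled apart enough for the supremum to be continuous), whereas (b) demands a \emph{failure} of separation for the associated open sets $U_n^{\eta_n}$. The point of working ``in the direction of $\mathcal D$'' and of the new asymmetry condition is that one is allowed to choose, adaptively and depending on the challenge, \emph{which} coordinates $\eta_n$ and \emph{which} signs $\nu_n^{\eta_n}$ to read off; the ladder family is engineered so that along the chosen coordinates the functions $f_n\cdot d_{\eta_n,\nu_n^{\eta_n}}$ have disjoint supports with a common continuous envelope, while the sets $U_n^{\eta_n}$, being genuine basic open sets rather than supports of functions, can still be clustered at a single point of the inverse limit. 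Verifying that the step can always be carried out requires: (1) checking that the inverse-limit extension defined at stage $\alpha$ is again a ladder family, so that the lemmas of Sections 3 and 4 keep applying (this is a routine but delicate bookkeeping); (2) a topological argument inside $K_\alpha$ producing the desired non-separated pair, of the kind surveyed in the introduction but now relativized to the fibers over $L$; and (3) a convergence estimate, using $f_n\cdot f_m=0$ and $\supp(f_n)\cap U_n=\emptyset$, showing the weighted partial sums are uniformly Cauchy on the extension. Finally, once the recursion closes at $\kappa$, a standard reflection/chain-condition argument shows that any challenge appearing in the completed $K$ already appeared at some stage in the stationary set predicted by $\diamondsuit(E_\omega^\kappa)$, so (a) and (b) are met for it; together with the ladder-family consequences this yields all the asserted properties of $K$.
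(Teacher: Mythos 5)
Your skeleton matches the paper (ladder family, $\diamondsuit(E^\kappa_\omega)$ via Gregory, Sections 3--4 for connectedness, c.c.c.\ and absence of butterfly points), but the description of the crucial step at a predicted stage is wrong in two ways that would break the argument. First, you propose to take the coordinates $\eta_n$ from ``an interval of ordinals past $\alpha$''. The diamond guess at stage $\alpha$ only determines the challenge data restricted to $\alpha$; the values $\nu_n^{\eta_n}$ and the sets $U_n^{\eta_n}$ for $\eta_n>\alpha$ are simply not available to the builder at that stage, so the function $g_\alpha=\bigvee_{n\in M}f_n d_{\nu_n^{\eta_n},\eta_n}$ cannot be defined. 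This is exactly why the construction works on $E^\kappa_\omega$: the sequence $(\eta^\alpha_n)_{n\in\N}$ is a \emph{pre-fixed ladder converging to $\alpha$ from below}, so the guessed initial segment $X\cap\alpha$ already encodes $\nu_n^{\eta^\alpha_n}$ (read off as $\phi(\eta^\alpha_n)(n)$) and $U_n^{\eta^\alpha_n}$, and the signs and sets needed for $g_\alpha$ are known at stage $\alpha$.

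Second, you speak of ``gluing'' the pieces $U_n^{\eta_n}$ so that their closures meet, and of routing them ``into the same limit point''. In this framework one can only enlarge $\F$, and enlarging $\F$ can only \emph{increase} separation in $\nabla\F$ (the space becomes a finer quotient of $L$); non-separation can never be installed later, it can only be preserved. The actual mechanism is: over a \emph{countable} separating subfamily $\F[A]$ (Lemma \ref{separating-countable}) a counting argument (Lemma \ref{countable-family}: continuum many $M$'s versus countably many candidate separations) yields an infinite coinfinite $M$ along which the antichain $(U_n^{\eta^\alpha_n})$ is not separated, and then the preservation lemmas (\ref{extending-d_alpha}, \ref{adding-disjoint}, \ref{extending-generator}, \ref{separating-ordinal}) — using $\supp(f_m)\cap U_n(L)=\emptyset$ and the ladder structure — guarantee this non-separation survives all later extensions up to the final $\F$. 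Your proposal omits both the source of $M$ and the preservation problem, which is the technical heart of the proof. Relatedly, no ``uniform Cauchy'' estimate is needed (or true) for item (a): the supremum $\bigvee_{n\in M}f_nd_{\nu_n,\eta_n}$ exists in $C(L)$ because $L$ is extremally disconnected, it is put into $\F$ by fiat, and Lemma \ref{definitionsupindes} ensures it remains the supremum of the factorizations in every later $C(\nabla\G)$; it generally differs from the pointwise sum on a nowhere dense set, so no uniform convergence argument is possible. Finally, note that the full challenge includes $\kappa$-indexed data $(\nu_n^\xi)_\xi$, $(U_n^\xi)_\xi$, which cannot be reflected by a c.c.c.\ argument into a set of size $<\kappa$; the paper instead codes the whole challenge as one subset $X\subseteq\kappa$ via the bijection $\Psi$ and only ever uses its trace $X\cap\alpha$ together with the ladder $(\eta^\alpha_n)_n\subseteq\alpha$.
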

\vskip 6pt
We do not know if the hypothesis of the generalized continuum hypothesis can be removed from Theorem
\ref{theorem1}. In \cite{endorigid11} Shelah's black boxes were used  to avoid any additional
set theoretic assumption in the construction of endo-rigid Boolean algebras. The Banach space
construction seems more demanding in this context. The first and the third named authors would like to
thank Gabriel Salazar for discussions concerning Shelah's black boxes.

The obtained spaces $C(K)$ have other usual properties of $C(K)$s 
with few operators proved in \cite{few} such as having proper subspaces,
in particular hyperplanes not isomorphic to
the entire space, not being isomorphic to $C(L)$ for $L$ totally disconnected etc.
One could point out one property not mentioned in the literature that
the space $C_\C(K)$ of complex valued functions of $K$ is an indecomposable
complex Banach space which additionally carries the structure of a 
commutative $C^*$-algebra\footnote{
To see this look at $C_\C(K)$ as $C(K)\oplus C(K)$ with the
multiplication by a complex scalar defined as $(\alpha+i\beta)(f, g)=(\alpha f-\beta g, \beta f+\alpha g)$.
A linear operator on $C_\C(K)$ can be identified with a $2\times 2$ matrix $A$
of operators on $C(K)$ such that $T(f, g)=A(f, g)$. The $\C$-linearity of $T$ 
imposes the condition $iT(1, 0)=T(0,1)$ which yields (by $i(f, g)=(-g, f)$):
\[T(f, g)=\begin{bmatrix}
    T_1 & -T_2  \\
    T_2 & T_1 
  \end{bmatrix}\begin{bmatrix}
    f   \\
    g 
  \end{bmatrix}\]
for some operators $T_1, T_2$ on $C(K)$. If $C(K)$ has few operators, this reduces
to a sum of a matrix of weakly compact operators and 
an operator of multiplication by a complex function $M_h(f+ig)=(h_1+ih_2)(f+ig)=
(h_1f-h_2g)+i(h_2f+h_1g)$ for some $h_1, h_2\in C(K)$. Hence as
in the real case every projection $P$ on $C_\C(K)$ is of the form $hI+S$ for $h\in C_\C(K)$
and $S$ strictly singular, and the condition $P^2=P$ yields that $h^2=h$ as
no multiplication can be strictly singular for continuous functions on
a $K$ with no isolated points ($K$ is connected). Hence $h(x)=0$ or $h(x)=1$
for each $x\in K$ and so $h=1$ or $h=0$ since $K$ is connected.
It follows that $P=I+S$ of $P=S$ where $S$ is finite dimensional since $S$ is a projection
as well, and so $C_\C(K)$ is indecomposable indeed.}.

Making considerably less technical effort and following
the ideas of this paper one could construct a totally disconnected
$K$ of arbitrarily big size such that $C(K)$ has few operators. This would already
provide Banach spaces of densities $\kappa$, for arbitrarily big $\kappa$
 without complemented infinite dimensional subspaces of
densities less than $\kappa$ giving a strong negative answer to question (B). We opted 
for presenting just the connected example. Our $K$ has one additional peculiar
property, while it has no nontrivial convergent sequence (this would give rise to a complemented
copy of $c_0$) for any pairwise disjoint sequence $(U_n)_{n\in \N}$ of nonempty open subsets of $K$
there are only countably many sets $M\subseteq \N$ such that $\overline{\{U_n\mid n\in M\}}
\cap \overline{\{U_n\mid n\in \N\setminus M\}}=\emptyset$. This follows from Lemma 
\ref{separating-countable} and the construction.

The terminology and notation of the paper should be standard. In set theory
we follow \cite{jech}, \cite{kunen}, in topology \cite{engelking}, in Boolean algebras
\cite{koppelberg}, \cite{halmos}
in Banach spaces \cite{fabianetal}, \cite{semadeni}. Important conventions include:
\begin{itemize}
\item $C_I(K)=\{f\in C(K)\mid f:K\rightarrow [0,1]\}$,
\item $(f_n)_{n\in \N}$ are pairwise disjoint if $f_n\cdot f_m$ for all distinct $n,m\in \N$.
\item GCH is the generalized continuum hypothesis i.e., the statement that
$2^\kappa$ is the successor cardinal $\kappa^+$ for every infinite cardinal $\kappa$.
\item  
$E^\kappa_\omega = \{\alpha \in \kappa\mid\text{cf}(\alpha)=\omega\}$ denotes the set of ordinals smaller than $\kappa$ of cofinality $\omega$. 
\item  $\supp (f)$ denotes $f^{-1}[\R\setminus \{0\}]$  for
any real valued function $f$.
\end{itemize}

\section{Few$^*$ operators from asymmetric distributions of separations}

The purpose of this section is to prove Theorem \ref{theorem2}.
This amounts to applying the theory initiated in \cite{few} and later
developed in \cite{big}, \cite{grande}, \cite{plebanek}, \cite{iryna}, \cite{fewsur} in the
new context of the sequence $\D=\{d_\alpha:\alpha<\kappa\}$.  The first step is to
prove  a Rosenthal  lemma type extraction principle
 in the flavour of the approach from Chapter 4.3 of \cite{iryna} (cf. Theorems 24 and 25 of \cite{fewsur}):

\begin{lemma} \label{rosenthal}
  Let $K$ be a compact Hausdorff space,  $T:C(K)\rightarrow C(K)$ 
a bounded linear operator and let $\varepsilon>0$.
  Let $(f_n)_{n\in\N}\subseteq C_I(K)$ be pairwise disjoint and 
let  $(U_n)_{n\in\N}$ be a pairwise disjoint family of nonempty open subsets  of $K$.
  Then there are an infinite $M\subseteq\N$ and 
nonempty open sets $V_n\subseteq U_n$ for $n\in\N$
  such that for all $m\in M$ and for all sequences 
$(g_n)_{n\in\N}\subseteq C_I(K)$ satisfying $g_n\leq f_n$ for $n\in\N$ we have
  \[
	\sup_{x\in V_m} \sum_{n\in M\setminus\{m\}} |T(g_n)(x)| < \varepsilon.
  \]
\end{lemma}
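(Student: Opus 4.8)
The plan is to prove this by a Rosenthal-lemma style diagonalization, exploiting the disjointness of the $U_n$ together with the boundedness of $T$. First I would fix, for each $n$, a nonempty open set $W_n$ with $\overline{W_n}\subseteq U_n$; this is possible by regularity of $K$ and will let me control disjoint supports. The key observation is that for any sequence $(g_n)$ with $0\le g_n\le f_n$, the functions $g_n$ are pairwise disjoint and uniformly bounded by $1$, so for any infinite $M$ and any $x\in K$ the family $\{g_n(x):n\in M\}$ has at most one nonzero value; however, the \emph{images} $T(g_n)$ need not be disjoint, and that is exactly where the extraction is needed. The standard trick is to reduce from arbitrary $(g_n)$ to the single test function $f_n$ itself, using that $|T(g_n)(x)|\le \|T\|$ and positivity-type estimates do not directly apply (operators on $C(K)$ need not be positive), so instead I would run the argument on $\sum_{n}|T(f_n)(x)|$-type tails after passing to suitable subsets, and then recover the uniform bound over all $g_n\le f_n$ by a separate, elementary argument from the disjoint supports.

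The main step is a Rosenthal-type exhaustion. Suppose, toward a contradiction, that no infinite $M$ and no choice of $V_n\subseteq U_n$ works. Then one builds inductively an infinite set $\{m_1<m_2<\cdots\}$ and points $x_k\in U_{m_k}$ (in fact in a shrinking neighborhood basis) together with finite sets $F_k\subseteq M_k$ of indices such that $\sum_{n\in F_k}|T(g_n)(x_k)|\ge \varepsilon$ for some admissible $(g_n)$, with the $F_k$ pairwise disjoint and all $\ge m_k$. Choosing $g=\sum_{k}\sum_{n\in F_k}g_n$ — which is a legitimate element of $C_I(K)$ because the $f_n$ are pairwise disjoint with disjoint supports, so the series converges in norm to a continuous function bounded by $1$ — one would get that $T(g)$ has, near each $x_k$, oscillation forced to be $\ge\varepsilon$ on account of the separation provided by the $U_n$'s (here is where $\overline{W_n}\subseteq U_n$ and the disjointness of the $U_n$ enter: the "other" terms $\sum_{n\notin F_k}g_n$ vanish on a neighborhood of $x_k$ that can be taken inside $U_{m_k}$). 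This contradicts the continuity of $T(g)$ together with boundedness, since $\|T(g)\|\le\|T\|$ but we would be producing infinitely many "spikes" of height $\varepsilon$ at points with disjoint neighborhoods — more precisely, one derives that $T(g)$ fails to be bounded, or fails to attain a continuous value, depending on how the $x_k$ are arranged. The cleanest packaging is: pass to a subnet of the $x_k$ converging to some $x\in K$; continuity of $T(g)$ at $x$ forces $|T(g)(x_k)-T(g)(x)|\to 0$, while the spike estimates force $\liminf|T(g)(x_k)|\ge\varepsilon$ and, using a second admissible function that kills the $F_k$-blocks, $|T(g')(x_k)|$ small, contradicting that $g$ and $g'$ agree near $x_k$ off a null set — giving the desired contradiction.

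Concretely, the steps in order are: (1) shrink $U_n$ to $W_n$ with $\overline{W_n}\subseteq U_n$ and note $\mathrm{supp}(f_n)\cap U_n=\emptyset$ is preserved; (2) set up the exhaustion assuming the conclusion fails, extracting $m_k$, shrinking open sets $V_{m_k}^{(k)}$, points $x_k$, finite index blocks $F_k$, and admissible $(g_n^{(k)})$; (3) assemble $g:=\sum_k\sum_{n\in F_k}g_n^{(k)}\in C_I(K)$ using norm-convergence from disjoint supports; (4) observe that on a neighborhood of $x_k$ contained in $U_{m_k}$, $g$ coincides with the finite sum $\sum_{n\in F_k}g_n^{(k)}$, so $|T(g)(x_k)|$ differs from $|T(\sum_{n\in F_k}g_n^{(k)})(x_k)|$ by a controlled amount — actually one must be careful here, since $T$ need not be local, so instead compare $T(g)$ with $T(g\cdot\mathbf 1_{U_{m_k}})$ via the operator norm applied to the small-sup remainder, which is where the failed-conclusion estimate $\sup_{V_m}\sum_{n\in M\setminus\{m\}}|T(g_n)|\ge\varepsilon$ gets its bite; (5) derive the contradiction with boundedness/continuity of $T(g)$.

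The main obstacle I anticipate is precisely step (4): operators on $C(K)$ are not local, so one cannot naively say that the behavior of $T(g)$ near $x_k$ depends only on the block $F_k$. The resolution — and this is the heart of the Rosenthal-lemma mechanism as used in \cite{iryna} and \cite{fewsur} — is to arrange the extraction so that the \emph{tails} $\sum_{n\in M\setminus F_k}|T(g_n)(x)|$ are made uniformly small on the chosen $V_{m_k}$ by the very process of shrinking the $V$'s at each stage (a diagonal argument over countably many pointwise-small conditions), leaving only the block $F_k$ to contribute the alleged $\ge\varepsilon$; then summing blocks produces an honest unbounded or discontinuous function. Getting the quantifiers right — "for all $g_n\le f_n$" in the conclusion versus the existentially chosen bad $g_n$ in the contradiction hypothesis — requires running the exhaustion against the sup over all admissible $(g_n)$ simultaneously, which is legitimate because that sup is bounded by $\|T\|\cdot(\text{number of terms})$ and, after extraction, by a convergent geometric-type series; this is routine once the bookkeeping is set up but is the step most prone to error.
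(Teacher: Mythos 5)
Your outline breaks down at its central step, the assembly of the test function. You claim that $g=\sum_k\sum_{n\in F_k}g_n^{(k)}$ is a legitimate element of $C_I(K)$ because the summands are pairwise disjoint, "so the series converges in norm." This is false: disjointly supported functions whose sup-norms do not tend to $0$ never form a norm-convergent series, and the pointwise sum of infinitely many disjoint bumps is in general discontinuous at accumulation points of the supports (indeed, the question of when such a sum/supremum lies in $C(K)$ is exactly the delicate issue this paper is built around, cf.\ Lemma \ref{suprema-criterium}). With $g$ gone, step (4) also collapses, and independently $g\cdot\mathbf 1_{U_{m_k}}$ is not continuous, so $T$ cannot be applied to it. Moreover, even granting such a $g\in C_I(K)$, your endgame is not a contradiction: a continuous function of norm at most $\|T\|$ may perfectly well satisfy $|T(g)(x_k)|\geq\varepsilon$ at infinitely many points $x_k$, and passing to a convergent subnet of the $x_k$ yields nothing; the talk of a second function agreeing with $g$ "off a null set" has no meaning in $C(K)$ and no bearing on the non-local operator $T$.

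What actually closes the argument in the paper is different and you never reach it: one works with \emph{finite} sums of same-signed contributions evaluated at a \emph{single common} point. In the paper's proof of condition $(\ast)$, the shrinking open sets are nested across the $2l$ rounds of extraction, so a point $x_0$ of the last set lies in all of them; choosing $l$ with $l\varepsilon/2^{k+2}>\|T\|$ and, by pigeonhole, $l$ indices for which $T(g_{n^i_{k+1},m})(x_0)$ share a sign, the finite disjoint sum $\sum_{i\in F}g_{n^i_{k+1},m}$ has norm at most $1$ while $|T(\sum_{i\in F}g_{n^i_{k+1},m})(x_0)|>\|T\|$ -- a genuine contradiction with no infinite sums anywhere. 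Similarly, your plan to make the tails $\sum_{n\in M\setminus\{m\}}|T(g_n)(x)|$ small "by shrinking the $V$'s" cannot work by shrinking alone: the paper's second extraction $(\ast\ast)$ uses that the supremum of all such tail sums is at most $2\|T\|$ (again via finite same-sign sums), picks a point and a finite subfamily nearly attaining it, \emph{removes that finite set of indices} from the index set, and only then shrinks the neighborhood. Both of these quantitative mechanisms are missing from your proposal, so as written it does not yield the lemma.
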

\begin{proof}
  Let us introduce an auxiliary notation: for sets $M\subseteq N \subseteq \N$ and two 
pairwise disjoint sequences  $(V_n)_{n\in M}$, 
  $(U_n)_{n\in N}$ of nonempty
open subsets of $K$ we 
write $(V_n)_{n\in M} \prec (U_n)_{n\in N}$ if $V_n\subseteq U_n$ for all $n\in M$.

  By recursion on $k\in \N$ construct
  \begin{itemize}
    \item $n_1< ...<n_k$ in $\N$,
    \item $(V_{n_1}, ..., V_{n_k})\prec (U^0_{n_1}, ..., U^{k-1}_{n_k}) \prec (U_{n_1}, ..., U_{n_k})$,
    \item $\N=X_0\supseteq X_1\supseteq ... \supseteq X_k$ such that
 $X_k$ is an infinite subset of $\N\setminus[1,\,n_k]$,
    \item $(U_n)_{n\in \N}=(U_n^0)_{n\in X_0}\succ (U_n^1)_{n\in X_1}\succ\dots\succ (U_n^k)_{n\in X_k}$.
  \end{itemize}
  such that
  \[
    |T(g_{n_k})(x)|\leq {\varepsilon \over{2^{k+1}}}\leqno (\ast)
  \]
  holds  for all $0\leq g_{n_k}\leq f_{n_k}$ and for all  $x\in U_n^k$ with $n\in X_k$.
  Moreover
  \[
    \sum_{n\in X_k}|T(g_n)(x)|\leq {\varepsilon \over{2^k}}\leqno (\ast\ast)
  \]
  holds for all $x\in V_{n_k}$ and for all $0<g_n\leq f_n$ with $n\in X_k$.
  
  As $n_0$ is undefined, the above is vacuously true for $k=0$.  So, suppose we have
  the above objects for $k\geq0$ and let us  construct the corresponding objects for
  $k+1$.
  Note that $(\ast)$ and $(\ast\ast)$ are worded in such a way that given $X_k$ we need to find
  $n_{k+1}\in X_k$ and an infinite $X_{k+1}\subseteq X_k\setminus[1,\,n_{k+1}]$
  such that  $(\ast)$ and $(\ast\ast)$ are satisfied for $k+1$ in place of $k$. That is,
  the previous $(n_1, ..., n_k)$ and $(V_{n_1}, ..., V_{n_k})$ play no role  when we
  pass to $(\ast)$ and $(\ast\ast)$ for $k+1$. First we will take care of ($\ast$).
  
  Suppose  that there is no $n_{k+1},\ X_{k+1}$ and $U^{k+1}_n$ for $n\in X_{k+1}$
such that $(\ast)$ holds, 
  that is, for all $n'\in X_k$, all infinite 
  $X\subseteq X_k\setminus [1,\,n']$, and all $(U'_n)_{n\in X}\prec (U_n^k)_{n\in X_k}$
  there exist a number $n''\in X$, and $0\leq g_{n',n''}\leq f_{n'}$, and an element $x_{n''}\in U'_{n''}$ such that
  \[
    |T(g_{n',n''})(x_{n''})| > {\varepsilon \over{2^{k+2}}}.\leqno (\ast\ast\ast)
  \]
  We will derive   contradiction from this hypothesis.
   Let $l\in \N$ be such that ${{l\varepsilon}\over{2^{k+2}}}>\|T\|$.
   Applying the above recursively on $i\leq 2l$ we can construct (note that the index $k+1$ below 
is fixed and indicates only that we are in the ($k+1$)-th stage of the recursive construction):
  \begin{itemize}
    \item an increasing $(n_{k+1}^i)_{i\leq 2l} \subseteq X_k$ with $n^1_{k+1}>n_k$,
    \item  an infinite $X^i_{k+1}$ such that $X^{i+1}_{k+1}\subseteq X^{i}_{k+1}\subseteq X_{k}\setminus [1,\, n_{k+1}^i]$,
  \item nonempty open $W_n^i$s for $n\in X^i_{k+1}$ such that 
  \[
    (W_n^{i+1})_{n\in X^{i+1}_{k+1}}\prec(W_n^{i})_{n\in X^{i}_{k+1}}\prec (U_n^k)_{n\in X_k}
  \]
  \item  $0\leq g_{n^i_{k+1},n} \leq f_{n^i_{k+1}}$ for $n\in X^i_{k+1}$ such that 
  \[
    |T(g_{n^i_{k+1},n})(x)|> {\varepsilon \over{2^{k+2}}}
  \]
  for all $x\in W_n^i$, and all $n\in X^i_{k+1}$.  
  \end{itemize}
  
  To move from $i$ to $i+1$ we set $n^{i+1}_{k+1}=\min X^i_{k+1}$ and use repeatedly the above hypothesis for each $j\in\N$
  with $n'=n^{i+1}_{k+1}$, $X= X^i_{k+1} \cap [n''_{j-1},\infty)$, and $(U'_n)_{n\in X} = (W^i_n)_{n\in X}$ to obtain  
    $n''_j\in X,\ g_{n',n''_j}\leq f_{n'}$, and $x_{n''_j} \in U'_{n''_j}$. Then $X^{i+1}_{k+1}=(n''_j)_{j\in\N}$ and
    we use the continuity of $|T(g_{n',n''_j})|$ to conclude that
   if it is bigger than ${\varepsilon \over{2^{k+2}}}$ at point $x_{n''_j}$, then it is bigger than
   ${\varepsilon \over{2^{k+2}}}$ at some neighborhood $W^i_{n''_j}$ of that point.
  
   Arriving at $i=2l$ we set $m=\min X^{2l}_{k+1}$, pick $x_0 \in W^{2l}_m$ and fix a finite set 
   $F\subseteq [1,\,2l]$ of cardinality not less than $l$ such that all numbers 
   $T(g_{n^i_{k+1}, m})(x_0)$ have the same sign for $i\in F$. 
   Then we have
   \[
     |T(\sum_{i\in F}g_{n^i_{k+1}, m})(x_0)|> {l\varepsilon \over{2^{k+2}}}\geq ||T||.
   \]
   This is a contradiction since the norm of  $\sum_{i\in F}g_{n^i_{k+1},m}$ is less than or equal to one.
  
  Hence our hypothesis was false, that is, there is $n_{k+1}\in X_k$ and an infinite 
  $X_{k+1}'\subseteq X_k\setminus[1,\, n_{k+1}]$
  such that for some nonempty $U_n^{k+1}\subseteq U_n^k$
 with $n\in X_{k+1}'$ the condition $(\ast\ast\ast)$ holds.
  That is $(\ast)$ holds for $k+1$ in place of $k$.
  
  Now we will choose  a nonempty $V_{n_{k+1}}\subseteq U_{n_{k+1}}^k$  and
  an infinite $X_{k+1}\subseteq X_{k+1}'$ such that $(\ast\ast)$ holds for $k+1$ instead of $k$. 
  Let
  \[
    s=\sup_{x\in U_{n_{k+1}}^k} \{\sum_{n\in X_{k+1}'}|T(g_n)(x)| \mid 0\leq g_n\leq f_n\}.
  \]
  Note that $s\leq 2\|T\|$ as for the supremum we can consider finite sums of numbers with constant sign, 
  which by the linearity of $T$ are reduced to values of the operator $T$ on vectors of norm less than or equal to one. 
  Choose $x_0\in  U_{n_{k+1}}^k$ such that
  \[
    s-\sup \{\sum_{n\in X_{k+1}'}|T(g_n)(x_0)| \mid 0\leq g_n\leq f_n)\}<{{\varepsilon}\over{2^{k+3}}},
  \]
  and then a finite $F\subseteq X_{k+1}'$ and $0\leq g_n\leq f_n$ for $n\in F$ such that
  \[
    s-\sum_{n\in F}|T(g_n)(x_0)|<{{\varepsilon}\over{2^{k+2}}}.
  \]
  Now, note that by the continuity of the functions $T(g_n)$ for $n\in F$ at $x_0$ there
  is a nonempty neighborhood of $x_0$ of the form $V_{n_{k+1}}$ for
  $V_{n_{k+1}}\subseteq U_{n_{k+1}}^k$ where the above inequality holds.  Put $X_{k+1}=X_{k+1}'\setminus F$
  and note that by the choice of $s$ and $F$  we have $(\ast\ast)$ with $k+1$ in the place of $k$.
  
  This completes the recursive construction.  Note that $n_{k+1}\in X_k$  for each $k\in \N$.
  To verify the statement of the lemma let $M=(n_k)_{k\in \N}$ and choose a sequence $(g_n)_{n\in M}
\subseteq C_I(K)$ with
  $g_n\leq f_n$ for all $n\in M$ and $m=n_k\in M$ and $x\in V_{n_k}$. Then
  \[
    \sum_{n\in M\setminus\{m\}} |T(g_n)(x)|\leq \sum_{1\leq i<k} |T(g_i)(x)|+ \sum_{n\in X_{k}} |T(g_n)(x)|
  \]
  as $M\setminus\{n_1, \dots, n_k\}\subseteq X_k$.
  The first sum is not bigger  than
  \[
    \sum_{1\leq i<k}{\varepsilon\over 2^{i+1}}={\varepsilon\over 2}(1- 2^{-k+1})\leq \varepsilon/2\] 
   by applying $(\ast)$ since
  $n_k\in X_{i}$ for each $i<k$ and $V_{n_k}\leq U_{n_k}^i$.
  On the other hand the second sum is not bigger than $\varepsilon/2$ by
  applying directly $(\ast\ast)$. Hence we obtain the statement of the lemma.
\end{proof}

Another extraction principle which we will need is the following:

\begin{lemma}\label{talagrand} Suppose that $V_n$'s for $n\in\N$ are 
pairwise disjoint open sets in a compact space $K$ and
$x_n\in K\setminus {V_n}$ are distinct. Suppose that $\varepsilon>0$ and  $\mu_n$ for $n\in \N$ is a 
Radon measure on $K$ such that $|\mu_n|(V_n)>\varepsilon$ for all $n\in \N$. 
Then there are:  an infinite $M\subseteq \N$, open $V_n'\subseteq V_n$ and a $\delta>0$  such that
for all $n\in M$ we have that $|\mu_n|(V_n')>\delta$ and
\[{\overline{\bigcup_{n\in M}V_n'}}\cap\{x_n: n\in M\}=\emptyset.\]
\end{lemma}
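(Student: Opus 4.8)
The plan is to produce $M$ by a recursion inside $K$: shrink the $V_n$ with the help of inner regularity, and attach to each chosen point $x_{n_i}$ a protective open neighbourhood that is kept disjoint from all the shrunken sets. First, preparation. By inner regularity pick compact $L_n\subseteq V_n$ with $|\mu_n|(L_n)>\varepsilon$; since $x_n\notin V_n\supseteq L_n$ and $K$ is compact Hausdorff, I may replace $V_n$ by a smaller open set still containing $L_n$ with $x_n\notin\overline{V_n}$, and I set $\delta=\varepsilon/2$. If there are an infinite $M\subseteq\N$ and an open $U$ with $x_m\in U$ for $m\in M$ and $U\cap V_n=\emptyset$ for $n\in M$, then $V_n':=V_n$ already works, since $\bigcup_{n\in M}V_n'\subseteq K\setminus U$, a closed set missing $\{x_m:m\in M\}$. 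Otherwise, fixing a cluster point $x^*$ of $(x_n)_n$, the failure of this easy alternative forces $x^*\in\overline{\bigcup_nV_n}\setminus\bigcup_nV_n$ (for instance $x^*\in V_{n_0}$ would put us in the easy case via $U=V_{n_0}$ and $M=\{n\neq n_0:x_n\in V_{n_0}\}$).

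Now I recursively construct $n_1<n_2<\cdots$, open sets $V_{n_k}'\subseteq V_{n_k}$ with $|\mu_{n_k}|(V_{n_k}')>\delta$, and open sets $O_k\ni x_{n_k}$, maintaining for all $i,j\le k$ that $\overline{V_{n_j}'}\subseteq V_{n_j}$, $\overline{V_{n_j}'}\cap\overline{O_i}=\emptyset$, and $\overline{O_k}\cap\overline{V_{n_j}'}=\emptyset$. If this succeeds then $M:=\{n_k:k\in\N\}$ is as required: $O_i$ is an open neighbourhood of $x_{n_i}$ disjoint from $\overline{V_{n_j}'}$ for every $j$ (for $j\le i$ since $\overline{O_i}$ was chosen off $\overline{V_{n_1}'},\dots,\overline{V_{n_i}'}$, for $j>i$ by the invariant), hence disjoint from the open set and so from $\overline{\bigcup_{n\in M}V_n'}$, giving $x_{n_i}\notin\overline{\bigcup_{n\in M}V_n'}$.

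For the step $k\to k+1$: the set $\bigcup_{j\le k}\overline{V_{n_j}'}$ is closed and, since each $\overline{V_{n_j}'}\subseteq V_{n_j}$ while $x^*\notin\bigcup_nV_n$, omits $x^*$; as $x^*$ is a cluster point, infinitely many $x_n$ lie off it. Discarding also the at most $k$ indices $n$ with some $x_{n_i}\in V_n$, I want such an $n_{k+1}>n_k$ satisfying
\[|\mu_{n_{k+1}}|\Big(V_{n_{k+1}}\cap\bigcup_{i\le k}\overline{O_i}\Big)<\delta .\]
Granting this, $|\mu_{n_{k+1}}|\big(V_{n_{k+1}}\setminus\bigcup_{i\le k}\overline{O_i}\big)>\varepsilon-\delta=\delta$, so inner regularity and normality give a compact $L$ and an open $V_{n_{k+1}}'$ with $L\subseteq V_{n_{k+1}}'\subseteq\overline{V_{n_{k+1}}'}\subseteq V_{n_{k+1}}\setminus\bigcup_{i\le k}\overline{O_i}$ and $|\mu_{n_{k+1}}|(L)>\delta$; automatically $\overline{V_{n_{k+1}}'}$ misses $x^*$ and $x_{n_1},\dots,x_{n_k}$. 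Finally pick an open $O_{k+1}\ni x_{n_{k+1}}$ with $\overline{O_{k+1}}$ off the closed set $\bigcup_{j\le k+1}\overline{V_{n_j}'}$, which is possible because $x_{n_{k+1}}$ lies off it; all invariants are kept.

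The real content — and the main obstacle — is producing, at each stage, an index $n_{k+1}$ meeting the displayed inequality, and this is exactly what governs how the $O_i$ must be chosen. One picks $O_i$ so small that $\limsup_n|\mu_n|(V_n\cap\overline{O_i})<\delta/2^{\,i+1}$; then $|\mu_n|(V_n\cap\bigcup_{i\le k}\overline{O_i})<\delta$ for all but finitely many $n$, and intersecting with the infinitely many admissible $n$ yields $n_{k+1}$. Such a small $O_i$ exists unless $x_{n_i}$ is a \emph{funnel point} of the sequence $(|\mu_n|\restriction V_n)_n$, meaning that for some $c>0$ every neighbourhood $N$ of $x_{n_i}$ has $|\mu_n|(N\cap V_n)>c$ for infinitely many $n$. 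To avoid this I would first pass to an infinite index set on which no $x_m$ is a funnel point of the restricted subsequence and run the recursion there: a funnel point absorbs an entire infinite sub-family of the $\mu_n$, while a single point equals at most one $x_m$, so these sub-families can be pulled apart by thinning the index set sufficiently (if necessary at the cost of replacing $\varepsilon$ by some smaller $\varepsilon'>0$ and taking $\delta=\varepsilon'/2$). Making this thinning precise is the one genuinely delicate point; with it in hand, the recursion runs and delivers $M$, the sets $V_n'$, and $\delta$.
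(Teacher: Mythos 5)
Your recursion with the protective neighbourhoods $O_i$ is essentially the second half of the paper's own argument (there the sets $W(n,\delta')$, chosen from the failure of a concentration condition, play exactly the role of your $O_i$, with the same geometric allocation of mass), so that part is sound. The genuine gap is precisely the step you flag: the preliminary thinning is not merely delicate, it is impossible in general. The lemma puts no bound on $\|\mu_n\|$, and one can arrange that \emph{every} $x_m$ is a funnel point of \emph{every} infinite subfamily: in $K=[0,1]$ take distinct $x_j\uparrow 1$; around each $x_j$ choose open intervals $I_{n,j}$ ($n\ge j$) accumulating at $x_j$, with all the $I_{n,j}$ pairwise disjoint and their closures avoiding every $x_i$; put $V_n=\bigcup_{j\le n}I_{n,j}$ and let $\mu_n$ give mass $1$ to each $I_{n,j}$, $j\le n$. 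Then the $V_n$ are pairwise disjoint, $x_n\notin V_n$, $|\mu_n|(V_n)\ge 1>\varepsilon=1/2$, but every neighbourhood of every $x_m$ absorbs mass $\ge 1$ from all but finitely many $\mu_n$, and this persists on every infinite set of indices. Hence no infinite $M'$ avoids funnel points among its own $x_m$'s, and no $O_i$ with $\limsup_n|\mu_n|(V_n\cap\overline{O_i})<\delta/2^{i+1}$ can exist (the thresholds tend to $0$ while the absorbed mass stays $\ge 1$). Your heuristic that a funnel point ``absorbs an entire infinite sub-family'' which can be ``pulled apart'' fails because a single measure can feed all the funnel points simultaneously. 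Note that the lemma is still true in this example: take $V_n'=I_{n,1}$ and $M=\mathbb{N}\setminus\{1\}$, so that $\overline{\bigcup_{n\in M}V_n'}$ contains the \emph{discarded} point $x_1$ but no retained one. This shows that in the concentration case the right move is not to avoid the concentration point but to exploit it.

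That is what the half missing from your proposal (the paper's Case 1) does. If there are $\delta>0$ and a point $x\in K$ (not necessarily any $x_m$) such that every neighbourhood of $x$ receives mass $>\delta$ from infinitely many $\mu_n$, one discards the at most one index with $x_m=x$ and recursively chooses a decreasing sequence of neighbourhoods $W_k$ of $x$ and indices $n_k$ so that $|\mu_{n_k}|\big(V_{n_k}\cap(W_k\setminus\overline{W_{k+1}})\big)>\delta$ and $x_{n_k}\notin\overline{W_{k+1}}$; with $V_{n_k}'=V_{n_k}\cap(W_k\setminus\overline{W_{k+1}})$ the only limit points of $\bigcup_k V_{n_k}'$ outside $\bigcup_k\overline{V_{n_k}'}$ lie in $\bigcap_k\overline{W_k}$, which misses every retained $x_{n_k}$. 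Your avoidance recursion handles the complementary case (no such concentration point), so to complete the proof you must add this second, structurally different construction; a thinning argument cannot replace it.
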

\begin{proof} By going to a subset we may assume that $\{x_n: n\in \N\}$
forms a discrete subspace of $K$. By the regularity of the measures and by going to subsets
of $V_n$s we may assume that $(\overline{V_n})_{n\in \N}$
is pairwise disjoint and $x_n\not\in \overline{V_n}$ for each $n\in \N$.
Consider a coloring $c:[\N]^2\rightarrow \{0,1, 2\}$ defined for distinct $n,m\in \N$ by
$c(\{n, m\})=0$ if $x_n\in \overline V_m$ and $n<m$,  $c(\{n, m\})=1$ if 
the previous condition does not hold and $x_m\in \overline V_n$ and $n<m$,  
and $c(\{n, m\})=2$ if $\{x_n, x_m\}\cap(\overline V_n\cup\overline V_m)=\emptyset$. 
Apply the Ramsey theorem for $c$ 
obtaining an infinite subset of $\N$ which is homogenous for $c$.
However,  a three element $0$-homogenous set or $1$-homogenous set would contradict 
the pairwise disjointness of $\overline V_n$s, so we have
an infinite $2$-homogenous set. Hence, by going to a subset we may assume
that $x_n\not\in \overline V_m$ for any two $n, m\in \N$. Let $U_n$ be an
open neighbourhood of $x_n$ such that $U_n\cap V_m=\emptyset$ for all
$m\leq n$ in $\N$.
We will consider two cases. 
\vskip 6pt
\noindent Case 1. There is $\delta>0$ and a point $x\in K$ 
 such that for
each open
neighbourhood $W$ of $x$ the set  $\{n\in \N: |\mu_n|(V_n\cap W)>\delta\}$ is infinite.

\noindent As $V_n$'s  are   pairwise disjoint
and by the regularity of the measures by going from $V_n$ to its 
subset we may assume that $x\not\in\overline V_n$
for every $n\in \N$.  Further removing at
most one index  we may assume that $x\not\in\{x_n: n\in \N\}$

  Now recursively define a decreasing sequence $(W_k)_{k\in \N}$
of open neighbourhoods of $x$  
and a strictly  increasing sequence $(n_k)_{k\in \N}\subseteq \N$  such that
the following two conditions hold:
\[|\mu_{n_k}|\big(V_{n_k}\cap (W_k\setminus \overline W_{k+1})\big)>\delta\leqno (1)\]
\[x_{n_k}\not\in \overline{W_{{k+1}}}.\leqno (2)\]
This is possible by the hypothesis of Case 1.  Put 
$V_{n_k}'=V_{n_k}\cap (W_k\setminus \overline W_{k+1})$. It follows that
\[\overline{\bigcup_{k\in \N}V_{n_k}'}\setminus \bigcup_{k\in \N}{\overline {V_{n_k}'}}\subseteq
 \bigcap_{k\in \N}\overline W_k,\]
which is disjoint from $\{x_{n_k}: k\in \N\}$ by (2). 
Since $\{x_{n_k}: k\in \N\}$ is disjoint form $\bigcup_{n\in \N}{\overline V_{n}}$
by the argument before Case 1, we conclude the Lemma in this case for $M=\{n_k: k\in \N\}$.

\vskip 6pt
\noindent Case 2. Case 1 does not hold.\par

\noindent Since the hypothesis of Case 1 fails, for every $n\in \N$
and for every $\delta'>0$ there is an $m(n,\delta')\in \N$
and an open neighbourhood $W(n,\delta')$ of $x_n$  such
that
\[|\mu_k|(V_k\cap {\overline{W(n,\delta')}})<\delta'\leqno (1)\]
for all $k>m(n,\delta')$.
 Thus, one can choose 
recursively a strictly
increasing  sequence of integers $(k_n)_{n\in N}$
such that $k_n>m(k_j, {\varepsilon\over{2^{j+2}}})$
for all $j<n$.
Consider
\[V_{k_n}'=V_{k_n}-\bigcup
\{{\overline{W(k_j,{\varepsilon\over{2^{j+2}}})}}:j<n\}.\]
By (1) we have $|\mu_{k_n}|(V_{k_n}\cap
 {{\overline{W(k_j,{\varepsilon\over{2^{j+2}}})})}}<
{\varepsilon\over{2^{j+2}}}$
for $j<n$ and so
\[|\mu_{k_n}|(V_{k_n}')>\varepsilon/2=\delta.\leqno (2)\]
\noindent Now, note that $W(k_j, {\varepsilon\over{2^{j+2}}})$
is disjoint from $V_{k_n}'$ for $n>j$ and hence
$W(k_j, {\varepsilon\over{2^{j+2}}})\cap U_{k_j}$ is an
open neighbourhood of $x_{k_j}$ witnessing the fact
that $x_{k_j}\not\in \overline{\bigcup_{n\in \N}V_{k_n}'}$ (Recall that $U_n$s are
open neighbourhoods of $x_n$s such that $U_n\cap V_m=\emptyset$ for all
$m\leq n$ in $\N$). So
this proves the lemma for  $M=\{n_k: k\in \N\}$ and $\delta=\varepsilon/2$.

\end{proof}

Now recall Definition \ref{multiplier}  and the following characterization
of weak multipliers:

\begin{theorem}[\cite{few} 2.1., 2.2. ]\label{weak-multiplier}
  Let $K$ be a compact Hausdorff space and let $T: C(K)\rightarrow C(K)$
be a bounded linear operator. The following conditions are equivalent:
  \begin{enumerate}
	\item $T$ is a weak multiplier,
	\item for every pairwise disjoint sequence $(f_n)_{n\in\N}\subseteq C_I(X)$  and every sequence $(x_n)_{n\in\N}\subseteq K$ such that
	      $f_n(x_n)=0$ for all $n\in\N$ we have
		  \[
			\lim_{n\rightarrow\infty} T(f_n)(x_n) = 0.
		  \]
  \end{enumerate}
\end{theorem}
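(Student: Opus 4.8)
The plan is to prove the two implications separately. The key external input is the classical description of relatively weakly compact subsets of $M(K)=C(K)^{*}$: a bounded family $\mathcal{M}\subseteq M(K)$ is relatively weakly compact if and only if $\lim_{n}\sup_{\mu\in\mathcal{M}}|\mu|(U_{n})=0$ for every pairwise disjoint sequence $(U_{n})_{n\in\N}$ of open subsets of $K$ (Grothendieck's theorem, together with the fact that $\{\,|\mu|:\mu\in\mathcal{M}\,\}$ is relatively weakly compact whenever $\mathcal{M}$ is).

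\emph{The implication $(1)\Rightarrow(2)$.} Write $T^{*}=M_{g}+S$ with $g\colon K\to\R$ bounded Borel, $M_{g}(\mu)=g\mu$, and $S$ weakly compact on $M(K)$. Given pairwise disjoint $(f_{n})_{n\in\N}\subseteq C_{I}(K)$ and $(x_{n})_{n\in\N}\subseteq K$ with $f_{n}(x_{n})=0$, evaluation at $\delta_{x_{n}}$ gives
\[
T(f_{n})(x_{n})=\langle T^{*}\delta_{x_{n}},f_{n}\rangle=g(x_{n})f_{n}(x_{n})+\langle S\delta_{x_{n}},f_{n}\rangle=\langle S\delta_{x_{n}},f_{n}\rangle .
\]
The set $\{S\delta_{x_{m}}:m\in\N\}$ lies in the image under $S$ of the unit ball of $M(K)$, hence is relatively weakly compact; since the sets $U_{n}=\{f_{n}>0\}$ are open and pairwise disjoint and $0\le f_{n}\le 1$ vanishes off $U_{n}$, the criterion above gives $|\langle S\delta_{x_{n}},f_{n}\rangle|\le|S\delta_{x_{n}}|(U_{n})\le\sup_{m}|S\delta_{x_{m}}|(U_{n})\to 0$, which is $(2)$.

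\emph{The implication $(2)\Rightarrow(1)$.} For $x\in K$ split $T^{*}\delta_{x}$ into its atom at $x$ and the rest: $T^{*}\delta_{x}=g(x)\delta_{x}+\nu_{x}$ with $g(x)=(T^{*}\delta_{x})(\{x\})$, $|g(x)|\le\|T\|$, and $\nu_{x}(\{x\})=0$ — hence also $|\nu_{x}|(\{x\})=0$, the Jordan parts being mutually singular. One first checks that $g$ is Borel measurable, which I expect to follow from the weak$^{*}$ continuity of $x\mapsto T^{*}\delta_{x}$; then $M_{g}(\mu)=g\mu$ is a bounded operator on $M(K)$ and $S:=T^{*}-M_{g}$ is well defined with $S\delta_{x}=\nu_{x}$. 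Testing against $\phi\in C(K)$ yields the weak$^{*}$ integral representation $S\mu=\int\nu_{x}\,d\mu(x)$; writing $\mu=\mu^{+}-\mu^{-}$ and using that the closed convex hull of a relatively weakly compact set is relatively weakly compact (Krein's theorem) together with a barycenter argument, $S$ is weakly compact provided $\{\nu_{x}:x\in K\}$ is relatively weakly compact. So it suffices to prove the latter. If it failed, Grothendieck's criterion would provide $\varepsilon>0$, points $x_{n}$, and pairwise disjoint open $V_{n}$ with $|\nu_{x_{n}}|(V_{n})>\varepsilon$; since $|\nu_{x_{n}}|(\{x_{n}\})=0$, outer regularity and normality allow us to shrink each $V_{n}$ so that $x_{n}\notin\overline{V_{n}}$ while still $|\nu_{x_{n}}|(V_{n})>\varepsilon/2$. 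A Hahn decomposition of $\nu_{x_{n}}$ inside $V_{n}$, together with inner and outer regularity and Urysohn's lemma, then produces $f_{n}\in C_{I}(K)$ with $\supp f_{n}\subseteq V_{n}$ — so $f_{n}(x_{n})=0$ and $(f_{n})_{n\in\N}$ is pairwise disjoint — and with $|\langle\nu_{x_{n}},f_{n}\rangle|>\varepsilon/8$. As $T(f_{n})(x_{n})=\langle\nu_{x_{n}},f_{n}\rangle$, this contradicts $(2)$. Hence $\{\nu_{x}:x\in K\}$ is relatively weakly compact, $S$ is weakly compact, and $T^{*}=M_{g}+S$ exhibits $T$ as a weak multiplier.

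\emph{Main obstacle.} Once Grothendieck's theorem is in hand the combinatorial extraction of $(f_{n})$ and $(x_{n})$ is routine; the delicate points are the soft ones in $(2)\Rightarrow(1)$, namely verifying that $g$ is genuinely Borel (so that the decomposition has exactly the form required by Definition~\ref{multiplier}) and justifying the reduction of weak compactness of $S$ to relative weak compactness of $\{S\delta_{x}:x\in K\}$ through the weak$^{*}$-integral and barycenter argument.
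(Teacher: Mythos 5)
Your overall route is the natural one, and since the paper itself does not prove Theorem \ref{weak-multiplier} (it is quoted from \cite{few}, 2.1--2.2), the comparison can only be with that standard argument, which your outline essentially reproduces: $(1)\Rightarrow(2)$ via the Dieudonn\'e--Grothendieck characterization of relatively weakly compact subsets of $M(K)$ by pairwise disjoint open sets, and $(2)\Rightarrow(1)$ by splitting off the diagonal atom $g(x)=T^*\delta_x(\{x\})$, reducing weak compactness of $S=T^*-gI$ to relative weak compactness of $\{\nu_x \mid x\in K\}$, and refuting a failure of the latter by producing pairwise disjoint $f_n\in C_I(K)$ with $\supp f_n\subseteq V_n$, $x_n\notin\overline{V_n}$ and $|T(f_n)(x_n)|=|\langle \nu_{x_n},f_n\rangle|>\varepsilon/8$, contradicting $(2)$. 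Those steps are sound as written: the shrinking of $V_n$ away from $x_n$ using $|\nu_{x_n}|(\{x_n\})=0$, the Hahn-decomposition/Urysohn construction of $f_n$, and the barycenter--Krein reduction (legitimate because you separate only with weak$^*$-continuous functionals, i.e.\ elements of $C(K)$, and handle general $\mu$ by $\mu=\mu^+-\mu^-$ and convexity).

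The genuine gap is the Borel measurability of $g$. You say you expect it to follow from the weak$^*$ continuity of $x\mapsto T^*\delta_x$, but it does not: the functional $\mu\mapsto\mu(\{x\})$ is not weak$^*$ continuous, and for an arbitrary compact Hausdorff $K$ the neighbourhood filter of a point need not be countably based, so $g(x)=T^*\delta_x(\{x\})$ cannot be exhibited as a sequential pointwise limit of the continuous functions $x\mapsto T(f)(x)$; a net limit over Urysohn functions yields no measurability. This is not cosmetic: Definition \ref{multiplier} requires $g$ Borel, and without (at least universal) measurability of $g$ the map $\mu\mapsto g\mu$ is not even a well-defined operator on $M(K)$, nor is $x\mapsto\langle\nu_x,\phi\rangle=T(\phi)(x)-g(x)\phi(x)$ integrable against $\mu$, so your integral representation of $S$ and the barycenter step also hinge on it. Note that your extraction argument for relative weak compactness of $\{\nu_x \mid x\in K\}$ never uses measurability of $g$ (it only evaluates $g$ at the individual points $x_n$), so the sensible repair is to establish measurability of $g$ after that step, e.g.\ exploiting the uniform regularity/control-measure consequences of relative weak compactness; some such argument must be supplied, and it is exactly the part of the cited result in \cite{few} that your proposal leaves open. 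As it stands, $(2)\Rightarrow(1)$ is incomplete at precisely the point you flagged.
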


\begin{lemma} \label{noncentripetal}
  Let $K$ be a compact Hausdorff space and let $T: C(K)\rightarrow C(K)$ be a
bounded linear operator. If $T$ is not a weak multiplier, then
  there exist $\varepsilon>0$, a pairwise disjoint sequence $(f_n)_{n\in\N}\subseteq C_I(K)$ and 
  a pairwise disjoint sequence  $(U_n)_{n\in\N}$  of nonempty open subsets of $K$ such that
  \[
    \supp(f_n) \cap U_m = \emptyset \text{ for all } n,m\in\N
  \]
  and 
  \[
	|T(f_n)|\restriction U_n > \varepsilon \text{ for all } n\in\N.
  \]
\end{lemma}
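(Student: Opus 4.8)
The plan is to turn the failure of the criterion in Theorem~\ref{weak-multiplier} into the required configuration, cleaning up the witnessing data with Lemma~\ref{talagrand} and then disjointifying the relevant neighbourhoods by a Ramsey argument that rests on a uniform point‑finiteness estimate. First, since $T$ is not a weak multiplier, Theorem~\ref{weak-multiplier} yields a pairwise disjoint $(g_n)_{n\in\N}\subseteq C_I(K)$ and points $x_n\in K$ with $g_n(x_n)=0$ and $T(g_n)(x_n)\not\to 0$; after passing to a subsequence, $|T(g_n)(x_n)|>2\varepsilon_0$ for all $n$ and some $\varepsilon_0>0$, so in particular each $g_n\neq 0$. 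Because $(g_n)$ is pairwise disjoint, at each point at most one $g_n$ is nonzero, so any finite signed sum of the $g_n$ has supremum norm $\leq 1$ and hence $\sum_n|T(g_n)(y)|\leq\|T\|$ for every $y\in K$; in particular no $y$ can equal $x_n$ for infinitely many $n$, and a further subsequence makes the $x_n$ distinct. Put $V_n:=\supp(g_n)$ and $\mu_n:=T^*\delta_{x_n}$; then the $V_n$ are pairwise disjoint, nonempty and open, $x_n\notin V_n$, and $|\mu_n|(V_n)\geq|T(g_n)(x_n)|>2\varepsilon_0$.

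Next I would apply Lemma~\ref{talagrand} to $(V_n)$, $(x_n)$, $(\mu_n)$, obtaining an infinite $M$, open sets $V_n'\subseteq V_n$ and a $\delta>0$ with $|\mu_n|(V_n')>\delta$ for $n\in M$ and $\overline{\bigcup_{n\in M}V_n'}\cap\{x_n:n\in M\}=\emptyset$. For each $n\in M$ I replace $g_n$ by a function $f_n\in C_I(K)$ with $\supp(f_n)\subseteq V_n'$ and $\bigl|\int f_n\,d\mu_n\bigr|>\delta/4=:\varepsilon_1$: using the Hahn--Jordan decomposition of $\mu_n$ together with inner regularity one finds a compact $F_n\subseteq V_n'$ on which $\mu_n$ has constant sign and $|\mu_n|(F_n)>\delta/2$; outer regularity gives an open $O_n$ with $F_n\subseteq O_n\subseteq V_n'$ and $|\mu_n|(O_n\setminus F_n)<\delta/4$; and any Urysohn function with $\mathbf{1}_{F_n}\leq f_n\leq\mathbf{1}_{O_n}$ then works. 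After re‑indexing $M$ as $\N$ we have: $(f_n)_{n\in\N}$ pairwise disjoint in $C_I(K)$, $|T(f_n)(x_n)|>\varepsilon_1$, $\sum_n|T(f_n)(y)|\leq\|T\|$ for all $y$, and --- the point of this step --- $\overline{\bigcup_n\supp(f_n)}\cap\{x_n:n\in\N\}=\emptyset$, because $\supp(f_n)\subseteq V_n'$.

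Now for the disjointification. Set $G_n:=\{y\in K:|T(f_n)(y)|>\varepsilon_1/2\}\setminus\overline{\bigcup_k\supp(f_k)}$. Each $G_n$ is open, contains $x_n$ (hence is nonempty) and is disjoint from every $\supp(f_k)$. Crucially, from $\sum_k|T(f_k)(y)|\leq\|T\|$ every point of $K$ lies in at most $N:=\lceil 2\|T\|/\varepsilon_1\rceil$ of the $G_n$, i.e.\ the family $(G_n)$ is \emph{uniformly} point‑finite. Choose $p_n\in G_n$ and color the pairs $\{n,m\}$ with $n<m$ by $0$ if $p_n\in G_m$, by $1$ if $p_n\notin G_m$ and $p_m\in G_n$, and by $2$ otherwise. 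An infinite $0$‑ or $1$‑homogeneous set is impossible, since an $(N+1)$‑element such set would force some $p_n$ to lie in $N+1$ of the $G_k$; so Ramsey's theorem produces an infinite $A$ with $p_n\notin G_m$ for all distinct $n,m\in A$. Enumerating $A$ one then builds recursively pairwise disjoint open $U_n\subseteq G_n$ ($n\in A$) with $p_n\in U_n$: given $U_{n_1},\dots,U_{n_k}$ with $\overline{U_{n_i}}\subseteq G_{n_i}$, one has $p_{n_{k+1}}\notin G_{n_i}\supseteq\overline{U_{n_i}}$ for $i\leq k$, so by regularity of $K$ one may pick $U_{n_{k+1}}$ with $\overline{U_{n_{k+1}}}\subseteq G_{n_{k+1}}\setminus\bigcup_{i\leq k}\overline{U_{n_i}}$. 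Re‑indexing $A$ as $\N$ and setting $\varepsilon:=\varepsilon_1/2$, the sequences $(f_n)$ and $(U_n)$ are as required: $|T(f_n)|\restriction U_n>\varepsilon$ since $U_n\subseteq G_n$, and $\supp(f_n)\cap U_m=\emptyset$ for all $n,m$ since $U_m\subseteq G_m$ avoids $\overline{\bigcup_k\supp(f_k)}$.

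The one genuine obstacle is this last step. The routine extraction provided by Lemma~\ref{talagrand} arranges that the neighbourhoods miss all the supports but gives no control on their mutual intersections, and a family of nonempty open sets that is only point‑finite --- for instance $\{(0,1/n):n\in\N\}$ in $[0,1]$ --- may well contain no infinite pairwise disjoint subfamily. It is precisely the quantitative bound $\sum_n|T(f_n)(y)|\leq\|T\|$, equivalently the \emph{uniform} point‑finiteness of $(G_n)$, that makes the Ramsey disjointification succeed, so recognizing and exploiting that estimate is the crux of the proof.
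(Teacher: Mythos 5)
Your argument is correct, and up to the last step it coincides with the paper's: both start from Theorem \ref{weak-multiplier}, set $\mu_n=T^*(\delta_{x_n})$, apply Lemma \ref{talagrand} to get $M$, $V_n'\subseteq V_n=\supp(g_n)$ and $\delta>0$, and then use regularity of the measures to pick $f_n\in C_I(K)$ with $\supp(f_n)\subseteq V_n'$ and $|\int f_n\,d\mu_n|$ bounded below. Where you diverge is the disjointification of the $U_n$. The paper anchors the $U_n$ at the points $x_n$ themselves: after passing to a subsequence on which $\{x_n: n\in\N\}$ is relatively discrete, it takes pairwise disjoint open neighbourhoods $U_n'$ of the $x_n$ inside the complement of $\overline{\bigcup_{n\in M}V_n'}$, and shrinks each to $U_n\subseteq U_n'$ on which $|T(f_n)|>\delta/2$, using $|T(f_n)(x_n)|>\delta/2$. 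You instead discard the $x_n$ at this stage and work with the level sets $G_n=\{y: |T(f_n)(y)|>\varepsilon_1/2\}\setminus\overline{\bigcup_k\supp(f_k)}$, proving they are uniformly point-finite from the estimate $\sum_k|T(f_k)(y)|\leq\|T\|$ (valid because the $f_k$ are pairwise disjoint and $[0,1]$-valued) and extracting a pairwise disjoint open refinement by Ramsey's theorem plus regularity. Both routes are sound; the paper's is shorter because the centres $x_n$ are already available and can be discretized, while yours is more self-contained on this point (the same norm estimate also gives you the distinctness of the $x_n$, a detail the paper passes over silently). One caveat: your closing claim that the uniform point-finiteness bound is ``the crux'' overstates the obstacle --- since Lemma \ref{talagrand} keeps the whole set $\{x_n: n\in M\}$ outside $\overline{\bigcup_{n\in M}V_n'}$ and $|T(f_n)(x_n)|$ remains large, pairwise disjoint neighbourhoods of the (discretized) points $x_n$ already finish the proof with no Ramsey argument at all.
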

\begin{proof}
First let us prove the forward implication.  By Theorem \ref{weak-multiplier}(2) there 
is a pairwise disjoint sequence 
$(g_n)_{n\in\N}\subseteq C_I(K)$ and a sequence $(x_n)_{n\in\N}\subseteq K$ such that
	      $g_n(x_n)=0$ for all $n\in\N$,   and 
			$|T(g_n)(x_n)|>\varepsilon'$  for some $\varepsilon'>0$.
Let $V_n=\supp(g_n)$ and $\mu_n=T^*(\delta_{x_n})$.
Note that the hypothesis of Lemma \ref{talagrand} is satisfied
for  $\varepsilon'$ instead of $\varepsilon$, so we may find
appropriate $\delta>0$, and infinite $M\subseteq \N$ and $V_n'\subseteq V_n$ for each $n\in \N$.
Let $U_n'$ be open neighbourhoods of $x_n$ for $n\in M$ such  that $\overline{\bigcup_{n\in M}V_n'}\cap
U_n'=\emptyset$. Since $\{x_n: n\in \N\}$ may be assumed to be discrete
by going to a subsequence,
we may assume that $U_n'$s are pairwise disjoint. Now choose $f_n\in C_I(K)$
such that $\supp(f_n)\subseteq V_n'$ and that $|\int f_nd\mu_n|>\delta/2$, this can be done
since $|\mu_n(V_n')|>\delta$. It follows that $|T(f_n)(x_n)|>\delta/2$ for each $n\in \N$.
Now find $U_n\subseteq U_n'$  such that $|T(f_n)\restriction U_n|>\delta/2$. By 
re-enumerating $M$ and putting $\varepsilon=\delta/2$ we obtain the statement from the lemma.

For the backward implication pick an $x_n\in U_n$ an apply \ref{weak-multiplier}.
\end{proof}

\begin{theorem}\label{theorem2} Suppose that $K$ and $\mathcal D$
are as in Definition \ref{paracomplicated}. 
If $C(K)$ has asymmetric distribution of separations in the direction of
$\mathcal D$ and $K$ is c.c.c.,  then $C(K)$ has few$^*$ operators.
\end{theorem}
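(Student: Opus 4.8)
The strategy is a proof by contradiction following the scheme established in \cite{few}, \cite{iryna}, \cite{fewsur}, but tracking the extra parameters coming from $\mathcal D$. Suppose $C(K)$ does not have few$^*$ operators, so there is a bounded linear operator $T$ on $C(K)$ which is not a weak multiplier. By Lemma \ref{noncentripetal} we obtain $\varepsilon>0$, a pairwise disjoint sequence $(f_n)_{n\in\N}\subseteq C_I(K)$ and a pairwise disjoint sequence $(U_n)_{n\in\N}$ of nonempty open sets with $\supp(f_n)\cap U_m=\emptyset$ for all $n,m$ and $|T(f_n)|\restriction U_n>\varepsilon$. Since $K$ has an open basis $\B$, we may shrink each $U_n$ to a basic open set, so $(U_n)_{n\in\N}\subseteq\B$; this puts us in position to feed data into Definition \ref{paracomplicated}.

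Next I would apply the Rosenthal-type extraction principle, Lemma \ref{rosenthal}, to $T$, $\varepsilon/2$, the sequence $(f_n)$ and the sequence $(U_n)$, obtaining an infinite $M_0\subseteq\N$ and nonempty open $V_n\subseteq U_n$ so that for every $m\in M_0$ and every $(g_n)_{n\in\N}\subseteq C_I(K)$ with $g_n\leq f_n$ one has $\sup_{x\in V_m}\sum_{n\in M_0\setminus\{m\}}|T(g_n)(x)|<\varepsilon/2$. Re-enumerating along $M_0$, we may assume this holds with $M_0=\N$; the $V_n\subseteq U_n$ are still basic (shrink again if needed) and are the sets $U_n^\xi$ we will hand to the asymmetry condition — here the role of the parameter $\xi\in\kappa$ is that we need the conclusion of Rosenthal's lemma to survive no matter which $d_{\eta_n,\nu_n^{\eta_n}}$ ends up multiplying $f_n$, which is automatic since $f_n\cdot d_{\eta_n,\nu_n^{\eta_n}}\leq f_n$ is still of the required form $g_n\leq f_n$. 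The sequences $(\nu_n^\xi)_{n\in\N}\subseteq\{-1,1\}$ for $\xi<\kappa$ should be chosen to record, for each candidate direction $\xi$, a sign so that the quantities $T(f_n\cdot d_{\eta_n,\nu_n^{\eta_n}})(x_n)$ do not cancel; concretely, for $x_n\in U_n$ a point witnessing $|T(f_n)(x_n)|>\varepsilon$, and for each $\xi$, pick $\nu_n^\xi$ so that $|T(f_n\cdot d_{\xi,\nu_n^\xi})(x_n)|$ is at least half of $|T(f_n)(x_n)|$ using $d_{\xi,1}+d_{\xi,-1}=1$ and linearity — this is where one trades a factor of $2$ and picks up the relevant side of the level function $d_\xi$.

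Now invoke the hypothesis that $C(K)$ has asymmetric distribution of separations in the direction of $\mathcal D$: with inputs $(f_n)$, $(U_n)\subseteq\B$, $(\nu_n^\xi)$ and $(U_n^\xi):=(V_n)$ (independent of $\xi$, which is legitimate), we get an increasing $(\eta_n)_{n\in\N}\subseteq\kappa$ and an infinite, coinfinite $M\subseteq\N$ such that $g:=\bigvee_{n\in M}(f_n\cdot d_{\eta_n,\nu_n^{\eta_n}})$ exists in $C(K)$ and $\overline{\bigcup_{n\in M}V_n}\cap\overline{\bigcup_{n\in\N\setminus M}V_n}\neq\emptyset$; pick $y$ in this intersection. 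The point of the supremum $g$ is that $g\restriction\supp(f_n)=f_n\cdot d_{\eta_n,\nu_n^{\eta_n}}$ for $n\in M$ while $g$ vanishes off $\bigcup_{n\in M}\supp(f_n)$, so writing $g_n:=f_n\cdot d_{\eta_n,\nu_n^{\eta_n}}$ one compares $T(g)$ with $\sum_{n\in M}T(g_n)$: on $V_m$ with $m\in M$, the Rosenthal estimate controls the tail $\sum_{n\in M\setminus\{m\}}|T(g_n)|$ by $\varepsilon/2$, and a standard disjoint-supports argument (as in \cite{iryna}, Ch.~4.3; \cite{fewsur}, Thms.~24--25) shows $T(g)$ and $T(g_m)$ agree on $V_m$ up to this error, whence $|T(g)|\restriction V_m\geq\varepsilon/2-\varepsilon/2$ is too weak — so one must instead arrange the bookkeeping so that $|T(g)|>\varepsilon/4$ say on each $V_m$, $m\in M$, which is where the careful choice of $\nu_n^\xi$ above (keeping $|T(g_m)(x_m)|>\varepsilon/2$ and hence $>\varepsilon/2$ on a sub-neighbourhood, then re-shrinking $V_m$ once more before applying Rosenthal) is essential. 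Symmetrically, $T(g)$ is small (at most $\varepsilon/2$) on $V_n$ for $n\in\N\setminus M$, because there $g$ is supported away from these indices and Rosenthal again controls $\sum_{n\in M}|T(g_n)|$. Then $T(g)$ is continuous, bounded below by $\varepsilon/4$ on $\bigcup_{n\in M}V_n$ and above by $\varepsilon/2$ on $\bigcup_{n\in\N\setminus M}V_n$, yet $y$ lies in the closure of both unions — contradicting continuity of $T(g)$ at $y$. This contradiction proves $C(K)$ has few$^*$ operators.

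The main obstacle is the bookkeeping in the last paragraph: one must interleave the shrinkings (first to basic sets, then the Rosenthal shrinking, then a final neighbourhood shrinking to protect the lower bound $|T(g_m)|>\varepsilon/2$ on all of $V_m$) in the right order, and one must verify that the supremum $g$ produced by Definition \ref{paracomplicated} genuinely behaves like $\sum_{n\in M}g_n$ as far as $T$ is concerned — i.e.\ that $T$ "commutes with the disjoint sum" up to the Rosenthal error, which is the content of the classical few-operators machinery and uses the c.c.c.\ hypothesis on $K$ to ensure countably many disjoint open sets suffice. The sign functions $\nu_n^\xi$ and the extra flexibility of choosing the $U_n^\xi$ are precisely the features of Definition \ref{paracomplicated} that make this go through uniformly in the unknown direction $\eta_n$.
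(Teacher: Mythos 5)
Your setup (Lemma \ref{noncentripetal} followed by Lemma \ref{rosenthal}) matches the paper, but the core of your plan has a genuine gap. The decisive step is your claim that, for $g=\bigvee_{n\in M}g_n$ with $g_n=f_nd_{\nu_n^{\eta_n},\eta_n}$, the function $T(g)$ is uniformly large on the sets $V_m$, $m\in M$, and uniformly small on the $V_n$, $n\in\N\setminus M$, because ``$T(g)$ and $T(g_m)$ agree on $V_m$ up to the Rosenthal error''. Writing $g=g_m+\bigvee_{n\in M\setminus\{m\}}g_n$, the error term is $T\bigl(\bigvee_{n\in M\setminus\{m\}}g_n\bigr)$, and Lemma \ref{rosenthal} controls only $\sum_{n\in M\setminus\{m\}}|T(g_n)(x)|$; since $T$ is merely a bounded linear operator (not a lattice homomorphism, and the series $\sum_n g_n$ does not converge in norm to the supremum), nothing lets you pass from the sum of the $|T(g_n)|$ to $T$ of the supremum. (Your final numerics would not give a contradiction anyway: a continuous function bounded below by $\varepsilon/4$ on one union and above by $\varepsilon/2$ on the other is perfectly possible at a common closure point.) Indeed, in the paper such two-sided estimates appear only hypothetically: the non-separation in clause (b) of Definition \ref{paracomplicated} is used to conclude that they must \emph{fail}, not to be contradicted by them.

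Because of this, the actual argument extracts the contradiction from the boundedness of $T$, not from continuity of a single $T(g)$: the paper applies the asymmetry hypothesis $\omega_1$ many times along an almost disjoint family $\{N_\alpha\mid\alpha<\omega_1\}$ of infinite subsets of $\N$, and from the failure of the two-sided estimate for each $\alpha$, together with the pointwise bound $|T(f_nd_{\nu_n^\xi,\xi})|\restriction U_n^\xi>\varepsilon$, produces a fixed $n_0$, a $\delta>0$, nonempty open $V_\alpha\subseteq U_{n_0}$ and sets $M'_\alpha$ with $n_0\notin M'_\alpha$ such that $|T(\bigvee_{n\in M'_\alpha}g^\alpha_n)|>\varepsilon/3+\delta$ on $V_\alpha$; then the c.c.c. of $K$ (via Lemma \ref{ccc}) yields $m$ of the $V_\alpha$ with nonempty intersection for $m\delta/2>\|T\|$, and the Rosenthal estimate plus almost disjointness of the $M'_\alpha$ make the relevant suprema pairwise disjoint of norm at most one, contradicting $\|T\|$. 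Your proposal uses neither the almost disjoint family nor the c.c.c. in this essential way (your stated role for the c.c.c. is not its actual role), so the one-shot continuity argument cannot be completed. A secondary but real defect: you set $U_n^\xi:=V_n$ independently of $\xi$, whereas the point of this parameter is that the neighbourhood on which $|T(f_nd_{\nu_n^\xi,\xi})|>\varepsilon$ holds depends on $\xi$; the sign choice at the single point $x_n$ gives the bound only at $x_n$, and the ``re-shrinking'' you propose cannot be performed in advance because the ordinals $\eta_n$ are produced only after the hypothesis is applied.
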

\begin{proof} Let $\kappa$ and $\B$ be as in Definiton \ref{paracomplicated}.
If $C(K)$ does not have few$^*$ operators, then by Definition \ref{multiplier}
 there is bounded linear operator 
$T\colon C(K)\rightarrow C(K)$ which is not a weak multiplier.
  Then by \ref{noncentripetal} there are
  \begin{itemize}
    \item a pairwise disjoint sequence $(f_n)_{n\in\N}$ in $C_I(K)$, 
    \item a pairwise disjoint sequence $(U_n)_{n\in\N}$ in $\B$ such that
      \[ \supp(f_n) \cap U_m = \emptyset, \text{ for all } n,m \in \N,\]
    \item $\varepsilon>0$
  \end{itemize}
  and for each $n\in \N$ we have \[|T(f_n)|\restriction U_n > 2\varepsilon.\]
  
  Now by applying \ref{rosenthal} for $\varepsilon/3$ we may assume
  that for any $m\in \N$ and for any sequence $(g_n)_{n\in\N} 
\subseteq C_I(K)$ such that $g_n\leq f_n$ for all $n\in \N$
    \[
  	\sup_{x\in U_m} \sum_{n\in M\setminus\{m\}} |T(g_n)(x)| < \varepsilon/3. \leqno(\ast)
    \]
  
  To make use of the asymmetric distribution of separations in the direction
of $\mathcal D$  we need to construct the following:
  \begin{itemize}
    \item $\{(\nu_{n}^{\xi})_{n\in \N} \mid \xi \in \kappa\} \subseteq \{\pm1\}$,
    \item $\{\, (U_{n}^{\xi})_{n \in \N} \mid \xi \in \kappa \,\}\subseteq \B$ 
  	  satisfying  $U_{n}^{\xi} \subseteq U_n $ for every $n\in\N$ and every $\xi \in \kappa$.
  \end{itemize}
We will construct the above objects in such a way 
that
  for all $n\in \N$ and all $\xi \in \kappa$ we have
 $|T(f_nd_{\nu_{n}^{\xi},\xi})|\restriction U_n^{\xi}>\varepsilon.$
  This is achieved in the following way. 
  Fix $\xi\in \kappa$, $n\in\N$ and $x_n \in U_n$.  Since $f_n=f_nd_{1,\xi}+f_nd_{-1,\xi}$,
  we have either
  \[
    |T(f_nd_{1,\xi})(x_n)| >\varepsilon \text { or }
    |T(f_nd_{-1,\xi})(x_n)|>\varepsilon.
  \]
  We choose $\nu_{n}^{\xi} \in \{\pm1\}$, the one for which the above holds 
  and define $U_{n}^{\xi}\in \B$ to be an open neighborhood of $x_n$ included in $U_n$ such that  
  \[
    |T(f_nd_{\nu_{n}^{\xi},\xi})|\restriction U_{n}^{\xi} >\varepsilon.\leqno(**)
  \]
  This completes the construction of $\{(\nu_{n}^{\xi})_{n\in\N} \mid \xi\in \kappa\}$
  and $\{(U_{n}^{\xi})_{n\in \N} \mid  \xi \in \kappa\}$.
  
  Let us fix an almost disjoint family $\{N_\alpha \mid \alpha<\omega_1\}$ of infinite subsets of $\N$.
  We will be considering the sets:
  \begin{itemize}
    \item $F_\alpha=(f_n)_{n\in N_\alpha}$,
    \item $U_\alpha=(U_n)_{n\in N_\alpha}$.
    \item $\mathcal N_\alpha=\{(\nu_{n}^{\xi})_{n\in N_\alpha} \mid \xi \in \kappa\}$,
    \item $\U_\alpha=\{(U_{n}^{\xi})_{n\in N_\alpha} \mid \xi \in \kappa\}$.
  \end{itemize}
  
  We use the hypothesis  that $C(K)$ has asymmetric distribution of separations in the direction of $\D$
 for $F_\alpha$, $U_\alpha$, 
  $\mathcal N_\alpha$, and $\U_\alpha$ for each $\alpha<\omega_1$ obtaining
increasing sequences 
  $(\eta^\alpha_n)_{n\in\N}\subseteq \kappa$ and infinite sets $M_\alpha\subseteq N_\alpha$ such that 
  \begin{enumerate}
    \item $\bigvee_{n\in M_\alpha}f_nd_{\nu_{n}^{\eta^\alpha_n},\eta^\alpha_n}$ exists in $C(K)$,
    \item $\overline{\bigcup\{U_{n}^{\eta^\alpha_n}\mid n\in M_\alpha\}}\cap 
\overline{\bigcup\{U_{n}^{\eta^\alpha_n}\mid n\in N_\alpha\setminus M_\alpha\}}\not=\emptyset$.
  \end{enumerate} 
  
  Let us define $g^\alpha_n=f_nd_{\nu_{n}^{\eta^\alpha_n},\eta^\alpha_n}$ 
for all $n\in\N$, $\alpha<\omega_1$.
  If for some $\alpha<\omega_1$ we had
  \begin{itemize}
    \item for all $n\in M_\alpha$, $x\in U_{n}^{\eta^\alpha_n}$: 
      $|T(\bigvee_{n\in M_\alpha} g^\alpha_n)(x)|\geq 2\varepsilon/3$ and
    \item for all $n\in N_\alpha\setminus M_\alpha$, $x\in U_{n}^{\eta^\alpha_n}$: 
      $|T(\bigvee_{n\in M_\alpha} g^\alpha_n)(x)|\leq \varepsilon/3$ 
  \end{itemize}
  then, we would separate the sets in $(2)$  contradicting the condition $(2)$.
  Therefore the conjunction of the above statements are false for each $\alpha<\omega_1$.
  By going to an uncountable subset of $\omega_1$ we may assume that there 
is $\delta>0$ and $n_0\in \N$ such that for each
  $\alpha<\omega_1$ we have some 
$\emptyset \neq V_\alpha \subseteq U_{n_0}^{\eta^\alpha_{n_0}}\subseteq U_{n_0}$ such that
either
  \begin{enumerate}
    \item[(3)] $n_0\in M_\alpha$ and for all $x\in V_\alpha$ we have 
      $|T(\bigvee_{n\in M_\alpha} g^\alpha_n)(x)| < 2\varepsilon/3 - \delta$ or
    \item[(4)] $n_0\in N_\alpha\setminus M_\alpha$ and for all $x\in V_\alpha$ we have 
      $|T(\bigvee_{n\in M_\alpha} g^\alpha_n)(x)| > \varepsilon/3 + \delta$.
  \end{enumerate}
This gives for each $\alpha<\omega_1$ the following statement: \begin{itemize}
    \item $n_0\in \N\setminus M_\alpha'$ and for all $x\in V_\alpha$ we have 
      $|T(\bigvee_{n\in M_\alpha'} g^\alpha_n)(x)| > \varepsilon/3 + \delta$.
  \end{itemize}
Where $M_\alpha'=M_\alpha$ if (4) holds and $M_\alpha'=M_\alpha\setminus\{n_0\}$
if (3) holds. In the latter case the above condition follows from (3) and 
  $(**)$ since then we have $|T(g^\alpha_{n_0})(x)|>\varepsilon$.
 Now consider $m\in \N$ such that $m\delta/2>\|T\|$. 
  Using the c.c.c. of the space $K$ and Lemma
\ref{ccc} we may find $\alpha_1< ... < \alpha_m<\omega_1$ such that
  \[
    V_{\alpha_1} \cap V_{\alpha_2} \cap \dots \cap V_{\alpha_m} \neq \emptyset.
  \]
  So let $x_0$ be a point in $V_{\alpha_1} \cap V_{\alpha_2} \cap \dots \cap V_{\alpha_m} \neq \emptyset$.
  
  Now  take  $k\in \N$ such that for each $1\leq i<j\leq m$ we have
  \[
    (M_{\alpha_i}'\setminus k) \cap (M_{\alpha_j}'\setminus k)=\emptyset.
  \]
  By $(\ast)$ for each $1\leq i\leq m$ we have that
  \[
    \sum_{n\in M_{\alpha_i}'\cap k}|T(g^{\alpha_i}_n)(x_0)|<\varepsilon/3
  \]
  and so for each $1\leq i\leq m$ we have
  \[
    |T\Big(\bigvee_{n\in M_{\alpha_i}'\setminus k}g^{\alpha_i}_n\Big)(x_0)|>\delta.
  \]
  For $m/2$ indices $1\leq i\leq m$, say from a set $F\subseteq \{1, ..., m\}$ all the reals 
  \[
    T\Big(\bigvee_{n\in M_{\alpha_i}'\setminus k}g^{\alpha_i}_n\Big)(x_0)
  \]
  have the same sign, and so
  \[\|T\Big(\sum_{i\in F}\bigvee_{n\in M_{\alpha_i}'\setminus k}g^{\alpha_i}_n\Big)\|\geq
    |T\Big(\sum_{i\in F}\bigvee_{n\in M_{\alpha_i}'\setminus k}g^{\alpha_i}_n\Big)(x_0)| =
    \sum_{i\in F}|T\Big(\bigvee_{n\in M_{\alpha_i}'\setminus k}g^{\alpha_i}_n\Big)(x_0)| > m\delta/2.
  \]
  However  
  \[
    \|\sum_{i\in F}\bigvee_{n\in M_{\alpha_i}'\setminus k}g^{\alpha_i}_n\| \leq 1
  \]
  as $(M_{\alpha_i}'\setminus k)$s are pairwise all disjoint. 
  Using the fact that $m\delta/2>\|T\|$ we obtain a contradiction with the definition of the norm of $T$.
  This means that assuming that there is an operator $T$ on $C(K)$ which is not a weak multiplier 
  leads to a contradiction completing the proof of the proposition.
\end{proof}

\section{Controlling separations and the connectedness}

\subsection{Boolean algebras,  their Stone spaces and continuous functions on the Stone spaces}

Recall that a topological space is called c.c.c. if it does not contain an 
uncountable collection of pairwise disjoint open sets. We have the following:

\begin{lemma}\label{ccc}
Suppose $K$ is a compact Hausdorff space and that $\kappa$ is a cardinal.
\begin{itemize}
\item $\{0,1\}^{\kappa\times\N}$ is c.c.c.
\item If $K$ is c.c.c, $m\in \N$  and $\{V_\xi: \xi<\omega_1\}$ is a collection
of nonempty open subsets of $K$, then there are distinct $\xi_1, ..., \xi_m$
such that $V_{\xi_1}\cap ...\cap V_{\xi_m}\not=\emptyset$.
\end{itemize}
\end{lemma}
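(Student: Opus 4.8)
The plan is to treat the two items separately, each by a short classical argument.

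For the first item I would reduce to basic clopen sets and apply the $\Delta$-system (sunflower) lemma. The sets $[s]=\{x\in\{0,1\}^{\kappa\times\N}:x\restriction F=s\}$, for $F\subseteq\kappa\times\N$ finite and $s\colon F\to\{0,1\}$, form a basis, so an uncountable pairwise disjoint family of nonempty open sets would give an uncountable pairwise disjoint family $\{[s_\xi]:\xi<\omega_1\}$ of basic clopen sets; let $F_\xi=\mathrm{dom}(s_\xi)$. By the $\Delta$-system lemma there are an uncountable $I\subseteq\omega_1$ and a finite root $R$ with $F_\xi\cap F_\eta=R$ for distinct $\xi,\eta\in I$. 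Since $\{0,1\}^R$ is finite, the pigeonhole principle yields distinct $\xi,\eta\in I$ with $s_\xi\restriction R=s_\eta\restriction R$; then $s_\xi$ and $s_\eta$ agree on $F_\xi\cap F_\eta$, so $s_\xi\cup s_\eta$ is a function and $[s_\xi]\cap[s_\eta]\supseteq[s_\xi\cup s_\eta]\neq\emptyset$, contradicting disjointness. (If $\kappa$ is finite the cube is finite and there is nothing to prove.)

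For the second item I would first isolate an auxiliary claim: if $K$ is c.c.c.\ and $\{V_\xi:\xi<\omega_1\}$ are nonempty open, then some $V_{\xi^*}$ meets uncountably many of the $V_\xi$. This follows by contraposition, since if every $C_\xi=\{\eta<\omega_1:V_\eta\cap V_\xi\neq\emptyset\}$ were countable, a recursion choosing $\xi_\alpha\notin\bigcup_{\beta<\alpha}C_{\xi_\beta}$ (possible, a countable union of countable sets being countable) would produce an uncountable pairwise disjoint family $\{V_{\xi_\alpha}:\alpha<\omega_1\}$, contradicting c.c.c. Granting the claim, I would prove the item by induction on $m$, the case $m=1$ being vacuous: for the step, choose $\xi^*$ as in the claim together with an uncountable $A\subseteq\omega_1\setminus\{\xi^*\}$ such that $V_\xi\cap V_{\xi^*}\neq\emptyset$ for all $\xi\in A$; the open subspace $V_{\xi^*}$ of $K$ is again c.c.c., and $\{V_\xi\cap V_{\xi^*}:\xi\in A\}$ is an uncountable family of its nonempty open subsets, so the inductive hypothesis applied inside $V_{\xi^*}$ produces distinct $\xi_1,\dots,\xi_{m-1}\in A$ with $V_{\xi^*}\cap V_{\xi_1}\cap\dots\cap V_{\xi_{m-1}}\neq\emptyset$. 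Then $\xi^*,\xi_1,\dots,\xi_{m-1}$ are the required $m$ distinct indices.

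I do not expect a genuine obstacle here; the routine points are merely that an open subspace of a c.c.c.\ space is c.c.c.\ (open-in-open is open) and the distinctness bookkeeping (handled by deleting $\xi^*$ from $A$). The one thing to be wary of in the second item is the temptation to run a Ramsey/partition argument on $[\omega_1]^2$ (which would fail); passing through the auxiliary claim and then the subspace $V_{\xi^*}$ sidesteps it.
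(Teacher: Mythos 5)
Your proof is correct, but both halves take a different route from the paper's. For the first item the paper simply invokes the Hewitt--Marczewski--Pondiczery theorem: an uncountable antichain may be refined to basic clopen sets involving at most $\omega_1\le 2^\omega$ coordinates, and a product of that many two-point spaces is separable, hence c.c.c.; you instead give the self-contained $\Delta$-system argument, which avoids the citation at the cost of a little combinatorics --- both are standard and correct. For the second item the paper's induction is structured differently: assuming the statement for $m$, it recursively builds uncountably many pairwise disjoint $m$-element index sets $F_\alpha\subseteq\omega_1$ with $W_\alpha=\bigcap_{\xi\in F_\alpha}V_\xi\neq\emptyset$ (at each stage applying the inductive hypothesis to the uncountably many indices not yet used) and then applies c.c.c.\ once more to $(W_\alpha)_{\alpha<\omega_1}$, so two blocks meet and one obtains $2m\ge m+1$ distinct indices with nonempty intersection; you instead prove the auxiliary claim that some $V_{\xi^*}$ meets uncountably many of the $V_\xi$ and then intersect everything with $V_{\xi^*}$, which gives a cleaner $m-1\to m$ step at the price of that extra claim. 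One cosmetic point in your step: the lemma as stated carries a compactness hypothesis, and you apply the inductive hypothesis to the open subspace $V_{\xi^*}$, which need not be compact; either remark that compactness is never used in the argument, or, simpler, skip the subspace altogether and apply the inductive hypothesis in $K$ itself to the nonempty open sets $V_\xi\cap V_{\xi^*}$ for $\xi\in A$ --- the conclusion is identical.
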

\begin{proof} The first condition follows from the Hewitt-Marczewski-Pondiczery
Theorem (2.3.17. \cite{engelking}). For the second condition prove it by induction on $m\in \N$.
For $m=2$ it is the c.c.c.  Given it for $m$, build recursively pairwise disjoint family
of  sets $F_\alpha\subseteq \omega_1$ of cardinality $m$
for $\alpha<\omega_1$ such that $W_\alpha=\bigcap_{\xi\in F_\alpha}V_\xi\not=\emptyset$,
now apply   the c.c.c. for $(W_\alpha)_{\alpha<\omega_1}$.
\end{proof}

In this section we use the following notation:  
\begin{itemize}
\item $\kappa$ will denote an uncountable regular cardinal,
\item If $\mathcal A$ is a Boolean algebra,  $S(\mathcal A)$ denotes the Stone space of $\mathcal A$,
i.e., a compact Hausdorff totally disconnected space such that there is a Boolean isomorphism
between $\mathcal A$ and the algebra of clopen subsets of $S(\mathcal A)$,
\item  the clopen set of $S(\mathcal A)$ corresponding to an element $a$ of $\mathcal A$
will be denoted by $s_{\mathcal A}(a)$,
\item $Fr(\kappa)$ denotes the free Boolean algebra generated by $(e_{\alpha, n})_{\alpha < \kappa, n\in \N}$,
\item For $A\subseteq \kappa$, $Fr(A)$ denotes the subalgebra of $Fr(\kappa)$ generated by
$(e_{\alpha, n})_{\alpha\in A, n\in \N}$,
\item For $A\subseteq \kappa$, $\overline{Fr}(A)$ denotes the Boolean completion of $Fr(A)$,
\item We will identify $\overline{Fr}(A)$ with a subalgebra of $\overline{Fr}(B)$ when
 $A\subseteq B\subseteq \kappa$,
\item For $A\subseteq \kappa$, $L_A$ denotes the Stone space of $\overline{Fr}(A)$,
\item $L_\kappa$ will be denoted by $L$,
\item $I=[0,1]$, 
\item The supremum of a family $\mathcal F$ of functions will be denoted by $\bigvee\F$.
In principle the supremum of the same family of functions can depend on the ambient
lattice of functions, so we will need to add where the supremum is taken.
\end{itemize}

For the Stone duality
or other dual terminology concerning
Boolean algebras  see \cite{koppelberg} or \cite{halmos}, for Gleason spaces see \cite{gleason}. 
The following proposition is the summary of standard facts concerning the
above objects:

\begin{proposition}\label{gleason} $ $
\begin{itemize}
\item For any $A\subseteq \kappa$ the space $L_A$ is the Gleason space of $I^A$,
\item For any $A\subseteq \kappa$ the space $L_A$ is extremally disconnected and c.c.c.,
which implies that bounded subsets of $C(L_A)$ have suprema in the lattice $C(L_A)$,
\item For $A\subseteq \kappa$ there is a continuous surjective map $p_A: L\rightarrow L_A$ dual to
the inclusion $\overline{Fr}(A)\subseteq \overline{Fr}(\kappa)$, in particular
$p_A^{-1}[s_{\overline{Fr}(A)}(a)]=s_{\overline{Fr}(\kappa)}(a)$ for $a\in \overline{Fr}(A)$,
\item For $A\subseteq \kappa$ there is an isometric inclusion $C(L_A)\subseteq C(L)$ induced by $p_A$.
\end{itemize}
\end{proposition}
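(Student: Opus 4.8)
The statement to prove is Proposition~\ref{gleason}, which collects four standard facts about the spaces $L_A$ (Stone spaces of the Boolean completions $\overline{Fr}(A)$) and the maps between them.

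\medskip

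\textbf{Plan of proof.} The plan is to treat the four bullet points in a sensible order, reducing the later ones to the earlier ones. I would start with the second bullet, since it is the most purely internal assertion about $L_A$. The algebra $Fr(A)$ is free, hence atomless, and its Boolean completion $\overline{Fr}(A)$ is a complete Boolean algebra; a completion of any Boolean algebra is complete, and its Stone space is therefore extremally disconnected (the closure of every open set is clopen, because every open set corresponds to a subset of clopen sets whose join exists in the algebra). For c.c.c.: the free algebra $Fr(\kappa)$ embeds densely, via $e_{\alpha,n}\mapsto$ (a clopen subset of) $\{0,1\}^{\kappa\times\N}$, so its Stone space maps onto the separable — hence c.c.c.\ by Lemma~\ref{ccc} — space $\{0,1\}^{\kappa\times\N}$ with dense image, and c.c.c.\ passes to (densely mapped) preimages and is preserved under completion since the completion adds no new disjoint families beyond refinements of old ones. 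The final clause of this bullet — that bounded subsets of $C(L_A)$ have suprema in the lattice $C(L_A)$ — is the classical Nakano/Stone theorem that $C(X)$ is Dedekind complete iff $X$ is extremally disconnected, so I would simply cite it.

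\medskip

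Next I would handle the first bullet, that $L_A$ is the Gleason space (absolute, or projective cover) of $I^A$. Here I would recall that the Gleason space of a compact Hausdorff space $Y$ is the Stone space of the complete Boolean algebra $RO(Y)$ of regular open subsets of $Y$, together with the canonical irreducible map onto $Y$; see \cite{gleason}. So it suffices to identify $\overline{Fr}(A)$ with $RO(I^A)$. The standard route: the map sending $e_{\alpha,n}$ to the regular open set $\{x\in I^A : x_\alpha < q_{\alpha,n}\}$ (or a suitable rational interval in coordinate $\alpha$, indexed so as to generate) exhibits $Fr(A)$ as a dense subalgebra of $RO(I^A)$ — density because finite unions of open boxes with rational sides form a basis for $I^A$ and every nonempty regular open set contains such a box — and since $RO(I^A)$ is complete, it is (up to isomorphism) the Boolean completion of any dense subalgebra, in particular of $Fr(A)$. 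This gives $\overline{Fr}(A)\cong RO(I^A)$ and hence $L_A\cong$ Gleason space of $I^A$.

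\medskip

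The third and fourth bullets are then Stone duality bookkeeping. For the third: the inclusion $\overline{Fr}(A)\hookrightarrow\overline{Fr}(\kappa)$ (using the stated identification of $\overline{Fr}(A)$ as a subalgebra of $\overline{Fr}(\kappa)$ for $A\subseteq\kappa$) dualizes to a continuous map $p_A\colon L\to L_A$ on Stone spaces, which is surjective because the inclusion is injective (a Boolean embedding dualizes to a surjection of Stone spaces — every ultrafilter on the subalgebra extends to one on the whole algebra). The formula $p_A^{-1}[s_{\overline{Fr}(A)}(a)] = s_{\overline{Fr}(\kappa)}(a)$ for $a\in\overline{Fr}(A)$ is exactly the naturality of the clopen-set assignment under the dual map. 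For the fourth: composition with the continuous surjection $p_A$ gives an algebra homomorphism $C(L_A)\to C(L)$, $f\mapsto f\circ p_A$, which is isometric precisely because $p_A$ is surjective ($\|f\circ p_A\|_\infty = \sup_{L}|f\circ p_A| = \sup_{L_A}|f| = \|f\|_\infty$).

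\medskip

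\textbf{Main obstacle.} None of the steps is deep; the only one requiring genuine care is the identification $\overline{Fr}(A)\cong RO(I^A)$ in the first bullet, specifically pinning down the generators $e_{\alpha,n}$ so that they generate a \emph{dense} subalgebra of $RO(I^A)$ — the indexing by $n\in\N$ is there precisely to provide, in each coordinate $\alpha$, a countable family of rational sub-intervals whose finite Boolean combinations, taken across finitely many coordinates, yield a basis of open boxes. Once density is checked, completeness of $RO(I^A)$ forces the isomorphism with the completion, and everything else follows formally. I would therefore spend the bulk of the written proof on that density verification and dispatch the remaining items with one or two sentences each, citing \cite{koppelberg}, \cite{halmos} for Stone duality and \cite{gleason} for the Gleason cover.
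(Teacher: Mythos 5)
The paper offers no proof of Proposition \ref{gleason} at all: it is explicitly presented as ``the summary of standard facts'' and delegated to the references (\cite{koppelberg}, \cite{halmos} for Stone duality, \cite{gleason} for Gleason covers), so the only question is whether your argument is sound. Bullets two, three and four are fine: extremal disconnectedness of the Stone space of a complete algebra, Stone--Nakano for the lattice completeness of $C(L_A)$, duality of the embedding $\overline{Fr}(A)\subseteq\overline{Fr}(\kappa)$ with a surjection $p_A$, and isometry of $f\mapsto f\circ p_A$ are all standard and correctly handled. Two small slips there: $\{0,1\}^{\kappa\times\N}$ need not be separable when $\kappa>\cc$ (what Lemma \ref{ccc} gives, and all you need, is c.c.c.), and ``c.c.c.\ passes to densely mapped preimages'' is not a valid principle -- the correct chain is that $Fr(A)$ is c.c.c.\ (its Stone space is the Cantor cube) and that an algebra is c.c.c.\ iff its completion is, because the algebra is dense in the completion; you do state this last point, so the argument survives once the detour through ``separable'' is dropped.

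The genuine gap is in the first bullet, precisely at the step you yourself flag as the crux. To conclude $\overline{Fr}(A)\cong RO(I^A)$ you need the chosen images of the free generators to be \emph{independent} in $RO(I^A)$ (so that $Fr(A)$ embeds) \emph{and} to generate a dense subalgebra; you verify only density. Your assignment $e_{\alpha,n}\mapsto\{x\in I^A: x_\alpha<q_{\alpha,n}\}$ (or any assignment of single rational subintervals within a fixed coordinate $\alpha$) cannot be independent: intervals of $[0,1]$ are nested or overlap in at most $2n+1$ cells, so already for three intervals some Boolean combination such as $e_{\alpha,n}\wedge -e_{\alpha,m}$ is sent to $\emptyset$, and the induced homomorphism of $Fr(A)$ is not injective. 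Density of the generated subalgebra then only shows that \emph{its} completion is $RO(I^A)$, not that $\overline{Fr}(A)$ is. The fix is to send $e_{\alpha,n}$ to the regularized set where the $n$-th binary digit of $x_\alpha$ equals $1$ -- a finite union of dyadic intervals in coordinate $\alpha$ -- which is an independent family whose finite Boolean combinations still yield all dyadic boxes, hence a dense copy of $Fr(A)$ inside $RO(I^A)$; equivalently, one can avoid $RO$ computations altogether by noting that the binary-expansion map $\{0,1\}^\N\to I$ is irreducible, that irreducibility is productive for compact spaces, and hence the natural map $L_A\to I^A$ (coordinatewise the $d_\alpha$'s) is an irreducible surjection from an extremally disconnected compactum, which is exactly the characterization of the Gleason cover used in \cite{gleason}. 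With that repair the proposal is a correct, self-contained proof of what the paper merely cites.
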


\begin{lemma}[\cite{grande} Corollary 2.5]  \label{extending-ultrafilters}
  Suppose that  $A, B$ are disjoint subsets of $\kappa$ and 
  $t_A \in L_{A}$ and $t_B\in L_{B}$.
  There exists a point $t\in L$ such that  $p_{A}(t)=t_A$ and $p_B(t)=t_B$.
\end{lemma}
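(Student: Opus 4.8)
The statement to prove is Lemma~\ref{extending-ultrafilters}: given disjoint $A,B\subseteq\kappa$ and points $t_A\in L_A$, $t_B\in L_B$, there is $t\in L$ with $p_A(t)=t_A$ and $p_B(t)=t_B$. Since $L=L_\kappa$ is the Stone space of $\overline{Fr}(\kappa)$, a point $t$ is an ultrafilter on $\overline{Fr}(\kappa)$, and $p_A$, $p_B$ are the restriction maps dual to the inclusions $\overline{Fr}(A)\subseteq\overline{Fr}(\kappa)$ and $\overline{Fr}(B)\subseteq\overline{Fr}(\kappa)$; thus $t_A$ and $t_B$ are ultrafilters on $\overline{Fr}(A)$ and $\overline{Fr}(B)$ respectively, and we must find an ultrafilter $t$ on $\overline{Fr}(\kappa)$ whose traces on these two subalgebras are exactly $t_A$ and $t_B$. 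The natural approach is to show that the filter generated by $t_A\cup t_B$ in $\overline{Fr}(\kappa)$ is proper; then any ultrafilter extending it works, since an ultrafilter containing $t_A$ (an ultrafilter of $\overline{Fr}(A)$) must trace exactly to $t_A$ on $\overline{Fr}(A)$, and similarly for $B$.

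\textbf{Key steps.} First I would reduce to checking finite intersections: it suffices to show that for every $a\in t_A$ and $b\in t_B$ we have $a\wedge b\neq 0$ in $\overline{Fr}(\kappa)$. This is where the disjointness of $A$ and $B$ and the product-like structure of $\overline{Fr}$ enter. The crucial structural fact is that $\overline{Fr}(A\cup B)$ sits inside $\overline{Fr}(\kappa)$, and $\overline{Fr}(A\cup B)$ is (densely) generated by $\overline{Fr}(A)$ and $\overline{Fr}(B)$ as "independent" subalgebras: since $Fr(A\cup B)=Fr(A)\otimes Fr(B)$ is the free product of $Fr(A)$ and $Fr(B)$, any nonzero $a\in Fr(A)$ and nonzero $b\in Fr(B)$ satisfy $a\wedge b\neq 0$. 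Passing to completions: given $0\neq a\in\overline{Fr}(A)$ and $0\neq b\in\overline{Fr}(B)$, one chooses $0\neq a'\in Fr(A)$ with $a'\leq a$ and $0\neq b'\in Fr(B)$ with $b'\leq b$ (possible because $Fr(A)$ is dense in $\overline{Fr}(A)$, being its completion, and likewise for $B$), and then $0\neq a'\wedge b'\leq a\wedge b$. Hence every pair from $t_A\times t_B$ has nonzero meet; since $t_A$ and $t_B$ are themselves filters, finite meets from $t_A\cup t_B$ reduce to a single meet $a\wedge b$ with $a\in t_A$, $b\in t_B$, which is nonzero. So the filter generated by $t_A\cup t_B$ is proper. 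Extend it by Zorn's lemma (or the ultrafilter theorem) to an ultrafilter $t$ on $\overline{Fr}(\kappa)$. Finally, verify $p_A(t)=t_A$: the trace $t\cap\overline{Fr}(A)$ is an ultrafilter of $\overline{Fr}(A)$ containing the ultrafilter $t_A$, hence equals $t_A$; symmetrically $p_B(t)=t_B$. This, together with the description of $p_A,p_B$ in Proposition~\ref{gleason}, gives exactly $p_A(t)=t_A$ and $p_B(t)=t_B$.

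\textbf{Main obstacle.} The one genuine point requiring care is the density/independence step: justifying that a nonzero element of the \emph{completion} $\overline{Fr}(A)$ dominates a nonzero element of $Fr(A)$, and that nonzero elements of $Fr(A)$ and $Fr(B)$ have nonzero meet inside $\overline{Fr}(\kappa)$ — equivalently, that the inclusion $\overline{Fr}(A)\subseteq\overline{Fr}(\kappa)$ (and likewise for $A\cup B$) is a regular embedding, so that meets computed in the big algebra agree with the separation behaviour in the small one. The density of $Fr(A)$ in $\overline{Fr}(A)$ is immediate from the definition of Boolean completion; that nonzero $a'\in Fr(A)$, $b'\in Fr(B)$ have $a'\wedge b'\neq 0$ is the standard fact that $Fr(A\cup B)$ is the free product (free Boolean algebras on disjoint generator sets are independent); and the regularity of $\overline{Fr}(A\cup B)\hookrightarrow\overline{Fr}(\kappa)$ is part of the standard package recorded in Proposition~\ref{gleason} (the maps $p_A$ being continuous surjections dual to complete embeddings). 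Everything else is routine filter manipulation, so I would state the free-product/regular-embedding facts with a reference to \cite{koppelberg} and spend the body of the proof on assembling them into the nonzero-meet conclusion.
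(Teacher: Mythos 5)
Your argument is correct and complete. Note, however, that the paper itself does not prove this lemma at all: it is quoted as Corollary 2.5 of \cite{grande}, so there is no internal proof to compare with; what you have written is a self-contained substitute for that citation. Your route is the standard Stone-duality one: identify $t_A$, $t_B$ with ultrafilters on $\overline{Fr}(A)$, $\overline{Fr}(B)$, observe that $p_A(t)=t\cap\overline{Fr}(A)$ because $p_A$ is dual to the inclusion (this is exactly the content of the displayed identity $p_A^{-1}[s_{\overline{Fr}(A)}(a)]=s_{\overline{Fr}(\kappa)}(a)$ in Proposition \ref{gleason}), and check that the filter generated by $t_A\cup t_B$ is proper by pushing nonzero elements of the completions down to nonzero elements of $Fr(A)$ and $Fr(B)$ (density of an algebra in its completion) and using independence of the free subalgebras on disjoint generator sets. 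All of these ingredients are sound, and the reduction of finite meets from $t_A\cup t_B$ to a single $a\wedge b$ is fine since finite Boolean operations are computed the same way in a subalgebra. One small remark: you do not really need regularity of the embeddings $\overline{Fr}(A)\subseteq\overline{Fr}(\kappa)$ for this argument --- once the paper's convention identifying $\overline{Fr}(A)$ with a subalgebra of $\overline{Fr}(\kappa)$ is granted, it suffices that nonzero elements stay nonzero and the order is preserved, which holds for any subalgebra; regularity is only needed to justify that identification in the first place, and the paper takes that as given.
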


\begin{definition}
For all $\alpha\in\kappa$ we define a Cantor-like surjection $d_\alpha\in C_I(L)$ by
\[d_\alpha(x) = \sum_{n\in\N} {\chi_{s_{\overline{Fr}(\kappa)}( e_{\alpha, n})}(x)\over{2^n}} \in I,\] 
We will use $d_{1,\alpha}=d_\alpha$ and $d_{-1,\alpha}=1-d_\alpha$.
We will also use the notation $\D=\{d_\alpha: \alpha\in \kappa\}$.
\end{definition}

\begin{definition}
We say that $f\in C(L)$ depends on a set $A\subseteq\kappa$,  if $p_A(s)=p_A(t)$
implies $f(s)=f(t)$ for any $s, t\in L$. 
We say that $\F \subseteq C(L)$ depends on a set $A\subseteq \kappa$, if every $f\in\F$ depends on $A$.
\end{definition}

\begin{lemma}\label{d_alpha}
  Let $\alpha\in\kappa$, then
\begin{enumerate}
\item $d_\alpha$ depends on the set $\{\alpha\}$, 
\item	$d_\alpha[L]=I$.
\end{enumerate}
\end{lemma}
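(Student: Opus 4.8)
The plan is to verify both assertions by unwinding the definition of $d_\alpha$ and invoking the standard facts about the Gleason maps $p_A$ collected in Proposition \ref{gleason}. For the first claim, note that $d_\alpha$ is defined as the uniformly convergent series $\sum_{n\in\N} 2^{-n}\chi_{s_{\overline{Fr}(\kappa)}(e_{\alpha,n})}$, so it is the uniform limit of the continuous functions $d_\alpha^{(k)}=\sum_{n\le k}2^{-n}\chi_{s_{\overline{Fr}(\kappa)}(e_{\alpha,n})}$. Since $e_{\alpha,n}\in Fr(\{\alpha\})\subseteq\overline{Fr}(\{\alpha\})$, the third item of Proposition \ref{gleason} gives $s_{\overline{Fr}(\kappa)}(e_{\alpha,n})=p_{\{\alpha\}}^{-1}[s_{\overline{Fr}(\{\alpha\})}(e_{\alpha,n})]$, so each $\chi_{s_{\overline{Fr}(\kappa)}(e_{\alpha,n})}$ is constant on fibres of $p_{\{\alpha\}}$; hence so is each $d_\alpha^{(k)}$, and therefore so is the uniform limit $d_\alpha$. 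That is exactly the statement that $d_\alpha$ depends on $\{\alpha\}$: if $p_{\{\alpha\}}(s)=p_{\{\alpha\}}(t)$ then $d_\alpha^{(k)}(s)=d_\alpha^{(k)}(t)$ for all $k$, so $d_\alpha(s)=d_\alpha(t)$.

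For the second claim I would argue that $d_\alpha[L]=I$. One direction, $d_\alpha[L]\subseteq I$, is immediate since $0\le d_\alpha(x)\le\sum_{n\in\N}2^{-n}=1$. For surjectivity, fix $r\in I$ and choose a binary expansion $r=\sum_{n\in\N}\varepsilon_n 2^{-n}$ with each $\varepsilon_n\in\{0,1\}$. The element $a=\bigwedge\{\,e_{\alpha,n}:\varepsilon_n=1\,\}\wedge\bigwedge\{\,-e_{\alpha,n}:\varepsilon_n=0\,\}$ need not lie in $Fr(\{\alpha\})$ as a finite meet, but passing to the completion $\overline{Fr}(\{\alpha\})$ one checks that the filter generated by $\{\,e_{\alpha,n}:\varepsilon_n=1\,\}\cup\{\,-e_{\alpha,n}:\varepsilon_n=0\,\}$ is proper: the free generators $(e_{\alpha,n})_n$ are independent, so every finite subfamily has nonzero meet, hence the filter extends to an ultrafilter $t_\alpha\in L_{\{\alpha\}}$ with $\chi_{s_{\overline{Fr}(\{\alpha\})}(e_{\alpha,n})}(t_\alpha)=\varepsilon_n$ for every $n$. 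By Proposition \ref{gleason} (surjectivity of $p_{\{\alpha\}}$) there is $t\in L$ with $p_{\{\alpha\}}(t)=t_\alpha$, and then, using again $s_{\overline{Fr}(\kappa)}(e_{\alpha,n})=p_{\{\alpha\}}^{-1}[s_{\overline{Fr}(\{\alpha\})}(e_{\alpha,n})]$, we get $\chi_{s_{\overline{Fr}(\kappa)}(e_{\alpha,n})}(t)=\varepsilon_n$, whence $d_\alpha(t)=\sum_n\varepsilon_n 2^{-n}=r$.

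I expect the only delicate point to be the surjectivity in (2), specifically the claim that an arbitrary prescribed binary pattern $(\varepsilon_n)_n$ is realized by some point of $L_{\{\alpha\}}$ (equivalently $L$). This is really just the statement that the $e_{\alpha,n}$ behave like independent generators of a free algebra $Fr(\{\alpha\})$, so that the clopen sets $s_{\overline{Fr}(\{\alpha\})}(e_{\alpha,n})$ are "independent" clopen subsets of $L_{\{\alpha\}}$ and any specification of membership along finitely many of them is consistent; compactness (the finite intersection property for the corresponding filter) then yields the desired point. Everything else is a routine uniform-convergence argument combined with the dictionary between $p_A$-fibres and dependence on $A$ already set up before the lemma. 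Note that (2) also follows from the first item of Proposition \ref{gleason}: $L_{\{\alpha\}}$ is the Gleason space of $I^{\{\alpha\}}=I$, so it maps onto $I$, and one identifies this map with $d_\alpha$ composed with $p_{\{\alpha\}}$; but I would prefer the direct ultrafilter argument above as it is self-contained.
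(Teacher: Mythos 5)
Your proposal is correct and takes essentially the same route as the paper: part (1) follows from the identity $\chi_{s_{\overline{Fr}(\kappa)}(e_{\alpha,n})}=\chi_{s_{\overline{Fr}(\{\alpha\})}(e_{\alpha,n})}\circ p_{\{\alpha\}}$ given by Proposition \ref{gleason}, and part (2) from the independence of the free generators $e_{\alpha,n}$ (which lets one realize any prescribed $0$--$1$ pattern at some point of $L$) together with the surjectivity of the binary-expansion map onto $I$. Your extra detail (the explicit filter/compactness argument in $L_{\{\alpha\}}$ and the lift along $p_{\{\alpha\}}$) is just a spelled-out version of the paper's appeal to independence, not a different method.
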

\begin{proof}  By Proposition \ref{gleason}
$\chi_{s_{\overline{Fr}(\kappa)}( e_{\alpha, n})}=
\chi_{s_{\overline{Fr}(\{\alpha\})}( e_{\alpha, n})}\circ p_{\{\alpha\}}$.
So, if $s, t\in L$ satisfy $p_{\{\alpha\}}(s)=p_{\{\alpha\}}(t)$,
then $d_\alpha(s)=d_\alpha(t)$, as required.
The second part follows from the fact that $e_{\alpha, n}$s
as free generators are independent, which implies that 
for every $\sigma\in \{0,1\}^\N$ there is $t\in L$ such that
$\chi_{s_{\overline{Fr}(\kappa)}( e_{\alpha, n})}(t)=\sigma(n)$ for every $n\in \N$. Now use the standard fact
that the mapping $\phi$ from $\{0,1\}^\N$ into $I$ given by $\phi(\sigma)=
\sum_{n\in\N} {\sigma(n)\over{2^n}}$ is surjective.
\end{proof}

\begin{lemma} [\cite{grande}, Lemma 2.10] \label{depends}
  Each $f\in C(L)$ depends on some countable $A\subseteq\kappa$.
\end{lemma}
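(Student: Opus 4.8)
Each $f\in C(L)$ depends on some countable $A\subseteq\kappa$.

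The plan is to exploit the fact that $L$ is the inverse limit of the spaces $L_A$, $A\in[\kappa]^{\leq\omega}$, or more precisely that $C(L)=\overline{\bigcup\{C(L_A)\mid A\in[\kappa]^{\leq\omega}\}}$ in the supremum norm, and then to upgrade ``approximately depends on a countable set'' to ``depends on a countable set'' using a standard diagonal/union argument. First I would observe that by Proposition \ref{gleason} each $C(L_A)$ embeds isometrically into $C(L)$ via $p_A$, and that the union $\bigcup_A C(L_A)$ over all \emph{finite} $A$ is a sublattice-subalgebra of $C(L)$ which separates the points of $L$ (indeed even the clopen characteristic functions $\chi_{s_{\overline{Fr}(\kappa)}(e_{\alpha,n})}$ already separate points, since the $e_{\alpha,n}$ generate $Fr(\kappa)$ which is dense in $\overline{Fr}(\kappa)$). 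By the Stone--Weierstrass theorem this union is dense in $C(L)$; a fortiori $\bigcup\{C(L_A)\mid A\in[\kappa]^{\leq\omega}\}$ is dense in $C(L)$.

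Next, given $f\in C(L)$ and $\varepsilon>0$, choose $A_\varepsilon\in[\kappa]^{\leq\omega}$ and $g_\varepsilon\in C(L_{A_\varepsilon})$ with $\|f-g_\varepsilon\circ p_{A_\varepsilon}\|<\varepsilon$. Applying this for $\varepsilon=1/k$, $k\in\N$, set $A=\bigcup_{k\in\N}A_{1/k}$, which is a countable subset of $\kappa$. I claim $f$ depends on $A$. Indeed, suppose $s,t\in L$ satisfy $p_A(s)=p_A(t)$. Since $A_{1/k}\subseteq A$, the map $p_{A_{1/k}}$ factors through $p_A$, so $p_{A_{1/k}}(s)=p_{A_{1/k}}(t)$, hence $g_{1/k}(p_{A_{1/k}}(s))=g_{1/k}(p_{A_{1/k}}(t))$; therefore $|f(s)-f(t)|\leq |f(s)-g_{1/k}(p_{A_{1/k}}(s))| + |g_{1/k}(p_{A_{1/k}}(t))-f(t)| < 2/k$ for every $k$, so $f(s)=f(t)$.

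The only genuinely nontrivial point is the density of $\bigcup\{C(L_A)\mid A\ \text{finite}\}$ in $C(L)$, i.e.\ checking the Stone--Weierstrass hypotheses: that this family is a subalgebra (clear, since $C(L_A)\cdot C(L_B),\ C(L_A)+C(L_B)\subseteq C(L_{A\cup B})$ and $A\cup B$ is again finite), that it contains the constants, and that it separates points of $L$ (which follows because the clopen sets $s_{\overline{Fr}(\kappa)}(e_{\alpha,n})$ separate points of the Stone space $L$, being a generating set of clopens). Once this is in place the rest is the routine diagonalization above, so I expect no real obstacle — this is essentially the observation that $L=\varprojlim L_A$ and continuous functions on an inverse limit of compacta factor, up to $\varepsilon$, through a coordinate, combined with taking a countable union of coordinate sets. (As the reference to \cite{grande} indicates, this is already in the literature.)
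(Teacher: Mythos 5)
Your overall architecture (approximate, then diagonalize over a countable union of index sets) is the right one, and the final diagonalization step is fine; but the key density claim is wrong as you justify it, and in the form you state it (over \emph{finite} $A$) it is actually false. The generators do \emph{not} separate the points of $L$: density of $Fr(\kappa)$ in its completion $\overline{Fr}(\kappa)$ only says that every nonzero element of the completion dominates a nonzero element of $Fr(\kappa)$, not that distinct ultrafilters on $\overline{Fr}(\kappa)$ have distinct traces on the generators. Indeed, the map $L\rightarrow \{0,1\}^{\kappa\times\N}$ sending an ultrafilter to its pattern of membership of the $e_{\alpha,n}$ is the canonical irreducible (Gleason) surjection, and it is not injective, since $L$ is extremally disconnected while the product is not; so there are distinct $s,t\in L$ agreeing on all $\chi_{s_{\overline{Fr}(\kappa)}(e_{\alpha,n})}$. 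More generally, $\bigcup\{\overline{Fr}(A)\mid A\subseteq\kappa\ \hbox{finite}\}$ is a \emph{proper} subalgebra of $\overline{Fr}(\kappa)$ (e.g.\ $\sup_n\big((-e_{\gamma,0})\wedge\dots\wedge(-e_{\gamma,n-1})\wedge e_{\gamma,n}\wedge e_{\alpha_n,0}\big)$ is a regular open set which is not a cylinder over any finite set of coordinates), and a proper subalgebra of the clopen algebra of a Stone space never separates its points: if it did, the restriction map of Stone spaces would be a homeomorphism and every clopen set would already come from the subalgebra. Hence $\bigcup\{C(L_A)\mid A\ \hbox{finite}\}$ is not dense in $C(L)$, and the Stone--Weierstrass step, which is exactly the ``only genuinely nontrivial point'' you flag, collapses; your ``a fortiori'' countable-$A$ density is true but is essentially the content of the lemma, so it cannot be obtained this way.

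The repair uses the c.c.c.\ of $\overline{Fr}(\kappa)$ (Proposition \ref{gleason}) rather than finite supports. Since $L$ is zero-dimensional, Stone--Weierstrass applied to the linear span of characteristic functions of \emph{clopen subsets of $L$} (i.e.\ of all of $\overline{Fr}(\kappa)$, not just of sets coded by finitely many generators) gives a dense subspace of $C(L)$. By c.c.c.\ every $b\in\overline{Fr}(\kappa)$ is the supremum of countably many elements of $Fr(\kappa)$ (this is noted in the proof of Lemma \ref{density-cardinality}), and such a supremum belongs to $\overline{Fr}(A)$ for the countable set $A$ of indices occurring in those elements; hence $\chi_{s_{\overline{Fr}(\kappa)}(b)}$ depends on a countable set, finite linear combinations depend on countable unions, and every $f\in C(L)$ is a uniform limit of functions each depending on a countable set. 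From there your $\varepsilon=1/k$ argument with $A=\bigcup_k A_{1/k}$ goes through verbatim and yields the lemma (this is also the route of Lemma 2.10 of \cite{grande}, which the paper cites instead of reproving).
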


\begin{lemma} \label{depends-countable}
Suppose that $X, Y$ are compact spaces, $\phi: X\rightarrow Y$ is a continuous
surjection and that $f\in C(X)$ is such that $f(x_1)=f(x_2)$ whenever $\phi(x_1)=\phi(x_2)$.
Then there is a continuous $g\in C(Y)$ such that $f=g\circ \phi$. In particular,
  for every $f\in C(L)$ which depends
on some $A\subseteq \kappa$ there exist  $g \in C(L_A)$ such that $f=g\circ p_A$.
\end{lemma}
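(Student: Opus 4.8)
The plan is to prove the general statement about compact spaces first and then deduce the particular case about $L$ and $L_A$ as an immediate corollary. For the general statement, given $X$, $Y$, $\phi$ and $f$ as in the hypotheses, I would \emph{define} $g:Y\to\R$ pointwise by $g(y)=f(x)$ for any $x\in\phi^{-1}[y]$. This is well defined precisely because $\phi$ is surjective (so the fibre is nonempty) and because $f$ is constant on each fibre (the hypothesis $\phi(x_1)=\phi(x_2)\Rightarrow f(x_1)=f(x_2)$). By construction $f=g\circ\phi$, so the only thing to check is that $g$ is continuous.

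The main point, and the only place where anything has to be verified, is the continuity of $g$. Here I would use that $\phi$, being a continuous surjection between compact Hausdorff spaces, is a closed quotient map: for any closed $F\subseteq X$ the image $\phi[F]$ is compact, hence closed in $Y$. Now take any closed $D\subseteq\R$; then $f^{-1}[D]$ is closed in $X$, and I claim $g^{-1}[D]=\phi[f^{-1}[D]]$. The inclusion $\supseteq$ is clear since $g\circ\phi=f$. For $\subseteq$, if $g(y)\in D$ pick $x\in\phi^{-1}[y]$; then $f(x)=g(y)\in D$, so $x\in f^{-1}[D]$ and $y=\phi(x)\in\phi[f^{-1}[D]]$. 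Hence $g^{-1}[D]$ is the continuous image of a compact set, therefore closed in $Y$, and $g$ is continuous. (Alternatively one can simply invoke the universal property of the quotient topology, noting that $\phi$ induces the quotient topology on $Y$ because it is a continuous closed surjection onto a Hausdorff space.)

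For the ``in particular'' clause, apply the general statement with $X=L$, $Y=L_A$, $\phi=p_A$: by Proposition \ref{gleason} the map $p_A:L\to L_A$ is a continuous surjection between compact Hausdorff spaces, and the assumption that $f\in C(L)$ depends on $A$ is by definition exactly the condition $p_A(x_1)=p_A(x_2)\Rightarrow f(x_1)=f(x_2)$. So there is $g\in C(L_A)$ with $f=g\circ p_A$, as desired.

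I do not expect any real obstacle here; the statement is a standard fact about quotient maps dressed up in the notation of the paper, and the only care needed is to record why $g$ is continuous, which is the closedness of $\phi$ coming from compactness. The slight subtlety worth stating explicitly is that boundedness of $g$ (so that $g\in C(L_A)$ rather than merely in the space of continuous real functions) is automatic since $g[L_A]\subseteq f[L]$ and $f$ is bounded.
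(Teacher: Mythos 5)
Your proof is correct and follows essentially the same route as the paper: define $g$ pointwise (well defined since $f$ is constant on fibres of the surjection $\phi$) and deduce continuity from the fact that $\phi$, being a continuous surjection of compact Hausdorff spaces, is closed and hence a quotient map; the paper simply cites Engelking for these two facts where you spell out the closed-set argument.
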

\begin{proof}
By the hypothesis one can well define $g: Y\rightarrow \R$ satisfying $f=g\circ \phi$.
Since $\phi$ is
a closed onto mapping (2.4.8. of \cite{engelking}) it is a quotient map
and so $g$ is continuous (2.4.2. of \cite{engelking}).
\end{proof}
\begin{lemma}\label{closed}Suppose that $X, Y$ are compact spaces $\phi: X\rightarrow Y$ is  continuous
and $Z\subseteq X$, then $f[\overline Z]=\overline{f[Z]}$.
\end{lemma}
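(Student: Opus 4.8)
The plan is to prove the two inclusions $\overline{\phi[Z]}\subseteq \phi[\overline Z]$ and $\phi[\overline Z]\subseteq \overline{\phi[Z]}$ separately, using only elementary properties of compactness and continuity. (Note that ``compact'' here is taken in the sense including Hausdorffness, as is standard throughout the paper; this is what makes compact subsets closed.)

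For the first inclusion I would argue as follows. Since $\overline Z$ is a closed subset of the compact space $X$, it is itself compact, hence its continuous image $\phi[\overline Z]$ is a compact subset of the Hausdorff space $Y$ and therefore closed in $Y$. As $Z\subseteq \overline Z$ we get $\phi[Z]\subseteq \phi[\overline Z]$, and since $\phi[\overline Z]$ is closed and contains $\phi[Z]$, it must contain the closure $\overline{\phi[Z]}$.

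For the reverse inclusion I would use the continuity of $\phi$ directly: the set $\phi^{-1}[\overline{\phi[Z]}]$ is closed in $X$ (preimage of a closed set under a continuous map) and contains $Z$ (because $\phi[Z]\subseteq\overline{\phi[Z]}$). Hence it contains $\overline Z$, which is precisely the statement that $\phi[\overline Z]\subseteq\overline{\phi[Z]}$. Combining the two inclusions yields the claimed equality. I do not expect any real obstacle here; the only point requiring a word of care is invoking the Hausdorff property of $Y$ to pass from ``$\phi[\overline Z]$ compact'' to ``$\phi[\overline Z]$ closed'', which is available under the paper's conventions.
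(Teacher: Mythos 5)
Your proof is correct and is essentially the paper's argument written out in full: the paper simply observes that continuous maps of compact (Hausdorff) spaces are closed and cites Engelking 1.4.C, which packages exactly your two inclusions (closedness of $\phi[\overline Z]$ giving $\overline{\phi[Z]}\subseteq\phi[\overline Z]$, continuity giving the reverse). Nothing further is needed.
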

\begin{proof} Continuous mappings of compact spaces are closed, so use 1.4. C of \cite{engelking}.
\end{proof}

\subsection{Algebras of functions and their Gelfand spaces}

Given $\F\subseteq C_I(L)$ we will consider the closed algebra over the reals containing
constant functions generated by
$\F$ in $C(L)$, that is the real unital Banach algebra generated by $\F$, we will denote it
by $[\mathcal F]$.   $[\F]_I$ will stand for $[\F]\cap C_I(L)$

The role of the Stone space for Boolean algebras is played for commutative Banach algebras by 
the Gelfand space. We will work with the following concrete representation $\nabla\F$
of  the Gelfand space of $[\mathcal F]$ (cf. \cite{grande}):

\begin{definition}[\cite{grande}]
Let $\mathcal{F, G}$ be  families of elements of $C_I(L)$.
  \begin{enumerate}[(1)]

	\item  define $\Pi \mathcal F\colon L \rightarrow I^\mathcal F$ by a formula
	  \[\big((\Pi\mathcal F)(x)\big)(f) = f(x)\]
 for all $x\in L$ and for all $f\in\mathcal F$, 

	\item the image $\Pi\mathcal F[L]\subseteq I^\mathcal F$ is denoted by $\nabla \mathcal F$,

	\item for $\mathcal G \subseteq \mathcal F \subseteq C_I(L)$ we define the natural projection 
	  \[ \pi_{\mathcal G,\mathcal F}\colon \nabla \mathcal F \rightarrow \nabla \mathcal G, \]
which is the restriction of the natural projection from $I^\mathcal F$ to $I^\mathcal G$,
\item given $\F\subseteq C_I(L)$ we say that $f\in C(\nabla\F)$ depends on a set $A\subseteq \kappa$ 
if $f\circ\Pi\F$ depends on $A$.
  \end{enumerate}
\end{definition}

\begin{proposition}\label{gelfand-isomorphism} Suppose that $\F\subseteq C_I(L)$. Then
there is an isometric isomorphism of real Banach algebras $T_{\F}:C(\nabla\F)\rightarrow [\F]$
induced by $\Pi\F$  such that $T_\F(\pi_{\{f\}, \F})=f$ for each $f\in \F$.
\end{proposition}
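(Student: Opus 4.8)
The plan is to realize the Gelfand transform concretely through the map $\Pi\F$. First I would observe that $\nabla\F = \Pi\F[L]$ is a compact subspace of the cube $I^\F$, since $L$ is compact and $\Pi\F$ is continuous (each coordinate $x\mapsto f(x)$ is continuous because $f\in C(L)$). Composition with $\Pi\F$ gives an algebra homomorphism $T_\F\colon C(\nabla\F)\to C(L)$, $T_\F(h)=h\circ\Pi\F$, which is automatically unital and norm-nonincreasing. It is isometric onto its image because $\Pi\F$ is a continuous surjection onto $\nabla\F$: for $h\in C(\nabla\F)$, $\|h\circ\Pi\F\|_\infty=\sup_{x\in L}|h(\Pi\F(x))|=\sup_{y\in\nabla\F}|h(y)|=\|h\|_\infty$. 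So $T_\F$ is an isometric algebra embedding of $C(\nabla\F)$ into $C(L)$.

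Next I would identify the image. For $f\in\F$, the coordinate projection $\pi_{\{f\},\F}\colon\nabla\F\to\nabla\{f\}\subseteq I$ is a continuous function on $\nabla\F$, and by construction $T_\F(\pi_{\{f\},\F})=\pi_{\{f\},\F}\circ\Pi\F=f$; this establishes the stated identity $T_\F(\pi_{\{f\},\F})=f$ and shows $\F\subseteq T_\F[C(\nabla\F)]$. Since $T_\F[C(\nabla\F)]$ is a closed unital subalgebra of $C(L)$ containing $\F$, it contains the generated algebra $[\F]$. For the reverse inclusion, the coordinate functions $\{\pi_{\{f\},\F}:f\in\F\}$ separate the points of $\nabla\F$ by the very definition of $\nabla\F$ as a subspace of $I^\F$, so the unital real subalgebra they generate is dense in $C(\nabla\F)$ by the Stone--Weierstrass theorem; applying the isometry $T_\F$, which sends that dense subalgebra into the algebra generated by $\F$, and using closedness of $[\F]$, we get $T_\F[C(\nabla\F)]\subseteq[\F]$. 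Hence $T_\F$ is an isometric isomorphism of real Banach algebras onto $[\F]$.

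The only genuinely substantive points are the Stone--Weierstrass step — which needs that the coordinate functions separate points of $\nabla\F$, and this is immediate since two distinct points of $\nabla\F\subseteq I^\F$ differ in some coordinate $f$, i.e. $\pi_{\{f\},\F}$ distinguishes them — and the verification that an isometric, unital, multiplicative, bijective linear map is an isomorphism of Banach algebras, which is routine. I expect no real obstacle; this proposition is essentially a packaging of the abstract Gelfand theory in the concrete coordinates supplied by $\Pi\F$, and the main care is just to check that $\nabla\F$ with its subspace topology from $I^\F$ is exactly the maximal ideal space, which the separation-of-points argument makes unnecessary to invoke directly.
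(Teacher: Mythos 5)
Your proof is correct and follows essentially the same route as the paper: composition with $\Pi\F$ gives an isometric unital algebra embedding of $C(\nabla\F)$ into $C(L)$, the identity $T_\F(\pi_{\{f\},\F})=f$ shows the (closed) image contains $[\F]$, and the real Stone--Weierstrass theorem (coordinates separate points of $\nabla\F\subseteq I^\F$) gives the reverse inclusion. No gaps; your write-up just spells out details the paper leaves implicit.
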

\begin{proof} The surjective continuous function $\Pi\F: L\rightarrow \nabla F$
induces an isometric isomorphic embedding of $C(\nabla\F)$ into $C(L)$ simply
by sending $g\in C(\nabla\F)$ to $g\circ \Pi\F$. Then it is clear that $T_\F(\pi_{\{f\}, \F})=f$,
so the image of $T_{\F}$ includes $[\F]$.
It remains to show that it is included in $[\F]$. For this it is enough to
show that $C(\nabla\F)$ is generated as a unital algebra by the functions $\pi_{\{f\}, \F}$
for $f\in \F$. This follows from the real Stone-Weierstrass theorem, as the coordinates
separate points in products.
\end{proof}

\begin{lemma}\label{factorization} Suppose that $f\in [\F]\subseteq C_I(L)$. Then there is 
a unique $f(\F)$ in $C(\nabla\F)$
satisfying 
\[f=f(\F)\circ \Pi\F.\]
$f(\F)$ will be called the  factorization
of $f$ through $\F$. If $\F\subseteq \G\subseteq C_I(L)$, then 
$f(\F)\circ\pi_{\F, \G}=f(\G)$.
\end{lemma}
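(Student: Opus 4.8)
\textbf{Proof plan for Lemma \ref{factorization}.}
The plan is to obtain $f(\F)$ directly from Lemma \ref{depends-countable} applied to the surjection $\Pi\F\colon L\to\nabla\F$. Since $f\in[\F]\subseteq C_I(L)$, I first need to check that $f$ is constant on the fibres of $\Pi\F$, i.e.\ that $\Pi\F(x_1)=\Pi\F(x_2)$ implies $f(x_1)=f(x_2)$. By definition of $\Pi\F$, the hypothesis $\Pi\F(x_1)=\Pi\F(x_2)$ says precisely that $g(x_1)=g(x_2)$ for every $g\in\F$; hence it holds for every element of the subalgebra generated by $\F$ over the reals (sums, products, scalar multiples and the constant $1$ all preserve this equality), and then, since evaluation at a fixed point is a bounded linear functional, it passes to the closure $[\F]$. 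So $f(x_1)=f(x_2)$, and Lemma \ref{depends-countable} (with $X=L$, $Y=\nabla\F$, $\phi=\Pi\F$) yields a continuous $f(\F)\in C(\nabla\F)$ with $f=f(\F)\circ\Pi\F$. Uniqueness is immediate from surjectivity of $\Pi\F$: if $h_1\circ\Pi\F=h_2\circ\Pi\F$ then $h_1=h_2$ pointwise on $\nabla\F=\Pi\F[L]$.

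For the last assertion, suppose $\F\subseteq\G\subseteq C_I(L)$. I would note the compatibility identity $\pi_{\F,\G}\circ\Pi\G=\Pi\F$, which is immediate from the definitions: for $x\in L$ and $g\in\F$, $\big((\pi_{\F,\G}\circ\Pi\G)(x)\big)(g)=\big(\Pi\G(x)\big)(g)=g(x)=\big(\Pi\F(x)\big)(g)$. Then
\[
\big(f(\F)\circ\pi_{\F,\G}\big)\circ\Pi\G = f(\F)\circ\big(\pi_{\F,\G}\circ\Pi\G\big) = f(\F)\circ\Pi\F = f = f(\G)\circ\Pi\G,
\]
and cancelling the surjection $\Pi\G$ on the right (as in the uniqueness argument) gives $f(\F)\circ\pi_{\F,\G}=f(\G)$.

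There is no real obstacle here; the only point requiring a line of care is that the fibre-constancy of $f$, evident for polynomials in the elements of $\F$, survives passage to the uniform closure, which is why the boundedness of the evaluation functionals $\delta_{x_1},\delta_{x_2}$ is invoked. Everything else is a formal manipulation with the surjections $\Pi\F$, $\Pi\G$ and $\pi_{\F,\G}$ together with the cited Lemma \ref{depends-countable}.
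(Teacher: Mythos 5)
Your argument is correct, but it reaches the conclusion by a different route than the paper. The paper's proof is a one-liner: it sets $f(\F)=T_{\F}^{-1}(f)$, where $T_{\F}$ is the isometric isomorphism of Proposition \ref{gelfand-isomorphism} between $C(\nabla\F)$ and $[\F]$ (whose surjectivity onto $[\F]$ was obtained there from the Stone--Weierstrass theorem), and then deduces the compatibility $f(\F)\circ\pi_{\F,\G}=f(\G)$ exactly as you do, from $\pi_{\F,\G}\circ\Pi\G=\Pi\F$ and uniqueness. You instead bypass the Gelfand isomorphism entirely: you verify directly that every $f\in[\F]$ is constant on the fibres of $\Pi\F$ (true for members of $\F$ by definition, preserved under the algebra operations, and preserved under uniform limits since pointwise agreement at two points survives passage to the closure), and then invoke the general quotient-map statement in Lemma \ref{depends-countable} to produce $f(\F)$, with uniqueness from surjectivity of $\Pi\F$. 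What the paper's route buys is economy -- existence, uniqueness and continuity are all packaged in the already-established isomorphism $T_{\F}$; what your route buys is self-containedness at this spot -- it needs neither Stone--Weierstrass nor Proposition \ref{gelfand-isomorphism}, only the elementary closure argument and the closed/quotient-map fact, and it makes explicit where fibre-constancy of elements of $[\F]$ comes from. Both treatments of the final assertion coincide.
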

\begin{proof}
Put $f(\F)$ to be $T_{\F}^{-1}(f)$ where $T_{\F}$ is the isometry from Proposition \ref{gelfand-isomorphism}.
The second part follows from the fact that $\pi_{\F, \G}\circ \Pi\G=\Pi\F$
and the uniqueness of the factorization. 
\end{proof}

\begin{lemma}\label{D_0}
Suppose that $\alpha\in \kappa$, then
\begin{enumerate}
\item if $\F$ depends on $\kappa\setminus\{\alpha\}$,
then there is a homeomorphism $\phi: \nabla(\F\cup\{d_\alpha\})\rightarrow (\nabla\F)\times I$
such that $\pi\circ\phi=\pi_{\F, \F\cup\{d_\alpha\}}$ where $\pi$ is the natural
projection from $(\nabla\F)\times I$ onto $\nabla\F$,
\item   $\nabla\D = I^\kappa$.
\end{enumerate}
\end{lemma}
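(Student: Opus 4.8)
The statement to be proved is Lemma \ref{D_0}: under the hypothesis that $\F$ depends on $\kappa\setminus\{\alpha\}$, the space $\nabla(\F\cup\{d_\alpha\})$ is homeomorphic to $(\nabla\F)\times I$ over $\nabla\F$, and $\nabla\D=I^\kappa$. The first part is where the work is; the second part will then fall out by transfinite iteration, using Proposition \ref{gelfand-isomorphism} / the definition of $\nabla$ as an inverse limit.

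\textbf{Part (1).} The natural candidate for $\phi$ is the map
\[\phi = \big(\pi_{\F,\,\F\cup\{d_\alpha\}},\ \pi_{\{d_\alpha\},\,\F\cup\{d_\alpha\}}\big)\colon \nabla(\F\cup\{d_\alpha\})\longrightarrow \nabla\F\times \nabla\{d_\alpha\},\]
where we identify $\nabla\{d_\alpha\}$ with $I$ via $d_\alpha[L]=I$ (Lemma \ref{d_alpha}(2)). This $\phi$ is automatically continuous, and $\pi\circ\phi = \pi_{\F,\F\cup\{d_\alpha\}}$ holds by construction, so the only real content is that $\phi$ is a homeomorphism; since everything in sight is compact Hausdorff, it suffices to show $\phi$ is a continuous bijection. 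Injectivity is immediate because the two coordinates of $\phi$ together record all the coordinates of a point of $\nabla(\F\cup\{d_\alpha\}) \subseteq I^{\F\cup\{d_\alpha\}}$. So the crux is \emph{surjectivity}: given $y\in\nabla\F$ and $t\in I$, I must produce $x\in L$ with $f(x) = y(f)$ for all $f\in\F$ and $d_\alpha(x) = t$. Here is where the dependence hypothesis enters. Pick any $x_0\in L$ with $\Pi\F(x_0)=y$ (possible since $\nabla\F=\Pi\F[L]$). The point $x_0$ lives over $p_{\kappa\setminus\{\alpha\}}(x_0)\in L_{\kappa\setminus\{\alpha\}}$, and by Lemma \ref{d_alpha}(2) applied inside the $\{\alpha\}$-coordinate there is a point $s_\alpha\in L_{\{\alpha\}}$ with $d_\alpha$-value (in the sense of the factored function on $L_{\{\alpha\}}$) equal to $t$; more precisely, since the free generators $e_{\alpha,n}$ are independent, choose an ultrafilter $s_\alpha\in L_{\{\alpha\}}$ realizing a $\{0,1\}$-sequence $\sigma$ with $\sum_n \sigma(n)/2^n = t$. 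Now invoke Lemma \ref{extending-ultrafilters} with the disjoint sets $A = \kappa\setminus\{\alpha\}$ and $B=\{\alpha\}$, the points $p_{\kappa\setminus\{\alpha\}}(x_0)\in L_{\kappa\setminus\{\alpha\}}$ and $s_\alpha\in L_{\{\alpha\}}$, to get $x\in L$ with $p_{\kappa\setminus\{\alpha\}}(x) = p_{\kappa\setminus\{\alpha\}}(x_0)$ and $p_{\{\alpha\}}(x) = s_\alpha$. Since every $f\in\F$ depends on $\kappa\setminus\{\alpha\}$, the first equation gives $f(x) = f(x_0) = y(f)$; since $d_\alpha$ depends on $\{\alpha\}$ (Lemma \ref{d_alpha}(1)), the second gives $d_\alpha(x) = d_\alpha$-value at $s_\alpha = t$. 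Thus $\phi(\Pi(\F\cup\{d_\alpha\})(x)) = (y,t)$, proving surjectivity.

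\textbf{The main obstacle} is making the ``$d_\alpha(x)=t$'' step precise: one needs to know that the value of $d_\alpha$ at a point of $L$ is determined by, and can be prescribed through, $p_{\{\alpha\}}$, together with the fact that every value of $I$ is attained. Both are exactly Lemma \ref{d_alpha}, so this is bookkeeping rather than a genuine difficulty; the combinatorial heart — that one can glue an arbitrary behaviour on $\{\alpha\}$ onto an arbitrary behaviour on $\kappa\setminus\{\alpha\}$ — is precisely what Lemma \ref{extending-ultrafilters} provides, which is why it was quoted from \cite{grande}.

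\textbf{Part (2): $\nabla\D = I^\kappa$.} By definition $\nabla\D = \Pi\D[L] \subseteq I^\D \cong I^\kappa$, so the content is surjectivity of $\Pi\D\colon L\to I^\kappa$, $x\mapsto (d_\alpha(x))_{\alpha<\kappa}$. One approach is a direct argument: given $(t_\alpha)_{\alpha<\kappa}\in I^\kappa$, for each $\alpha$ choose a $\{0,1\}$-sequence $\sigma^\alpha$ with $\sum_n \sigma^\alpha(n)/2^n = t_\alpha$, and since the generators $(e_{\alpha,n})_{\alpha<\kappa,\,n\in\N}$ of $Fr(\kappa)$ are free (hence independent), the condition ``$\chi_{s_{\overline{Fr}(\kappa)}(e_{\alpha,n})} = \sigma^\alpha(n)$ for all $\alpha,n$'' is realized by some ultrafilter $x\in L$; then $d_\alpha(x)=t_\alpha$ for all $\alpha$. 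Alternatively, and perhaps more in keeping with the inductive flavour of the section, one deduces it from part (1): the family $\D$ depends coordinatewise in the sense that $d_\alpha$ depends on $\{\alpha\}$, so $\D\setminus\{d_\alpha\}$ depends on $\kappa\setminus\{\alpha\}$; applying (1) repeatedly (and passing to inverse limits for the limit stages, using that $\nabla\D = \varprojlim \nabla\D'$ over finite $\D'\subseteq\D$ and that $I^\kappa=\varprojlim I^{\D'}$) yields $\nabla\D \cong \prod_{\alpha<\kappa} I = I^\kappa$ compatibly with the projections, which is the claim. I would present the direct argument, as it is shortest and self-contained given Lemma \ref{d_alpha} and the freeness of $Fr(\kappa)$.
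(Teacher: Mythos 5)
Your proof of part (1) is correct and follows essentially the same route as the paper: realize the prescribed $\nabla\F$-coordinate and the prescribed value of $d_\alpha$ by two points of $L$ and glue them with Lemma \ref{extending-ultrafilters}, using that $\F$ depends on $\kappa\setminus\{\alpha\}$ and $d_\alpha$ on $\{\alpha\}$ (you merely make explicit the routine injectivity/compactness bookkeeping that the paper leaves implicit). For part (2) the paper just says ``apply (1) inductively together with Lemma \ref{d_alpha}'', whereas your preferred direct argument -- prescribing all coordinates $\chi_{s_{\overline{Fr}(\kappa)}(e_{\alpha,n})}$ at once via independence of the free generators -- is a correct and arguably cleaner variant, since it avoids the limit-stage/inverse-limit discussion that your own sketch of the inductive route rightly points out is needed.
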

\begin{proof}
 Fix $x\in I$ and $y\in \nabla\F$.
  Using Lemma \ref{d_alpha} let $t\in L$ be such that $d_\alpha(t)=x$.
Fix $s\in L$ such that $(\Pi\F)(s)=y$. Use Lemma \ref{extending-ultrafilters}
to find $u\in L$ such that $p_{\{\alpha\}}(u)=p_{\{\alpha\}}(t)$
and $p_{\kappa\setminus\{\alpha\}}(u)=p_{\kappa\setminus\{\alpha\}}(s)$.
Since $d_\alpha$ depends on $\alpha$ by Lemma \ref{d_alpha} and $\F$ depends on $\kappa\setminus\{\alpha\}$
by the hypothesis, we obtain that 
 $(\Pi(\F\cup\{d_\alpha\}))(u)=(y, x)$ which completes 
the proof of part (1).
(2) follows from (1) applied inductively and Lemma \ref{d_alpha}.
\end{proof}

For a Banach space $X$ a density character of $X$ is a cardinality
 of a minimal dense subset of $X$ and it is denoted by $d(X)$.
\begin{lemma}[GCH] \label{density-cardinality}
  Let $\D\subseteq\F\subseteq C_I(L)$. Then the density character of $C(\nabla\F)$ equals $\kappa$.
\end{lemma}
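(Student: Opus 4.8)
The plan is to prove both inequalities $d(C(\nabla\F))\le\kappa$ and $d(C(\nabla\F))\ge\kappa$ separately. For the upper bound I would first recall from Lemma~\ref{depends} that every $f\in C(L)$ depends on some countable subset of $\kappa$, and hence (via Proposition~\ref{gelfand-isomorphism} identifying $C(\nabla\F)$ with $[\F]\subseteq C(L)$) every element of $C(\nabla\F)$ depends on a countable set. The weight of $L_A$ for countable $A$ is $\aleph_0\cdot|A|=\aleph_0$... no, wait: $L_A$ is the Stone space of $\overline{Fr}(A)$, a completion of a countable free algebra, so $|\overline{Fr}(A)|\le\mathfrak c=\aleph_1$ under GCH, and thus $C(L_A)$ has density $\le\mathfrak c=\aleph_1$. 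Since there are $\kappa^{\aleph_0}=\kappa$ countable subsets of $\kappa$ (using GCH and $\kappa$ regular uncountable, so $\mathrm{cf}(\kappa)>\omega$ gives $\kappa^{\aleph_0}=\kappa$), the union over all countable $A\subseteq\kappa$ of the (dense subsets of size $\le\aleph_1$ of the) spaces $C(L_A)\cap[\F]$, suitably factored through $\nabla\F$, yields a dense subset of $[\F]\cong C(\nabla\F)$ of size $\le\kappa\cdot\aleph_1=\kappa$.

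For the lower bound I would use the hypothesis $\D\subseteq\F$: since $\pi_{\D,\F}:\nabla\F\to\nabla\D$ is a continuous surjection, composition with it gives an isometric embedding of $C(\nabla\D)$ into $C(\nabla\F)$, so $d(C(\nabla\F))\ge d(C(\nabla\D))$. By Lemma~\ref{D_0}(2) we have $\nabla\D=I^\kappa$, and it is classical that $C(I^\kappa)$ (equivalently, the product $I^\kappa$ itself) has density character exactly $\kappa$ for infinite $\kappa$: the weight of $I^\kappa$ is $\kappa$, and for compact $K$ the density of $C(K)$ equals the weight of $K$. Combining, $d(C(\nabla\F))\ge\kappa$.

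The main obstacle is the bookkeeping in the upper bound: one must check that the countably many coordinates on which an $f\in[\F]$ depends can be chosen \emph{inside} the index set of $\F$ in a way compatible with the factorization $f(\F)\in C(\nabla\F)$, and that a dense family is genuinely obtained — i.e.\ that if $f\in[\F]$ depends on countable $A\subseteq\kappa$ then $f$ lies in (the closure of) the subalgebra generated by $\{g\in\F: g \text{ depends on } A\}$ together with $\{d_\alpha:\alpha\in A\}$, so that approximating these countably many generators suffices. This is where Lemmas~\ref{depends-countable}, \ref{factorization} and the structure of $\nabla\F$ as an inverse limit of the $\nabla(\F\restriction A)$ get used. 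Once that is set up, the cardinal arithmetic ($\kappa^{\aleph_0}=\kappa$ under GCH for $\kappa$ of uncountable cofinality, and $|\overline{Fr}(\omega)|\le\mathfrak c=\aleph_1$) is routine, and together with the lower bound from $\nabla\D=I^\kappa$ the equality $d(C(\nabla\F))=\kappa$ follows.
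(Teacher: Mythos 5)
Your proof is correct, and your lower bound is exactly the paper's: composing with $\pi_{\D,\F}$ embeds $C(\nabla\D)=C(I^\kappa)$ isometrically into $C(\nabla\F)$, and $d(C(I^\kappa))=\kappa$ since density of $C(K)$ equals weight of $K$. For the upper bound you take a genuinely different route. The paper embeds $C(\nabla\F)$ into $C(L)$ via $\Pi\F$ and bounds $d(C(L))$ by Stone--Weierstrass together with a count of the clopen algebra: $\overline{Fr}(\kappa)$ is c.c.c.\ and contains the dense subalgebra $Fr(\kappa)$ of size $\kappa$, so every element is a countable supremum from $Fr(\kappa)$, giving $|\overline{Fr}(\kappa)|\le\kappa^\omega=\kappa$ by Lemma \ref{cardinal}. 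You instead decompose through Lemma \ref{depends}: every element of $[\F]$ lies in some $C(L_A)$ with $A\subseteq\kappa$ countable, each such $C(L_A)$ has density at most $|\overline{Fr}(A)|\le 2^{\aleph_0}=\aleph_1$, and there are only $\kappa^{\aleph_0}=\kappa$ countable subsets of $\kappa$ (under GCH this holds for every regular uncountable $\kappa$: countable subsets are bounded and $|\alpha|^{\aleph_0}\le 2^{|\alpha|}\le\kappa$ for $\alpha<\kappa$). Both arguments are sound; the paper's is shorter and needs only $\kappa^\omega=\kappa$, while yours additionally invokes the instance $\mathfrak{c}=\aleph_1$ of GCH but makes explicit on which coordinates individual functions depend. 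Finally, the obstacle you flag in your last paragraph is not a real one: there is no need to show that an $f\in[\F]$ depending on a countable $A$ is approximable by elements built from those members of $\F$ that depend on $A$. Either bound $d(C(L))$ and use that density character is monotone under isometric embeddings of metric spaces (exactly as the paper does, and as you already do for the lower bound), or take dense sets of size $\le\aleph_1$ in the subspaces $C(L_A)\cap[\F]$ themselves, since density is hereditary for subspaces of metric spaces; their union, of size $\le\kappa$, is then dense in $[\F]\cong C(\nabla\F)$ by Proposition \ref{gelfand-isomorphism}, with no further factorization bookkeeping required.
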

\begin{proof}
  Using the surjections $\pi_{\D,\F}$ and $\Pi\F$ we obtain isometric injection
 of $C(\nabla\D)$ into $C(\nabla\F)$ and $C(\nabla\F)$ into $C(L)$, so $\kappa\leq d(C(\nabla\F))\leq d(C(L))$
by Lemma \ref{D_0}.
On the other hand, by the Stone-Weierstrass theorem, the density of the Banach space
$C(L)$ is not bigger than the cardinality of the Boolean algebra 
of clopen subsets of $L$ which is isomorphic to the algebra  $\overline{Fr}(\kappa)$,
which is c.c.c. (Proposition \ref{gleason}) and contains a dense subalgebra
${Fr}(\kappa)$ of cardinality $\kappa$. So each element of $\overline{Fr}(\kappa)$
is the supremum of a countable subset of ${Fr}(\kappa)$, hence $|\overline{Fr}(\kappa)|\leq\kappa^\omega$.
So by Lemma \ref{cardinal}, we obtain $d(C(L))\leq\kappa$ which completes the proof.
\end{proof}

It turns out to be convenient to talk about open subsets 
of the Gelfand spaces $\nabla\F$ of the algebras $[\F]$ 
using a language purely depending on $\F$. The next definition
is aiming at this purpose.

\begin{definition}\label{definition-openF}
Suppose that  $\F\subseteq \G \subseteq C_I(L)$. Let $\mathcal J$ denote
the family of all nonempty open subintervals of $[0,1]$ with rational endpoints.
By $\B(\F)$ we denote the family of all partial functions
\[ U: dom(U)\rightarrow \mathcal J,\]
where the domain $dom(U)$ of $U$ is a finite subset of $\mathcal F$.
We will consider the evaluation $U(\G)$ of $U$ at $\G$ which is defined as
\[U(\G)=\{x\in \nabla G: x(f)\in U(f)\ \hbox{for all}\  f\in dom(U)\}.\]
By $U(L)$ we will mean the set
$\{x\in L: f(x)\in U(f)\ \hbox{for all}\  f\in dom(U)\}=\bigcap_{f\in dom(U)}f^{-1}[U(f)]$.
\end{definition}

Note that with the above notation $U(f)$ is the same subinterval of $I$ as the
one defined as $U(\{f\})$.

\begin{lemma}\label{openF}Suppose that  $\F\subseteq \G \subseteq C_I(L)$
and $U, V\in \B(\F)$.
\begin{enumerate}
\item The family of  all sets of the form $W(\F)$ for $W\in \B(\F)$ forms  a basis
of open sets for $\nabla \F$.
\item $(\Pi\F)^{-1}[U(\F)]=U(L)$
\item $\Pi\F[U(L)]=U(\F)$,
\item $\pi_{\F, \G}^{-1}[U(\F)]=U(\G)$,
\item $\pi_{\F, \G}[U(\G)]=U(\F)$
\item $U(\F)\cap V(\F)=\emptyset$,
if and only if $U(\G)\cap V(\G)=\emptyset$.
\item   $U(\F)\subseteq V(\F)$,
if and only if $U(\G)\subseteq V(\G)$.
\item  $\overline{U(\F)}\subseteq V(\F)$,
if and only if $\overline{U(\G)}\subseteq V(\G)$, where the closures are taken in 
$\nabla \F$ and $\nabla \G$ respectively,
\end{enumerate}
\end{lemma}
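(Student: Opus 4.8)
The plan is to reduce everything to the single fact that $\Pi\F\colon L\to\nabla\F$ and $\pi_{\F,\G}\colon\nabla\G\to\nabla\F$ are continuous surjections between compact Hausdorff spaces, together with the basic relation $\pi_{\F,\G}\circ\Pi\G=\Pi\F$ already recorded after Lemma \ref{factorization}. For item (1), note that $\nabla\F\subseteq I^\F$ carries the subspace topology, so a basis is given by intersections of $\nabla\F$ with finitely many preimages of open subintervals of $I$ under coordinate projections; since $\mathcal J$ generates the topology of $I$, the sets $W(\F)$ for $W\in\B(\F)$ are exactly these, giving (1). For item (2), by definition $x\in(\Pi\F)^{-1}[U(\F)]$ means $\big((\Pi\F)(x)\big)(f)=f(x)\in U(f)$ for every $f\in dom(U)$, which is precisely the defining condition for $x\in U(L)$; this is an unwinding of definitions. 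Item (3) then follows by applying $\Pi\F$ to (2) and using surjectivity of $\Pi\F$: $\Pi\F[U(L)]=\Pi\F[(\Pi\F)^{-1}[U(\F)]]=U(\F)$.

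For items (4) and (5), observe that $U(\G)=\{x\in\nabla\G: x(f)\in U(f)\text{ for }f\in dom(U)\}$ is by construction the preimage of $U(\F)$ under $\pi_{\F,\G}$ (which is just the coordinate restriction), so (4) is again a definitional identity; (5) follows by surjectivity of $\pi_{\F,\G}$ exactly as (3) followed from (2). With (2)--(5) in hand, the remaining items become pure point-set bookkeeping. For (6), disjointness of $U(\F)$ and $V(\F)$ pulls back under $\pi_{\F,\G}$ to disjointness of $U(\G)$ and $V(\G)$ by (4); conversely, if $U(\F)\cap V(\F)\ni y$, then by surjectivity of $\pi_{\F,\G}$ there is $x\in\pi_{\F,\G}^{-1}(y)$, and $x\in U(\G)\cap V(\G)$ by (4). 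Item (7) is identical with ``$\subseteq$'' in place of disjointness, again using that $\pi_{\F,\G}$ is onto for the backward direction.

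The only item requiring an argument beyond formal manipulation is (8), and this is where I expect the one genuine point to lie: one must know that $\pi_{\F,\G}$ behaves well with closures. Since $\pi_{\F,\G}$ is a continuous map of compact Hausdorff spaces it is a closed map, hence (by Lemma \ref{closed}) $\pi_{\F,\G}[\overline{U(\G)}]=\overline{\pi_{\F,\G}[U(\G)]}=\overline{U(\F)}$ using (5); combined with $\pi_{\F,\G}^{-1}[\overline{U(\F)}]\supseteq\overline{U(\G)}$ (continuity) this gives $\pi_{\F,\G}^{-1}[\overline{U(\F)}]=\overline{U(\G)}$ because $\pi_{\F,\G}$ being a closed surjection satisfies $\pi_{\F,\G}^{-1}[\overline{U(\F)}]\subseteq\overline{\pi_{\F,\G}^{-1}[U(\F)]}$ only after a short verification—alternatively, one argues directly: $\overline{U(\G)}\subseteq V(\G)$ iff $\pi_{\F,\G}[\overline{U(\G)}]\subseteq\pi_{\F,\G}[V(\G)]=V(\F)$ (using that $\pi_{\F,\G}^{-1}[V(\F)]=V(\G)$ so the implication reverses), and $\pi_{\F,\G}[\overline{U(\G)}]=\overline{U(\F)}$ by closedness, yielding the equivalence. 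The backward direction of (8) is the delicate one: from $\overline{U(\F)}\subseteq V(\F)$ one infers $\overline{U(\G)}\subseteq\pi_{\F,\G}^{-1}[\overline{U(\F)}]\subseteq\pi_{\F,\G}^{-1}[V(\F)]=V(\G)$, where the first inclusion is continuity of $\pi_{\F,\G}$ applied to $U(\G)=\pi_{\F,\G}^{-1}[U(\F)]\subseteq\pi_{\F,\G}^{-1}[\overline{U(\F)}]$ (a closed set). Thus no compactness beyond closedness of continuous maps on compacta is needed, and the proof is complete.
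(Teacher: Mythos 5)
Your proof is correct and follows essentially the same route as the paper's: items (1)--(7) are definitional unwindings together with surjectivity of $\Pi\F$ and $\pi_{\F,\G}$ (via $\pi_{\F,\G}\circ\Pi\G=\Pi\F$), and for (8) the arguments you ultimately rely on are exactly the paper's two ingredients, namely $\overline{\pi_{\F,\G}^{-1}[U(\F)]}\subseteq\pi_{\F,\G}^{-1}[\overline{U(\F)}]$ combined with (4) for one implication, and $\pi_{\F,\G}[\overline{U(\G)}]=\overline{\pi_{\F,\G}[U(\G)]}$ (Lemma \ref{closed}) combined with (5) for the other. One warning about the aside in (8): the claim that $\pi_{\F,\G}^{-1}[\overline{U(\F)}]=\overline{U(\G)}$, i.e.\ that the closed surjection $\pi_{\F,\G}$ satisfies $\pi_{\F,\G}^{-1}[\overline{U(\F)}]\subseteq\overline{\pi_{\F,\G}^{-1}[U(\F)]}$, is not a ``short verification'' but is false in general (preimages of closures can strictly contain closures of preimages for closed maps of compacta), and its failure is precisely the phenomenon this paper lives on, since an antichain not separated in $[\F]$ may well become separated in $[\G]$. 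Fortunately you discard that route: your direct argument via the adjunction $\pi[A]\subseteq B\Leftrightarrow A\subseteq\pi^{-1}[B]$ together with the final paragraph covers both implications without it, so the proof stands.
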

\begin{proof} The first item is clear from the definition of the product topology.
Item (2) follows directly from Definition \ref{definition-openF}.
Item (3) is the immediate consequence of (2). Item (4)
follows from the fact that $\pi_{\F, \G}^{-1}[X]=\Pi\G[(\Pi\F)^{-1}[X]$ for any $X\subseteq \nabla\F$ and
(2) - (3), namely $\pi_{\F, \G}^{-1}[U(\F)]=\Pi\G[(\Pi\F)^{-1}[U(\F)]=\Pi\G[U(L)]=U(\G)$.
Item (5) is the immediate consequence of (4).  Items (6) - (7)
are the immediate consequences of (4) and the properties of the preimages
of functions. For the forward direction of (8), note 
that always $\overline{\pi_{\F, \G}^{-1}[U(\F)]}\subseteq
 \pi_{\F, \G}^{-1}[\overline{U(\F)}]$ and apply (4). For the backward direction of (8), note 
that always $\overline{\pi_{\F, \G}[U(\G)]}=
 \pi_{\F, \G}[\overline{U(\G)}]$  by Lemma \ref{closed} and apply (5). 
\end{proof}

\begin{definition}
Suppose that  $\F\subseteq  C_I(L)$. A family $\mathcal U\subseteq \B(\F)$
is called an antichain if and only if $U(\F)\cap V(\F)=\emptyset$ for
all $U, V\in \mathcal U$.
\end{definition}

We see  by Lemma \ref{openF} that  the property of
being of  antichain does not change if we pass from $\F$ to a bigger $\G$.
Despite of Lemma \ref{openF} a nontrivial interplay between
 properties $U(\F)$s and $U(\G)$s for $\F\subseteq\G\subseteq C_I(L)$
is possible and will actually be at the heart of the difficulties of the main construction
of this paper. For example, as we want $\nabla\F$ to be connected, we would need
$\Pi\F[U]\cap \Pi\F[L\setminus U]\not=\emptyset$ for any clopen $U\subseteq L$,
In fact, the main properties of $\nabla\F$ for the main
construction of $\F$  (Section 5) corresponding to Definition \ref{paracomplicated} (b) are
expressed in terms of 
the nonemptyness of the intersection 
$\overline{\bigcup_{n\in M}U_n(\F)} \cap \overline{\bigcup_{n\in\N\setminus M}U_n(\F)}$
 for some antichain $(U_n)_{n\in \N}\subseteq \B(\F)$ and $M\subseteq \N$, while 
$\overline{\bigcup_{n\in M}U_n(L)} \cap \overline{\bigcup_{n\in\N\setminus M}U_n(L)}$
is always empty, since $L$ is extremally disconnected, which implies that
it is empty if we replace $U_n(L)$s by $U_n(\G)$s for sufficiently big $\G$.
The following lemma is the first of a series of  observations aiming at
developing techniques of increasing the family $\F$ to a bigger $\G$ with
preserving the nonemptyness of the intersections of the closures of unions as above.

\begin{lemma} \label{keep-intersected}
  Let $\F\subseteq C_I(L)$,  and let $(U_n)_{n\in\N} \subseteq \B(\F)$ be
  an antichain. 
  Let $M\subseteq\N$ be such that there exists $x\in\nabla\F$ with
  \[
    x\in \overline{\bigcup_{n\in M}U_n(\F)} \cap \overline{\bigcup_{n\in\N\setminus M}U_n(\F)}.
  \]
  Then  there exist $s,t\in L$ such that
\begin{enumerate}
  \item  $s\in \overline{\bigcup_{n\in M}U_n(L)}$ and $t\in \overline{\bigcup_{n\in\N\setminus M}U_n(L)}$,
   and 
\item $\Pi\F(s)=\Pi\F(t)=x$.
\end{enumerate}
  Moreover if $f\in C_I(L)$ is such that
  \begin{enumerate}
     \item[(3)] $f(s)=f(t)$ for any $s, t$ satisfying (2),
  \end{enumerate}
   then
  \[
    \overline{\bigcup_{n\in M}U_n(\G)} \cap 
\overline{\bigcup_{n\in\N\setminus M}U_n(\G)}\not=\emptyset,
  \] where $\G=\F\cup\{f\}$.
\end{lemma}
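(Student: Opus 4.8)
The plan is to prove the two assertions separately, first extracting the points $s,t\in L$ and then verifying that adjoining $f$ preserves the intersection. For the first part, I would use that $\Pi\F\colon L\rightarrow\nabla\F$ is a continuous surjection of compact spaces, hence a closed map (Lemma \ref{closed}). Consider $A=\overline{\bigcup_{n\in M}U_n(L)}$, a closed subset of $L$. By Lemma \ref{openF}(2) we have $(\Pi\F)^{-1}[U_n(\F)]=U_n(L)$, so $\bigcup_{n\in M}U_n(L)=(\Pi\F)^{-1}[\bigcup_{n\in M}U_n(\F)]$, and applying Lemma \ref{closed} to the set $Z=\bigcup_{n\in M}U_n(L)$ gives $\Pi\F[A]=\overline{\Pi\F[\bigcup_{n\in M}U_n(L)]}=\overline{\bigcup_{n\in M}U_n(\F)}$ (using Lemma \ref{openF}(3) for the inner equality). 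Since $x$ lies in this closure, there is $s\in A$ with $\Pi\F(s)=x$; symmetrically there is $t\in\overline{\bigcup_{n\in\N\setminus M}U_n(L)}$ with $\Pi\F(t)=x$. This establishes (1) and (2).

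For the \emph{moreover} part, set $\G=\F\cup\{f\}$. I want a point $y\in\nabla\G$ lying in both $\overline{\bigcup_{n\in M}U_n(\G)}$ and $\overline{\bigcup_{n\in\N\setminus M}U_n(\G)}$. The natural candidate is $y=\Pi\G(s)$, which equals $\Pi\G(t)$: indeed $\Pi\G$ records the values of all functions in $\F$ together with $f$, and $\Pi\F(s)=\Pi\F(t)=x$ handles the $\F$-coordinates while hypothesis (3) gives $f(s)=f(t)$, so all $\G$-coordinates of $s$ and $t$ agree, i.e.\ $\Pi\G(s)=\Pi\G(t)=:y$. Now I claim $y\in\overline{\bigcup_{n\in M}U_n(\G)}$. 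Since $s\in\overline{\bigcup_{n\in M}U_n(L)}$ and $\Pi\G$ is continuous and closed, $\Pi\G[\overline{\bigcup_{n\in M}U_n(L)}]=\overline{\Pi\G[\bigcup_{n\in M}U_n(L)]}$ by Lemma \ref{closed}; and $\Pi\G[U_n(L)]=U_n(\G)$ by Lemma \ref{openF}(3) applied with $\G$ in place of $\F$ (note $U_n\in\B(\F)\subseteq\B(\G)$). Hence $y=\Pi\G(s)\in\overline{\bigcup_{n\in M}U_n(\G)}$. The symmetric argument with $t$ gives $y\in\overline{\bigcup_{n\in\N\setminus M}U_n(\G)}$, so the intersection is nonempty.

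The one genuinely delicate point, and the place I would be most careful, is the interchange of closure with the image map: this relies precisely on $\Pi\F$ and $\Pi\G$ being \emph{closed} maps, which holds because $L$ is compact and $\nabla\F$, $\nabla\G$ are Hausdorff (they are subspaces of products of copies of $I$), so Lemma \ref{closed} applies verbatim. One should also make sure $\Pi\G[U_n(L)] = U_n(\G)$ is being invoked with the correct ambient family; this is just Lemma \ref{openF}(3) read with $\G$, since $dom(U_n)\subseteq\F\subseteq\G$ so $U_n$ is equally a member of $\B(\G)$ and $U_n(L)$ computed in either family is literally the same subset $\bigcap_{g\in dom(U_n)}g^{-1}[U_n(g)]$ of $L$. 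Everything else is bookkeeping with the identities in Lemma \ref{openF}.
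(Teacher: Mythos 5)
Your argument is correct and follows essentially the same route as the paper: extract $s,t$ by using that $\Pi\F$ is a closed map (Lemma \ref{closed}) together with Lemma \ref{openF}(2)--(3), and then push $s,t$ forward under $\Pi\G$, noting $\Pi\G(s)=\Pi\G(t)$ since the $\F$-coordinates agree by (2) and the $f$-coordinate agrees by hypothesis (3). Your remark that $U_n(L)$ is literally the same subset of $L$ whether read in $\B(\F)$ or $\B(\G)$ is exactly the bookkeeping the paper leaves implicit.
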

\begin{proof}
By Lemma \ref{closed}  and Lemma \ref{openF} (3) we have
	\[ \Pi\F[\overline{\bigcup_{n\in M} U_n(L)}] 
	=   \overline{\Pi\F[\bigcup_{n\in M} U_n(L)]} 
    =   \overline{\bigcup_{n\in M} U_n(\F)}\ni x,\]
so the existence of $s, t$ as in (1) - (2) follows. 
For (3) again
use Lemma \ref{closed}  and Lemma \ref{openF} (3)   to note that
\[
	\Pi(\F\cup\{f\})(s) 
	\in  \Pi(\F\cup\{f\})[\overline{\bigcup_{n\in M} U_n(L)}] 
	=   \overline{\Pi(\F\cup\{f\})[\bigcup_{n\in M} U_n(L)]} 
    =   \overline{\bigcup_{n\in M} U_n(\G)},\]
  and similarly 
  %\[
	$\Pi(\F\cup\{f\})(t) \in \overline{\bigcup_{n\in \N\setminus M} U_n(\G)}$,
  %\]
  which finishes the proof, since 
  %\[
    $\Pi(\F\cup\{f\})(t)=(x,f(t))=(x,f(s))=\Pi(\F\cup\{f\})(s)$  by (2) and by
the hypothesis of  (3).
  %\]
\end{proof}

\subsection{Adding suprema}

\begin{definition}\label{Delta} Suppose that $K$ is a compact Hausdorff space.
  For a pairwise disjoint sequence $(f_n)_{n\in\N}\subseteq C_I(K)$ and a function $f\in C_I(K)$ we define the set
  \[ \Delta\big(f,(f_n)_{n\in\N}\big) = \{ x\in K \mid f(x) \neq \sum_{n\in\N} f_n(x) \}. \]
\end{definition}

\begin{lemma}[4.1.(a) \cite{few}] \label{suprema-criterium} Let $K$ be a compact
Hausdorff space.
  A function $f\in C_I(K)$ is the supremum in $C(K)$ of 
a pairwise disjoint sequence $(f_n)_{n\in\N}\subseteq C_I(K)$ if and only if the set $\Delta\big(f,(f_n)_{n\in\N}\big)$
  is nowhere dense.
\end{lemma}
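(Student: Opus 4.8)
The plan is to characterize when $f = \bigvee_{n} f_n$ in the lattice $C(K)$ by analyzing the pointwise function $g(x) = \sum_{n\in\N} f_n(x)$, which equals $\sup_n f_n(x)$ since the $f_n$ are pairwise disjoint and nonnegative. Note that $g$ is a bounded function with values in $[0,1]$, but it need not be continuous; however it is lower semicontinuous as a pointwise supremum of continuous functions. First I would observe that any $h \in C(K)$ satisfies $h \geq f_n$ for all $n$ if and only if $h \geq g$ pointwise, so $f$ is the supremum of $(f_n)$ in $C(K)$ precisely when $f$ is the least continuous function dominating $g$ pointwise. In particular $f \geq g$ always holds if $f$ is any upper bound, so $\Delta = \Delta(f,(f_n)_{n\in\N}) = \{x : f(x) \neq g(x)\} = \{x : f(x) > g(x)\}$ in that case.

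For the forward direction, suppose $f = \bigvee_n f_n$ in $C(K)$; I want to show $\Delta$ is nowhere dense. Since $f$ is continuous and $g$ is lower semicontinuous, the set $\Delta = \{f > g\}$ need not be open, but I can argue as follows: if $\Delta$ were not nowhere dense, there would be a nonempty open $V \subseteq \overline{\Delta}$. I would then find a nonempty open $W \subseteq V$ and a rational $r$ such that $f(x) > r$ on $W$ while the open set $\{g < r\} = \bigcap_n \{f_n < r\}$ meets $W$; shrinking, one produces a nonempty open set on which $f > r$ but every $f_n < r$. Then the function $h = \min(f, \,(1-\lambda)f + \lambda r\cdot\mathbf{1})$ — more carefully, a continuous function obtained by pushing $f$ down slightly on that open set using Urysohn's lemma and keeping it equal to $f$ outside — is a continuous upper bound for all $f_n$ that is strictly below $f$ somewhere, contradicting minimality of $f$. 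The technical care here is to build the perturbation $h$ genuinely continuous and genuinely $\geq f_n$ for every $n$ simultaneously; this uses that on the relevant open set all $f_n$ are bounded away below $f$ by a fixed amount.

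For the converse, suppose $\Delta$ is nowhere dense; I must show $f = \bigvee_n f_n$. Clearly $f \geq g \geq f_n$ pointwise (since $f$ agrees with $g$ off a nowhere dense set and $f,g$ are... here one needs $f \geq g$ everywhere: on the dense set $K \setminus \overline{\Delta}$ we have $f = g$, and by lower semicontinuity of $g$ and continuity of $f$, $f \geq g$ extends to all of $K$). So $f$ is an upper bound for $(f_n)$. Now let $h \in C(K)$ be any upper bound, so $h \geq f_n$ for all $n$, hence $h \geq g$ pointwise, hence $h \geq f$ on the dense set $K\setminus\overline\Delta$, and by continuity $h \geq f$ everywhere. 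Thus $f$ is the least upper bound, i.e. $f = \bigvee_n f_n$ in $C(K)$.

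The main obstacle is the forward direction: producing, from a non-nowhere-dense $\Delta$, an explicit continuous function strictly smaller than $f$ that still dominates every $f_n$. The delicate point is handling infinitely many $f_n$ at once — one must locate an open set on which $f$ exceeds $\sup_n f_n$ by a uniform margin, which is exactly where lower semicontinuity of $g$ and the definition of $\Delta$ via strict inequality are used, and then apply Urysohn's lemma to carve out the perturbation without disturbing the inequality elsewhere. I expect the cited reference (4.1.(a) of \cite{few}) handles precisely this construction.
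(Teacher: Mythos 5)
Your backward direction is correct and complete: the lower semicontinuity argument showing that $f\geq g$ (where $g=\sum_n f_n=\sup_n f_n$ pointwise) follows from $f=g$ on the dense set $K\setminus\overline{\Delta}$, and the conclusion that any continuous upper bound $h\geq g$ dominates $f$ on a dense set and hence everywhere, is exactly right.

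The forward direction, however, has a genuine gap precisely at the step you flag as delicate. You write that $\{g<r\}=\bigcap_{n}\{f_n<r\}$ is open; it is not: $g$ is only lower semicontinuous, so its strict sublevel sets are $G_\delta$ but in general not open (the values $f_n$, for larger and larger $n$, may creep up towards $r$ arbitrarily close to your chosen point $x_0$ even though every $f_n(x_0)=0$). Consequently the ``shrinking'' that is supposed to produce a nonempty open set on which $f>r$ while \emph{every} $f_n<r$ is not justified by anything you have written, and no neighbourhood of $x_0$ need work; yet this uniform gap on an open set is exactly what the Urysohn perturbation requires. The missing idea is the one the paper records as Lemma \ref{domain-sup}: the set $D=D((f_n)_{n\in\N})$ of points having a neighbourhood meeting $\supp(f_n)$ for only finitely many $n$ is open and \emph{dense} (density uses pairwise disjointness: on $\{f_{n_0}>0\}$ all other $f_m$ vanish, so such a set meets only one support). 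On $D$ the function $g$ is locally a finite sum, hence continuous, and the argument closes cleanly: if $f=\bigvee_n f_n$ and $f(x)>g(x)$ for some $x\in D$, then on a small open $W\subseteq D$ one has $f>g+\varepsilon$, and $h=f-\varepsilon\varphi$ with $\varphi$ a Urysohn function concentrated in $W$ is a strictly smaller continuous upper bound, a contradiction; therefore $\Delta\subseteq K\setminus D$, which is closed with empty interior, so $\Delta$ is nowhere dense. (Equivalently, in your contradiction scheme: either $\Delta$ meets $D$, which yields the uniform-gap open set, or $\Delta\subseteq K\setminus D$ and is already nowhere dense.) Your choice of a point of $\Delta$ inside an arbitrary open subset of $\overline{\Delta}$ cannot substitute for this, since that point may lie outside $D$, where no uniform control over the tail of $(f_n)_{n\in\N}$ is available. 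The paper itself does not reprove the lemma (it cites 4.1 of \cite{few}), but the standard proof is the $D$-based one just described, and that local-finiteness device is the ingredient your proposal defers to the reference rather than supplies.
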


\begin{lemma}\label{domain-sup} Let $K$ be a compact
Hausdorff space.
  For a pairwise disjoint sequence $(f_n)_{n\in\N}\subseteq C_I(K)$ the following set 
  \[ 
    D((f_n)_{n\in\N}) = \bigcup\{ U\mid  U \ \hbox{is open in}\  K\ 
 \hbox{and}\  \{n\in\N\colon U\cap\supp(f_n)\not
=\emptyset\}\  \hbox{is finite}\}   
  \]
  is dense open and for $f = \bigvee_{n\in\N} f_n$ in $C(K)$ we have
  \[
	f \restriction D((f_n)_{n\in\N}) = \sum_{n\in\N} f_n \restriction D((f_n)_{n\in\N}).
  \]
\end{lemma}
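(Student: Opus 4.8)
The plan is to prove the two assertions about $D((f_n)_{n\in\N})$ in turn: first that the set is dense and open, then that $f=\bigvee_{n\in\N}f_n$ agrees with $\sum_{n\in\N}f_n$ on it.

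\textbf{Openness and density of $D$.} Openness is immediate: $D$ is defined as a union of open sets, so it is open. For density, fix a nonempty open $W\subseteq K$ and a nonempty open $W'$ with $\overline{W'}\subseteq W$ (possible since $K$ is compact Hausdorff, hence regular). It suffices to find a nonempty open $U\subseteq W$ meeting only finitely many supports $\supp(f_n)$. Suppose toward a contradiction that every nonempty open subset of $W'$ meets infinitely many of the sets $\supp(f_n)$. Then one can recursively pick a strictly increasing sequence $n_1<n_2<\cdots$ and nonempty open sets $G_k\subseteq W'$ with $\overline{G_{k+1}}\subseteq G_k$ and $G_k\cap\supp(f_{n_k})\neq\emptyset$: at each stage $G_k$ is a nonempty open subset of $W'$, so it meets $\supp(f_n)$ for infinitely many $n$, in particular for some $n_{k+1}>n_k$; intersecting with $f_{n_{k+1}}^{-1}[(0,1]]$, which is open since $f_{n_{k+1}}$ is continuous, gives a nonempty open set, inside which we may choose $G_{k+1}$ with $\overline{G_{k+1}}$ contained in it and in $G_k$, again by regularity. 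By compactness $\bigcap_k\overline{G_k}\neq\emptyset$; pick $x$ in this intersection. For each $k$, since $x\in\overline{G_{k+1}}\subseteq G_k$ and $G_k$ meets the open set $f_{n_k}^{-1}[(0,1]]$, actually we arrange $G_k\subseteq f_{n_k}^{-1}[(0,1]]$, so $x\in\overline{G_k}$ lies in the closure of a set on which $f_{n_k}>0$; more carefully, choosing $G_k$ so that $f_{n_k}\ge c_k>0$ on $G_k$ for some constant (using continuity of $f_{n_k}$ and shrinking) forces $f_{n_k}(x)\ge c_k>0$ by continuity. Thus $f_{n_k}(x)>0$ for infinitely many $k$, so the $f_{n_k}$ are not pairwise disjoint at $x$ — contradicting that $(f_n)_{n\in\N}$ is pairwise disjoint, i.e. $f_n\cdot f_m=0$ for $n\neq m$. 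Hence some nonempty open $U\subseteq W'\subseteq W$ meets only finitely many supports, so $U\subseteq D$ and $D\cap W\neq\emptyset$.

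\textbf{Agreement of $f$ with $\sum_n f_n$ on $D$.} Let $f=\bigvee_{n\in\N}f_n$ in $C(K)$. Fix $x\in D$; then there is an open neighbourhood $U$ of $x$ meeting only finitely many supports, say $\supp(f_n)\cap U=\emptyset$ for all $n\notin F$, where $F\subseteq\N$ is finite. On $U$ the infinite sum $\sum_{n\in\N}f_n$ reduces to the finite sum $\sum_{n\in F}f_n$, which is continuous; denote $g=\sum_{n\in F}f_n$, a continuous function on $K$ with $g\restriction U=\sum_{n\in\N}f_n\restriction U$. I claim $g$ is an upper bound for $\{f_n:n\in\N\}$ when restricted suitably — more precisely, I invoke Lemma \ref{suprema-criterium}: the set $\Delta(f,(f_n)_{n\in\N})=\{x: f(x)\neq\sum_n f_n(x)\}$ is nowhere dense. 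Therefore $\Delta(f,(f_n)_{n\in\N})\cap U$ is nowhere dense in $U$, so its complement in $U$ is dense in $U$; on that complement $f=\sum_n f_n=g$. Since $f$ and $g$ are both continuous on $U$ and agree on a dense subset of $U$, they agree on all of $U$, in particular $f(x)=g(x)=\sum_{n\in\N}f_n(x)$. As $x\in D$ was arbitrary, $f\restriction D=\sum_{n\in\N}f_n\restriction D$.

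\textbf{Main obstacle.} The routine part is the agreement statement, which is essentially a continuity-plus-density argument on top of Lemma \ref{suprema-criterium}; the part requiring care is the density of $D$, where one must rule out the pathological scenario of a shrinking tower of open sets each meeting a fresh support. The key device there is to shrink each $G_k$ not merely to meet $\supp(f_{n_k})$ but to sit inside a region where $f_{n_k}$ is bounded below by a positive constant, so that the limit point $x\in\bigcap_k\overline{G_k}$ inherits $f_{n_k}(x)>0$ for infinitely many $k$, contradicting pairwise disjointness. One should double-check that the recursion can indeed maintain $\overline{G_{k+1}}\subseteq G_k$ together with the lower-bound condition, which is where compactness and regularity of $K$ are used.
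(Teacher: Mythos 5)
Your proof is correct and follows essentially the same route as the paper: the agreement-on-$D$ step is exactly the paper's argument (the sum is locally a finite, hence continuous, sum on $D$, the set $\Delta(f,(f_n)_{n\in\N})$ is nowhere dense by Lemma \ref{suprema-criterium}, and two continuous functions agreeing on a dense subset of the open set $U$ agree on $U$), while for the dense-open part the paper merely cites 4.1(b) of \cite{few}, which your recursive shrinking argument with the positive lower bounds $c_k$ proves directly and correctly. The only cosmetic remark is that the condition $\overline{G_{k+1}}\subseteq G_k$ you worry about is not even needed: nestedness of the $G_k$ already makes the closures $\overline{G_k}$ a decreasing family of nonempty compact sets, so $\bigcap_k\overline{G_k}\neq\emptyset$, and the lower bound $f_{n_k}\geq c_k$ on $G_k$ passes to $\overline{G_k}$ by continuity, giving the contradiction with pairwise disjointness from just two distinct indices.
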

\begin{proof}
The first part is  the first part of 4.1. (b) of \cite{few}. The second part
follows from the second part of 4.1. (b) which says that $\sum_{n\in\N} f_n$
is continuous on the open set $D((f_n)_{n\in\N})$, from the fact that
two distinct continuous function differ on an open set and from Lemma \ref{suprema-criterium}.
\end{proof}

\begin{lemma} \label{supp-sup} Let $K$ be a compact
Hausdorff space. Suppose that $(f_n)_{n\in\N}\subseteq C_I(K)$ is an antichain
and $f=\bigvee_{n\in\N}f_n$ in $C(K)$.   Then
  \[
	\supp (f) \subseteq \overline{\bigcup_{n\in\N} \supp(f_n)}.
  \]
\end{lemma}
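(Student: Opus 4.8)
The plan is to show the contrapositive at the level of points: if $x \notin \overline{\bigcup_n \supp(f_n)}$, then $x \notin \supp(f)$, i.e. $f(x) = 0$. So fix such an $x$ and pick an open neighborhood $W$ of $x$ with $W \cap \supp(f_n) = \emptyset$ for every $n \in \N$; in particular $f_n \restriction W \equiv 0$ for all $n$. The obvious idea is that on $W$ the candidate pointwise sum $\sum_n f_n$ vanishes identically, so $f$ ought to vanish there too — but $f$ is only the \emph{supremum} in the lattice $C(K)$, which by Lemma \ref{suprema-criterium} agrees with $\sum_n f_n$ only off the nowhere dense set $\Delta\big(f,(f_n)_{n\in\N}\big)$. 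So I cannot immediately conclude $f \restriction W \equiv 0$.

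The fix is to use Lemma \ref{domain-sup}: the set $D((f_n)_{n\in\N})$ is dense open in $K$ and $f$ coincides with $\sum_n f_n$ on it. First I would observe that $W \subseteq D((f_n)_{n\in\N})$: indeed for the open set $W$ the index set $\{n : W \cap \supp(f_n) \neq \emptyset\}$ is empty, hence finite, so $W$ is one of the open sets whose union defines $D((f_n)_{n\in\N})$. Therefore $f \restriction W = \sum_n f_n \restriction W \equiv 0$, and in particular $f(x) = 0$, so $x \notin \supp(f) = f^{-1}[\R \setminus \{0\}]$. Since $x$ was an arbitrary point outside $\overline{\bigcup_n \supp(f_n)}$, this proves $\supp(f) \subseteq \overline{\bigcup_n \supp(f_n)}$.

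There is really no serious obstacle here; the only subtlety — and the reason the lemma is not entirely trivial — is the gap between "supremum in the lattice $C(K)$" and "pointwise sum", which is precisely what Lemmas \ref{suprema-criterium} and \ref{domain-sup} are designed to bridge. The antichain hypothesis is used only to make sense of $\bigvee_n f_n$ via those lemmas (pairwise disjointness of the $f_n$, equivalently $f_n \cdot f_m = 0$ for $n \neq m$). One could alternatively phrase the argument via $\Delta$ directly: on $W$ we have $\sum_n f_n \equiv 0$, so $\{x \in W : f(x) \neq 0\} \subseteq \Delta\big(f,(f_n)_{n\in\N}\big) \cap W$ is open and nowhere dense, hence empty — but routing through $D((f_n)_{n\in\N})$ is cleaner, so that is the route I would take.
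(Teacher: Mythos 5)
Your proposal is correct and rests on the same idea as the paper: outside $\overline{\bigcup_{n}\supp(f_n)}$ the pointwise sum vanishes, and the supremum cannot differ from it on a nonempty open set because $\Delta\big(f,(f_n)_{n\in\N}\big)$ is nowhere dense (Lemma \ref{suprema-criterium}); your main route merely packages this through Lemma \ref{domain-sup}. In fact, the ``alternative'' you sketch at the end --- that $\supp(f)\setminus\overline{\bigcup_{n}\supp(f_n)}$ is an open subset of $\Delta\big(f,(f_n)_{n\in\N}\big)$ and hence empty --- is precisely the paper's own one-line proof.
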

\begin{proof} The set
 \[\supp (f) \setminus \overline{\bigcup_{n\in\N} \supp(f_n)}\subseteq\Delta\big(f,(f_n)_{n\in\N}\big)\]
 is open. If it was nonempty, it would contradict Lemma \ref{suprema-criterium}.
\end{proof}

In general, when passing from $C(\nabla\F)$ to $C(\nabla\G)$
for $\F\subseteq \G\subseteq C_I(L)$ the supremum $f$ of a
pairwise disjoint sequence  $(f_n)_{n\in \N}$ of functions
in $C(\nabla\F)$ may no longer be its supremum in $C(\nabla\G)$, i.e.,
$f\circ\pi_{\F, \G}$ may not be the supremum of  $(f_n\circ\pi_{\F,\G})_{n\in \N}$.
However, if we use the supremum of $(f_n\circ \Pi\F)_{n\in \N}$ in $C(L)$,
this will not happen as stated in the following:

\begin{lemma}\label{definitionsupindes}
Suppose that $\mathcal F\subseteq C_I(L)$ and 
$(f_n)_{n\in \N}\subseteq[\F]$ is a pairwise disjoint sequence of functions,
and let $f=\bigvee_{n\in \N} f_n$ in $C(L)$.
Then  for every $\mathcal G\subseteq C_I(L)$ 
such that $\mathcal F\cup\{f\}\subseteq \mathcal G$ we have that
the factorization $f(\G)$ of $f$ is the supremum of the factorizations
$(f_n(\G))_{n\in \N}$ in 
 $C_I(\nabla \mathcal G)$.
\end{lemma}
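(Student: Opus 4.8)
The plan is to use the criterion from Lemma \ref{suprema-criterium}: the factorization $f(\G)$ is the supremum of $(f_n(\G))_{n\in\N}$ in $C(\nabla\G)$ if and only if the set $\Delta\big(f(\G),(f_n(\G))_{n\in\N}\big)$ is nowhere dense in $\nabla\G$. So the whole proof reduces to showing this $\Delta$ set is nowhere dense.

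First I would pull everything back to $L$ via $\Pi\G$. Since $\Pi\G\colon L\to\nabla\G$ is a continuous surjection between compact spaces, it is a closed quotient map (Lemma \ref{closed} and the fact that it is onto), so a set $A\subseteq\nabla\G$ is nowhere dense in $\nabla\G$ if and only if its preimage $(\Pi\G)^{-1}[A]$ has empty interior in $L$; indeed if $(\Pi\G)^{-1}[\overline A]$ had nonempty interior then, being saturated and closed, its image $\overline A$ would contain a nonempty open set, contradicting nowhere density, and conversely if $\overline A$ contains a nonempty open $V$ then $(\Pi\G)^{-1}[V]$ is a nonempty open subset of the closure of the preimage. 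Using $f(\G)\circ\Pi\G=f$ and $f_n(\G)\circ\Pi\G=f_n$ (the defining property of factorizations, Lemma \ref{factorization}), one checks directly that
\[(\Pi\G)^{-1}\big[\Delta\big(f(\G),(f_n(\G))_{n\in\N}\big)\big]=\{x\in L\mid f(x)\neq\textstyle\sum_{n\in\N}f_n(x)\}=\Delta\big(f,(f_n)_{n\in\N}\big),\]
the last equality by Definition \ref{Delta} applied in $L$. Note here one must be slightly careful: $f_n(\G)\circ\pi_{\F,\G}=f_n(\F)$ only gives factorizations through $\G$ if $f_n\in[\F]\subseteq[\G]$, which holds by hypothesis, and $f\in\G\subseteq[\G]$ so $f(\G)$ is well-defined; this is why the hypothesis $\F\cup\{f\}\subseteq\G$ is needed.

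Now the key point: since $f=\bigvee_{n\in\N}f_n$ in $C(L)$ by hypothesis, Lemma \ref{suprema-criterium} applied in the compact space $L$ tells us precisely that $\Delta\big(f,(f_n)_{n\in\N}\big)$ is nowhere dense in $L$. In particular its closure has empty interior in $L$. By the first paragraph's equivalence — transferred through the closed quotient map $\Pi\G$ — this forces $\Delta\big(f(\G),(f_n(\G))_{n\in\N}\big)$ to be nowhere dense in $\nabla\G$. Applying Lemma \ref{suprema-criterium} once more, this time in the compact space $\nabla\G$, we conclude that $f(\G)=\bigvee_{n\in\N}f_n(\G)$ in $C(\nabla\G)$, and since $C_I(\nabla\G)$ is an order ideal in $C(\nabla\G)$ containing all the relevant functions, the supremum is the same whether computed in $C(\nabla\G)$ or $C_I(\nabla\G)$.

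I expect the main (very mild) obstacle to be the careful verification that nowhere density transfers correctly across the quotient map $\Pi\G$ — one needs that preimages of closures equal closures of preimages for saturated sets under a closed map, or equivalently that $\Pi\G$ maps dense open sets to dense open sets and reflects them, which is a standard consequence of $\Pi\G$ being a closed continuous surjection. Everything else is bookkeeping with the factorization identities $f=f(\G)\circ\Pi\G$ of Lemma \ref{factorization} and the two applications of the nowhere-dense characterization of suprema in Lemma \ref{suprema-criterium}. No new extraction principle or topological input beyond Section 3 is required.
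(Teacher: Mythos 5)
The reduction to nowhere density via Lemma \ref{suprema-criterium} and the identity $(\Pi\G)^{-1}\big[\Delta\big(f(\G),(f_n(\G))_{n\in\N}\big)\big]=\Delta\big(f,(f_n)_{n\in\N}\big)$ (via Lemma \ref{factorization}) are fine, but your transfer step contains a genuine gap. You need: if $(\Pi\G)^{-1}[A]$ is nowhere dense in $L$, then $A$ is nowhere dense in $\nabla\G$, and you justify it by claiming that a nonempty open $V\subseteq\overline{A}$ gives $(\Pi\G)^{-1}[V]\subseteq\overline{(\Pi\G)^{-1}[A]}$. That inclusion amounts to $(\Pi\G)^{-1}[\overline{A}]\subseteq\overline{(\Pi\G)^{-1}[A]}$, which is precisely the inclusion that fails for maps that are closed but not open (only the reverse inclusion is automatic); being a closed quotient map is not enough. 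Concretely: let $Y=[0,1]$, let $(r_n)_{n\in\N}$ enumerate the rationals of $[0,1]$, let $X=\big([0,1]\times\{0\}\big)\cup\bigcup_{n\in\N}\big(\{r_n\}\times[0,1/n]\big)$ and let $q\colon X\rightarrow Y$ be the first-coordinate projection. Then $q$ is a continuous closed surjection of compact metric spaces, the set $A$ of irrationals is dense in $Y$, yet $q^{-1}[A]\subseteq[0,1]\times\{0\}$, which is nowhere dense in $X$; taking $V=(0,1)$, the set $q^{-1}[V]$ meets the vertical segments and is nowhere near $\overline{q^{-1}[A]}$. (The other direction of your claimed equivalence fails as well, but it is the direction above that your proof uses.) For an arbitrary $\G\subseteq C_I(L)$ the map $\Pi\G$ is just some continuous surjection -- it need not be open or irreducible (e.g.\ if $\G$ consists of the characteristic function of a clopen set, $\Pi\G$ collapses $L$ to two points) -- so the general principle you invoke is simply false, and your argument supplies no special property of $\Pi\G$ or of the $\Delta$-sets to repair it.

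The lemma itself is true, but the efficient argument is order-theoretic rather than topological, and this is essentially what the paper does by invoking 5.11--5.12 of \cite{grande} together with Proposition \ref{gelfand-isomorphism} and Lemma \ref{factorization}: since $f\in\G\subseteq[\G]$ and $f$ is the least upper bound of $(f_n)_{n\in\N}$ in the larger lattice $C(L)$, it is automatically their least upper bound in the sub-poset $[\G]$ (any upper bound lying in $[\G]$ is an upper bound in $C(L)$, hence dominates $f$); moreover $T_\G\colon C(\nabla\G)\rightarrow[\G]$, $T_\G(h)=h\circ\Pi\G$, is an order isomorphism because $\Pi\G$ is onto, so $h\geq 0$ if and only if $h\circ\Pi\G\geq 0$. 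Applying $T_\G^{-1}$ yields $f(\G)=\bigvee_{n\in\N}f_n(\G)$ in $C(\nabla\G)$, and since $f(\G)\in C_I(\nabla\G)$ the supremum is unchanged when computed in $C_I(\nabla\G)$. Your topological route could only be salvaged under an additional hypothesis such as irreducibility of $\Pi\G$ (for irreducible closed surjections preimages of somewhere dense sets are somewhere dense), but the lemma makes no such assumption and your proof does not establish one.
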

\begin{proof} Use 5.11, 5.12 of \cite{grande} and
the isometric isomorphism between $[\F]$ and $\nabla\F$
from Proposition \ref{gelfand-isomorphism} and Lemma \ref{factorization}.
\end{proof}

\begin{lemma}\label{supremum-dependence} Let $A\subseteq\kappa$. Suppose that 
$(f_n)_{n\in \N}\subseteq C_I(L)$ is pairwise disjoint sequence
of functions which  all depend on $A$.
Then the supremum $\bigvee_{n\in \N}f_n$ in $C(L)$ depends on $A$.
\end{lemma}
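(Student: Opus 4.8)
The plan is to reduce the statement about the big space $L = L_\kappa$ to the corresponding statement about the Gleason space $L_A$ by factoring all the data through the projection $p_A \colon L \to L_A$, and then use extremal disconnectedness of $L_A$ together with the criterion for suprema in Lemma \ref{suprema-criterium}. First I would invoke Lemma \ref{depends-countable}: since each $f_n$ depends on $A$, there is $g_n \in C(L_A)$ with $f_n = g_n \circ p_A$. The sequence $(g_n)_{n\in\N}$ is again pairwise disjoint in $C_I(L_A)$: if $g_n(y)\cdot g_m(y)\neq 0$ for some $y\in L_A$, pick (by surjectivity of $p_A$, Proposition \ref{gleason}) a point $x\in L$ with $p_A(x)=y$, and then $f_n(x)\cdot f_m(x)=g_n(y)\cdot g_m(y)\neq 0$, a contradiction for $n\neq m$. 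Since $L_A$ is extremally disconnected and c.c.c.\ (Proposition \ref{gleason}), bounded families in $C(L_A)$ have suprema, so let $g=\bigvee_{n\in\N} g_n$ in $C(L_A)$; clearly $g\in C_I(L_A)$.

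Next I would argue that $g\circ p_A = \bigvee_{n\in\N} f_n$ in $C(L)$, which immediately yields that $\bigvee_{n\in\N} f_n$ depends on $A$ (being of the form $g\circ p_A$, apply the definition of ``depends on $A$'' directly). By Lemma \ref{suprema-criterium} it suffices to check that $\Delta\big(g\circ p_A,(f_n)_{n\in\N}\big)$ is nowhere dense in $L$; note this set is precisely $p_A^{-1}[\Delta(g,(g_n)_{n\in\N})]$ because $g\circ p_A(x) = g(p_A(x))$ and $\sum_n f_n(x) = \sum_n g_n(p_A(x))$ for every $x$. Again by Lemma \ref{suprema-criterium}, $\Delta(g,(g_n)_{n\in\N})$ is nowhere dense in $L_A$ since $g$ is the supremum of the $g_n$ there. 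So the task reduces to: the preimage under $p_A$ of a nowhere dense set is nowhere dense.

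The main obstacle is exactly this last point — one needs that $p_A$ is not just continuous and surjective but behaves well with respect to nowhere dense sets. Here I would use that $p_A$ is the Stone-dual of the \emph{complete} embedding $\overline{Fr}(A) \subseteq \overline{Fr}(\kappa)$: such a map is open (the dual of a complete, hence sup-preserving, embedding of Boolean algebras is an open map between Stone spaces), and for a continuous open surjection the preimage of a nowhere dense set is nowhere dense. Concretely, if $N\subseteq L_A$ is nowhere dense and $p_A^{-1}[N]$ were somewhere dense, its closure would contain a nonempty clopen $s_{\overline{Fr}(\kappa)}(a)$; one shows this forces the closure of $N$ to contain the nonempty open set $p_A[s_{\overline{Fr}(\kappa)}(a)]$, contradicting nowhere density of $N$. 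Alternatively, and perhaps more cleanly in the framework of the paper, one can avoid openness of $p_A$ and instead cite Lemma \ref{definitionsupindes}/\ref{supp-sup}-style reasoning or the results of \cite{grande} invoked there; but I expect the cleanest route is the openness of $p_A$ coming from completeness of the subalgebra, and verifying that openness is the one genuinely non-formal step.
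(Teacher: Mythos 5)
Your proposal is correct and follows the paper's own proof essentially step for step: factor each $f_n$ through $p_A$ via Lemma \ref{depends-countable}, take the supremum of the factorizations in $C(L_A)$ using extremal disconnectedness (Proposition \ref{gleason}), and reduce, via Lemma \ref{suprema-criterium} and the identity $\Delta(g\circ p_A,(f_n)_{n\in\N})=p_A^{-1}[\Delta(g,(g_n)_{n\in\N})]$, to showing that $p_A$-preimages of nowhere dense sets are nowhere dense. The only divergence is in how that last step is justified: you invoke openness of $p_A$ as the dual of the complete embedding $\overline{Fr}(A)\subseteq\overline{Fr}(\kappa)$ (true, but itself resting on the regularity of $Fr(A)$ in $Fr(\kappa)$, i.e.\ on independence of the free generators), whereas the paper proves directly the weaker, already sufficient, fact that images of open sets have nonempty interior, by checking on the dense subalgebra $Fr(\kappa)$ that $p_A[s_{\overline{Fr}(\kappa)}(a'\wedge a'')]=s_{\overline{Fr}(A)}(a')$ for $a'\in Fr(A)$, $a''\in Fr(\kappa\setminus A)$ --- the same combinatorial content you would need to certify completeness of the embedding.
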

\begin{proof} Let $f_n'$ be such a function in $C(L_A)$ that $f_n'\circ p_A=f_n$ for $n\in \N$.
Its existence follows from Lemma \ref{depends-countable}.
It is clear that $f_n'$s are pairwise disjoint as well. As 
$L_A$ is extremally disconnected (Proposition \ref{gleason}), we can take the suprema
of bounded sequences in $C(L_A)$. So let $g=\bigvee_{n\in \N}f_n'$ where
the supremum is taken in $C(L_A)$. It is clear that $g\circ p_A$
depends on $A$. So it is enough to prove that $g\circ p_A$ is the supremum
of $\bigvee_{n\in \N}f_n$ in $C(L)$. Recall Definition \ref{Delta} and let $X=\Delta (g, (f_n')_{n\in \N})$.
It is clear that $Y=p_A^{-1}[X]=\Delta (g\circ p_A, (f_n)_{n\in \N})$. So
by lemma \ref{suprema-criterium} it is enough to prove that that preimages of nowhere dense sets
under $p_A$ are nowhere dense, or that images of open sets
under $p_A$ have nonempty interior.

As $Fr(\kappa)$ is a dense subalgebra of $\overline{Fr}(\kappa)$, it is enough
to prove that $p_A[s_{\overline{Fr}(\kappa)}(a)]$ has a nonempty interior in $L_A$
for any $a\in {Fr}(\kappa)$ (see section 2.2.)
But by the independence of the generators of $Fr(\kappa)$ such an $a$
is a finite sum of elements of the form $a'\wedge a''$ where
$a'\in Fr(A)$ and $a''\in Fr(\kappa\setminus A)$. Moreover 
$p_A[s_{\overline{Fr}(\kappa)}(a'\wedge a'')]=s_{\overline{Fr}(A)}(a')$
by the definition of the Stone functor and the fact that any
ultrafilter in $\overline{Fr}(A)$ which contains $a'$ can
be extended to one in $\overline{Fr}(\kappa)$ which contains $a'\wedge a''$.
It follows that images of open sets
under $p_A$ have nonempty interior which completes the proof.
\end{proof}

\begin{lemma} \label{value-supremum}
  Let $\F\subseteq C_I(L)$, $(f_n)_{n\in\N}\subseteq [\F]$ 
be pairwise disjoint, $(\nu_n)_{n\in\N}\subseteq\{\pm 1\}$,
  $(\eta_n)_{n\in\N}\subseteq \kappa$ be increasing with $d_{\eta_n}\in\F$ for all $n\in\N$ 
  and let $f=\bigvee_{n\in \N}(f_nd_{\nu_n,\eta_n})$ in $C_I(L)$.
  Then 
  \[
	f\restriction \bigcup_{n\in\N} \supp (f_n) =
    \sum_{n\in\N}(f_nd_{\nu_n,\eta_n})\restriction \bigcup_{n\in\N} \supp (f_n).
  \]
\end{lemma}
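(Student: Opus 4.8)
The plan is to reduce the claim to the elementary fact, recorded in Lemma~\ref{value-supremum}'s one-variable analogues, that a supremum of a pairwise disjoint sequence agrees with the pointwise sum on the support of each term. Concretely, set $g_n = f_n d_{\nu_n,\eta_n}$ for $n\in\N$. First I would check that $(g_n)_{n\in\N}$ is again a pairwise disjoint sequence in $C_I(L)$: since $f_n\cdot f_m=0$ for $n\neq m$ and $0\le d_{\nu_n,\eta_n}\le 1$, we get $g_n\cdot g_m=0$; and each $g_n$ maps $L$ into $[0,1]$ because it is a product of two $C_I(L)$ functions. Hence $f=\bigvee_{n\in\N} g_n$ exists in $C(L)$ (using that $L$ is extremally disconnected, Proposition~\ref{gleason}), and $f\in C_I(L)$.

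Next I would invoke Lemma~\ref{domain-sup} applied to the sequence $(g_n)_{n\in\N}$: it gives the dense open set $D\big((g_n)_{n\in\N}\big)$ on which $f$ coincides with $\sum_{n\in\N} g_n = \sum_{n\in\N} f_n d_{\nu_n,\eta_n}$. So it suffices to show that $\bigcup_{n\in\N}\supp(f_n)$ is contained in the closure of $D\big((g_n)_{n\in\N}\big)$, together with a continuity argument to push the equality from the dense set to its closure. Actually the cleanest route is pointwise: fix $x\in\supp(f_{n_0})$ for some $n_0$. Because the $f_n$ are pairwise disjoint and $f_{n_0}(x)\neq 0$, continuity of $f_{n_0}$ gives a neighbourhood $W\ni x$ with $f_{n_0}>0$ on $W$; shrinking $W$ further (using pairwise disjointness of the supports, and that each $\supp(f_n)$ is open), one arranges that $W$ meets $\supp(f_n)$ for only finitely many $n$ — this is exactly the condition defining $D\big((g_n)_{n\in\N}\big)$ since $\supp(g_n)\subseteq\supp(f_n)$. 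Thus $x\in D\big((g_n)_{n\in\N}\big)$, so in fact $\bigcup_{n\in\N}\supp(f_n)\subseteq D\big((g_n)_{n\in\N}\big)$, and on this open set $f=\sum_{n\in\N} f_n d_{\nu_n,\eta_n}$ holds identically by Lemma~\ref{domain-sup}.

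The step I expect to require the most care is verifying that each $x\in\supp(f_{n_0})$ genuinely lies in $D\big((g_n)_{n\in\N}\big)$, i.e.\ that one can find a neighbourhood of $x$ meeting only finitely many of the supports $\supp(g_n)$. The hypothesis that $(f_n)_{n\in\N}$ is an antichain (pairwise disjoint) does not by itself make the supports locally finite, but the point $x$ is interior to $\supp(f_{n_0})$, and on a small enough neighbourhood $W\subseteq\supp(f_{n_0})$ every other $f_n$ (being disjoint from $f_{n_0}$) vanishes, so $W$ meets no $\supp(g_n)$ for $n\neq n_0$ at all; this is immediate once one observes $\supp(f_n)\cap\supp(f_{n_0})=\emptyset$ for $n\neq n_0$ follows from $f_n f_{n_0}=0$. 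Everything else is a routine combination of Lemma~\ref{domain-sup} with these observations; the roles of $(\nu_n)$, $(\eta_n)$ and the functions $d_{\eta_n}\in\F$ enter only through the fact that each $g_n$ stays in $C_I(L)$, so the supremum is taken in the same lattice.
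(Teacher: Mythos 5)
Your proposal is correct and is essentially the paper's own proof: the paper also observes that $\supp(f_{n_0})\subseteq D\big((f_nd_{\nu_n,\eta_n})_{n\in\N}\big)$ (since the open set $\supp(f_{n_0})$ meets no $\supp(f_nd_{\nu_n,\eta_n})$ for $n\neq n_0$ by pairwise disjointness) and then applies the second part of Lemma~\ref{domain-sup}. Your extra verifications (that the $f_nd_{\nu_n,\eta_n}$ are pairwise disjoint and in $C_I(L)$, and that $\supp(f_nd_{\nu_n,\eta_n})\subseteq\supp(f_n)$) are routine and consistent with what the paper leaves implicit.
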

\begin{proof}
Note that $\supp (f_n)\subseteq D((f_nd_{\nu_n,\eta_n})_{n\in \N})$ and
so the second part of the Lemma \ref{domain-sup} may be used.
\end{proof}

\subsection{Extensions and preserving the connectedness}
For sets $X,Y$ and a function $f\colon X \rightarrow Y$ let us denote the graph of $f$ by $\Gamma(f)$.

\begin{definition}[4.2. \cite{few}]
  Let $K$ be a compact Hausdorff space and let $(f_n)_{n\in\N}\subseteq C_I(K)$ be 
pairwise disjoint. Then the closure of 
  $\Gamma\big( \sum_{n\in\N} f_n\restriction D( (f_n)_{n\in\N})\big)$ in $K\times I$ is called an extension of $K$ 
  by $(f_n)_{n\in\N}$. We denote it by $\ext(K,(f_n)_{n\in\N})$.
  The extension is called strong if $\Gamma\big( \sum_{n\in\N} f_n \big) \subseteq \ext(K,(f_n)_{n\in\N})$.
If $K=\nabla\F$ for some $\F\subseteq C_I(K)$ and $(f_n)_{n\in \N}\subseteq [\F]$,
 then an extension of $\nabla\F$ by $(f_n)_{n\in \N}$ means the
extension of $\nabla \F$ by $(f_n(\F))_{n\in \N}$ and is denoted $\ext(\nabla\F,(f_n)_{n\in\N})$
\end{definition}

Indiscriminate adding of suprema leads to a complete lattice $C(K)$ and implies
that $K$ is extremally disconnected, so
in general extensions of compact spaces do not need to preserve the
 connectedness (for explicite analysis  of this phenomenon in the case of pairwise disjoint
sequences of functions  see \cite{andre}), however
we have the following:

\begin{lemma}[\cite{few}, 4.4. ] \label{graph-connected}
  Let $K$ be a compact and connected Hausdorff space and let $(f_n)_{n\in\N}\subseteq C_I(K)$ be pairwise disjoint. 
The strong extension of $K$ by $(f_n)_{n\in\N}$ is a  compact and connected space. 
\end{lemma}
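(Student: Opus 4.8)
The plan is to build an explicit bounded linear retraction from $\ext(K,(f_n)_{n\in\N})\times[0,1]$ (or, more conveniently, from the square $\ext(K,(f_n)_{n\in\N})^2$) onto the extension, but since the extension lives inside $K\times I$ it is cleaner to work topologically: I would show that $E:=\ext(K,(f_n)_{n\in\N})$ is connected by exhibiting, for any two points of $E$, a connected subset of $E$ joining them. Recall $E$ is the closure in $K\times I$ of $\Gamma\bigl(\sum_{n}f_n\restriction D((f_n)_{n\in\N})\bigr)$, and that "strong" means $\Gamma(\sum_n f_n)\subseteq E$, i.e. the whole graph $G:=\{(x,\sum_n f_n(x)):x\in K\}$ (this sum makes pointwise sense on all of $K$, even where it is discontinuous) is contained in $E$. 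Compactness of $E$ is immediate since it is a closed subset of the compact space $K\times I$, so the entire content is connectedness.

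First I would observe that $G$ itself, being the image of the connected space $K$ under the (not-necessarily-continuous but still "graph-of-a-function") map $x\mapsto(x,\sum f_n(x))$, need not be connected; so instead I would decompose $E$ cleverly. The key structural fact is that over the dense open set $D:=D((f_n)_{n\in\N})$ the extension agrees with the genuine continuous graph $\Gamma(\sum_n f_n\restriction D)$, which is homeomorphic to $D$ via the first projection; and on the fibres over $K\setminus D$, the extension can only pick up extra points vertically. More precisely, let $\pi_1\colon E\to K$ be the restriction of the first coordinate projection; it is a continuous surjection (surjective because $G\subseteq E$). I would prove: (i) for each $x\in K$ the fibre $\pi_1^{-1}(x)$ is a closed subinterval of $\{x\}\times I$ — this uses that $x$ is a limit of points $x_\alpha\in D$ along which $\sum f_n$ varies continuously, so the cluster values of $\sum_{k\le m}f_k(x_\alpha)$ as $\alpha$ and $m$ vary form an interval, or at worst one handles it by a connectedness-of-fibres argument directly from the definition of the closure. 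Each fibre is in particular connected. (ii) $\pi_1$ is a closed map (continuous map from the compact $E$), hence a quotient map onto $K$.

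Now the finishing move: a continuous surjection with connected base and connected fibres need not have connected total space in general, but here I would use that $E$ is additionally the closure of a set that projects \emph{homeomorphically} onto the dense connected set $D$. Concretely, suppose $E=A\sqcup B$ is a partition into nonempty relatively clopen sets. Each fibre $\pi_1^{-1}(x)$, being connected, lies entirely in $A$ or entirely in $B$; so $A=\pi_1^{-1}(K_A)$, $B=\pi_1^{-1}(K_B)$ for a partition $K=K_A\sqcup K_B$, and since $\pi_1$ is a quotient map this partition is clopen in $K$; connectedness of $K$ forces $K_A=\emptyset$ or $K_B=\emptyset$, hence $A$ or $B$ is empty — contradiction. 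This is essentially the proof that the total space of a quotient map with connected fibres over a connected base is connected, which is \cite{few}'s Lemma 4.4, and I would cite or reprove it in this short form.

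The main obstacle — and the step deserving the most care — is point (i): verifying that each fibre $\pi_1^{-1}(x)$ is connected (an interval). The subtlety is exactly the one flagged in the remark before the lemma and in \cite{andre}: the closure operation defining $E$ can insert an entire vertical segment above a point $x\in K\setminus D$, and one must check it inserts a \emph{segment} and not, say, a two-point set. Here is where strongness is used in an essential way together with the structure of $\sum f_n$: I would argue that the cluster set of $\sum f_n$ at $x$ (along $D$) is connected because, for $x_\alpha\to x$ with $x_\alpha\in D$, the values $\sum_n f_n(x_\alpha)$ can be interpolated — for any target value $v$ between the limit and $\sum f_n(x)$ one finds nearby points of $D$ where $\sum f_n$ takes value close to $v$, using continuity of each $f_n$ and the pairwise-disjointness so that $\sum_{n} f_n$ is "locally a single $f_n$" near $x_\alpha$ inside $D$. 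Granting connectedness of all fibres, the rest of the argument is the soft quotient-map reasoning above, and the lemma follows.
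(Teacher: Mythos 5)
Your step (i) — that every fibre of $\pi_1\colon\ext(K,(f_n)_{n\in\N})\to K$ is a subinterval of $\{x\}\times I$, hence connected — is false, and it is the load-bearing step of your argument. Take for $K$ the harmonic comb $([0,1]\times\{0\})\cup(\{0\}\times[0,1])\cup\bigcup_{n\geq 1}(\{1/n\}\times[0,1])\subseteq\R^2$, which is compact and connected, and for each $k\in\N$ let $f_k\in C_I(K)$ be supported on the upper half of the $2k$-th tooth, $f_k(1/2k,t)=\max(0,2t-1)$ and $f_k=0$ elsewhere; these are pairwise disjoint. Then $D((f_k)_{k\in\N})=K\setminus(\{0\}\times[1/2,1])$, and the extension is strong: $\sum_k f_k$ vanishes on $\{0\}\times[1/2,1]$, and each such point is the limit of points on the odd teeth, where $\sum_k f_k\equiv 0$. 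Nevertheless the fibre over the point $(0,1)$ is the two-point set $\{(0,1)\}\times\{0,1\}$: approaching along even teeth the values tend to $1$, along odd teeth they are $0$, and no point of $D$ close to $(0,1)$ carries a value in $(0,1)$ — to find a point where $\sum_k f_k$ is near some $v\in(0,1)$ you must slide down an even tooth to second coordinate near $(v+1)/2$, which takes you out of every small neighbourhood of $(0,1)$. This is exactly where your interpolation sketch breaks, so fibres need not be connected even for strong extensions, and the quotient-map argument (each fibre lying wholly in $A$ or in $B$) collapses. (In this example the extension is still connected, as the lemma asserts, but for a different reason: here $D$ itself happens to be connected.)

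There is also a structural warning sign: beyond step (i) you use strongness only to get surjectivity of $\pi_1$, but that is automatic, since $\pi_1[\ext(K,(f_n)_{n\in\N})]$ is compact, hence closed, and contains the dense set $D((f_n)_{n\in\N})$. So if your scheme worked it would prove connectedness of arbitrary, not necessarily strong, extensions — which is false, as discussed in \cite{andre} and as is the very reason the paper works with strong extensions (cf.\ Lemma \ref{strong-extension}). Note that the present paper does not prove this lemma; it quotes it from \cite{few} (Lemma 4.4 there), and any correct proof must exploit strongness more globally — under strongness the full graph $\Gamma(\sum_n f_n)$ is contained in, and dense in, the extension — rather than through connectedness of individual fibres, which the example above shows is simply unavailable. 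Your compactness observation and the soft quotient reasoning are fine; the proof needs a genuinely different idea where your (i) now stands.
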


\begin{lemma} \label{extension-image}
  Let $ \F\subseteq C_I(L)$ and $A\subseteq \kappa$ be such that the family $\F$ depends on 
  $A$ and $\{d_\alpha\mid \alpha\in A\}\subseteq\F$. 
	Let $(f_n)_{n\in\N}\subseteq [\F]$ be pairwise disjoint and let 
	$f\in C_I(L)$ be the supremum of  $(f_n)_{n\in\N}$.
	Then $\ext(\nabla\F,(f_n)_{n\in\N})=\nabla(\F\cup\{f\})$.
\end{lemma}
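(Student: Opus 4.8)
The plan is to prove the two inclusions $\ext(\nabla\F,(f_n)_{n\in\N}) \subseteq \nabla(\F\cup\{f\})$ and $\nabla(\F\cup\{f\}) \subseteq \ext(\nabla\F,(f_n)_{n\in\N})$ separately, working inside $(\nabla\F)\times I$, which is the common ambient space (recall $\nabla(\F\cup\{f\})\subseteq I^{\F}\times I = I^{\F\cup\{f\}}$, and by Lemma \ref{D_0}(1), or rather by the evident projection $\pi_{\F,\F\cup\{f\}}$ together with the coordinate $f$, the space $\nabla(\F\cup\{f\})$ sits canonically as a closed subset of $(\nabla\F)\times I$). First I would record the elementary description of $\nabla(\F\cup\{f\})$: by Proposition \ref{gelfand-isomorphism} and the definition of $\nabla$, it is $\{\,((\Pi\F)(x), f(x)) : x\in L\,\} = \{(\Pi\F(x), f(x)) : x\in L\}$, i.e. the image of $L$ under $x\mapsto(\Pi\F(x),f(x))$, which since $L$ is compact and the map continuous is exactly the closure of that image (the image is already closed). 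So I must show this image equals the closure of the graph $\Gamma\big(\sum_n f_n(\F)\restriction D((f_n(\F))_{n})\big)$ taken in $(\nabla\F)\times I$.

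For the inclusion $\ext \subseteq \nabla(\F\cup\{f\})$: since $\nabla(\F\cup\{f\})$ is closed in $(\nabla\F)\times I$, it suffices to show the graph $\Gamma\big(\sum_n f_n(\F)\restriction D\big)$ is contained in it, where $D = D((f_n(\F))_{n\in\N})$. Take $y\in D$ and pick $x\in L$ with $\Pi\F(x)=y$. Here I would use Lemma \ref{openF}(2)–(3) and the definition of $D$ to transfer: a basic open neighborhood $U(\F)\ni y$ meeting only finitely many $\supp(f_n(\F))$ pulls back via $(\Pi\F)^{-1}$ to $U(L)$, and on $U(L)$ we have $\supp(f_n)$ nonempty for only finitely many $n$ (using that $\supp(f_n) \subseteq (\Pi\F)^{-1}[\supp(f_n(\F))]$, which holds because $f_n = f_n(\F)\circ\Pi\F$). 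Hence we can arrange $x\in D((f_n)_{n\in\N})$ in $L$ itself, and then by Lemma \ref{domain-sup} applied in $C(L)$, $f(x)=\sum_n f_n(x) = \sum_n f_n(\F)(y)$ (using again $f_n(x) = f_n(\F)(\Pi\F(x)) = f_n(\F)(y)$). Thus $(y,\sum_n f_n(\F)(y)) = (\Pi\F(x),f(x))\in\nabla(\F\cup\{f\})$, as required.

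For the reverse inclusion $\nabla(\F\cup\{f\})\subseteq\ext$: take $x\in L$; I must show $(\Pi\F(x),f(x))$ lies in $\overline{\Gamma(\sum_n f_n(\F)\restriction D)}$. The idea is to approximate the point $x\in L$ by points lying over $D((f_n)_{n\in\N})$ in $L$. The set $D((f_n)_{n\in\N})$ is dense open in $L$ by Lemma \ref{domain-sup}; moreover on it $f = \sum_n f_n$. So pick a net $x_i\to x$ in $L$ with $x_i\in D((f_n)_{n\in\N})$; then $\Pi\F(x_i)\to\Pi\F(x)$, and I claim $\Pi\F(x_i)\in D((f_n(\F))_{n\in\N})$ — this is where I'd need the fact that $\F$ depends on $A$ and $\{d_\alpha:\alpha\in A\}\subseteq\F$, so that $\Pi\F$ behaves like $p_A$ up to homeomorphism (via Lemma \ref{depends-countable} and Lemma \ref{D_0}) and in particular $\Pi\F$ maps the "finitely-many-supports" open sets to the corresponding open sets in $\nabla\F$; concretely, a basic $U(L)$ on which only finitely many $\supp(f_n)$ are nonempty projects under $\Pi\F$ to $U(\F)$ by Lemma \ref{openF}(3), and $U(\F)$ meets only finitely many $\supp(f_n(\F))$. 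Then $(\Pi\F(x_i), f(x_i)) = (\Pi\F(x_i), \sum_n f_n(x_i))$ is literally a point of the graph $\Gamma(\sum_n f_n(\F)\restriction D)$, since $\sum_n f_n(x_i) = \sum_n f_n(\F)(\Pi\F(x_i))$ and $\Pi\F(x_i)\in D$. Finally $f(x_i)\to f(x)$ by continuity of $f$, so $(\Pi\F(x_i),f(x_i))\to(\Pi\F(x),f(x))$, proving membership in the closure.

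The main obstacle I anticipate is the bookkeeping in the claim that $\Pi\F$ matches up $D((f_n)_{n})$ in $L$ with $D((f_n(\F))_{n})$ in $\nabla\F$: one must be careful that the hypotheses ($\F$ depends on $A$, all $d_\alpha$ for $\alpha\in A$ are in $\F$, and the $f_n\in[\F]$) are exactly what make this transfer work — without them, $\Pi\F$ could collapse or expand supports uncontrollably. Everything else is a routine translation between $L$ and $\nabla\F$ using Lemma \ref{openF}, Lemma \ref{domain-sup}, and the factorization $f_n = f_n(\F)\circ\Pi\F$, $f = f(\F)\circ\Pi(\F\cup\{f\})$.
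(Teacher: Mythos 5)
Your easy inclusion $\ext(\nabla\F,(f_n)_{n\in\N})\subseteq\nabla(\F\cup\{f\})$ is correct (and, tellingly, uses none of the hypotheses on $A$). The gap is in the reverse inclusion, at the claim that $x_i\in D((f_n)_{n\in\N})$ computed in $L$ implies $\Pi\F(x_i)\in D((f_n(\F))_{n\in\N})$ computed in $\nabla\F$. Your justification tacitly assumes the neighbourhood witnessing $x_i\in D((f_n)_{n\in\N})$ can be taken of the form $U(L)$ with $U\in\B(\F)$; but such sets form a base only for the topology pulled back along $\Pi\F$, which is far coarser than the topology of $L$, so this does not follow. Worse, the claim is actually false under exactly the hypotheses of the lemma. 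Take $A=\{0\}$, $\F=\{d_0\}$, so $\nabla\F=I$ and $\Pi\F$ is in effect $d_0$, and let $f_n=g_n\circ d_0$ where $g_n$ are bumps with $\supp(g_n)=(a_{n+1},b_{n+1})$, $a_{n+1}<b_{n+1}<a_n\rightarrow 0$, the supports leaving gaps of positive length. Since $L_{\{0\}}$ is the Gleason space of the Cantor cube (Proposition \ref{gleason}), choose for each $k$ a cylinder of $2^{\N}$ whose image under the binary expansion map lies inside a gap contained in $(0,2^{-k})$; such a cylinder begins with $k$ zeros, so the closure $R$ of the union of these cylinders is a regular closed set containing the zero sequence, and its image under the expansion map contains $0$ and avoids every $\supp(g_n)$. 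Pulling back the clopen set of the Gleason space that maps onto $R$, one gets a nonempty clopen $W\subseteq L$ meeting no $\supp(f_n)=d_0^{-1}[\supp(g_n)]$ but containing a point $x'$ with $d_0(x')=0$. Then $x'\in D((f_n)_{n\in\N})$, yet $\Pi\F(x')=0\notin D((g_n)_{n\in\N})$ because the supports accumulate at $0$. (The lemma itself of course still holds in this example.) So a net chosen merely in $D((f_n)_{n\in\N})$ need not project to points of the graph over $D((f_n(\F))_{n\in\N})$, and your argument is circular at precisely the point that carries the content of the lemma.

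What makes the hard inclusion work, and where the hypotheses on $A$ really enter, is the weaker but sufficient fact that $(\Pi\F)^{-1}[D((f_n(\F))_{n\in\N})]$ is dense in $L$: choosing the net inside this set, its points do lie in $D((f_n)_{n\in\N})$ by your (correct) easy-direction computation, so $f=\sum_{n}f_n$ there, and one concludes with Lemma \ref{closed}. Density can be obtained by factoring $\Pi\F=q\circ p_A$ with $q\colon L_A\rightarrow\nabla\F$: the composition of $q$ with the projection of $\nabla\F$ onto the coordinates $\{d_\alpha\mid\alpha\in A\}$ is the canonical irreducible projection of the Gleason space $L_A$ onto $I^A$ (this is where $\{d_\alpha\mid\alpha\in A\}\subseteq\F$ and the dependence of $\F$ on $A$ are used), hence $q$ is irreducible and preimages under $q$ of dense open sets are dense; and preimages under $p_A$ of nowhere dense sets are nowhere dense, as shown inside the proof of Lemma \ref{supremum-dependence}. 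The paper does not argue directly at all: it simply invokes Lemma 5.13 of \cite{grande} together with Lemma \ref{factorization}, and an argument of the above kind is what that citation hides; your proposal, as written, does not supply it.
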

\begin{proof} Use 5.13. of \cite{grande} and Lemma \ref{factorization}.
\end{proof}

\begin{lemma}[\cite{grande}, 2.7.]\label{finite-connected}
  Let $\F\subseteq C(L)$. Then $\nabla \F$ is connected if and only if 
$\nabla F$ is connected for all finite $F\subseteq \F$.
\end{lemma}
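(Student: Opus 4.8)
The plan is to read $\nabla\F$ as an inverse limit of the Gelfand spaces $\nabla F$ over the finite subfamilies $F\subseteq\F$, with the projections $\pi_{F,\G}$ as bonding maps; only two elementary features of this picture are needed. First, for every $F\subseteq\F$ the map $\pi_{F,\F}\colon\nabla\F\to\nabla F$ is a continuous surjection: it is a restriction of a coordinate projection of $I^\F$, hence continuous, and $\pi_{F,\F}\circ\Pi\F=\Pi F$ together with $\nabla F=\Pi F[L]$ gives $\pi_{F,\F}[\nabla\F]=\nabla F$. Second, by Lemma \ref{openF}(1) every basic open subset of $\nabla\F$ has the form $W(\F)$ for some $W\in\B(\F)$, hence depends only on the finite set of coordinates $dom(W)$, and by Lemma \ref{openF}(4) it equals $\pi_{F,\F}^{-1}[W(F)]$ whenever $dom(W)\subseteq F$.

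For the forward implication, assume $\nabla\F$ is connected. Then for \emph{any} $F\subseteq\F$ the space $\nabla F=\pi_{F,\F}[\nabla\F]$ is a continuous image of a connected space and is therefore connected; in particular this holds for all finite $F$.

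For the converse, assume $\nabla F$ is connected for every finite $F\subseteq\F$, and suppose toward a contradiction that $\nabla\F=A\sqcup B$ with $A,B$ disjoint, nonempty, clopen. Being closed in the compact space $\nabla\F$ and open, $A$ and $B$ are each \emph{finite} unions of basic open sets $W(\F)$, $W\in\B(\F)$; let $F_0\subseteq\F$ be the (finite) union of all the domains $dom(W)$ occurring in these representations. By Lemma \ref{openF}(4), $A=\pi_{F_0,\F}^{-1}[A']$ and $B=\pi_{F_0,\F}^{-1}[B']$ for some open $A',B'\subseteq\nabla F_0$ (namely the corresponding finite unions of the sets $W(F_0)$). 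Since $\pi_{F_0,\F}$ is surjective and $A\sqcup B=\nabla\F$, pulling back and pushing forward shows $A'\cup B'=\nabla F_0$, $A'\cap B'=\emptyset$ and $A',B'\neq\emptyset$; thus $\nabla F_0=A'\sqcup B'$ is a nontrivial clopen partition, contradicting the connectedness of $\nabla F_0$. Hence $\nabla\F$ is connected.

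The argument is short, and the only step needing attention is the last one: one must verify that passing a nontrivial clopen partition of $\nabla\F$ down through the finite-stage projection $\pi_{F_0,\F}$ yields again a nontrivial clopen partition of $\nabla F_0$ --- this is exactly where surjectivity of $\pi_{F_0,\F}$ is used, to transport disjointness, covering, and nonemptiness to $\nabla F_0$. Alternatively one could identify $\nabla\F$ with $\varprojlim_{F}\nabla F$ over the directed family of finite subsets of $\F$ (using compactness of $L$ to produce threads) and invoke the classical fact that an inverse limit of compact connected Hausdorff spaces is connected; but the direct argument above avoids that machinery.
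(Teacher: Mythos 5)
Your proof is correct, and since the paper does not prove this lemma itself but only cites \cite{grande} (Lemma 2.7), there is nothing in-paper to diverge from: your argument --- both implications via surjectivity of $\pi_{F,\F}$, and, for the converse, the observation that a clopen disconnection of the compact space $\nabla\F$ is a finite union of basic sets $W(\F)$ depending on a finite coordinate set $F_0$, hence pulls back from a nontrivial clopen partition of $\nabla F_0$ via Lemma \ref{openF}(4) --- is exactly the standard finite-support/compactness argument underlying the cited result. No gaps; the only point worth noting is that Lemma \ref{openF} is stated for families in $C_I(L)$, which is the setting in which the lemma is actually applied.
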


\begin{lemma} \label{strong-extension}
Let
\begin{enumerate}
\item  $\F \subseteq C_I(L)$, 
\item $(f_n)_{n\in\N}\subseteq [\F]_I$ be an pairwise disjoint, 
\item  $\F$ depends on $A\subseteq\kappa$, 
\item $(\eta_n)_{n\in\N}\subseteq \kappa$ be
  such that the set $\{n\in\N \mid \eta_n\in A\}$ is finite, 
\item $(\nu_n)_{n\in\N}\in\{-1,1\}^\N$.
\end{enumerate}
  Then for all infinite $M \subseteq \N$ and for $\G=\F\cup\{d_{\eta_n}\mid n\in\N\}$ the extension of $\nabla\G$ by
  $(f_nd_{\nu_n,\eta_n})_{n\in M}$ is strong.
Moreover,
  if $\nabla\G$ is compact and connected,
 then $\ext(\nabla\G,(f_nd_{\nu_n,\eta_n})_{n\in M})$ is compact and connected as well.
\end{lemma}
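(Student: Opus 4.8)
The plan is to reduce strongness to a statement about how the extension sits over $\nabla\G$, and then verify it by a short case analysis. Write $g_n:=f_n\cdot d_{\nu_n,\eta_n}\in[\G]_I$ for $n\in M$; since $\supp(g_n)\subseteq\supp(f_n)$ and the $f_n$ are pairwise disjoint, $(g_n)_{n\in M}$ is pairwise disjoint, and $L$ being extremally disconnected (Proposition \ref{gleason}) the supremum $f:=\bigvee_{n\in M}g_n$ exists in $C(L)$. The ``moreover'' clause will follow at once from Lemma \ref{graph-connected} once strongness is known, so I focus on strongness. First I would invoke Lemma \ref{extension-image} (noting that $\G$ depends on $A\cup\{\eta_n:n\in\N\}$ and contains the corresponding surjections $d_{\eta_n}$) to get $\ext(\nabla\G,(g_n)_{n\in M})=\nabla(\G\cup\{f\})$; since by definition $\ext(\nabla\G,(g_n)_{n\in M})=\overline{\Gamma(h\restriction D)}$, where $h$ is the (generally discontinuous) pointwise sum $x\mapsto\sum_{n\in M}g_n(\G)(x)$ on $\nabla\G$ and $D:=D\big((g_n(\G))_{n\in M}\big)$ is the dense open set of Lemma \ref{domain-sup}, strongness says precisely that $(x,h(x))\in\overline{\Gamma(h\restriction D)}$ for every $x\in\nabla\G$.

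Next I would record two simplifications. Since each $d_{\eta_n}$ is a surjection onto $I$ (Lemma \ref{d_alpha}), $\bigvee_m e_{\eta_n,m}=1$ in $\overline{Fr}(\kappa)$, so $\{d_{\nu_n,\eta_n}=0\}$ is nowhere dense and a nonempty open set meets $\supp(g_n(\G))$ exactly when it meets $\supp(f_n(\G))$; hence $D=D\big((f_n(\G))_{n\in M}\big)$. Also, as $f_n\in[\F]$, one has $\supp(f_n(\G))=\pi_{\F,\G}^{-1}[\supp(f_n(\F))]$. With these in hand two of the three cases are easy. If $h(x)>0$, by disjointness there is a unique $n^\ast\in M$ with $g_{n^\ast}(\G)(x)=h(x)$; then $f_{n^\ast}(\G)(x)>0$, the open set $\supp(f_{n^\ast}(\G))$ is a neighbourhood of $x$ contained in $D$, and $h$ agrees there with the continuous function $g_{n^\ast}(\G)$, so $(x,h(x))\in\Gamma(h\restriction D)$. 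If $h(x)=0$ and $x\in D$, then $h\restriction D$ is continuous at $x$ with value $0$, so again $(x,0)\in\Gamma(h\restriction D)$.

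The hard part is $x$ with $h(x)=0$ and $x\notin D$. Then $x\notin D\big((f_n(\G))_{n\in M}\big)$ means that every neighbourhood of $x$ meets $\supp(f_n(\G))$ for arbitrarily large $n\in M$; this is where hypothesis (4) enters. Given a basic neighbourhood $W=U(\nabla\G)$ of $x$ and $\varepsilon>0$, since $\{n\in M:\eta_n\in A\}$ is finite I would choose $n_W\in M$ large enough that $W\cap\supp(f_{n_W}(\G))\neq\emptyset$, that $\eta_{n_W}\notin A$, and that $d_{\eta_{n_W}}\notin\dom(U)$ (handling possible coincidences among the $\eta_n$ needs a little extra care). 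Picking $y'\in W\cap\supp(f_{n_W}(\G))$ and setting $w:=\pi_{\F,\G}(y')\in\supp(f_{n_W}(\F))$, the coordinate $\eta_{n_W}$ is free over $\F$ and over the finitely many ordinals pinned by $W$, so Lemmas \ref{extending-ultrafilters} and \ref{d_alpha} produce $y''\in\nabla\G$ with $\pi_{\F,\G}(y'')=w$ (hence $y''\in\supp(f_{n_W}(\G))\subseteq D$, and $y''$ lies in the $\F$-part of $W$), with $d_{\eta_m}(y'')=d_{\eta_m}(y')$ whenever $d_{\eta_m}\in\dom(U)$ (hence $y''\in W$), and with $d_{\nu_{n_W},\eta_{n_W}}(y'')<\varepsilon$. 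By disjointness only the $n_W$-term of $h$ survives at $y''$, so $h(y'')=f_{n_W}(\G)(y'')\,d_{\nu_{n_W},\eta_{n_W}}(y'')<\varepsilon$. Letting $W$ shrink and $\varepsilon\downarrow0$ yields a net in $D$ realizing $(x,0)$ in $\overline{\Gamma(h\restriction D)}$.

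The main obstacle is precisely this last step, and it is genuinely delicate for a structural reason: $\{d=0\}$ is closed but not open, so a factor $d_{\nu_n,\eta_n}$ can only be made \emph{small} on an open set, never identically zero. This is why one is forced to approach $x$ \emph{through} the supports $\supp(f_n(\G))$, where $h$ reduces to a single continuous term, rather than through their complement, and why the argument really needs the freeness of $\eta_n$ for large $n$ — which is exactly what hypothesis (4) supplies (getting the convergence and the bookkeeping right, and dealing with any repetitions among the $\eta_n$, is where the work lies). The remaining ingredients are routine: Lemma \ref{extension-image} for the identification of the extension, Lemma \ref{domain-sup} for collapsing $h$ to one term on a single support, Lemmas \ref{extending-ultrafilters} and \ref{d_alpha} for building the approximating points, and Lemma \ref{graph-connected} for the connectedness addendum.
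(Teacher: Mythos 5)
Your proof is correct and follows essentially the same route as the paper's: reduce strongness to approximating $(x,0)$ for $x$ outside the relevant dense open set $D$, then use hypothesis (4) to pick a large $n\in M$ whose coordinate $\eta_n$ lies outside $A$ and outside the (finitely many) coordinates pinned by the basic neighbourhood, and perturb only that coordinate via Lemmas \ref{extending-ultrafilters} and \ref{d_alpha} so that the $n$-th term becomes small while the point stays in the neighbourhood and in a single support, with the connectedness addendum coming from Lemma \ref{graph-connected}, exactly as in the paper. The only cosmetic differences are that the paper works directly with the supports of the products $f_nd_{\nu_n,\eta_n}$, so your nowhere-density identification of the two $D$-sets and the (not strictly justified, but also unnecessary) appeal to Lemma \ref{extension-image} can be dropped, and, like you, it tacitly treats the $\eta_n$ as distinct, as they are in all applications.
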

\begin{proof} 
Fix some infinite $M\subseteq\N$ and $(x,s) \in \Gamma(\sum_{n\in M} (f_nd_{\nu_n,\eta_n})(\G))$. 
  We need to check that
  \[
	(x,s) \in \overline{\Gamma\big(\sum_{n\in M}  (f_nd_{\nu_n,\eta_n})(\G)
	\restriction D( ((f_nd_{\nu_n,\eta_n})(\G))_{n\in M} ) \big)}.
  \]
  If $s>0$, then $x\in  D( ((f_nd_{\nu_n,\eta_n})(\G))_{n\in M} )$, so we can assume that $s=0$.
  Fix a neighborhood of $(x,0)$ of
the form $U(\G) \times (-\varepsilon,\varepsilon)$  where $U \in 
\B(\G)$ and $\varepsilon > 0$.
  It will be sufficient to show that
  \[
	\left( U(\G) \times (-\varepsilon,\varepsilon) \right)
	\cap \Gamma \big(\sum_{n\in M}  (f_nd_{\nu_n,\eta_n})(\G)
	\restriction D( ((f_nd_{\nu_n,\eta_n})(\G))_{n\in M} ) \big) \neq \emptyset.
  \]
  If $U(\G)$ intersects only finitely many sets $\supp((f_nd_{\nu_n,\eta_n})(\G))$, then 
  we have the inclusion $U(\G)\subseteq D(((f_nd_{\nu_n,\eta_n})(\G))_{n\in M} )$ and  
  so the point $(x,0)$ belongs to the graph $\Gamma\big(\sum_{n\in M} 
(f_nd_{\nu_n,\eta_n})(\F)\restriction
 D(((f_nd_{\nu_n,\eta_n})(\G))_{n\in M})\big)$.

  If $U(\G)$ intersects $\supp((f_nd_{\nu_n,\eta_n})(\G))$ for infinitely many $n\in M$ then,
  by the hypothesis of the lemma, we can pick a number
 $n_0\in M$ such that $\eta_{n_0}\not\in A\cup B$ where
  $B=\{\eta_n\mid d_{\eta_n} \in dom(U)\}$, and there
 is an $x \in U(\G)\cap\supp((f_{n_0}d_{\nu_{n_0},\eta_{n_0}})(\G))$.
In particular $f_{n_0}(\G)(x)\not=0$.
Let $u\in L$ be such that $\Pi\G(u)=x$. Let $v\in L$ be such that 
\[
    0<d_{\nu_{n_0},\eta_{n_0}}(v)<\varepsilon/f_{n_0}(\G)(x),\leqno (1)
  \] 
which exists by Lemma \ref{d_alpha} (2).
By Lemma \ref{extending-ultrafilters} there is $t\in L$ such that
$p_{A\cup B}(t)=p_{A\cup B}(u)$ and $p_{\{\eta_{n_0}\}}(t)=p_{\{\eta_{n_0}\}}(v)$.
Put $y=\Pi\G(t)$.
It follows that $f(t)=f(u)$ for every $f\in dom(U)$ and so
\[y\in U(\G),\leqno (2)\]
 since $\Pi\G(u)=x\in U(\G)$.  Also 
by the dependence of $\F$ by $A$,  Lemma \ref{factorization} and Lemma \ref{d_alpha} we have
\[f_{n_0}(\G)(y)=f_{n_0}(t)=f_{n_0}(u)=f_{n_0}(\G)(x)\not=0,\leqno (3)\]
 and 
$d_{\nu_{n_0},\eta_{n_0}}(\G)(y)=d_{\nu_{n_0},\eta_{n_0}}(t)=
d_{\nu_{n_0},\eta_{n_0}}(v)$
and so by (1) and (3) we have
 \[0<(f_{n_0}d_{\nu_{n_0},\eta_{n_0}})(\G)(y)<\varepsilon.\leqno (4)\]
The first inequality of (4) implies that $y\in D(((f_nd_{\nu_n,\eta_n})(\G))_{n\in M})$
and so that $(y, f_{n_0}d_{\nu_{n_0},\eta_{n_0}})(\G)(y))$ is in 
$\Gamma \big(\sum_{n\in M}  f_nd_{\nu_n,\eta_n}
	\restriction D( (f_nd_{\nu_n,\eta_n})_{n\in M} )$, 
while (2) and the second inequality of (4) imply that it is in $U(\G)\times (-\varepsilon, \varepsilon)$
which completes the proof of the first part of the lemma.
The moreover part follows from Lemma \ref{graph-connected}.
\end{proof}

\subsection{Separations}

\begin{definition}
  Let $\F\subseteq C_I(L)$. We say that an antichain $(U_n)_{n\in\N}$ 
of elements of $\B(\F)$ is separated along $M$ in $[\F]$ iff
  \[ 
	\overline{\bigcup_{n\in M} U_n(\F)} \cap \overline{\bigcup_{n\not\in M} U_n(\F)} = \emptyset,
 \]
where the closures are taken in $\nabla\F$.
\end{definition}

\begin{lemma} \label{separation-equivalent}
  Let $\F\subseteq C_I(L)$ and let $(U_n)_{n\in\N}$ be an antichain
of elements of $\B(\F)$.
  Then $(U_n)_{n\in\N}$ is separated along $M$ in $[\F]$ if and only if there are elements 
  $\{V_j\mid j\in J\},\{V'_j\mid j\in J'\}\subseteq\B(\F)$, with $J$ and $J'$ finite, such that 
  for $V(\F)=\bigcup_{j\in J} V_j(\F)$ and $V'(\F)=\bigcup_{j\in J'} V'_j(\F)$ and we have
  \[
	\bigcup_{n\in M} U_n(\F) \subseteq V(\F)\subseteq\overline{V(\F)}\subseteq V'(\F)\subseteq 
\nabla\F \setminus \big(\bigcup_{n\not\in M} U_n(\F)\big).
  \]
\end{lemma}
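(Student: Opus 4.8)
The statement is a routine separation argument in the compact Hausdorff space $\nabla\F$; the one point that needs attention is that the two separating open sets can be produced as \emph{finite} unions of basic sets of the form $W(\F)$ with $W\in\B(\F)$. Observe first that $\nabla\F=\Pi\F[L]$ is a continuous image of the compact space $L$ sitting inside the Hausdorff cube $I^\F$, hence it is compact Hausdorff, so normal and in particular regular, and by Lemma \ref{openF}(1) the sets $W(\F)$, $W\in\B(\F)$, form a basis of its topology (the antichain hypothesis on $(U_n)_{n\in\N}$ will play no role). For the implication ($\Leftarrow$), suppose finite families $\{V_j:j\in J\}$, $\{V'_j:j\in J'\}\subseteq\B(\F)$ are given and write $V(\F)=\bigcup_{j\in J}V_j(\F)$, $V'(\F)=\bigcup_{j\in J'}V'_j(\F)$. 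From $\bigcup_{n\in M}U_n(\F)\subseteq\overline{V(\F)}$ and the closedness of $\overline{V(\F)}$ we get $\overline{\bigcup_{n\in M}U_n(\F)}\subseteq\overline{V(\F)}\subseteq V'(\F)$, while $V'(\F)$ is open and disjoint from $\bigcup_{n\notin M}U_n(\F)$, so its closed complement contains $\overline{\bigcup_{n\notin M}U_n(\F)}$. Hence the two closures lie in $V'(\F)$ and in $\nabla\F\setminus V'(\F)$ respectively, so they are disjoint, i.e. $(U_n)_{n\in\N}$ is separated along $M$ in $[\F]$.

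For the implication ($\Rightarrow$), put $A=\overline{\bigcup_{n\in M}U_n(\F)}$ and $B=\overline{\bigcup_{n\notin M}U_n(\F)}$; these are disjoint closed subsets of $\nabla\F$. By normality fix an open $G$ with $A\subseteq G\subseteq\overline G\subseteq\nabla\F\setminus B$. For each $a\in A$, using regularity of $\nabla\F$ together with the basis property, pick $W_a\in\B(\F)$ with $a\in W_a(\F)$ and $\overline{W_a(\F)}\subseteq G$; since $A$ is compact, finitely many of these cover $A$, say indexed by a finite $J$, and we set $V_j=W_{a_j}$ and $V(\F)=\bigcup_{j\in J}V_j(\F)$. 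Then $\bigcup_{n\in M}U_n(\F)\subseteq A\subseteq V(\F)$, and, because the closure of a finite union is the union of the closures, $\overline{V(\F)}=\bigcup_{j\in J}\overline{W_{a_j}(\F)}\subseteq G$. Now $\overline{V(\F)}$ is a compact subset of the open set $\nabla\F\setminus B$; covering it by basic sets each contained in $\nabla\F\setminus B$ and extracting a finite subcover, indexed by a finite $J'$, we obtain $V'_j\in\B(\F)$ with $\overline{V(\F)}\subseteq V'(\F)=\bigcup_{j\in J'}V'_j(\F)\subseteq\nabla\F\setminus B$. Stringing the inclusions together gives
\[ \bigcup_{n\in M}U_n(\F)\subseteq V(\F)\subseteq\overline{V(\F)}\subseteq V'(\F)\subseteq\nabla\F\setminus B\subseteq\nabla\F\setminus\bigcup_{n\notin M}U_n(\F), \]
which is the required chain.

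There is no serious obstacle here. The only thing to keep in mind is that \emph{compactness} of $A$ and of $\overline{V(\F)}$ — which holds because they are closed subsets of the compact space $\nabla\F$ — is exactly what lets one replace the abstract separating open sets coming from normality by finite unions of the prescribed basic sets $W(\F)$, while the elementary identity $\overline{\bigcup_{j\in J}W_{a_j}(\F)}=\bigcup_{j\in J}\overline{W_{a_j}(\F)}$ for finite $J$ is what keeps $\overline{V(\F)}$ trapped inside $G$ and hence inside $V'(\F)$.
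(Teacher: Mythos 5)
Your proof is correct and is exactly the argument the paper intends: the paper's own proof simply cites the normality of the compact space $\nabla\F$ together with the basis property of Lemma \ref{openF}(1), and your write-up just fills in the standard compactness/regularity details that turn the abstract separating open sets into finite unions of basic sets.
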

\begin{proof}
  It follows directly from the normality of the compact space $\nabla F$ and
from Lemma \ref{openF} (1).
\end{proof}

Note that by Lemma \ref{openF} the above condition from Lemma \ref{separation-equivalent}
is preserved if we pass from
$\F$ to a bigger $\G$. However if an antichain $(U_n)_{n\in \N}$
is not separated along $M\subseteq \N$ in $[\F]$, it may become
separated along $M\subseteq \N$ in $[\G]$.

\begin{lemma} \label{finite-separation}
  Let $\F\subseteq\G\subseteq C_I(L)$ and let $(U_n)_{n\in\N}\subseteq\B(\F)$ be an antichain,
  $M\subseteq\N$. Then the antichain $(U_n)_{n\in\N}\subseteq \B(\G)$ 
is separated along $M$ in $[\G]$ if and only if it is separated in
  $[\F\cup \mathcal H]$ for some finite $\mathcal H\subseteq\G$.
\end{lemma}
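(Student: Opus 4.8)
The plan is to use Lemma \ref{separation-equivalent} to express ``$(U_n)_{n\in\N}$ is separated along $M$'' as the existence of a chain of inclusions between finitely many basic open sets of the relevant Gelfand space, and then to observe that such a chain mentions only finitely many functions from $\G$. The implication from right to left is exactly the remark following Lemma \ref{separation-equivalent}: if $(U_n)_{n\in\N}$ is separated along $M$ in $[\F\cup\mathcal H]$ for some finite $\mathcal H\subseteq\G$, then $\B(\F\cup\mathcal H)\subseteq\B(\G)$ and, by Lemma \ref{openF}, the relations of inclusion, disjointness and ``closure contained in'' among sets of the form $W(\cdot)$ do not change when the index family is enlarged; hence the finite witnesses $\{V_j\mid j\in J\},\{V'_j\mid j\in J'\}\subseteq\B(\F\cup\mathcal H)$ produced by Lemma \ref{separation-equivalent} witness separation along $M$ in $[\G]$ as well.

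For the converse, assume $(U_n)_{n\in\N}$ is separated along $M$ in $[\G]$. By Lemma \ref{separation-equivalent} there are finite families $\{V_j\mid j\in J\},\{V'_j\mid j\in J'\}\subseteq\B(\G)$ so that, writing $V=\bigcup_{j\in J}V_j$ and $V'=\bigcup_{j\in J'}V'_j$,
\[
\bigcup_{n\in M}U_n(\G)\subseteq V(\G)\subseteq\overline{V(\G)}\subseteq V'(\G)\subseteq\nabla\G\setminus\Big(\bigcup_{n\notin M}U_n(\G)\Big).
\]
Let $\mathcal H$ be the union of the domains of all the $V_j$ and $V'_j$; since $J,J'$ are finite and each $dom(V_j),dom(V'_j)$ is finite, $\mathcal H$ is a finite subset of $\G$, so $\F\cup\mathcal H\subseteq\G$, we have $V_j,V'_j\in\B(\F\cup\mathcal H)$, and $(U_n)_{n\in\N}\subseteq\B(\F)\subseteq\B(\F\cup\mathcal H)$. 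Now apply to the projection $\pi=\pi_{\F\cup\mathcal H,\G}\colon\nabla\G\to\nabla(\F\cup\mathcal H)$ — continuous, surjective, and, being a map between compact Hausdorff spaces, closed — the arguments behind Lemma \ref{openF}, but with the finite unions $V$, $V'$, $\bigcup_{n\in M}U_n$, $\bigcup_{n\notin M}U_n$ in place of single basic sets: from $\pi^{-1}\big[\bigcup_i W_i(\F\cup\mathcal H)\big]=\bigcup_i W_i(\G)$ together with the surjectivity of $\pi$ one transports the inclusions and the disjointness in the displayed chain, while Lemma \ref{closed} (i.e.\ $\pi[\overline Z]=\overline{\pi[Z]}$) and closedness of $\pi$ handle the step $\overline{V(\cdot)}\subseteq V'(\cdot)$. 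This yields the chain of Lemma \ref{separation-equivalent} with $\F\cup\mathcal H$ replacing $\G$, so $(U_n)_{n\in\N}$ is separated along $M$ in $[\F\cup\mathcal H]$, as required.

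The argument carries no genuine difficulty; the only point that needs a little attention is that Lemma \ref{openF}(6)--(8) is phrased for single basic open sets whereas the separation witnesses $V,V'$ are finite unions, which is why I re-run the short topological arguments (preimages plus surjectivity of $\pi$ for inclusions and disjointness, images plus closedness of $\pi$ together with Lemma \ref{closed} for closures) rather than quote those items verbatim.
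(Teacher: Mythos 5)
Your proposal is correct and follows essentially the same route as the paper: apply Lemma \ref{separation-equivalent} in $[\G]$, let $\mathcal H$ collect the (finitely many) domains of the witnesses, and transport the chain down to $[\F\cup\mathcal H]$ via Lemma \ref{openF} (the paper cites items (7) and (8) directly), with the converse being the remark after Lemma \ref{separation-equivalent}. Your only deviation is spelling out the extension of \ref{openF}(6)--(8) from single basic sets to finite unions, a routine point the paper leaves implicit.
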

\begin{proof}
  Suppose that $(U_n)_{n\in\N}$ is separated along $M$ in $[\G]$ and let 
  $V,V',\{V_j\mid j\in J\},\{V'_j\mid j\in J'\}$ be as in Lemma \ref{separation-equivalent}.
Let $\mathcal H\subseteq \G$ be a finite set including domains of all $V_j$ for $j\in J$
 and all $V_j'$ for all $j\in J'$. Now use Lemma \ref{openF} (7) and (8).
\end{proof}

\begin{lemma} \label{countable-family}
  Let $\F\subseteq C_I(L)$ be countable and let $(U_n)_{n\in\N}\subseteq \B(\F)$ be an antichain. 
  Then there is a set $M\subseteq\N$ such that the antichain $(U_n(\F))_{n\in\N}$ 
  is not separated along $M$ in $[\F]$.  
\end{lemma}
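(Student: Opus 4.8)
The plan is to exploit the countability of $\F$. Since $\F$ is countable, $I^\F=[0,1]^{\F}$ is metrizable, and $\nabla\F$, being the continuous image of the compact space $L$ under $\Pi\F$, is a compact subspace of $I^\F$; hence $\nabla\F$ is compact metrizable and in particular sequentially compact. This is really all that is needed: no interplay with larger families $\G$ will enter.

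Concretely, I would argue as follows. The members of an antichain have nonempty evaluations, so I choose a point $x_n\in U_n(\F)$ for each $n\in\N$. By sequential compactness of $\nabla\F$ there are an infinite set $N'\subseteq\N$ and a point $x\in\nabla\F$ such that $x_n\to x$ along $N'$. Now split $N'$ into two infinite pieces $M$ and $N'\setminus M$. Then $x$ is a limit of the subsequence $(x_n)_{n\in M}$ and also of $(x_n)_{n\in N'\setminus M}$; using $x_n\in U_n(\F)$ together with the trivial inclusion $N'\setminus M\subseteq\N\setminus M$, we get
\[
 x\in\overline{\bigcup_{n\in M}U_n(\F)}\ \cap\ \overline{\bigcup_{n\notin M}U_n(\F)}.
\]
Thus this intersection is nonempty, which is exactly the statement that $(U_n(\F))_{n\in\N}$ is not separated along $M$ in $[\F]$. (The set $M$ produced in this way is both infinite and coinfinite, which is the form in which the conclusion is used later.)

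There is no substantial obstacle: the content of the lemma is just the observation that countability of $\F$ forces $\nabla\F$ to be sequentially compact. The only mild points to keep track of are that an antichain consists of elements with nonempty evaluations (so the $x_n$ exist) and the bookkeeping $N'\setminus M\subseteq\N\setminus M$ so that the infinite set $M\subseteq N'$ serves as the required witness $M\subseteq\N$. Note, for contrast, that the countability hypothesis is essential here: for uncountable $\F$, as in the main construction, $\nabla\F$ need not be sequentially compact, and there one aims precisely to separate every such antichain along a suitable $M$.
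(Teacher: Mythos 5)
Your proof is correct, but it is a genuinely different argument from the one in the paper. You use that for countable $\F$ the space $\nabla\F\subseteq I^{\F}$ is compact metrizable, hence sequentially compact, pick $x_n\in U_n(\F)$ (the evaluations are nonempty, a tacit convention about antichains that the paper's proof also relies on), extract a convergent subsequence along $N'$, and split $N'$ into two infinite halves; the limit point then witnesses non-separation along $M$. The paper instead argues by counting: by Lemma \ref{separation-equivalent}, separation along $M$ is witnessed by a pair of finite subsets of $\B(\F)$, distinct sets $M$ force distinct witnessing pairs, and since $\B(\F)$ is countable while there are continuum many $M\subseteq\N$, some $M$ admits no witness. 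Your route is more elementary and topologically explicit: it bypasses Lemma \ref{separation-equivalent} entirely, exhibits the common limit point, and automatically delivers an infinite, coinfinite $M$ (indeed any splitting of $N'$ works), which is the form needed later in the construction and in Definition \ref{paracomplicated}. The paper's counting route buys something you do not get: it shows that at most countably many $M\subseteq\N$ can separate a given antichain, a quantitative strengthening that the authors invoke in the introduction when describing the peculiar separation behaviour of the final space $K$; on the other hand it is less constructive and does not by itself point at the non-separating witness. Both arguments use only nonemptiness of the $U_n(\F)$, not their pairwise disjointness, so neither needs the full antichain hypothesis.
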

\begin{proof}
The separation
of   $(U_n)_{n\in\N}$ 
   along $M$ would yield finite sets $\{V_j^M: j\in J\}, \{(V_j')^M: j\in J'\}\subseteq \B(\F)$
as in Lemma \ref{separation-equivalent}.
For distinct $M_1, M_2\subseteq N$, these finite families must be distinct. However
there are continuum many subsets of $\N$ while $\B(\F)$ is countable for countable $\F$.
\end{proof}

\begin{definition}\label{separating}
  Let $\F\subseteq\G\subseteq C_I(L)$ and let $(U_n)_{n\in\N}$ be an antichain in $\B(\F)$.
  We say that $\F$ is separating for $(U_n)_{n\in\N}$ in $\G$, if whenever
 $(U_n(\G))_{n\in\N}$ is separated along some
  $M\subseteq\N$ in $[\G]$, then already $(U_n(\F))_{n\in\N}$ is separated along $M$ in $[\F]$.
\end{definition}

\begin{lemma} \label{adding-disjoint}
  Let $\F\subseteq\G\subseteq C_I(L)$ and let $(U_n)_{n\in\N}$ be an antichain in $\B(\F)$ such that
  $\F$ is separating for $(U_n)_{n\in\N}$ in $\G$. Suppose that $(f_n)_{n\in\N}\subseteq C_I(L)$ is an antichain
  such that $U_n(L)\cap \supp (f_m) = \emptyset$ for every $n,m\in\N$ and let $f\in C_I(L)$ be the supremum of 
  $(f_n)_{n\in\N}$ in $C(L)$. Then $\F$ is separating for $(U_n)_{n\in\N}$ in $\G\cup\{f\}$.
\end{lemma}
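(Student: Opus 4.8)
The plan is to reduce to a statement about a single extra coordinate: I will show directly that if the antichain $(U_n(\G\cup\{f\}))_{n\in\N}$ is separated along some $M\subseteq\N$ in $[\G\cup\{f\}]$, then $(U_n(\G))_{n\in\N}$ is already separated along $M$ in $[\G]$; combined with the hypothesis that $\F$ is separating for $(U_n)_{n\in\N}$ in $\G$, this yields separation of $(U_n(\F))_{n\in\N}$ along $M$ in $[\F]$, which is exactly what is required. Put $\G'=\G\cup\{f\}$ (we may assume $f\notin\G$, since otherwise there is nothing to prove) and let $\pi=\pi_{\G,\G'}\colon\nabla\G'\rightarrow\nabla\G$ be the natural projection, i.e., the restriction of the coordinate projection $I^{\G'}\rightarrow I^{\G}$ forgetting the $f$-coordinate.

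The key observation is that $f$ vanishes on $\bigcup_{n}U_n(L)$. Indeed, by Lemma \ref{supp-sup} we have $\supp(f)\subseteq\overline{\bigcup_m\supp(f_m)}$, and $\bigcup_m\supp(f_m)$ is an open set disjoint from the open set $\bigcup_n U_n(L)$ by hypothesis; since the closure of a set disjoint from an open set $V$ remains disjoint from $V$, it follows that $\supp(f)\cap\bigcup_n U_n(L)=\emptyset$. Let $Z_0=\{x\in\nabla\G'\mid x(f)=0\}$, a closed subset of $\nabla\G'$. For each $n$, Lemma \ref{openF}(3) gives $U_n(\G')=\Pi\G'[U_n(L)]$, and every point $\Pi\G'(w)$ with $w\in U_n(L)$ has $f$-coordinate $f(w)=0$; hence $\bigcup_n U_n(\G')\subseteq Z_0$, and therefore $\overline{\bigcup_n U_n(\G')}\subseteq Z_0$ since $Z_0$ is closed.

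Now note that $\pi$ restricted to $Z_0$ is injective: if $x,y\in Z_0$ and $\pi(x)=\pi(y)$, then $x$ and $y$ agree on every coordinate in $\G$ and both have $f$-coordinate $0$, so $x=y$ in $I^{\G'}$. Assume $(U_n(\G'))_{n\in\N}$ is separated along $M$ in $[\G']$, i.e., the compact sets $C_1=\overline{\bigcup_{n\in M}U_n(\G')}$ and $C_2=\overline{\bigcup_{n\notin M}U_n(\G')}$ are disjoint. Both lie in $Z_0$ by the previous paragraph, so injectivity of $\pi$ on $Z_0$ forces $\pi[C_1]\cap\pi[C_2]=\emptyset$. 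By Lemma \ref{closed} applied to the continuous map $\pi$ between the compact spaces $\nabla\G'$ and $\nabla\G$, together with Lemma \ref{openF}(5), we obtain $\pi[C_1]=\overline{\pi[\bigcup_{n\in M}U_n(\G')]}=\overline{\bigcup_{n\in M}U_n(\G)}$ and likewise $\pi[C_2]=\overline{\bigcup_{n\notin M}U_n(\G)}$, with the closures now taken in $\nabla\G$. Thus $(U_n(\G))_{n\in\N}$ is separated along $M$ in $[\G]$, and the separating hypothesis completes the proof.

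The routine ingredients are the topological preservation facts from Lemma \ref{openF} and the standard fact that continuous images of compact (hence closed) sets are closed, which is what lets us identify $\pi[C_i]$ with the relevant closure in $\nabla\G$. The step carrying the real content—and the one I would treat most carefully—is the inclusion $\overline{\bigcup_n U_n(\G')}\subseteq Z_0$: it is precisely there that the hypothesis $U_n(L)\cap\supp(f_m)=\emptyset$, the identity $f=\bigvee_n f_n$, and Lemma \ref{supp-sup} combine, and it is what makes $\pi$ injective on the region supporting all the relevant closures, so that disjointness of closures survives the projection from $\nabla\G'$ to $\nabla\G$.
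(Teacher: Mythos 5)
Your proof is correct, but it takes a genuinely different route from the paper's. The paper argues contrapositively: assuming $(U_n(\F))_{n\in\N}$ is not separated along $M$ in $[\F]$, the separating hypothesis yields a common closure point $x\in\nabla\G$, which is lifted by Lemma \ref{keep-intersected} (1)--(2) to points $s,t\in L$ in the closures of $\bigcup_{n\in M}U_n(L)$ and $\bigcup_{n\in\N\setminus M}U_n(L)$; the extremal disconnectedness of $L$ is then used to see that $s,t\notin\overline{\bigcup_{m}\supp(f_m)}\supseteq\supp(f)$ (via Lemma \ref{supp-sup}), so $f(s)=f(t)=0$ and Lemma \ref{keep-intersected} (3) gives non-separation in $[\G\cup\{f\}]$. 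You instead prove the (logically equivalent) downward reflection directly: since $f$ vanishes on every $U_n(L)$ --- for which you need only that $U_n(L)$ is open together with Lemma \ref{supp-sup}, not extremal disconnectedness --- every set $U_n(\G\cup\{f\})$, and hence the two relevant closures, lies in the closed set $Z_0=\{x\in\nabla(\G\cup\{f\})\mid x(f)=0\}$, on which the projection $\pi_{\G,\G\cup\{f\}}$ is injective; disjointness of the closures therefore survives the projection, and Lemmas \ref{closed} and \ref{openF} (3),(5) identify the images with $\overline{\bigcup_{n\in M}U_n(\G)}$ and $\overline{\bigcup_{n\in\N\setminus M}U_n(\G)}$ in $\nabla\G$, after which Definition \ref{separating} finishes. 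Your use of Lemma \ref{openF} for the larger families is legitimate, since $\B(\F)\subseteq\B(\G)\subseteq\B(\G\cup\{f\})$. What each approach buys: yours is more self-contained for this particular lemma, bypassing Lemma \ref{keep-intersected} and the extremal disconnectedness of $L$, and it isolates the real mechanism (the new coordinate is constant, indeed zero, over the whole region carrying the relevant closures, so it cannot create a separation); the paper's version reuses the witness-lifting machinery of Lemma \ref{keep-intersected}, which is needed anyway in neighbouring arguments such as Lemma \ref{extending-generator}, so it keeps the proofs in this section uniform.
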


\begin{proof}
  Fix $M\subseteq\N$ and suppose that the antichain 
$(U_n(\F))_{n\in\N}$ is not separated along $M$ in $[\F]$. 
  Then the hypothesis of the lemma guarantees that $(U_n(\G))_{n\in\N}$
 is not separated along $M$ in $[\G]$ and
  so we can pick $x\in \overline{\bigcup_{n\in M} U_n(\G)} \cap
 \overline{\bigcup_{n\in \N\setminus M} U_n(\G)}\subseteq \nabla\G$. 
  Now it is enough to find appropriate $s,t\in L$ and use Lemma \ref{keep-intersected}.

  By the hypothesis we have for every $P\subseteq\N$:
  \[ \bigcup_{n\in P} U_n(L) \cap \bigcup_{n\in\N} \supp (f_n)=\emptyset, \]
  but the space $L$ is extremally disconnected, hence for every $P\subseteq\N$ we have
  \[ \overline{\bigcup_{n\in P} U_n(L)} \cap \overline{\bigcup_{n\in\N} \supp (f_n)}=\emptyset. \leqno{(\ast)}\]
By Lemma \ref{keep-intersected} (1) - (2) there are
 $s\in \overline{\bigcup_{n\in M} U_n(L)}$ such that $(\Pi\G)(s)=x$
and   $t\in\overline{\bigcup_{n\in \N\setminus M} U_n(L)}$ such that $(\Pi\G)(t)=x$. 
  By $(\ast)$ we have that $s,t \not\in \overline{\bigcup_{n\in\N} \supp (f_n)}$, so by Lemma \ref{supp-sup} we have $s,t \not\in \supp(f)$, which means exactly that $f(s)=f(t)=0$ 
   so
an application of  Lemma \ref{keep-intersected} (3) completes  the proof.
\end{proof}

\begin{lemma} \label{extending-d_alpha}
  Let $A\subseteq\kappa$,  $\alpha\in \kappa\setminus A$  and suppose  
that $\F\subseteq C_I(L)$  depends on $A$ and  
  $(U_n)_{n\in\N}$ is an antichain in $\B(\F)$.
  Then $\F$ is separating for $(U_n)_{n\in\N}$ in $\F \cup \{d_\alpha\}$.
\end{lemma}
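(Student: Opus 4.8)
The plan is to exploit the fact that, under the dependence hypothesis, adjoining $d_\alpha$ to $\F$ only attaches a free interval coordinate to the Gelfand space, so such a coordinate cannot affect whether an antichain coming from $\B(\F)$ is separated.

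First I would note that $\F$ depends on $\kappa\setminus\{\alpha\}$, since $\F$ depends on $A$ and $A\subseteq\kappa\setminus\{\alpha\}$ (dependence on a set passes to any larger set, because $p_A$ factors through $p_{\kappa\setminus\{\alpha\}}$ via the Stone-dual surjection $L_{\kappa\setminus\{\alpha\}}\to L_A$). Writing $\G=\F\cup\{d_\alpha\}$, Lemma \ref{D_0}(1) then supplies a homeomorphism $\phi\colon\nabla\G\to(\nabla\F)\times I$ with $\pi\circ\phi=\pi_{\F,\G}$, where $\pi$ is the projection onto $\nabla\F$. For the given antichain $(U_n)_{n\in\N}\subseteq\B(\F)$, Lemma \ref{openF}(4) gives $U_n(\G)=\pi_{\F,\G}^{-1}[U_n(\F)]$, hence $\phi[U_n(\G)]=\pi^{-1}[U_n(\F)]=U_n(\F)\times I$ for every $n\in\N$. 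Since $\phi$ is a homeomorphism it commutes with closures, and for any $S\subseteq\nabla\F$ one has $\overline{S\times I}=\overline S\times I$ in $(\nabla\F)\times I$ (an elementary computation in the product topology, valid because $I\neq\emptyset$). Therefore, for any $M\subseteq\N$,
\[\phi\Big[\,\overline{\bigcup_{n\in M}U_n(\G)}\cap\overline{\bigcup_{n\in\N\setminus M}U_n(\G)}\,\Big]=\Big(\overline{\bigcup_{n\in M}U_n(\F)}\cap\overline{\bigcup_{n\in\N\setminus M}U_n(\F)}\Big)\times I,\]
and, since $I\neq\emptyset$, the left-hand set is empty precisely when $\overline{\bigcup_{n\in M}U_n(\F)}\cap\overline{\bigcup_{n\in\N\setminus M}U_n(\F)}=\emptyset$. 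Thus $(U_n(\G))_{n\in\N}$ is separated along $M$ in $[\G]$ if and only if $(U_n(\F))_{n\in\N}$ is separated along $M$ in $[\F]$; in particular the former implies the latter, which is exactly what Definition \ref{separating} demands.

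The argument is short and in fact yields the equivalence of separation on the two sides, which is more than ``separating'' requires; the same reasoning, iterated, handles $\F\cup\{d_\alpha\mid\alpha\in B\}$ for any finite $B\subseteq\kappa\setminus A$. The only mildly delicate points are the passage from ``depends on $A$'' to ``depends on $\kappa\setminus\{\alpha\}$'' (needed to invoke Lemma \ref{D_0}(1)) and the product-closure identity. I would also flag that the tempting route through Lemma \ref{keep-intersected} does not work verbatim: its ``moreover'' clause would require $d_\alpha$ to be constant on an entire $\Pi\F$-fibre over the witnessing point, which fails since that fibre realizes every value of the $\alpha$-coordinate; it is the product description of $\nabla\G$ that makes the separation invisible to $d_\alpha$.
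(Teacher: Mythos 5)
Your argument is correct and is essentially the paper's own proof: the paper dispatches this lemma in one line by citing Lemma \ref{D_0}(1) to identify $U_n(\F\cup\{d_\alpha\})$ with $U_n(\F)\times I$, which is exactly the route you take, only with the implicit steps (dependence on $A$ passing to $\kappa\setminus\{\alpha\}$, $U_n(\G)=\pi_{\F,\G}^{-1}[U_n(\F)]$ via Lemma \ref{openF}(4), and $\overline{S\times I}=\overline S\times I$) written out. Your closing remark that Lemma \ref{keep-intersected} does not apply verbatim to $f=d_\alpha$ is also a fair observation, though not needed for the proof.
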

\begin{proof}
By Lemma \ref{D_0} (1) the sets $U_n(\F\cup\{d_\alpha\})$ correspond
to $U_n(\F)\times I$, so the lemma follows.
\end{proof}

\begin{lemma} \label{extending-generator} Suppose that we are given
\begin{enumerate}
\item $A\subseteq \kappa$, 
\item  $\F\subseteq C_I(L)$  which depends on $A$ 
with $\{d_\alpha\mid\alpha\in A\}\subseteq\F$,
\item  an antichain $(U_n)_{n\in\N} \subseteq \B(\F)$ 
\item a pairwise disjoint  $(f_n)_{n\in\N}\subseteq [\F]_I$
\item  $(\nu_n)_{n\in\N}\subseteq\{\pm 1\}$, 
\item   a strictly  increasing $(\eta_n)_{n\in\N}\subseteq \kappa$  with 
the set $\{n\in\N\mid\eta_n\in A\}$  finite.
\end{enumerate}
  Let 
\[f =\bigvee_{n\in\N}f_nd_{\nu_n,\eta_n}\] 
in $C_I(L)$.
  Then $\G$ is separating for $(U_n)_{n\in\N}$ in $\G \cup \{f\}$ where
  $\G=\F\cup\{d_{\eta_n}\mid n\in\N\}$.
\end{lemma}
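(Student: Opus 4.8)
The plan is to prove the contrapositive. Assume $(U_n(\G))_{n\in\N}$ is \emph{not} separated along $M$ in $[\G]$; I will show that $(U_n(\G\cup\{f\}))_{n\in\N}$ is not separated along $M$ in $[\G\cup\{f\}]$. Note first that $\G=\F\cup\{d_{\eta_n}\mid n\in\N\}$ depends on $A\cup\{\eta_n\mid n\in\N\}$ and contains $d_\alpha$ for every $\alpha$ in that set. Since $(U_n(\G))$ is not separated along $M$, pick $x\in\overline{\bigcup_{n\in M}U_n(\G)}\cap\overline{\bigcup_{n\in\N\setminus M}U_n(\G)}$ in $\nabla\G$ and, applying Lemma \ref{keep-intersected} with $\G$ in place of $\F$, obtain $s\in\overline{\bigcup_{n\in M}U_n(L)}$ and $t\in\overline{\bigcup_{n\in\N\setminus M}U_n(L)}$ with $\Pi\G(s)=\Pi\G(t)=x$. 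It then suffices to produce $s',t'\in L$ lying in the same two closures in $L$, with $\Pi\G(s')=\Pi\G(t')$ and $f(s')=f(t')$: for then $\Pi(\G\cup\{f\})(s')=(\Pi\G(s'),f(s'))=(\Pi\G(t'),f(t'))=\Pi(\G\cup\{f\})(t')$, and pushing the two closures forward through $\Pi(\G\cup\{f\})$ by Lemma \ref{closed} and Lemma \ref{openF}(3), exactly as in the proof of Lemma \ref{keep-intersected}, places this common point in $\overline{\bigcup_{n\in M}U_n(\G\cup\{f\})}\cap\overline{\bigcup_{n\in\N\setminus M}U_n(\G\cup\{f\})}$.

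The crux is the equality $f(s')=f(t')$, and the first observation is that, because every $f_n\in[\F]\subseteq[\G]$ and every $d_{\eta_n}\in\G$, any two points of $L$ with the same $\Pi\G$-image agree on each $f_n$ and each $d_{\nu_n,\eta_n}$, whence $(f_nd_{\nu_n,\eta_n})(s')=(f_nd_{\nu_n,\eta_n})(t')$ for all $n$ — in particular for $s'=s$, $t'=t$. Since the $\supp(f_n)$ are pairwise disjoint, $s$ lies in at most one of them; if $s\in\supp(f_m)$ then $t\in\supp(f_m)$ as well (because $f_m(s)=f_m(t)$), the set $\supp(f_m)$ is a neighbourhood of both points meeting $\supp(f_nd_{\nu_n,\eta_n})$ only for $n=m$, so $s,t\in D\big((f_nd_{\nu_n,\eta_n})_{n\in\N}\big)$ and Lemma \ref{value-supremum} gives $f(s)=(f_md_{\nu_m,\eta_m})(s)=(f_md_{\nu_m,\eta_m})(t)=f(t)$; here one takes $s'=s$, $t'=t$. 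The same works if $t$ meets some $\supp(f_m)$; and if either of $s,t$ lies off $\overline{\bigcup_n\supp(f_n)}$ then $f$ vanishes there by Lemma \ref{supp-sup}. Hence the only case requiring a genuine argument is the one in which neither $s$ nor $t$ meets any $\supp(f_n)$ while at least one of them lies in $\overline{\bigcup_n\supp(f_n)}\setminus\bigcup_n\supp(f_n)$.

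In that remaining case one imitates the extraction/modification technique of the proof of Lemma \ref{strong-extension}. Put $A_0=\{n\mid\eta_n\in A\}$, which is finite by hypothesis, and $C=\{\eta_n\mid n\notin A_0\}$, so $C\cap A=\emptyset$. Since $\F$, the basic sets $U_n(L)$ and the supports $\supp(f_n)$ all depend on $A\subseteq\kappa\setminus C$, the open sets $\bigcup_{n\in M}U_n(L)$ and $\bigcup_{n\in\N\setminus M}U_n(L)$, and therefore their closures, are saturated under $p_A$ (argued with the normal form of elements of $Fr(\kappa)$, as in the proof of Lemma \ref{supremum-dependence}). Using Lemma \ref{extending-ultrafilters} one replaces $s$ by $s'$ with $p_{\kappa\setminus C}(s')=p_{\kappa\setminus C}(s)$ and a suitably chosen behaviour $p_C(s')$ on the new coordinates, and $t$ by $t'$ with $p_{\kappa\setminus C}(t')=p_{\kappa\setminus C}(t)$ and $p_C(t')=p_C(s')$; this keeps $s'$ and $t'$ inside the respective $p_A$-saturated closures, keeps $\Pi\G(s')=\Pi\G(t')$ (the coordinates $d_{\eta_n}$ with $\eta_n\in A$ are untouched and those with $\eta_n\in C$ are forced to coincide at $s'$ and $t'$), and is arranged so that, for all $n$ outside a finite set determined by $A_0$ and by the finitely many coordinates on which the $U_m$ relevant to $x$ depend, the factor $d_{\nu_n,\eta_n}$ is small enough at $s'$ — hence at $t'$ — that, together with $s',t'\notin\bigcup_n\supp(f_n)$ and continuity of $f=\bigvee_n f_nd_{\nu_n,\eta_n}$, one gets $f(s')=f(t')$. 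The main obstacle is exactly this construction: the $C$-coordinates of $s'$ must be chosen so as to keep the modified points inside the already established closures of $L$ — which is possible precisely because those closures are $p_A$-saturated and $C\cap A=\emptyset$ — while simultaneously controlling enough of the factors $d_{\nu_n,\eta_n}$ on the new coordinates, and it is here that the finiteness of $\{n\mid\eta_n\in A\}$ does its work, just as in the proof of Lemma \ref{strong-extension}. Finally, passing from $\F$ to $\G$ — i.e.\ adjoining the $d_{\eta_n}$ themselves — causes no extra difficulty: a single $d_{\eta_n}$ with $\eta_n\notin A$ can be added preserving ``being separating'' by Lemma \ref{extending-d_alpha}, this property is transitive, and by Lemma \ref{finite-separation} it is determined by finite subfamilies.
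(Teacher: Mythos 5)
Your overall plan is the same as the paper's: argue the contrapositive, lift a witness $x$ of non-separation along $M$ through Lemma \ref{keep-intersected}(1)--(2) to points $s,t\in L$ lying in the two closures with equal $\Pi\G$-image, and then modify $s,t$ only on the coordinates $\eta_n\notin A$ so as to force $f(s')=f(t')$, after which Lemma \ref{keep-intersected} finishes the argument. Your easy cases are handled correctly (if $s$ or $t$ meets some $\supp(f_m)$, Lemma \ref{value-supremum} applies; if both miss $\overline{\bigcup_n\supp(f_n)}$, Lemma \ref{supp-sup} applies), and you correctly isolate the hard case: $s,t\notin\bigcup_n\supp(f_n)$ but at least one of them lies in the closure of this union.

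In that hard case, however, there is a genuine gap at the decisive step. You arrange that $d_{\nu_n,\eta_n}$ is ``small enough at $s'$'' for almost all $n$ and conclude, ``together with $s',t'\notin\bigcup_n\supp(f_n)$ and continuity of $f$'', that $f(s')=f(t')$. This inference is invalid: $f$ is the lattice supremum, not the pointwise sum, and at a point of $\overline{\bigcup_n\supp(f_nd_{\nu_n,\eta_n})}\setminus\bigcup_n\supp(f_nd_{\nu_n,\eta_n})$ its value is not a function of the numbers $f_n(s')$, $d_{\nu_n,\eta_n}(s')$; it is governed by the behaviour of the functions $f_nd_{\nu_n,\eta_n}$ on neighbourhoods of $s'$, hence by the whole ultrafilter $s'$, whose $A$-part differs from that of $t'$. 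Prescribing the values of the generators $e_{\eta_n,k}$ at $s'$ (equivalently the values $d_{\nu_n,\eta_n}(s')$, or even the whole restriction $p_C(s')$) does not pin down $f(s')$: one can have $p_{\kappa\setminus C}(s')=p_{\kappa\setminus C}(s)$, $d_{\nu_n,\eta_n}(s')=0$ for all $n$, $s'\notin\bigcup_n\supp(f_n)$, and still $f(s')$ arbitrarily close to $1$, because the ultrafilter trace on $\overline{Fr}(C)$ is much richer than the generator values and the ``bad'' set where $f$ exceeds $\sum_n f_nd_{\nu_n,\eta_n}$ is only nowhere dense, not empty. The $p_A$-saturation of the closures, which you invoke as the reason the construction ``is possible'', only keeps $s',t'$ inside the closures; it has no bearing on $f(s')=f(t')$. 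What is actually needed is to choose the whole trace of $s'$ (and of $t'$) on the new coordinates: for instance, with $F=\{n\mid\eta_n\in A\}$ (your $A_0$), for each $m$ let $P^m_n$ be the clopen closure of $\{f_n>2^{-m}\}$ (pairwise disjoint, $L$ being extremally disconnected, and lying over the $A$-coordinates) and let $Q^m_n$ be the finite condition on the coordinate $\eta_n$ forcing $d_{\nu_n,\eta_n}\le 2^{-m}$; using the disjointness of the $P^m_n$ and the independence of the coordinates $\eta_n\notin A$ one checks that the elements $b_m=\prod_{n\notin F}(-P^m_n\vee Q^m_n)$, together with the generator conditions and with the ultrafilter $p_{\kappa\setminus C}(s)$, have the finite intersection property; any $s'$ containing them satisfies $f_nd_{\nu_n,\eta_n}\le 2^{-m}$ on the neighbourhood determined by $b_m$ for every $n\notin F$, whence $f(s')=\sum_{n\in F}(f_nd_{\nu_n,\eta_n})(s')$, and likewise for $t'$, giving $f(s')=f(t')$. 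This is the content hidden behind the paper's reduction, which first normalizes $d_{\nu_n,\eta_n}(s)=d_{\nu_n,\eta_n}(t)=0$, treats $g=\bigvee_{n\in\N\setminus F}f_nd_{\nu_n,\eta_n}$ by Lemma \ref{keep-intersected}, and then passes from $\G\cup\{g\}$ to $\G\cup\{f\}$ through the continuous surjection coming from $f=g+\sum_{n\in F}f_nd_{\nu_n,\eta_n}$; your proposal omits exactly this mechanism. (Your closing remark about adjoining the $d_{\eta_n}$ via Lemma \ref{extending-d_alpha} is harmless but addresses nothing that is still needed once non-separation in $[\G]$ has been assumed.)
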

\begin{proof}
  Fix $M\subseteq\N$ such that the antichain $(U_n)_{n\in\N}$ is not separated along $M$ in $[\G]$,
 we will show that $(U_n)_{n\in\N}$ is not separated along $M$ in $[\G\cup\{f\}]$.
Let $F\subseteq\N$ be the finite set of all $n$'s such that $\eta_n\in A$ and let
$\mathcal H=\F\cup\{\eta_n: n\in F\}\subseteq \G$. Lemma \ref{finite-separation} implies
that $(U_n)_{n\in\N}$ is not separated along $M$ in $[\mathcal H]$.
 Hence there is $x\in \overline{\bigcup_{n\in M} U_n(\mathcal H)} \cap 
\overline{\bigcup_{n\in \N\setminus M} U_n(\mathcal H)}\subseteq \nabla\mathcal H$. 
  Using Lemma \ref{keep-intersected} (1) - (2)  fix $t\in\overline{\bigcup_{n\in M} U_n(L)}$ and 
  $s\in\overline{\bigcup_{n\in \N\setminus M} U_n(L)}$ such that $\Pi\mathcal H(t)=\Pi\mathcal H(s)=x$. 

As $\mathcal H$ depends on $A$
we have $\Pi\mathcal H(t')=\Pi\mathcal H(s')=x$ for any $t',s'\in L$ such that $p_A(t')=p_A(t)$ and
$p_A(s')=p_A(s)$. Using this
observation and inductively Lemma \ref{d_alpha} and  Lemma \ref{extending-ultrafilters}
we may assume that $d_{\nu_n,\eta_n}(s)=d_{\nu_n,\eta_n}(t)=0$ for
all $n\in \N\setminus F$.  Inductive application of Lemma \ref{keep-intersected}
and later Lemma \ref{finite-separation} implies that $(U_n)_{n\in\N}$ is 
not separated along $M$ in $[\G\cup\{g\}]$ where
\[g =\bigvee_{n\in\N\setminus F}f_nd_{\nu_n,\eta_n}\] 
in $C_I(L)$.
So it is enough to show that there is a continuous surjection
$\phi: \nabla(\G\cup\{g\})\rightarrow \nabla(\G\cup\{f\})$ such that
$\phi[U(\G\cup\{g\})]=U(\G\cup\{f\})$ for every $U\in \B(\G)$. 
For this it is enough to have a continuous surjection
$\psi: \nabla(\G\cup\{g\})\rightarrow \nabla(\G\cup\{f, g\})$ such that
$\phi[U(\G\cup\{g\})]=U(\G\cup\{f, g\})$ for every $U\in \B(\F)$
since
then we can consider $\phi=\pi_{\G\cup\{f\}, \G\cup\{f, g\}}\circ\psi$ and Lemma \ref{openF} (5).
To get $\psi$ note that $f$ is the composition
of $\Pi (\G\cup\{g\})$ with the sum $h$ of
two continuous functions on $\nabla(\G\cup\{g\})$ namely $g( \G\cup\{g\})$ and
$\Sigma_{n\in F}((f_nd_{\nu_n,\eta_n})( \G\cup\{g\}))$, so 
$\psi(x)=(x, h(x))$ works.
\end{proof}
\subsection{Butterfly points}

\begin{lemma} \label{antichain-existence}
  Let $\F\subseteq C_I(L)$ and let $U\subseteq\nabla\F$ be open. 
Then there is a countable subset $\F_0\subseteq\F$
  and an antichain $(U_n)_{n\in\N}\subseteq \B(\F_0)$ such that 
  \[
	\overline U = \overline{ \bigcup_{n\in\N} U_n(\F)} \text{ in } \nabla\F.
  \]
\end{lemma}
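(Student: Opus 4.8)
The plan is to exploit the fact that $\nabla\F$ is a compact Hausdorff space whose topology, by Lemma \ref{openF}(1), has a basis consisting of the sets $W(\F)$ for $W\in\B(\F)$, each of which depends on a finite subset of $\F$. First I would write the open set $U$ as a union of basic open sets: $U=\bigcup_{i\in I}W_i(\F)$ with $W_i\in\B(\F)$. The set $\{W_i:i\in I\}$ may be uncountable, but since $\nabla\F$ is c.c.c. — this follows because $\nabla\F$ is a continuous image of $L$, which is c.c.c. by Proposition \ref{gleason}, and the continuous image of a c.c.c. space is c.c.c. — a standard argument produces a countable subfamily with the same union. Concretely, let $\mathcal{W}$ be a maximal antichain (in the sense of pairwise disjoint nonempty open sets) among finite intersections of the $W_i(\F)$ that are contained in $U$; by c.c.c. it is countable, say $\mathcal{W}=\{V_n(\F):n\in\N\}$ for suitable $V_n\in\B(\F)$, and maximality together with the fact that $\nabla\F$ has no isolated points (or directly, that every nonempty open subset of $U$ meets some member of $\mathcal{W}$) gives $\overline{\bigcup_{n}V_n(\F)}=\overline U$.

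Next I would repair the antichain condition: the $V_n$ produced above are pairwise disjoint as subsets of $\nabla\F$, which is exactly the definition of an antichain in $\B(\F)$, so in fact there is nothing further to do on that front — one only needs to discard any $V_n$ with $V_n(\F)=\emptyset$ and re-index. Finally, to get the countable subfamily $\F_0\subseteq\F$: each $V_n$ has finite domain $\dom(V_n)\subseteq\F$, so $\F_0:=\bigcup_{n\in\N}\dom(V_n)$ is a countable subset of $\F$, and by construction $(V_n)_{n\in\N}\subseteq\B(\F_0)$. Setting $U_n=V_n$ completes the argument; the identity $\overline U=\overline{\bigcup_n U_n(\F)}$ is the one established in the previous step, and the fact that we may also view the $U_n$ as elements of $\B(\F_0)$ rather than $\B(\F)$ does not change the sets $U_n(\F)$.

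I expect the main obstacle to be the careful verification that the countable antichain $(V_n)$ extracted by the c.c.c. argument actually has closure of its union equal to $\overline U$, rather than to the closure of some proper open subset of $U$. The point is that if $x\in U$ and $N$ is any basic open neighbourhood of $x$ with $N(\F)\subseteq U$, then $N(\F)$ cannot be disjoint from every $V_n(\F)$ — otherwise $N(\F)$ (or a suitable nonempty open subset of it disjoint from the finitely many $V_n$ it might partially overlap, using that the $V_n$ form an antichain) could be added to the maximal antichain, contradicting maximality. Hence $x\in\overline{\bigcup_n V_n(\F)}$, giving $U\subseteq\overline{\bigcup_n V_n(\F)}$ and therefore $\overline U\subseteq\overline{\bigcup_n V_n(\F)}$; the reverse inclusion is immediate since each $V_n(\F)\subseteq U$. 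Everything else is routine bookkeeping with the definitions in \ref{definition-openF} and \ref{openF}.
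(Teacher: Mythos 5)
Your argument is correct and is essentially the paper's own proof: take a maximal pairwise-disjoint subfamily of $\B(\F)$ whose evaluations lie in $U$, note it is countable because $\nabla\F$ is c.c.c.\ as a continuous image of $L$ (Proposition \ref{gleason}), and use maximality plus Lemma \ref{openF}(1) to get $\overline U=\overline{\bigcup_n U_n(\F)}$, with $\F_0$ the union of the finitely many coordinates each $U_n$ depends on. The only superfluous remark is the appeal to ``no isolated points,'' which is not needed; your direct maximality argument already suffices.
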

\begin{proof} Let $\mathcal U$ be a maximal with respect to inclusion
subfamily of  $\B(\F)$ such that 
\begin{itemize}
\item $V(\F)\subseteq U$ for all $V\in \mathcal U$,
\item $V(\F)\cap W(\F)=\emptyset$ for any two distinct $V, W\in \mathcal U$.
\end{itemize}
Note that $\mathcal U$ is countable since $\nabla F$ satisfies the c.c.c. as 
a continuous image of $L$  and by Proposition \ref{gleason}.
The maximality together with
Lemma \ref{openF} (1) gives $\overline {U} = \overline{ \bigcup_{n\in\N} U_n(\F)}$.
\end{proof}

Recall the notion of a butterfly point from Definition \ref{multiplier}.

\begin{lemma} \label{ladder-nobutterfly}
  Let $\D=\{d_\alpha: \alpha<\kappa\}\subseteq \F\subseteq C_I(L)$ be of the form
$\F=\bigcup_{\alpha<\kappa}\F_\alpha$ 
for $\F_\alpha$s
satisfying 
$ \F_{\alpha'}\subseteq \F_{\alpha},$ $\F_{\alpha+1}=\F_\alpha\cup\{d_\alpha\}$,
 $\F_\alpha$ depends on $\alpha$
and $\F_\alpha\cup\{d_\alpha\}$ is separating in $\F$ for every antichain in $\B(\F_{\alpha'})$ for
each $\alpha'<\alpha<\kappa$.
  Then $\nabla\F$ has no butterfly points.
\end{lemma}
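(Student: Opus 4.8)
The plan is to argue by contradiction: suppose $x\in\nabla\F$ is a butterfly point, witnessed by disjoint open $U,V\subseteq\nabla\F$ with $\overline U\cap\overline V=\{x\}$. First I would apply Lemma~\ref{antichain-existence} to both $U$ and $V$, obtaining countable subfamilies $\F_0,\F_1\subseteq\F$ and antichains $(U_n)_{n\in\N}\subseteq\B(\F_0)$, $(V_n)_{n\in\N}\subseteq\B(\F_1)$ with $\overline U=\overline{\bigcup_n U_n(\F)}$ and $\overline V=\overline{\bigcup_n V_n(\F)}$. Since $U\cap V=\emptyset$, the antichain $(U_n)_n$ together with $(V_n)_n$ forms a single countable antichain $(W_k)_{k\in\N}\subseteq\B(\F_0\cup\F_1)$ (interleaving them), and because $\F=\bigcup_{\alpha<\kappa}\F_\alpha$ is an increasing union with each $\F_\alpha$ depending on $\alpha<\kappa$, there is by Lemma~\ref{depends} (applied to the countably many functions in $\F_0\cup\F_1$ together with their domains, which are finite) some $\alpha<\kappa$ with $\F_0\cup\F_1\subseteq\F_\alpha$; in particular $(W_k)_k\subseteq\B(\F_\alpha)$, hence also $(U_n)_n$ and $(V_n)_n$ are antichains in $\B(\F_\alpha)$.

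Next I would invoke Lemma~\ref{countable-family} applied to the countable family $\F_\alpha$ and the antichain $(W_k)_k$: there is a set $P\subseteq\N$ such that $(W_k(\F_\alpha))_k$ is \emph{not} separated along $P$ in $[\F_\alpha]$. Splitting $P$ according to whether an index came from the $U$-block or the $V$-block, one of the two resulting separations of $(U_n)_n$ (along some $M\subseteq\N$ in $[\F_\alpha]$) or of $(V_n)_n$ must fail — more precisely, the nonseparation along $P$ of the interleaved antichain forces, for one of the two subfamilies, a nonseparated partition; the key point is that $\overline{\bigcup_{n\in M}U_n(\F_\alpha)}\cap\overline{\bigcup_{n\notin M}U_n(\F_\alpha)}\neq\emptyset$ for some $M$, or the analogous statement for $V$. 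Now the hypothesis of the lemma says precisely that $\F_\beta\cup\{d_\beta\}=\F_{\beta+1}$ is separating in $\F$ for every antichain in $\B(\F_{\alpha})$ whenever $\alpha<\beta<\kappa$; iterating this (as in the proof of Lemma~\ref{adding-disjoint}, using Lemma~\ref{finite-separation} to reduce to finitely many steps) gives that $\F_\alpha$ is separating for $(U_n)_n$ and for $(V_n)_n$ in $\F$. Hence the nonseparation in $[\F_\alpha]$ lifts to nonseparation in $[\F]$: there is $M\subseteq\N$ with $\overline{\bigcup_{n\in M}U_n(\F)}\cap\overline{\bigcup_{n\notin M}U_n(\F)}\neq\emptyset$ (or the $V$-version).

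Finally I would derive the contradiction with $\overline U\cap\overline V=\{x\}$ and the single-point assumption. Pick a point $y$ in the nonempty intersection $\overline{\bigcup_{n\in M}U_n(\F)}\cap\overline{\bigcup_{n\notin M}U_n(\F)}\subseteq\overline U$. One checks that such a $y$ cannot equal $x$: indeed, since $\overline U\cap\overline V=\{x\}$ is a single point and $x$ is a butterfly point for $U$ and $V$, the set $\overline U$ is itself ``irreducible at $x$'' in the sense that no proper clopen-type splitting of a cofinal antichain inside $U$ can have nonseparated halves both meeting $\overline U\setminus\{x\}$ — the real mechanism here is that $\nabla\F$, being a continuous image of the extremally disconnected $L$ via $\Pi\F$, has the property (used repeatedly in Section~3) that $\overline{\bigcup_{n\in M}U_n(L)}\cap\overline{\bigcup_{n\notin M}U_n(L)}=\emptyset$ in $L$, so the only way the intersection downstairs can be nonempty is that the identifications made by $\Pi\F$ glue it onto the single butterfly point $x$; but then running the same argument with $V$ in place of $U$, and using that $x$ is shared, forces $\overline U\cap\overline V$ to contain a nonseparated pair that in turn forces $\overline U\cap\overline V$ to be larger than a point, or forces $x$ to fail the butterfly separation $\overline U\cap\overline V=\{x\}$. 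The main obstacle, and the step I expect to require the most care, is exactly this last one: extracting a genuine contradiction from ``some half-union accumulates to a point'' versus ``the two closures meet in exactly one point.'' I would handle it by using that a butterfly point $x$ comes with disjoint $U,V$ and choosing the antichains so that the $U_n$'s lie in $U$ and the accumulation point of $\bigcup_{n\in M}U_n$ that is $\neq x$ (which must exist, since if $\overline{\bigcup_{n\in M}U_n(\F)}=\{x\}\cup\bigcup_{n\in M}\overline{U_n(\F)}$ collapsed everything to $x$ the antichain would be separated) then lies in $\overline U\setminus\overline V$, contradicting the nonseparation along $M$ once one observes $\bigcup_{n\notin M}U_n(\F)\subseteq U$ as well, so its closure also misses $\overline V$ and the two closures can be separated inside the open set $U$ away from $x$ by normality — contradicting nonseparation along $M$ in $[\F]$. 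This contradiction shows no butterfly point exists.
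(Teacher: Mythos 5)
There is a genuine gap, and it is exactly at the step you flagged as the hardest one: your contradiction mechanism does not work. If you split the interleaved antichain $(W_k)$ along some $P$ and get a nonseparation witness $y$, there are two possibilities. In the ``cross'' case ($y$ lies in the closure of a $U$-part and of a $V$-part) you get $y\in\overline U\cap\overline V$, but then $y=x$ and nothing is contradicted. In the ``diagonal'' case you get that $(U_n)_{n\in\N}$ is nonseparated along some $M$ in $[\F]$ (or the $V$-analogue); but both half-unions lie inside $U$, so the witness is merely some point of $\overline U$, and this is perfectly compatible with $\overline U\cap\overline V=\{x\}$. Indeed, by Lemma \ref{countable-family} nonseparated antichains inside an open set exist in essentially any such space, including spaces with butterfly points, so ``some antichain inside $U$ is nonseparated'' can never by itself contradict the butterfly condition; your closing argument (that the witness lies in $\overline U\setminus\overline V$ and can then ``be separated by normality'') contradicts your own nonseparation claim rather than the hypothesis. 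There are also technical misuses along the way: Lemma \ref{countable-family} requires a countable family, while $\F_\alpha$ is in general uncountable; and if you instead apply it to the countable $\F_0\cup\F_1$, nonseparation in $[\F_0\cup\F_1]$ need not persist to $[\F_\alpha]$, whereas the separating hypothesis of the lemma only transfers nonseparation from $[\F_{\alpha+1}]$ (for antichains in $\B(\F_{\alpha'})$) up to $[\F]$.

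The missing idea is that one must manufacture \emph{two distinct} witnesses, both forced into $\overline U\cap\overline V$. The paper does not argue by contradiction from a single nonseparation; it shows directly that whenever $\overline U\cap\overline V\ni x$, the intersection contains a second point. Concretely: choose $\alpha$ with the antichains in $\B(\F_\alpha)$, project $x$ to $x'=\pi_{\F_\alpha,\F}(x)$, and use that $\F_\alpha$ does not depend on the coordinate $\alpha$, so by Lemma \ref{D_0} $\nabla\F_{\alpha+1}\cong\nabla\F_\alpha\times I$. Then interleave the $U_n$ and $V_n$ \emph{after} intersecting with the strip $d_\alpha\in(u,v)$, obtaining antichains $(W^{u,v}_k)$ in $\B(\F_{\alpha+1})$ which are nonseparated along $2\N$ (witnessed by $(x',\frac{u+v}{2})$); Lemma \ref{extending-d_alpha} and the separating hypothesis (applied at $\alpha+2$) push this nonseparation up to $[\F]$. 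Doing this for the two strips $(0,1/3)$ and $(2/3,1)$, whose closures $\overline{X(\F)}$ and $\overline{Y(\F)}$ are disjoint, forces the two nonseparation witnesses in $\nabla\F$ to be distinct, and since the interleaving pits the $U$-part against the $V$-part, each witness lies in $\overline{\bigcup_n U_n(\F)}\cap\overline{\bigcup_n V_n(\F)}\subseteq\overline U\cap\overline V$. Hence $\overline U\cap\overline V$ has at least two points and no butterfly point exists. Your proposal contains the interleaving germ of this, but without the fresh coordinate $d_\alpha$ and the two disjoint strips there is no way to force two distinct points, and no contradiction is obtained.
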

\begin{proof}
  Fix two disjoint open set $U, V\subset\nabla\F$ such that there exists $x\in\overline U \cap \overline V$. 
We will show that $\overline U \cap \overline V$ contains at least two distinct points.
  By Lemmas \ref{antichain-existence} and \ref{depends} there exist countable sets  $A\subseteq \kappa$
and $\mathcal G\subseteq \F$ and antichains
  $(U_n)_{n\in\N}\subseteq\B(\mathcal G)$ and $(V_n)_{n\in\N}\subseteq\B(\mathcal G)$ such that
$\mathcal G$ depends on $A$ and
  \[
	\overline U = \overline{ \bigcup_{n\in\N} U_n(\F)} \text{ and }
	\overline V = \overline{ \bigcup_{n\in\N} V_n(\F)} \text{ in } \nabla\F.
  \]
  Using  the regularity of $\kappa$ we see that there 
exists $\alpha<\kappa$ such that 
 $A\subseteq \alpha$ and  $\G\subseteq \F_\alpha$ 

  By Lemma \ref{openF} (6) 
we have that $\pi_{\F_\alpha,\F}[U_n(\F)]=U_n(\F_\alpha)$
and $\pi_{\F_\alpha,\F}[V_n(\F)]=V_n(\F_\alpha)$, so
$x'\in \overline{ \bigcup_{n\in\N} U_n(\F_\alpha)}\cap \overline{ \bigcup_{n\in\N} V_n(\F_\alpha)}$ where
$x'=\pi_{\F_\alpha,\F}(x)$.
It is clear that in $\nabla\F_\alpha\times [0,1]$ we have
$(x', {u+v\over2})\in \overline{ \bigcup_{n\in\N} U_n(\F_\alpha)\times (u,v)}\cap
 \overline{ \bigcup_{n\in\N} V_n(\F_\alpha)\times(u, v)}$ for
any $0<u<v<1$.

Now  define $W_{2n}^{u, v}=U_n\cup\{\left\langle d_\alpha, (u, v)\right\rangle\}$
in  $\B(\F_{\alpha+1})$
 and 
  $W_{2n+1}^{u, v}=V_n\cup\{\left\langle d_\alpha, (u, v)\right\rangle\}$ in  
$\B(\F_{\alpha+1})$ for all $n\in\N$.
By the above observation and Lemma \ref{D_0}  (1),  for every $0<u<v<1$ the sequence
  $(W_n^{u, v})_{n\in\N}$ is an antichain in
 $\B(\F_{\alpha+1})$ which is not
 separated along $2\N$ in
  $\nabla(\F_{\alpha+1})$.  By Lemma \ref{extending-d_alpha}
and the hypothesis that $\F_{\alpha+1}$ depends on $\alpha+1$ the sequence
$(W_n^{u, v})_{n\in\N}$ is an antichain in
 $\B(\F_{\alpha+1})$ which is not
 separated along $2\N$ in
  $\nabla(\F_{\alpha+2})$. 
Hence by the hypothesis
that $\F_{\alpha+2}$ is separating in $\F$ for every antichain in $\F_{\alpha+1}$,
 the antichain $(W_n^{u,v})_{n\in\N}$ 
  is not separated along $2\N$ in $\nabla\F$.

Now let $X=\left\langle d_\alpha, (0, 1/3)\right\rangle$ and 
$Y=\left\langle d_\alpha, (2/3, 1)\right\rangle$ be elements of $\B(\F)$.
We see that $W_n^{0,1/3}(\F)\subseteq X(\F)$ as well as 
$W_n^{2/3, 1}(\F)\subseteq Y(\F)$ while $\overline{X(\F)}\cap \overline{Y(\F)}=\emptyset$
which shows that the points witnessing the nonseparation
of $(W_n^{0, 1/3}(\F))_{n\in\N}$  and $(W_n^{2/3, 1}(\F))_{n\in\N}$
 along $2\N$ in $\nabla\F$ must be distinct.

On the other hand we have $W_{2n}^{u, v}(\F)\subseteq U_n(\F)$
and  $W_{2n+1}^{u, v}(\F)\subseteq V_n(\F)$
for any $0<u<v<1$ and any $n\in \N$ which shows that these two
distinct points must belong to 
$\overline{ \bigcup_{n\in\N} U_n(\F)}\cap \overline{ \bigcup_{n\in\N} V_n(\F)}$
and consequently to $\overline U\cap\overline V$ which completes the proof.
\end{proof}

\section{Ladder  families}

\begin{definition} \label{ladder} 
  Let $\lambda< \kappa$ and $S\subseteq E^\lambda_\omega$. 
  We say that a family $\F\subseteq C_I(L_\lambda)$ is a ladder family of length $\lambda$ given by
the following parameters defined for all $\alpha\in S$:
  \begin{enumerate}
	\item $(\nu^\alpha_n)_{n\in\N} \subseteq \{-1,1\}$,
    \item sequences $(\eta^\alpha_n)_{n\in\N}\subseteq \alpha$ increasingly convergent to $\alpha$,
	\item pairwise disjoint  $(f^\alpha_n)_{n\in\N}\subseteq [\F]$, 
which depends on some $\beta_\alpha< \alpha$,
	\item infinite coinfinite set of integers $M_\alpha\subseteq\N$,
  \end{enumerate}
  if $\F=\{d_\alpha\mid \alpha<\lambda\} \cup\{g_\alpha\mid\alpha\in S\}$ where
  \[
	g_\alpha=\bigvee_{n\in M_\alpha} f_n^\alpha d_{\nu^\alpha_n,\eta^\alpha_n}
  \  \ \hbox{in $C(L)$}\]
  and each $f^\alpha_n$ belongs to 
  \[
	[\{d_\beta\mid\beta<\beta_\alpha\}\cup\{g_\beta\mid\beta\in S\cap {\beta_\alpha}\}].
  \]
  Given $B\subseteq \lambda$ we denote the family $\{d_\alpha \mid \alpha\in B\}$ by $\D[B]$ and 
  the family $\D[B] \cup\{g_\alpha\mid\alpha\in B\cap S\}$ by $\F[B]$.
\end{definition}

Thus a ladder family is a family determined by $S$ and the parameters as in (1) - (4) and constructed
in a recursive manner following the values of these parameters.

\begin{lemma}\label{fragments-dependence} Suppose
that $\F$ is a ladder family of length $\lambda$. Then $\F[\alpha]$ depends on
$\alpha$ for every $\alpha<\lambda$.
\end{lemma}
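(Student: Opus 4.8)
The plan is to prove the statement by induction on $\alpha<\lambda$, showing at each stage that every element of $\F[\alpha]$ depends on $\alpha$; since by Definition \ref{separating} a family depends on a set iff each of its members does, it suffices to check the two kinds of generators. First I would observe that $\F[\alpha]=\D[\alpha]\cup\{g_\beta\mid\beta\in S\cap\alpha\}$. For the Cantor-like surjections $d_\beta$ with $\beta<\alpha$, Lemma \ref{d_alpha}(1) says $d_\beta$ depends on $\{\beta\}\subseteq\alpha$, and a function depending on a subset of $\alpha$ depends on $\alpha$ (immediate from Definition \ref{separating}: $p_\alpha(s)=p_\alpha(t)$ implies $p_{\{\beta\}}(s)=p_{\{\beta\}}(t)$ when $\beta<\alpha$, because $p_{\{\beta\}}$ factors through $p_\alpha$). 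So only the functions $g_\beta$ for $\beta\in S\cap\alpha$ need work.

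For a fixed $\beta\in S\cap\alpha$, recall $g_\beta=\bigvee_{n\in M_\beta}f^\beta_n d_{\nu^\beta_n,\eta^\beta_n}$ in $C(L)$, where the $\eta^\beta_n$ increase to $\beta$, hence all $\eta^\beta_n<\beta<\alpha$, and each $f^\beta_n$ lies in $[\{d_\gamma\mid\gamma<\beta_\beta\}\cup\{g_\gamma\mid\gamma\in S\cap\beta_\beta\}]=[\F[\beta_\beta]]$ for some $\beta_\beta<\beta<\alpha$. By the inductive hypothesis applied at stage $\beta_\beta$ (or simply: $\F[\beta_\beta]$ depends on $\beta_\beta\subseteq\alpha$), every generator of the algebra $[\F[\beta_\beta]]$ depends on $\alpha$; since the set of functions depending on a fixed set $A$ is a closed subalgebra of $C(L)$ containing the constants (dependence is preserved by sums, products, scalar multiples, and uniform limits, as $p_A(s)=p_A(t)$ forces equality of all the relevant values), we get $f^\beta_n$ depends on $\alpha$. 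Also $d_{\nu^\beta_n,\eta^\beta_n}$ equals $d_{\eta^\beta_n}$ or $1-d_{\eta^\beta_n}$, which depends on $\{\eta^\beta_n\}\subseteq\alpha$. Hence each $f^\beta_n d_{\nu^\beta_n,\eta^\beta_n}$ depends on $\alpha$, and these functions are pairwise disjoint; by Lemma \ref{supremum-dependence} their supremum $g_\beta$ in $C(L)$ depends on $\alpha$ as well.

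The main point to be careful about — and the only genuine obstacle — is the circularity latent in the definition of a ladder family: the $f^\beta_n$ used to define $g_\beta$ are themselves built from earlier $g_\gamma$'s, so one must run the induction on the ordinal stage and invoke that the $g_\gamma$ for $\gamma<\beta_\beta<\alpha$ have already been shown to depend on $\gamma+1\le\alpha$. Concretely, I would phrase the induction hypothesis as ``for all $\alpha'\le\alpha$, every member of $\F[\alpha']$ depends on $\alpha'$,'' handle limit $\alpha$ trivially (a member of $\F[\alpha]=\bigcup_{\alpha'<\alpha}\F[\alpha']$ lies in some $\F[\alpha']$ and depends on $\alpha'\subseteq\alpha$), and at successor or at an arbitrary $\alpha$ use the argument above. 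The bookkeeping that ``dependence on $A$'' passes to subalgebras and is inherited by $\F[B]$ for $B\subseteq\alpha$ is routine given Lemmas \ref{d_alpha} and \ref{supremum-dependence}, so no serious difficulty remains.
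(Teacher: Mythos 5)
Your argument is correct and is essentially the paper's proof, which simply invokes the recursive definition of $\F[\alpha]$, Lemma \ref{d_alpha}(1) and Lemma \ref{supremum-dependence}; you have merely spelled out the routine closure facts (dependence passes to larger index sets and is preserved by the algebra operations, uniform limits and pairwise disjoint suprema) that the paper leaves implicit. One trivial slip: the notion of ``depends on'' comes from the definition following Lemma \ref{d_alpha}, not from Definition \ref{separating}.
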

\begin{proof} Use the recursive definition of
$\F[\alpha]$, Lemma \ref{d_alpha} (1) and Lemma \ref{supremum-dependence}.
\end{proof}

\begin{lemma} \label{separating-ordinal}
  Let $\lambda<\kappa$ and let $\F$ be a ladder family of length $\lambda$ and 
  let $(U_n)_{n\in\N} \subseteq \B(\F[\lambda_0])$ be an antichain for some $\lambda_0<\lambda$. 
  Then the family $\F[\lambda_0\cup\{\lambda_0\}]$ is separating for  $(U_n)_{n\in\N}$ in $\F$.
\end{lemma}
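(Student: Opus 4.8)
The plan is to prove Lemma~\ref{separating-ordinal} by induction on the structure of the ladder family, reducing the claim to the two "atomic" separation-preservation results already established: Lemma~\ref{extending-d_alpha} (adding a single Cantor function $d_\alpha$) and Lemma~\ref{extending-generator} (adding a supremum of the form $\bigvee_n f_n d_{\nu_n,\eta_n}$). First I would fix the antichain $(U_n)_{n\in\N}\subseteq\B(\F[\lambda_0])$ and observe that, by Lemma~\ref{finite-separation}, it suffices to show that if $(U_n)_{n\in\N}$ is separated along some $M\subseteq\N$ in $[\F[\lambda_0\cup\{\lambda_0\}]\cup\mathcal H]$ for some finite $\mathcal H\subseteq\F$, then it is already separated along $M$ in $[\F[\lambda_0\cup\{\lambda_0\}]]$. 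Such a finite $\mathcal H$ involves only finitely many coordinates $d_\alpha$ and finitely many suprema $g_\alpha$; let $B$ be the finite set of ordinals $\alpha\in\lambda$ indexing these, and let $\mu$ be large enough that $B\cup\{\lambda_0\}\subseteq\mu<\lambda$. Then $\F[\lambda_0\cup\{\lambda_0\}]\cup\mathcal H\subseteq\F[\mu\cup\{\lambda_0\}]$ (using $\lambda_0<\mu$), so by Lemma~\ref{openF}~(8) it is enough to prove that $\F[\lambda_0\cup\{\lambda_0\}]$ is separating for $(U_n)_{n\in\N}$ in $\F[\mu\cup\{\lambda_0\}]$, i.e., the problem has been localized to a \emph{finite} extension of ordinals above $\lambda_0$.

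Next I would carry out this finite extension one ordinal at a time. Enumerate the ordinals in $\mu\setminus(\lambda_0\cup\{\lambda_0\})$ that actually contribute, in increasing order, as $\gamma_0<\gamma_1<\dots<\gamma_{k-1}$, and set $\G_0=\F[\lambda_0\cup\{\lambda_0\}]$ and $\G_{i+1}=\G_i\cup\{d_{\gamma_i}\}\cup(\{g_{\gamma_i}\}$ if $\gamma_i\in S)$. It suffices, by transitivity of the separating relation (if $\F$ is separating for an antichain in $\G$ and $\G$ is separating in $\G'$, then $\F$ is separating in $\G'$, which is immediate from Definition~\ref{separating}), to show each single step $\G_i\rightsquigarrow\G_{i+1}$ preserves being separating. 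Adding $d_{\gamma_i}$ is handled by Lemma~\ref{extending-d_alpha}: indeed, by Lemma~\ref{fragments-dependence}, $\G_i$ depends on $\max(B\cap\gamma_i)+1\le\gamma_i$, so in fact $\G_i$ depends on some ordinal $\le\gamma_i$ not containing $\gamma_i$, and $\gamma_i\notin(\text{that set})$, giving the hypothesis of Lemma~\ref{extending-d_alpha}. Adding $g_{\gamma_i}=\bigvee_{n\in M_{\gamma_i}}f_n^{\gamma_i}d_{\nu^{\gamma_i}_n,\eta^{\gamma_i}_n}$ is handled by Lemma~\ref{extending-generator}: the family $\G_i\cup\{d_{\gamma_i}\}$ depends on a set $A\subseteq\gamma_i+1$ containing all $d_\beta$ for $\beta\in A$ (here one uses that $\F[\beta_{\gamma_i}]$ depends on $\beta_{\gamma_i}<\gamma_i$ and that all the $d_\beta$'s for $\beta$ below the relevant bound have already been thrown in), the $(f_n^{\gamma_i})$ are pairwise disjoint elements of $[\G_i]_I$, and the $\eta^{\gamma_i}_n$ converge to $\gamma_i$, so only finitely many of them lie in $A$ — which is exactly hypothesis~(6) of Lemma~\ref{extending-generator}. (A small point: Lemma~\ref{extending-generator} adds the full supremum $\bigvee_{n\in\N}$, not $\bigvee_{n\in M_{\gamma_i}}$; one can either restrict to $M_{\gamma_i}$ by setting the remaining $f_n^{\gamma_i}=0$, or note the proof of Lemma~\ref{extending-generator} works verbatim for any index set.)

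The main obstacle I anticipate is bookkeeping the dependence sets correctly so that the hypotheses of Lemmas~\ref{extending-d_alpha} and~\ref{extending-generator} are met at each step — in particular, making sure that at the moment we add $g_{\gamma_i}$, \emph{all} the generators $d_\beta$ on which $\G_i$ depends are already present in $\G_i$ (so that hypothesis~(2) of Lemma~\ref{extending-generator}, "$\{d_\alpha\mid\alpha\in A\}\subseteq\F$", holds), and that the ordinals are added in increasing order so no $\eta^{\gamma_i}_n$ ever coincides with a later $\gamma_j$ in a way that violates finiteness. This is a routine but careful verification using Lemma~\ref{fragments-dependence} and the fact that in the recursive construction of a ladder family each $f_n^\alpha$ lies in $[\{d_\beta\mid\beta<\beta_\alpha\}\cup\{g_\beta\mid\beta\in S\cap\beta_\alpha\}]=[\F[\beta_\alpha]]$ with $\beta_\alpha<\alpha$. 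Once the local reduction and the single-step lemmas are in place, chaining them via Lemma~\ref{finite-separation} (to go back from $\F[\mu\cup\{\lambda_0\}]$ to all of $\F$) finishes the proof.
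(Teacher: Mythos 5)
Your overall skeleton — reduce to finite subfamilies via Lemma~\ref{finite-separation} and then use Lemmas~\ref{extending-d_alpha} and~\ref{extending-generator} as the atomic preservation steps — is the right one, and your opening reduction (it suffices to show that separation in $[\F[\lambda_0\cup\{\lambda_0\}]\cup\mathcal H]$ for finite $\mathcal H\subseteq\F$ forces separation in $[\F[\lambda_0\cup\{\lambda_0\}]]$) is correct. But the central structural decision is wrong, and the obstacle you flag as ``routine but careful bookkeeping'' is in fact fatal to the plan as stated. The intermediate families $\G_i=\F[\lambda_0\cup\{\lambda_0\}]\cup\{d_{\gamma_j},g_{\gamma_j}\mid j<i\}$ skip all generators indexed by ordinals in $(\lambda_0,\gamma_i)$ other than the finitely many $\gamma_j$. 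Consequently the hypotheses of Lemma~\ref{extending-generator} fail at the step adding $g_{\gamma_i}$: hypothesis (4) needs $(f_n^{\gamma_i})_{n\in\N}\subseteq[\G_i]_I$, but $f_n^{\gamma_i}\in[\F[\beta_{\gamma_i}]]$ and $\F[\beta_{\gamma_i}]\not\subseteq\G_i$ once $\beta_{\gamma_i}>\lambda_0$; hypothesis (2) needs $\{d_\alpha\mid\alpha\in A\}\subseteq\G_i$ for a set $A$ on which $\G_i$ depends, but $\G_i$ genuinely depends (through the earlier $g_{\gamma_j}$'s) on the infinitely many coordinates $\eta_n^{\gamma_j}$ and on subsets of $\beta_{\gamma_j}$ whose $d$'s are absent from $\G_i$; and the conclusion of that lemma concerns $\G=\F\cup\{d_{\eta_n}\mid n\in\N\}$, i.e.\ it forces you to adjoin all the $d_{\eta_n^{\gamma_i}}$'s as well. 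This is not cosmetic: the engine behind Lemma~\ref{extending-generator} is Lemma~\ref{keep-intersected}, which requires the new function to take equal values at the two lifted witnesses $s,t\in L$ of non-separation, and $g_{\gamma_i}(s),g_{\gamma_i}(t)$ are determined by coordinates in $\beta_{\gamma_i}\cup\{\eta_n^{\gamma_i}\mid n\in M_{\gamma_i}\}$ that $\Pi\G_i(s)=\Pi\G_i(t)$ does not control. There is also an internal inconsistency in the reduction: once you enlarge the target to $\F[\mu]$ (as you do to invoke Lemma~\ref{openF}), the extension is no longer finite — \emph{every} ordinal in $(\lambda_0,\mu)$ contributes a generator to $\F[\mu]$.

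The repair is not more bookkeeping but a change of induction scheme, which is what the paper does: fix $M$ along which $(U_n)_{n\in\N}$ is not separated in $[\F[\lambda_0\cup\{\lambda_0\}]]$ and prove by transfinite induction on \emph{all} $\alpha\in[\lambda_0,\lambda)$ that it is not separated along $M$ in $[\F[\alpha+1]]$, using Lemma~\ref{finite-separation} to pass through limit stages. At a successor with $\alpha\notin S$ one adds $d_\alpha$ by Lemma~\ref{extending-d_alpha} (here $\F[\alpha]$ depends on $\alpha$ by Lemma~\ref{fragments-dependence}, so this part of your argument survives). At $\alpha\in S$ one does \emph{not} adjoin $g_\alpha$ to $\F[\alpha]$ in one step; instead one applies Lemma~\ref{finite-separation} a second time to reduce to showing non-separation in $[\F[\beta+1]\cup\{g_\alpha\}]$ for each $\beta\in[\beta_\alpha,\alpha)$, and there Lemma~\ref{extending-generator} applies with ground family $\F[\beta+1]$ — which, being a full initial segment, contains $\F[\beta_\alpha]$ (hence the $f_n^\alpha$'s) and all $d_\gamma$ for $\gamma\leq\beta$, while only finitely many $\eta_n^\alpha$ lie below $\beta+1$ since $\eta_n^\alpha\nearrow\alpha>\beta$. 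The non-separation in $[\F[\beta+1]]$ needed as input is exactly the outer inductive hypothesis. Your proposal is missing this two-level induction through the entire initial segment, and without it the single-step lemmas simply do not apply.
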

\begin{proof}
  Let  $S\subseteq E^\lambda_\omega$ be as in the definition of a ladder family and let $M\subseteq\N$ 
be such that 
  the antichain $(U_n)_{n\in\N}\subseteq \B(\F[\lambda_0])$ 
is not separated along $M$ in $[\F[\lambda_0\cup\{\lambda_0\}]]$.
  By Lemma \ref{finite-separation} it is enough to show that the 
antichain  $(U_n)_{n\in\N}$ is not separated along $M$ 
  in $[\F[\alpha+1]]$ for
 all $\alpha \in [\lambda_0,\lambda)$. We proceed by induction.
  The base step $\alpha=\lambda_0$ follows from the choice of $M$.
  For the inductive step we fix $\alpha\in  (\lambda_0,\lambda)$ 
and assume the hypothesis for all ordinals 
  smaller than $\alpha$. 
  Then by Lemma \ref{finite-separation} the antichain $(U_n)_{n\in\N}$ is not separated along 
$M$ in $\F[\alpha]$ and we have the following two cases:

  \noindent Case 1. $\alpha\not\in S$. Then $\F[\alpha+1]=\F[\alpha]\cup\{d_\alpha\}$ and 
  the family $\F[\alpha]$ depends on $\alpha$ so we can use Lemma \ref{extending-d_alpha}. 

  \noindent Case 2. $\alpha\in S$. Then $\F[\alpha+1]=\F[\alpha]\cup\{g_\alpha,d_\alpha\}$.
  First, we show that the antichain  $(U_n)_{n\in\N}$ is not separated along $M$ in 
$[\F[\alpha]\cup\{g_\alpha\}]$.
  By the definition of the ladder family we have that
  \(
	g_\alpha=\bigvee_{n\in M_\alpha} ( f^\alpha_nd_{\nu^\alpha_n,\eta^\alpha_n})
  \)
  and all $f^\alpha_n$s are from $[\F[\beta_\alpha]]$.
  Now, as $\beta_\alpha < \alpha$, we observe that by Lemma \ref{finite-separation}, it is enough to show
  that the antichain $(U_n)_{n\in\N}$ is not separated along $M$ in $[\F[\beta]\cup\{g_\alpha\}]$ for all
  $\beta \in [\beta_\alpha,\alpha)$. 
  But this follows from Lemma \ref{extending-generator} for
  $\F=\F[\beta+1]$, $A=\beta$, $f=g_\alpha$ and the inductive hypothesis that for all 
  $\beta \in [\beta_\alpha,\alpha)$ the antichain $(U_n)_{n\in\N}$ is not separated 
  along $M$ in $[\F[\beta+1]]$.

  Finally, to conclude that the antichain $(U_n)_{n\in\N}$ is not separated along $M$ in 
  $[\F[\alpha]\cup\{g_\alpha,d_\alpha\}]$ we use Lemma \ref{extending-d_alpha} 
as in the Case 1 and Lemma \ref{supremum-dependence}. 
  The proof of the Lemma is finished.
\end{proof}

\begin{lemma}\label{monolithic} 
  Suppose that $\kappa$ is an uncountable regular cardinal, $S\subseteq E^\kappa_\omega$ and a strictly
  increasing sequence 
  $(\eta^\alpha_n)_{n\in\N}\subseteq \kappa$ is convergent to $\alpha$ for every $\alpha\in S$. 
  Then for every countable $A\subseteq \kappa$ the set $S_A$ of all $\alpha\in S$ such that 
  $\{\eta^\alpha_n\mid n\in\N\} \cap A$ is infinite is at most countable.
\end{lemma}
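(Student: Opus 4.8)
The plan is to fix a countable $A\subseteq\kappa$ and show two things: first, that every $\alpha\in S_A$ is a limit point of $A$ in the ordinal topology, and second, that a countable set of ordinals has only countably many limit points, which together bound $|S_A|$ by $\aleph_0$.

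For the first step I would argue as follows. Fix $\alpha\in S_A$. By definition of $S_A$ the set $N=\{n\in\N\mid\eta^\alpha_n\in A\}$ is infinite; enumerate it increasingly as $n_0<n_1<\cdots$. Since $N$ is cofinal in $\N$ and $(\eta^\alpha_n)_{n\in\N}$ is strictly increasing with $\sup_{n\in\N}\eta^\alpha_n=\alpha$ (which is what convergence to $\alpha$ means for a strictly increasing sequence), the subsequence $(\eta^\alpha_{n_k})_{k\in\N}$ is also strictly increasing with supremum $\alpha$. All of its terms lie in $A$, so $\alpha=\sup_{k\in\N}\eta^\alpha_{n_k}$ is a limit point of $A$. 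Hence $S_A$ is contained in $\overline A$, the set consisting of $A$ together with all of its limit points, i.e.\ the closure of $A$ in the ordinal topology.

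For the second step I would use the standard fact that $\overline A$ is countable. One way to see this: $\overline A$, being a set of ordinals, is order-isomorphic (hence homeomorphic for the order topologies) to a unique ordinal $\delta$, under which $A$ corresponds to a countable dense subset $D$ of $\delta$. If $\delta$ were infinite, then $0$ and every ordinal of the form $\xi+1<\delta$ would be isolated in $\delta$ (one has $\{\xi+1\}=(\xi,\xi+2)\cap\delta$, a basic open set), hence would lie in $D$; but there are exactly $|\delta|$ successor ordinals below an infinite $\delta$, forcing $|D|\ge|\delta|$, so $\delta$ is countable. Thus $\overline A\cong\delta$ is countable, and combining with the first step, $S_A\subseteq\overline A$ is at most countable.

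The whole argument is short, and the only point that needs care is the passage in the first step from ``infinitely many of the $\eta^\alpha_n$ lie in $A$'' to ``$\alpha$ is the limit of a sequence drawn entirely from $A$'' --- this genuinely uses that $(\eta^\alpha_n)_{n\in\N}$ is strictly increasing and converges upward to $\alpha$, so that a cofinal index subset still yields the same supremum. I would also note that the hypotheses that $\kappa$ is regular and $S\subseteq E^\kappa_\omega$ are not used beyond making the statement well-posed; the conclusion holds for arbitrary sequences converging upward to the points of $S$.
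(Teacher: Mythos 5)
Your proof is correct, but it takes a genuinely different route from the paper's. You use the same key observation -- that for $\alpha\in S_A$ the set $\{\eta^\alpha_n\mid n\in\N\}\cap A$ is cofinal in $\alpha$ -- to conclude that every $\alpha\in S_A$ is a limit point of $A$, hence $S_A\subseteq\overline A$, and you then prove separately that the closure of a countable set of ordinals is countable (via the order type $\delta$ of $\overline A$ and counting its isolated points). The paper instead converts the cofinality observation directly into an injection: it sends each $\alpha\in S_A$ to the least element of $A\cup\{\sup A\}$ bounding $\{\eta^\alpha_n\mid n\in\N\}\cap A$ from above, and verifies injectivity by choosing, for $\alpha<\alpha'$, an index $n_0$ with $\eta^{\alpha'}_{n_0}\in A$ and $\alpha<\eta^{\alpha'}_{n_0}<\alpha'$, so that $f(\alpha)\leq\eta^{\alpha'}_{n_0}<f(\alpha')$. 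The paper's argument is shorter and self-contained; yours factors the proof through a reusable general lemma (countable sets of ordinals have countable closure) and makes explicit that neither the regularity of $\kappa$ nor $S\subseteq E^\kappa_\omega$ is really used. One small point worth stating in your second step: $A$ is dense in $\overline A$ a priori for the subspace topology inherited from $\kappa$, while you use density for the order topology of $\overline A\cong\delta$; this is legitimate because the order topology of a suborder is coarser than the subspace topology, but the remark should be made (likewise, for $\delta=\xi+2$ the set $\{\xi+1\}$ is open as a final ray rather than as an interval with both endpoints in $\delta$).
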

\begin{proof} 
  Define $f:S_A\rightarrow A\cup\{\sup A\}$ by putting for $\alpha\in S_A$
  the value $f(\alpha)$ to be the least upper bound of the set $\{\eta^\alpha_n\mid n\in\N\} \cap A$ 
among the elements of the set $A\cup\{\sup A\}$.
  Since $A\cup\{\sup A\}$ is countable it is enough to check the injectivity of $f$.
  Fix $\alpha, \alpha'\in S_A$ such that $\alpha<\alpha'$. $A\cap\{\eta^{\alpha'}_n\mid n\in\N\}$ is cofinal in $\alpha'$, so we can pick $n_0$ such that 
  $\alpha < \eta^{\alpha'}_{n_0}$ and $\eta^{\alpha'}_{n_0} \in A$. 
  Then we see that $f(\alpha) \leq \eta^{\alpha'}_{n_0} < f(\alpha')$.
\end{proof}

\begin{lemma} \label{separating-countable}
  Let $\lambda<\kappa$, let $\F$ be a ladder family of length $\lambda$ and 
  let $(U_n)_{n\in\N} \subseteq \B(\F)$ be an antichain. 
  Then there is a countable $A\subseteq \lambda$ such that $(U_n)_{n\in\N}\subseteq\B(\F[A])$
  and $\F[A]$ is separating for $(U_n)_{n\in\N}$ in $\F$.
\end{lemma}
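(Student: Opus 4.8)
The plan is to produce a countable, sufficiently ``closed'' $A\subseteq\lambda$ and then run a transfinite induction on $\alpha\le\lambda$ proving that, for every $M\subseteq\N$, the antichain $(U_n)_{n\in\N}$ stays not separated along $M$ when one passes from $[\F[A]]$ up to $[\F[A\cup\alpha]]$; since $A\cup\lambda=\lambda$ gives $\F[A\cup\lambda]=\F[\lambda]=\F$, this is exactly the assertion that $\F[A]$ is separating for $(U_n)_{n\in\N}$ in $\F$. This is the scheme of the proof of Lemma \ref{separating-ordinal}, the extra difficulty being that $A$ is a countable set rather than an ordinal.

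First I would fix the auxiliary data. Let $S$ and the parameters $(\nu^\gamma_n),(\eta^\gamma_n),(f^\gamma_n),M_\gamma,\beta_\gamma$ be as in Definition \ref{ladder}. Since each $f^\gamma_n$ lies in $[\F[\beta_\gamma]]$ and a continuous function in a closed subalgebra already lies in the closed subalgebra generated by a countable subfamily, I may choose for every $\gamma\in S$ a countable $E^0_\gamma\subseteq\beta_\gamma$ with $f^\gamma_n\in[\F[E^0_\gamma]]$ for all $n$. By recursion on $\gamma\in S$ (legitimate, as only indices $<\gamma$ occur) put
\[\mathrm{cl}(\gamma)=E^0_\gamma\cup\{\eta^\gamma_n:n\in\N\}\cup\bigcup\bigl\{\mathrm{cl}(\delta):\delta\in S\cap(E^0_\gamma\cup\{\eta^\gamma_n:n\in\N\})\bigr\}\subseteq\gamma .\]
Call $B\subseteq\lambda$ \emph{closed} if $\mathrm{cl}(\delta)\subseteq B$ for all $\delta\in B\cap S$. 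Each $\mathrm{cl}(\gamma)$ is closed, so the closure operator $X\mapsto\overline X$ (least closed set containing $X$) is the one-step operator $\overline X=X\cup\bigcup_{\delta\in X\cap S}\mathrm{cl}(\delta)$; in particular it preserves countability, $\overline{X\cup Y}=\overline X\cup\overline Y$, and $\overline{A\cup Y}=A\cup\overline Y$ whenever $A$ is closed. Using Lemmas \ref{d_alpha}, \ref{factorization} and \ref{supremum-dependence} one checks by induction on $\delta$ that $g_\delta$ depends on $\mathrm{cl}(\delta)$, and hence, exactly as in Lemma \ref{fragments-dependence}, that $\F[B]$ depends on $B$ for every closed $B$.

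Next I would build $A$. Let $I_0$ be the (countable) set of indices occurring in $\bigcup_n dom(U_n)$; then $(U_n)_{n\in\N}\subseteq\B(\F[A])$ for any $A\supseteq I_0$. For countable $X$ write $S_X=\{\gamma\in S:\{\eta^\gamma_n:n\in\N\}\cap X\text{ is infinite}\}$, which is countable by Lemma \ref{monolithic}. Put $A^{(0)}=\overline{\overline{I_0}\cup S_{\overline{I_0}}}$ and $A^{(k+1)}=\overline{A^{(k)}\cup\bigcup_{\delta\in A^{(k)}}S_{\mathrm{cl}(\delta)}}$, and set $A=\bigcup_kA^{(k)}$. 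Each $A^{(k)}$ is a countable union of countable sets, so $A$ is a countable closed set with $I_0\subseteq A$, $S_{\overline{I_0}}\subseteq A$, and $S_{\mathrm{cl}(\delta)}\subseteq A$ for every $\delta\in A$. I expect this bookkeeping to be the real point: the naive closure under $X\mapsto S_X$ need not stabilize in $\omega$ steps, whereas $S_{\overline{I_0}}$ and each $S_{\mathrm{cl}(\delta)}$ depend only on a fixed set, so the iteration above does stabilize — and this is precisely what the inductive step below requires.

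Finally, the induction. Assume $(U_n)_{n\in\N}$ is not separated along $M$ in $[\F[A]]$; I claim the same for $[\F[A\cup\alpha]]$, all $\alpha\le\lambda$. For limit $\alpha$, and for $\beta\in A$ at a successor $\alpha=\beta+1$, this is immediate from Lemma \ref{finite-separation} with base family $\F[\overline{I_0}]$ (which contains $(U_n)_{n\in\N}$ in its $\B$); for $\beta\notin A\cup S$ the set $A\cup\beta$ is closed and omits $\beta$, so Lemma \ref{extending-d_alpha} gives that $\F[A\cup\beta]$ is separating for $(U_n)_{n\in\N}$ in $\F[A\cup(\beta+1)]$ and the claim follows. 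The delicate case is $\beta\in S\setminus A$; here one adjoins first $g_\beta$ and then $d_\beta$ (in this order, so that $d_\beta$ is absent when $g_\beta$ is added, which is essential since otherwise the closure below would pull in all of $\{d_{\eta^\beta_n}:n\}$). Suppose $(U_n)_{n\in\N}$ were separated along $M$ in $[\F[A\cup\beta]\cup\{g_\beta\}]$. By Lemma \ref{finite-separation} it is separated along $M$ in $[\F[\overline{I_0}]\cup\mathcal H]$ for some finite $\mathcal H\subseteq\F[A\cup\beta]\cup\{g_\beta\}$; writing the indices of $\mathcal H\setminus\{g_\beta\}$ as $B_1\cup B_2$ with $B_1$ those that are $<\beta$ and $B_2$ the remaining ones (which lie in $A$), both finite, and setting
\[B^{**}=\overline{\overline{I_0}\cup B_2\cup B_1\cup E^0_\beta}=\overline{\overline{I_0}\cup B_2}\cup\overline{B_1\cup E^0_\beta},\]
one gets separation along $M$ in $[\F[B^{**}]\cup\{g_\beta\}]$. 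Here $B^{**}$ is a countable closed set omitting $\beta$, with $\overline{\overline{I_0}\cup B_2}\subseteq A$ and $\overline{B_1\cup E^0_\beta}\subseteq\mu$ for some $\mu<\beta$ (as $\beta$ has cofinality $\omega$, hence is a limit, and $\mathrm{cl}(\delta)\subseteq\delta$), and $(f^\beta_n)_n\subseteq[\F[B^{**}]]_I$ since $E^0_\beta\subseteq B^{**}$. Crucially $\{n:\eta^\beta_n\in B^{**}\}$ is finite: its part in $\overline{B_1\cup E^0_\beta}\subseteq\mu$ is finite because $\eta^\beta_n\to\beta>\mu$, and its part in $\overline{\overline{I_0}\cup B_2}=\overline{I_0}\cup B_2\cup\bigcup_{\delta\in B_2\cap S}\mathrm{cl}(\delta)$ is finite because $\beta\notin A$ forces $\beta\notin S_{\overline{I_0}}$ and $\beta\notin S_{\mathrm{cl}(\delta)}$ for each $\delta\in B_2\subseteq A$. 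Hence Lemma \ref{extending-generator} applies to $\F[B^{**}]$ and $g_\beta=\bigvee_{n\in M_\beta}f^\beta_nd_{\nu^\beta_n,\eta^\beta_n}$, yielding that $\F[B^{**}]\cup\{d_{\eta^\beta_n}:n\in M_\beta\}$ is separating for $(U_n)_{n\in\N}$ in that family together with $\{g_\beta\}$; as both of these lie inside $\F[A\cup\beta]$ (because $\eta^\beta_n<\beta$) and separation is preserved under enlarging the family, the assumed separation descends to $[\F[A\cup\beta]]$, contradicting the inductive hypothesis. Thus $(U_n)_{n\in\N}$ is not separated along $M$ in $[\F[A\cup\beta]\cup\{g_\beta\}]$, and since $\F[A\cup\beta]\cup\{g_\beta\}$ depends on $A\cup\beta\not\ni\beta$ (Lemma \ref{supremum-dependence}), Lemma \ref{extending-d_alpha} adds $d_\beta$ and closes the step. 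Taking $\alpha=\lambda$ gives non-separation in $[\F]$, the desired contradiction; so $\F[A]$ is separating for $(U_n)_{n\in\N}$ in $\F$, as required.
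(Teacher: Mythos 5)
Your proof is correct and uses the same key lemmas and the same overall architecture as the paper's: produce a countable $A$ closed both under dependence and under the operation of Lemma \ref{monolithic}, then induct on $\alpha\le\lambda$ with the three cases ($\alpha\in A$ trivial; $\alpha\notin A\cup S$ via Lemma \ref{extending-d_alpha}; $\alpha\in S\setminus A$ via Lemma \ref{extending-generator}), using Lemma \ref{finite-separation} to localize separation to small subfamilies. The one genuinely different ingredient is the bookkeeping that secures the hypothesis of Lemma \ref{extending-generator} that $\{n\mid \eta^\alpha_n\in\cdot\}$ be finite --- which, as you rightly flag, is the real point, since closing under $X\mapsto S_X$ in $\omega$ steps does not give $S_A\subseteq A$. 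The paper keeps the increasing chain $(A_k)_{k\in\N}$ with $S_{A_k}\subseteq A_{k+1}$ visible and, in the hard case, descends by Lemma \ref{finite-separation} to the families $\F[A_k\cup\beta]$, where finiteness is immediate from $\alpha\notin A_{k+1}$. You instead introduce an explicit closure operator $\mathrm{cl}(\cdot)$, close $A$ under $S_{\overline{I_0}}$ and under $S_{\mathrm{cl}(\delta)}$ for $\delta\in A$, and in the hard case extract from a putative finite separation witness a single countable closed set $B^{**}$ decomposed into pieces (namely $\overline{I_0}$, finitely many sets $\mathrm{cl}(\delta)$ with $\delta\in A$, a set bounded strictly below $\beta$, and a finite set), each meeting $\{\eta^\beta_n\mid n\in\N\}$ finitely. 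Both devices neutralize the same obstacle; the paper's is lighter on notation, while yours makes the closure structure of a ladder family explicit and isolates more transparently why an $\omega$-length iteration suffices. The remaining points --- the limit stage, adjoining $g_\beta$ before $d_\beta$, the inclusion $E^0_\beta\subseteq B^{**}$ guaranteeing $(f^\beta_n)_{n\in\N}\subseteq[\F[B^{**}]]_I$, and the final descent of the assumed separation from $[\F[B^{**}]\cup\{d_{\eta^\beta_n}\mid n\in\N\}]$ into $[\F[A\cup\beta]]$ --- are all handled correctly.
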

\begin{proof}
  Fix the set $S$ from the definition of a ladder family.
  \begin{claim}
	There exists an increasing sequence $(A_n)_{n\in\N}$ of countable subsets of $\lambda$ such that
	\begin{enumerate}[(i)]
	  \item $(U_n)_{n\in\N} \subseteq \B(\F[A_0])$,
	  \item for all $n\in\N$ the family $\F[A_n]$ depends on $A_n$,
	  \item for all $\alpha\in S$ and all $n\in\N$ if $\eta^\alpha_k\in A_n$ for infinitely many $k$ then $\alpha\in A_{n+1}$.
	\end{enumerate}
  \end{claim}
  \begin{proof}
	By the assumption the domain
of every $U_n$ is  some finite set $F_n\subseteq\F$ of coordinates.
	Fix an arbitrary countable set $A_{0,0}\subseteq \lambda$ 
such that $\bigcup_{n\in\N} F_n \subseteq \F[A_{0,0}]$.  
	This choice guarantees that $(U_n)_{n\in\N} \subseteq \B(\F[A_{0,0}])$.
	Then for every $n\in\N$ define 
$A_{0,n+1}\subseteq \lambda$ as the union of $A_{0,n}$ and the countable set 
	$\bigcup\{ Y_\alpha\mid \alpha\in A_{n,0}\cap S\}$ where 
$Y_\alpha\subseteq \alpha$ is some countable set such that 
	$g_\alpha$ depends on $Y_\alpha$ for  $\alpha\in S$. 
	We see that $A_{0,n}$ is countable for every $n\in\N$ and so is the set
	$A_0=\bigcup_{n\in\N} A_{0,n}$. Now we have $(i)$ and $(ii)$ for $n=0$.

	Fix $n\in\N$ and assume we have defined $A_n$ such that
 $(ii)$ and $(iii)$ hold. We define $A_{n+1}$ in two steps. 
	First, we use Lemma \ref{monolithic} with $S=S$ and $A=A_n$ to 
obtain the countable set $S_{A_n}$ so that 
	we know the set $A_{n+1,0}=A_n\cup S_{A_n}$ is countable and that any
superset $A_{n+1}$ of $A_{n+1,0}$ satisfies (iii).
	Then we apply the procedure outlined above for constructing $A_0$ 
	to obtain countable $A_{n+1}$ such that $(i)$ and $(ii)$ hold. This completes
the proof of the Claim.
  \end{proof}
  Fix a sequence $(A_n)_{n\in\N}$ from the above claim and set $A=\bigcup_{n\in\N} A_n$. 
  We will show that $\F[A]$ is separating for $(U_n)_{n\in\N}$ in $\F$. 
  So suppose  that $(U_n)_{n\in\N}$ is not separated along $M$ in $[\F[A]]$.
  By Lemma \ref{finite-separation} it is enough to show that $(U_n)_{n\in\N}$ is not separated along $M$
  in $[\F[A\cup \alpha+1]]$ for every $\alpha<\lambda$.
  We prove it by induction on $\alpha<\lambda$. 
  The base step for $\alpha=0$ that $(U_n)_{n\in\N}$ is not separated along $M$ in $[\F[A]]$ 
follows from the choice of $M$.
  Now assume that $\alpha<\lambda$ and that the hypothesis is 
true for all ordinals smaller than $\alpha$ that is
  the antichain $(U_n)_{n\in\N}$ is not separated along $M$ in $[\F[A\cup \alpha]]$.
  We have the following three cases:
  
  \noindent \emph{Case} 1. $\alpha\in A$. 
  Then $\F[A\cup (\alpha+1)]=\F[A \cup\alpha\cup\{\alpha\}]=\F[A\cup\alpha]$ and
 we are done by inductive hypothesis.
  
  \noindent \emph{Case} 2. $\alpha\not\in A$ and $\alpha\not\in S$. 
  Then 
  \[
	\F[A\cup (\alpha+1)]=\F[A\cup \alpha]\cup\{d_\alpha\}.
  \]
  The family $\F[A\cup \alpha]$ depends on $A\cup \alpha$ by $(ii)$ of the above claim, by Lemma 
  \ref{fragments-dependence} and by definition of ladder family.
  Therefore, we can use Lemma \ref{extending-d_alpha}   to
  conclude that the antichain $(U_n)_{n\in\N}$ is not separated along $M$ in $[\F[A\cup (\alpha+1)]]$.

  \noindent \emph{Case} 3. $\alpha\not\in A$ and $\alpha\in S$.
  Then 
  \[
	\F[A\cup (\alpha+1)]=\F[A\cup \alpha]\cup\{g_\alpha,d_\alpha\}.
  \]
  We prove the inductive step in this case in two steps. 
  In the first step we show that the antichain $(U_n)_{n\in\N}$ is not separated along $M$ in 
  $[\F[A_k\cup \alpha]\cup\{g_\alpha\}]$ for every $k\in\N$, which, by Lemma 
\ref{finite-separation}, implies that 
  the antichain $(U_n)_{n\in\N}$ is not separated along $M$ in $[\F[A\cup \alpha]\cup\{g_\alpha\}]$.
  Fix $k\in\N$ and let $\beta_\alpha<\alpha$ be as in the definition of a ladder family.
  Using Lemma \ref{finite-separation} once more we see that it is sufficient to show that the antichain $(U_n)_{n\in\N}$ is 
  not separated along $M$ in $[\F[A_k\cup \beta]\cup\{g_\alpha\}]$ for all $\beta\in(\beta_\alpha,\alpha)$.
  Fix $\beta\in(\beta_\alpha,\alpha)$ and apply Lemma \ref{extending-generator} with $\F=\F[A_k\cup \beta]$,
  $A=A_k\cup \beta$ and $f=g_\alpha$.
  Let us check the assumptions of Lemma \ref{extending-generator}:
  \begin{itemize}
	\item the family $\F[A_k\cup \beta]$ depends on the set 
	  $A_k\cup \beta$ by $(ii)$ of the above claim, Lemma \ref{fragments-dependence} 
	  and the definition of ladder family,
	\item we have $\D[A_k\cup \beta]\subseteq \F[A_k\cup \beta]$ by the definition of operation $\F[(\cdot)]$,
	\item the elements $f^\alpha_n$ all depend on $\beta$ because we have $\beta_\alpha < \beta$,
	\item the set $\{n\in\N\mid\eta^\alpha_n\in A_k\cup \beta\}$ is finite because 
	  by the assumption of this case $\alpha\not\in A_{k+1}$ and  $(\eta^\alpha_n)_{n\in\N}$
 increasingly converges to $\alpha>\beta$.
  \end{itemize}
  As the antichain $(U_n)_{n\in\N}$ is not separated along $M$ in $[\F[A_k\cup \beta]]$, then Lemma 
  \ref{extending-generator} guarantees that the antichain $(U_n)_{n\in\N}$ is not separated along $M$ in 
  $[\F[A\cup \beta]\cup\{g_\alpha\}]$.

  The second step of the proof of Case 3 consists of showing that the 
antichain $(U_n)_{n\in\N}$ is not separated along $M$ in 
  $[\F[A\cup \alpha]\cup\{g_\alpha,d_\alpha\}]$. This is done by Lemma \ref{extending-d_alpha} since
  the family $\F[A\cup \alpha]\cup\{g_\alpha\}$ depends on 
  $ A\cup \alpha$. This completes the inductive step and hence the proof of the lemma.
\end{proof}

\begin{lemma} \label{ladder-connected}
  Let $\F$ be a ladder family of length $\kappa$. 
  Then $\nabla\F$ is connected.
\end{lemma}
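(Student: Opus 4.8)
The plan is to prove by transfinite induction on $\alpha\le\kappa$ that $\nabla\F[\alpha]$ is connected; taking $\alpha=\kappa$ and noting $\F[\kappa]=\F$ then yields the lemma. The case $\alpha=0$ is trivial, since $\nabla\emptyset$ is a one--point space. For a limit $\alpha$ I would invoke Lemma \ref{finite-connected}: any finite $F\subseteq\F[\alpha]=\bigcup_{\beta<\alpha}\F[\beta]$ is contained in some $\F[\beta]$ with $\beta<\alpha$, and $\nabla F$ is a continuous surjective image of the inductively connected $\nabla\F[\beta]$ (via $\pi_{F,\F[\beta]}$), hence connected; so $\nabla\F[\alpha]$ is connected. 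The same observation --- that $\nabla\G$ is connected whenever $\G\subseteq\mathcal H$ and $\nabla\mathcal H$ is connected --- will be used repeatedly below.

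For the successor step $\alpha=\gamma+1$ with $\gamma\notin S$ we have $\F[\gamma+1]=\F[\gamma]\cup\{d_\gamma\}$; by Lemma \ref{fragments-dependence} the family $\F[\gamma]$ depends on $\gamma$, hence on $\kappa\setminus\{\gamma\}$, so Lemma \ref{D_0}(1) identifies $\nabla\F[\gamma+1]$ with $\nabla\F[\gamma]\times I$, which is connected. When $\gamma\in S$ we have $\F[\gamma+1]=\F[\gamma]\cup\{g_\gamma,d_\gamma\}$, where (Definition \ref{ladder}) $g_\gamma=\bigvee_{n\in M_\gamma}f^\gamma_n d_{\nu^\gamma_n,\eta^\gamma_n}$ is a supremum in $C(L)$ of a pairwise disjoint sequence, the $f^\gamma_n$ lie in $[\F[\beta_\gamma]]$ for some $\beta_\gamma<\gamma$ and depend on $\beta_\gamma$, and $(\eta^\gamma_n)_n$ increasingly converges to $\gamma$. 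I would first show $\nabla(\F[\gamma]\cup\{g_\gamma\})$ is connected and then add $d_\gamma$: each $f^\gamma_n d_{\nu^\gamma_n,\eta^\gamma_n}$ depends on $\beta_\gamma\cup\{\eta^\gamma_n\mid n\in\N\}\subseteq\gamma$, so by Lemma \ref{supremum-dependence} $g_\gamma$ depends on $\gamma$, whence $\F[\gamma]\cup\{g_\gamma\}$ depends on $\kappa\setminus\{\gamma\}$ and Lemma \ref{D_0}(1) applies once more.

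To see that $\nabla(\F[\gamma]\cup\{g_\gamma\})$ is connected I would use Lemma \ref{finite-connected} again: it suffices to prove $\nabla(F_0\cup\{g_\gamma\})$ connected for every finite $F_0\subseteq\F[\gamma]$. Since $F_0$ is finite (the $d_\mu\in F_0$ have $\mu<\gamma$ and the $g_\delta\in F_0$ have $\delta<\gamma$, and $\gamma$ is a limit ordinal), there is $\beta^*<\gamma$ with $\beta_\gamma\le\beta^*$ and $F_0\subseteq\F[\beta^*+1]$. Put $\G=\F[\beta^*+1]\cup\{d_{\eta^\gamma_n}\mid n\in\N\}$. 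Because $\F[\beta^*+1]$ depends on $\beta^*+1$ (Lemma \ref{fragments-dependence}) and only finitely many $\eta^\gamma_n$ lie in $\beta^*+1$ (as $\eta^\gamma_n\nearrow\gamma>\beta^*$), the hypotheses of Lemma \ref{strong-extension} are met with $\F=\F[\beta^*+1]$, $A=\beta^*+1$, the sequences $(f^\gamma_n)_n,(\eta^\gamma_n)_n,(\nu^\gamma_n)_n$, and $M=M_\gamma$; it gives that the extension of $\nabla\G$ by $(f^\gamma_n d_{\nu^\gamma_n,\eta^\gamma_n})_{n\in M_\gamma}$ is strong, and is connected provided $\nabla\G$ is. Now $\G\subseteq\F[\gamma]$, so $\nabla\G$ is connected by the inductive hypothesis applied to $\F[\gamma]$ and the observation above. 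By Lemma \ref{extension-image}, applied with $\F=\G$ and $A=(\beta^*+1)\cup\{\eta^\gamma_n\mid n\in\N\}$ --- which is legitimate since $\G$ depends on $A$, $\{d_\beta\mid\beta\in A\}\subseteq\G$, and $g_\gamma$ is the supremum in $C(L)$ of the pairwise disjoint sequence $(f^\gamma_n d_{\nu^\gamma_n,\eta^\gamma_n})_{n\in M_\gamma}\subseteq[\G]$ --- this extension equals $\nabla(\G\cup\{g_\gamma\})$. Hence $\nabla(\G\cup\{g_\gamma\})$ is connected, and since $F_0\cup\{g_\gamma\}\subseteq\G\cup\{g_\gamma\}$, so is its continuous image $\nabla(F_0\cup\{g_\gamma\})$.

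I expect the crux to be exactly this last step, bringing $g_\gamma$ into a family while keeping connectedness. One cannot apply Lemmas \ref{strong-extension}/\ref{extension-image} to $\F[\gamma]$ directly: although $\F[\gamma]$ contains every $d_{\eta^\gamma_n}$, its dependence set is essentially $\gamma$ and meets $\{\eta^\gamma_n\mid n\in\N\}$ in an infinite set (worse, the generators $g_\delta$ for $\delta\in S\cap\gamma$ may collectively involve all of the $\eta^\gamma_n$), so the ``only finitely many $\eta_n$ in $A$'' hypothesis of Lemma \ref{strong-extension} fails. Cutting down first to a finite $F_0$ via Lemma \ref{finite-connected}, so that only an initial segment $\F[\beta^*+1]$ with $\beta^*<\gamma$ has to be accommodated, is precisely what restores that hypothesis. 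The remaining points --- re-indexing $M_\gamma$ as $\N$ (or padding with zeros), the fact that products and suprema of functions depending on a set $A$ again depend on $A$, and that $\pi_{\G,\mathcal H}$ is a continuous surjection for $\G\subseteq\mathcal H$ --- are routine.
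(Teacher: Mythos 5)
Your proof is correct and follows essentially the same route as the paper: transfinite induction with Lemma \ref{finite-connected} handling limits, Lemma \ref{D_0}(1) (via \ref{fragments-dependence} and \ref{supremum-dependence}) for adding $d_\gamma$, and Lemmas \ref{extension-image} and \ref{strong-extension} for adding $g_\gamma$ after cutting down to an initial segment below $\gamma$. If anything, your explicit passage to $\G=\F[\beta^*+1]\cup\{d_{\eta^\gamma_n}\mid n\in\N\}$ before invoking Lemma \ref{extension-image} is slightly more careful than the paper's wording, which applies the extension directly to $\nabla(\F[\beta])$ even though the functions $f^\alpha_n d_{\nu^\alpha_n,\eta^\alpha_n}$ need not lie in $[\F[\beta]]$.
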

\begin{proof}
  By Lemma \ref{finite-connected} it is enough to show that
  $\nabla\F[\alpha+1]$ is connected for all $\alpha<\kappa$. 
  We use the transfinite induction so let us fix $\alpha<\kappa$ and let us assume that we are done below 
  $\alpha$. The inductive hypothesis implies that $\nabla\F[\alpha]$ is connected. 
  If $\alpha\not\in S$ then 
  \[
	\nabla(\F[\alpha+1])=\nabla(\F[\alpha]\cup\{d_\alpha\})=\nabla(\F[\alpha])\times I
  \]
  by Lemma \ref{D_0}, since $\F[\alpha]$ depends on $\alpha$ by Lemma \ref{fragments-dependence}, so we are done. 
  If $\alpha\in S$ then $\F[\alpha+1]=\F[\alpha]\cup\{g_\alpha,d_\alpha\}$.
  By the definition of the ladder family we have
  \[
	g_\alpha=\bigvee_{n\in M_\alpha}f^\alpha_nd_{\nu^\alpha_n,\eta^\alpha_n} 
  \]
  and all $f^\alpha_n$s are from $[\F[\beta_\alpha]]$ where
$\beta_\alpha<\alpha$. 
  Using Lemma \ref{finite-connected} once more we see that it is enough to prove that 
  $\nabla(\F[\beta]\cup\{g_\alpha\})$ is connected for all $\beta\in(\beta_\alpha,\alpha)$.
  By Lemma \ref{extension-image} we have that
  \[
	\nabla(\F[\beta]\cup\{g_\alpha\}) = 
	\ext\big(\nabla(\F[\beta]), (f^\alpha_nd_{\nu^\alpha_n,\eta^\alpha_n})_{n\in\N}\big)
  \]
  and we use Lemma \ref{strong-extension} to conclude that $\nabla(\F[\beta]\cup\{g_\alpha\})$ 
  is connected for all $\beta\in(\beta_\alpha,\alpha)$ since $\nabla(\F[\beta])$
is connected by the inductive hypothesis.
With adding $d_\alpha$ we proceed as in the first case.
\end{proof}

\begin{lemma}\label{ladder2-nobutterfly}  Let $\F$ be a ladder family of length $\kappa$. 
  Then $\nabla\F$ has no butterfly points.
\end{lemma}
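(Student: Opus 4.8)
The plan is to reduce to Lemma~\ref{ladder-nobutterfly}. The filtration $(\F[\alpha])_{\alpha\le\kappa}$ produced by the recursive construction of a ladder family does not directly fit the hypotheses of that lemma: at a stage $\alpha\in S$ one passes from $\F[\alpha]$ to $\F[\alpha+1]=\F[\alpha]\cup\{g_\alpha,d_\alpha\}$, adjoining two functions at once, whereas Lemma~\ref{ladder-nobutterfly} wants each successor step to adjoin a single $d_\alpha$. I would repair this by adjoining $g_\alpha$ half a step earlier, i.e. by working with the filtration
\[
\F_\alpha=\F[\alpha]\ \text{ for }\alpha\notin S,\qquad \F_\alpha=\F[\alpha]\cup\{g_\alpha\}\ \text{ for }\alpha\in S ,
\]
with $S$ and $g_\alpha,\beta_\alpha,(\eta^\alpha_n)_{n\in\N},\dots$ as in Definition~\ref{ladder}. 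Since every $\alpha\in S\subseteq E^\kappa_\omega$ is a limit ordinal while every $\alpha+1$ is a successor and hence not in $S$, a routine check using $\F[\alpha+1]=\F[\alpha]\cup\{d_\alpha\}$ for $\alpha\notin S$ and $\F[\alpha+1]=\F[\alpha]\cup\{g_\alpha,d_\alpha\}$ for $\alpha\in S$ shows that $\F_{\alpha+1}=\F_\alpha\cup\{d_\alpha\}=\F[\alpha+1]$ for every $\alpha<\kappa$, that $(\F_\alpha)_{\alpha<\kappa}$ is increasing, and that $\F=\bigcup_{\alpha<\kappa}\F_\alpha$ with $\D\subseteq\F$.

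It then remains to verify the two substantive hypotheses of Lemma~\ref{ladder-nobutterfly}. For the dependence requirement, $\F_\alpha$ depends on $\alpha$: when $\alpha\notin S$ this is exactly Lemma~\ref{fragments-dependence}, and when $\alpha\in S$ one adds the observation that $g_\alpha=\bigvee_{n\in M_\alpha}f^\alpha_nd_{\nu^\alpha_n,\eta^\alpha_n}$ depends on $\alpha$: each $f^\alpha_n$ lies in $[\F[\beta_\alpha]]$ with $\beta_\alpha<\alpha$, hence depends on $\beta_\alpha\subseteq\alpha$ (Lemma~\ref{fragments-dependence}, together with the stability of ``depends on $\alpha$'' under the algebra operations and uniform limits), each $d_{\eta^\alpha_n}$ depends on $\{\eta^\alpha_n\}\subseteq\alpha$ by Lemma~\ref{d_alpha}, and so Lemma~\ref{supremum-dependence}, applied with the countable set $\beta_\alpha\cup\{\eta^\alpha_n\mid n\in\N\}\subseteq\alpha$, gives the claim. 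For the separation requirement, fix $\alpha'<\alpha<\kappa$ and an antichain $(U_n)_{n\in\N}\subseteq\B(\F_{\alpha'})$; I must show $\F_\alpha\cup\{d_\alpha\}\,(=\F_{\alpha+1}=\F[\alpha+1])$ is separating for $(U_n)_{n\in\N}$ in $\F$. Since $\F_{\alpha'}\subseteq\F[\alpha'+1]$ we have $(U_n)_{n\in\N}\subseteq\B(\F[\alpha'+1])$, so Lemma~\ref{separating-ordinal}, with $\lambda_0=\alpha'+1$ (its proof goes through unchanged for ladder families of length $\kappa$), tells us that $\F[\alpha'+2]$ is separating for $(U_n)_{n\in\N}$ in $\F$. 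As $\F[\alpha'+2]\subseteq\F[\alpha+1]\subseteq\F$ (using $\alpha'<\alpha$), a separation of $(U_n(\F))_{n\in\N}$ along some $M$ in $[\F]$ descends to a separation along $M$ in $[\F[\alpha'+2]]$ and then, by Lemma~\ref{openF}(8), lifts to a separation along $M$ in $[\F[\alpha+1]]$; this is precisely the property required.

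Having checked all the hypotheses, Lemma~\ref{ladder-nobutterfly} gives at once that $\nabla\F$ has no butterfly points. The hardest part is not any single deep argument but the bookkeeping in the last step: Lemma~\ref{separating-ordinal} delivers a separating subfamily of the slightly ``too large'' shape $\F[\alpha'+2]$, and one must pass from it to the specific family $\F_\alpha\cup\{d_\alpha\}$ that Lemma~\ref{ladder-nobutterfly} insists on; this is handled by the observation that, for a fixed antichain and a fixed $M$, being separating in $\F$ is inherited by any family intermediate between a separating subfamily and $\F$, which in turn is just the monotonicity in Lemma~\ref{openF}(8). One must also make sure, as above, that the ``$+1$ shift'' in the definition of $\F_\alpha$ never collides with membership in $S$, which it does not since $S$ contains only limit ordinals.
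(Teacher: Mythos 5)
Your argument is correct and is essentially the paper's proof: the paper also reduces to Lemma~\ref{ladder-nobutterfly}, verifying its hypotheses via Lemmas~\ref{fragments-dependence} and~\ref{separating-ordinal}, only it takes the filtration $\F_\alpha=\F[\alpha+1]$ (with a tacit index shift) where you take $\F_\alpha=\F[\alpha]$, resp.\ $\F[\alpha]\cup\{g_\alpha\}$ for $\alpha\in S$, and check the dependence of $g_\alpha$ and the monotonicity of the separating condition explicitly. This is the same reduction with slightly more careful bookkeeping, so no further changes are needed.
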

\begin{proof} Put $\F_\alpha=\F[\alpha+1]$ and note that the hypothesis
of Lemma \ref{ladder-nobutterfly} is satisfied by Lemmas \ref{fragments-dependence}
and \ref{separating-ordinal}, so the proof is completed by applying Lemma \ref{ladder-nobutterfly}.
\end{proof}

\section{The construction}

\begin{lemma}\label{cardinal} Assume the GCH.
Let  $\kappa$ be a regular cardinal which is of the form $\lambda^+$ for $\lambda$ which is a cardinal of uncountable cofinality.
Then $\kappa^\omega=\kappa$ and for every $\alpha<\kappa$ we have \[\alpha^\omega<\kappa\]
\end{lemma}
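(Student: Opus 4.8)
The plan is to reduce both assertions to the single identity $\lambda^\omega=\lambda$, which is the only place where GCH and the uncountable cofinality of $\lambda$ will really be used.

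First I would prove $\lambda^\omega=\lambda$. Since $\operatorname{cf}(\lambda)>\omega$, every function $f\colon\omega\to\lambda$ has bounded range, so ${}^\omega\lambda=\bigcup_{\mu<\lambda}{}^\omega\mu$ and hence $\lambda^\omega\le\lambda\cdot\sup_{\mu<\lambda}|\mu|^\omega$. Now $|\mu|^\omega\le\lambda$ for every $\mu<\lambda$: if $\mu$ is infinite then by GCH $|\mu|^\omega\le 2^{|\mu|}=|\mu|^+\le\lambda$, while if $\mu$ is finite then $|\mu|^\omega\le\aleph_0^\omega=2^{\aleph_0}=\aleph_1\le\lambda$, using that $\lambda\ge\operatorname{cf}(\lambda)>\omega$ is uncountable. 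Thus $\lambda^\omega\le\lambda\cdot\lambda=\lambda$, and the reverse inequality is trivial.

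Given this, the bound $\alpha^\omega<\kappa$ for $\alpha<\kappa$ is immediate: from $\alpha<\kappa=\lambda^+$ we get $|\alpha|\le\lambda$, hence $\max(|\alpha|,\aleph_0)\le\lambda$ and, by monotonicity of cardinal exponentiation, $\alpha^\omega\le\max(|\alpha|,\aleph_0)^\omega\le\lambda^\omega=\lambda<\lambda^+=\kappa$. For $\kappa^\omega=\kappa$ I would first note that $\kappa=\lambda^+$ is a successor cardinal, hence regular, so $\operatorname{cf}(\kappa)=\kappa>\omega$; therefore every $f\colon\omega\to\kappa$ again has bounded range, ${}^\omega\kappa=\bigcup_{\alpha<\kappa}{}^\omega\alpha$, and $\kappa^\omega\le\sum_{\alpha<\kappa}|\alpha|^\omega$. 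Since each summand is $\le\lambda$ by the previous computation, this sum is at most $\kappa\cdot\lambda=\kappa$, and the reverse inequality is clear, so $\kappa^\omega=\kappa$.

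I do not expect any genuine obstacle here: the whole argument is routine cardinal arithmetic, the only points needing care being the appeal to $\operatorname{cf}(\lambda)>\omega$ (and to the regularity of the successor cardinal $\kappa$) in order to control countable sequences, and the use of GCH in the form $2^\mu=\mu^+$ to estimate $|\mu|^\omega$ for $\mu<\lambda$. One could alternatively invoke Hausdorff's formula $(\lambda^+)^\omega=\lambda^+\cdot\lambda^\omega$ to deduce $\kappa^\omega=\kappa$ once $\lambda^\omega=\lambda$ is known, but the bounded-sequence computation has the advantage of being self-contained.
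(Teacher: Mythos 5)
Your proposal is correct. It differs from the paper's proof mainly in organization rather than in substance: the paper argues by induction on cardinals $\alpha<\kappa$, splitting into cases according to $\operatorname{cf}(\alpha)$ --- when $\operatorname{cf}(\alpha)=\omega$ it applies GCH directly to get $\alpha^\omega\le 2^\alpha=\alpha^+<\kappa$ (here the uncountable cofinality of $\lambda$ is used to guarantee $\alpha<\lambda$, so $\alpha^+\le\lambda$), and when $\operatorname{cf}(\alpha)>\omega$ it writes $\alpha^\omega=\sum_{\beta<\alpha}\beta^\omega$ and invokes the inductive hypothesis together with the regularity of $\kappa$; the statement $\kappa^\omega=\kappa$ is then noted as a consequence. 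You instead avoid induction altogether by isolating the single identity $\lambda^\omega=\lambda$ (via boundedness of countable sequences in $\lambda$, GCH below $\lambda$, and CH for the finite/countable base case) and then deriving $\alpha^\omega\le\lambda<\kappa$ by monotonicity and $\kappa^\omega=\kappa$ by the bounded-sequence argument at the regular cardinal $\kappa$. The ingredients (GCH in the form $\mu^\omega\le 2^\mu=\mu^+$, and uncountable cofinality to control countable sequences) are the same in both arguments; your reduction is arguably a bit cleaner and makes explicit where each hypothesis is used, while the paper's induction handles all $\alpha<\kappa$ uniformly without singling out $\lambda$. Both are complete; your remark about Hausdorff's formula is a legitimate alternative finish for $\kappa^\omega=\kappa$.
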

\begin{proof}
We prove the lemma by induction on a cardinal $\alpha<\kappa$.
If $cf(\alpha)=\omega$, then  $\alpha^\omega\leq 2^\alpha=\alpha^+\leq\kappa$ by the GCH and 
$\alpha^+<\kappa$ by the hypothesis on $\kappa$. If $cf(\alpha)>\omega$, then
$\alpha^\omega=\sum\{\beta^\omega\mid \beta<\alpha\}$ which is less than $\kappa$ 
by the inductive assumption and the regularity of $\kappa$.
It also follows that $\kappa^\omega=\kappa$.
\end{proof}

When talking about topological concepts like convergence
in the context of ordinals we always refer to the order topology on the ordinals.
Recall that a subset $C\subseteq \kappa$ is called club if and only if it is unbounded in $\kappa$
and closed in the order topology. $S\subseteq \kappa$ is called stationary if it intersects
all club sets. It is well known that $E^\kappa_\omega$ is stationary for any uncountable
regular $\kappa$ (see \cite{jech}, \cite{kunen}). Assuming GCH we have the following
theorem due to Gregory:

\begin{theorem}[Theorem $23.2$ \cite{jech}]\label{diamond} Assume GCH.
  There is a sequence $(S_\alpha)_{\alpha\in E^\kappa_\omega}$ such that:
  \begin{enumerate}
	\item $S_\alpha\subseteq \alpha$ for every $\alpha\in E^\kappa_\omega$,
	\item for every $X\subseteq \kappa$ the set
	  \[
		\{\alpha\in E^\kappa_\omega \mid X\cap \alpha=S_\alpha\}
	  \]
	is stationary in $\kappa$.
\end{enumerate}
The above statement is called $\diamondsuit(E^\kappa_\omega)$. 
\end{theorem}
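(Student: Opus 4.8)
The statement is Gregory's theorem, recorded as Theorem $23.2$ of \cite{jech}; here $\kappa=\lambda^+$ with $\operatorname{cf}(\lambda)>\omega$, as in Lemma \ref{cardinal}. My plan is to prove it as follows. First I would extract the cardinal arithmetic: by GCH and Lemma \ref{cardinal}, $2^\lambda=\lambda^+=\kappa$ and $\lambda^\omega=\lambda$, so the collection of all bounded subsets of $\kappa$ has cardinality $\kappa^\lambda=\kappa$. Fix a well-ordering $<^*$ of $H(\theta)$ for large regular $\theta$, enumerate the bounded subsets of $\kappa$ as $\langle A_\xi:\xi<\kappa\rangle$, and build a continuous increasing chain $\langle N_\alpha:\alpha<\kappa\rangle$ of elementary submodels of $(H(\theta),\in,<^*)$ with $(\lambda+1)\cup\{\kappa,\lambda,\langle A_\xi\rangle\}\subseteq N_0$, $|N_\alpha|=\lambda$, and $A_\alpha\in N_{\alpha+1}$ for all $\alpha$. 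Put $\delta_\alpha=N_\alpha\cap\kappa$; since $|N_\alpha|<\kappa$ and $\lambda\subseteq N_\alpha$, each $\delta_\alpha$ is an ordinal $<\kappa$, the map $\alpha\mapsto\delta_\alpha$ is continuous and strictly increasing, and $E=\{\delta<\kappa:\delta_\delta=\delta\}$ is a club. Two facts drive everything: (i) if $A\in N_\alpha$ is a bounded subset of $\kappa$ then $\sup A\in N_\alpha\cap\kappa=\delta_\alpha$, so $A\subseteq\delta_\alpha$; and (ii) every bounded subset of $\kappa$ lies in some $N_\alpha$.

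Next I would produce a $\diamondsuit^-$-type sequence on $E^\kappa_\omega$. For $\gamma\in E^\kappa_\omega\cap E$ fix an increasing $\langle\gamma_n:n<\omega\rangle$ cofinal in $\gamma$ and set
\[ \mathcal S_\gamma=\Big\{\,\bigcup_{n<\omega}Z_n : Z_n\in\mathcal P(\gamma)\cap N_\gamma\ \text{for all }n\,\Big\}, \]
and $\mathcal S_\gamma=\{\emptyset\}$ otherwise; since $\lambda^\omega=\lambda$ we get $|\mathcal S_\gamma|\le\lambda$. Given $X\subseteq\kappa$, I claim $D_X=\{\delta\in E:\forall\eta<\delta\ \ X\cap\eta\in N_\delta\}$ is a club. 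It is closed, being $E$ intersected with the limit points of the continuous chain. It is unbounded because, given $\alpha_0$, one alternately picks $\alpha_k\in E$ above the previously found bounds and, using that $\{X\cap\eta:\eta<\alpha_k\}$ has size $\le\lambda<\kappa$ together with fact (ii) and the regularity of $\kappa$, an ordinal $\beta_k<\kappa$ with $X\cap\eta\in N_{\beta_k}$ for all $\eta<\alpha_k$; then $\delta=\sup_k\alpha_k=\sup_k\beta_k\in E$ and $\delta\in D_X$. Since $E^\kappa_\omega$ is stationary, $D_X\cap E^\kappa_\omega$ is stationary, and for $\gamma$ in it we have $\gamma\in E$, so each $X\cap\gamma_n\in N_\gamma$, and by fact (i) $X\cap\gamma_n\in\mathcal P(\gamma)\cap N_\gamma$; hence $X\cap\gamma=\bigcup_n(X\cap\gamma_n)\in\mathcal S_\gamma$. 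Thus $\{\gamma\in E^\kappa_\omega:X\cap\gamma\in\mathcal S_\gamma\}$ is stationary.

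Finally I would upgrade this to the single-valued $\diamondsuit(E^\kappa_\omega)$: restricting to the club $[\lambda,\kappa)$ makes $|\gamma|=\lambda\ge|\mathcal S_\gamma|$, so the standard implication $\diamondsuit^-\Rightarrow\diamondsuit$ (Kunen) converts $\langle\mathcal S_\gamma\rangle$ into a sequence $\langle S_\gamma:\gamma\in E^\kappa_\omega\cap[\lambda,\kappa)\rangle$ guessing every $X$ stationarily, which I then extend by $S_\gamma=\emptyset$ on the rest of $E^\kappa_\omega$. The only real obstacle is the ``escape'' phenomenon: an arbitrary $X\subseteq\kappa$ need not lie in any $N_\alpha$, since $|\mathcal P(\kappa)|=\kappa^+>\kappa\ge|\bigcup_\alpha N_\alpha|$. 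The resolution is that one never needs $X$ itself to be captured, only its bounded initial pieces $X\cap\gamma_n$, and there are at most $\kappa=2^\lambda$ of those, which is exactly what lets one sweep them all into $\bigcup_\alpha N_\alpha$; the uncountable cofinality of $\lambda$ enters solely through $\lambda^\omega=\lambda$, keeping each guessing family $\mathcal S_\gamma$ of size $<\kappa$.
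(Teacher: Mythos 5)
Your argument is correct, but note that the paper does not prove this statement at all: it is quoted as Gregory's theorem with a citation to Jech (Theorem 23.2), so there is no internal proof to match. What you have written is essentially the standard proof of Gregory's result, in the same spirit as Jech's: first a $\diamondsuit^-$-type principle on $E^\kappa_\omega$ with guessing families of size $\le\lambda$, obtained from the fact that under GCH there are only $\kappa$ bounded subsets of $\kappa$ and that $\lambda^\omega=\lambda$ (this is exactly where $\operatorname{cf}(\lambda)>\omega$ enters), and then the coding upgrade to full $\diamondsuit(E^\kappa_\omega)$; your elementary-submodel chain is just a convenient packaging of the usual closure argument under the map $\eta\mapsto$ index of $X\cap\eta$ in an enumeration of the bounded sets. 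Two places deserve one more sentence each. First, strict increase of $\alpha\mapsto\delta_\alpha=N_\alpha\cap\kappa$ does not follow from the requirements you list; either add $N_\alpha\in N_{\alpha+1}$ to the recursion, or observe that unboundedness of $\{\delta_\alpha\colon\alpha<\kappa\}$ (which does follow, since every bounded set $A_\xi$ eventually appears, so $\sup A_\xi<\delta_{\xi+1}$) already makes $E=\{\delta\colon\delta_\delta=\delta\}$ a club. Second, the final step is not literally Kunen's $\omega_1$ statement: you need the version of $\diamondsuit^-\Rightarrow\diamondsuit$ for $\kappa=\lambda^+$ with families of size $\le\lambda$, relativized to the stationary set $E^\kappa_\omega$; it does go through by the usual argument (fix a pairing $e\colon\lambda\times\kappa\to\kappa$, enumerate $\mathcal S_\gamma=\{A^\xi_\gamma\colon\xi<\lambda\}$, assume each row fails with witnesses $X_\xi$, $C_\xi$, code the $X_\xi$ into one set $Y$, and intersect the $\lambda$ many clubs $C_\xi$ with the club of closure points of $e$ -- legitimate since $\lambda<\kappa$ and $\kappa$ is regular), but since it carries the weight of producing the single-valued sequence, it should be spelled out rather than waved at. With those two points made explicit, your proof is a complete and correct substitute for the citation.
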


Fix a bijection $\Psi$ from $\kappa$ onto
$
  (\{-2, -1\}\cup\kappa)\times \{-1, 1\}^\N \times 
  \big( C_I(L)^\N \cup \B(C_I(L))^\N\big),
$
which exists by the fact that the cardinalities 
of the sets $C_I(L)$ and $\B(C_I(L))$ are $\kappa$ and
$\kappa^\omega=\kappa$ by Lemma \ref{cardinal} and Lemma \ref{density-cardinality}. 
By the standard closure argument and 
the fact that $\alpha^\omega<\kappa$ for all $\alpha<\kappa$ (Lemma \ref{cardinal}) the set
\[
  C_\Psi=\{\alpha\in\kappa: \Psi[\alpha]=(\{-2, -1\}\cup\alpha)\times \{-1, 1\}^\N \times  
  \big( C_I(L_\alpha)^\N \cup \B(C_I(L_\alpha))^\N\big)\}
\]
is a club set in $\kappa$.  While using $\Psi$ we will only be working with such subsets
$T\subseteq\alpha$ for $\alpha\in C_\Psi\cup\{\kappa\}$ that $\Psi[T]$ is a graph of
a function with two coordinate functions, first from $\{-2, -1\}\cup\alpha$ into $\{-1, 1\}^\N$
and the second from $\{-2, -1\}\cup\alpha$ into $ C_I(L_\alpha)^\N \cup \B(C_I(L_\alpha))^\N$
considered as a subset of $(\{-2, -1\}\cup\alpha)\times \{-1, 1\}^\N \times  
   \big(C_I(L_\alpha)^\N \cup \B(C_I(L_\alpha))^\N\big)$. That is, $\Psi$ will serve 
as a coding of such pairs of functions by subsets of ordinals in $\kappa$.
$\diamondsuit(E^\kappa_\omega)$ from Theorem \ref{diamond} will be our
prediction principle which for $\alpha\in C_\Psi$ may provide such a code for
the above pair of functions in the form of $T=S_\alpha\subseteq \alpha$.
\begin{theorem}\label{theorem3} Assume GCH. Let $\kappa$
be the successor of a cardinal of uncountable cofinality.
There is a compact Hausdorff connected c.c.c. space $K$ of weight
$\kappa$ without a butterfly point  such that $C(K)$ has asymmetric distribution of separations
in the direction of some $\mathcal D\subseteq C_I(K)$.
\end{theorem}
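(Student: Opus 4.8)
The plan is to realise $K$ as $\nabla\F$ for a ladder family $\F\subseteq C_I(L)$ of length $\kappa$ (Definition \ref{ladder}) built by a recursion guided by $\diamondsuit(E^\kappa_\omega)$. Connectedness, the absence of butterfly points, the c.c.c.\ and the weight will then be automatic: $\nabla\F$ is compact Hausdorff as a closed subspace of $I^\F$, it is a continuous image of $L$ and hence c.c.c.\ by Proposition \ref{gleason}, it is connected by Lemma \ref{ladder-connected}, it has no butterfly point by Lemma \ref{ladder2-nobutterfly}, and its weight equals the density of $C(\nabla\F)$, which is $\kappa$ by Lemma \ref{density-cardinality} since $\D\subseteq\F$. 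Taking $\mathcal D=\{d_\alpha\mid\alpha<\kappa\}$, which sits inside $C_I(K)$ under the isomorphism $C(\nabla\F)\cong[\F]$ of Proposition \ref{gelfand-isomorphism}, the only real work is to arrange that $C(K)$ has asymmetric distribution of separations in the direction of $\mathcal D$, and this is where $\diamondsuit(E^\kappa_\omega)$ (Theorem \ref{diamond}, available by GCH and Lemma \ref{cardinal}) is used.

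To build $\F$, I would fix a $\diamondsuit(E^\kappa_\omega)$-sequence $(S_\alpha)_{\alpha\in E^\kappa_\omega}$, the bijection $\Psi$ and the club $C_\Psi$ described before the theorem, so that for $\alpha\in C_\Psi$ a set $S_\alpha\subseteq\alpha$ codes, via $\Psi$, a candidate for data $\big((f_n)_n,(U_n)_n,(\nu^\xi_n)_{n,\xi<\alpha},(U^\xi_n)_{n,\xi<\alpha}\big)$ as in Definition \ref{paracomplicated} with all functions in $C_I(L_\alpha)$ and all sets in $\B(C_I(L_\alpha))$. Then I would recursively define $\F=\bigcup_{\alpha<\kappa}\F[\alpha]$, placing $\alpha$ in $S$ exactly when $\alpha\in C_\Psi\cap E^\kappa_\omega$ and $S_\alpha$ codes data satisfying (i)--(iv) of Definition \ref{paracomplicated} with $(f_n)_n\subseteq[\F[\beta]]_I$ and $(U_n)_n\subseteq\B(\F[\beta])$ for some $\beta<\alpha$. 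In that case put $\beta_\alpha:=\beta$, $f^\alpha_n:=f_n$, choose any strictly increasing $(\eta^\alpha_n)_n\subseteq\alpha$ converging to $\alpha$ (possible since $\mathrm{cf}(\alpha)=\omega$), set $\nu^\alpha_n:=\nu^{\eta^\alpha_n}_n$, note that $(U^{\eta^\alpha_n}_n)_n$ is an antichain in $\B(\F[\alpha])$ (the $U_n$ are pairwise disjoint and $U^\xi_n\subseteq U_n$), and --- the delicate point --- pick an infinite, coinfinite $M_\alpha\subseteq\N$ along which this diagonal antichain is \emph{not} separated in the final $[\F]$. Such an $M_\alpha$ exists because, by Lemma \ref{separating-countable}, some countable $A\subseteq\kappa$ has $\F[A]$ separating for $(U^{\eta^\alpha_n}_n)_n$ in $\F$; as $\F[A]$ is countable, Lemma \ref{countable-family} and its proof show this antichain is separated along only countably many $M$ in $[\F[A]]$, so continuum many infinite–coinfinite $M$ fail to separate it there and hence, by the separating property, in $[\F]$. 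Moreover neither $A$ nor the fact that it is separating depends on $M_\alpha$ or on choices made at stages $\geq\alpha$: the separating-lemma apparatus of Section 4 (Lemmas \ref{extending-d_alpha}, \ref{extending-generator}, \ref{separating-ordinal}) keeps this non-separation alive through all later additions $\F\mapsto\F\cup\{d_\gamma\}$ and $\F\mapsto\F\cup\{g_\gamma\}$, using that every later $(\eta^\gamma_n)_n$ increasingly converges to $\gamma>\alpha$ and so meets $A$ finitely often. Finally set $g_\alpha=\bigvee_{n\in M_\alpha}f^\alpha_n d_{\nu^\alpha_n,\eta^\alpha_n}$ in $C(L)$ (it exists by Proposition \ref{gleason}); if $\alpha\notin S$, just adjoin $d_\alpha$. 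This produces a ladder family of length $\kappa$ with $S\subseteq E^\kappa_\omega$.

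To verify the asymmetry condition for $K=\nabla\F$, I would take arbitrary data $\big((f_n)_n,(U_n)_n,(\nu^\xi_n)_{n,\xi},(U^\xi_n)_{n,\xi}\big)$ as in (i)--(iv) of Definition \ref{paracomplicated}, identify the $f_n$ with elements of $[\F]_I$ and the $U_n,U^\xi_n$ with elements of $\B(\F)$ using Proposition \ref{gelfand-isomorphism}, and code the whole data as a set $X\subseteq\kappa$ through $\Psi$. Each $f_n$ depends on a countable set (Lemma \ref{depends}) and therefore lies in some $[\F[\gamma]]$ with $\gamma<\kappa$, each $U_n$ lies in some $\B(\F[\gamma])$, and each $(U^\xi_n)_n$ depends on a countable $B_\xi\subseteq\kappa$; so a standard closure argument shows that the set of $\alpha$ which are in $C_\Psi$, satisfy $(f_n)_n\subseteq[\F[\beta]]_I$ and $(U_n)_n\subseteq\B(\F[\beta])$ for some fixed $\beta<\alpha$, and satisfy $B_\xi\subseteq\alpha$ for all $\xi<\alpha$, is club. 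By $\diamondsuit(E^\kappa_\omega)$ I would pick $\alpha\in E^\kappa_\omega$ in this club with $S_\alpha=X\cap\alpha$; then the recursion acted on exactly this data at stage $\alpha$, so I obtain the increasing $\eta_n:=\eta^\alpha_n$, the infinite coinfinite $M:=M_\alpha$ and the function $g_\alpha$. Condition (b) of Definition \ref{paracomplicated} holds by the choice of $M_\alpha$, and condition (a) holds because $g_\alpha=\bigvee_{n\in M}f_n d_{\nu^{\eta_n}_n,\eta_n}$ in $C(L)$, whence by Lemma \ref{definitionsupindes} its factorization $g_\alpha(\F)\in C(\nabla\F)=C(K)$ is the supremum of $\big(f_n d_{\eta_n,\nu^{\eta_n}_n}\big)_{n\in M}$ in $C(K)$.

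The step I expect to be the main obstacle is the recursion at the caught stages: making one function $g_\alpha$ serve two masters at once. On the one hand $g_\alpha$ must deliver (a), the supremum in $C(K)$, which in turn forces every later extension $\nabla\F[\gamma]\rightsquigarrow\nabla(\F[\gamma]\cup\{g_\alpha\})$ to be a \emph{strong} extension (Lemma \ref{strong-extension}) so that connectedness is preserved, and this is precisely why the $\eta^\alpha_n$ must avoid the finitely many coordinates $\beta_\alpha$ on which the $f^\alpha_n$ depend. On the other hand $g_\alpha$ must realise (b): the non-separation must be arranged at stage $\alpha$, before $g_\alpha$ --- which uses $M_\alpha$ --- exists, and then propagated unchanged to the final $\nabla\F$ through the separating lemmas. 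The accompanying bureaucratic difficulty is to run the $\diamondsuit$-bookkeeping so that the $(f_n),(U_n)$-part of every genuine configuration eventually becomes a bounded fragment of the ladder, so that at the catching ordinal $\alpha$ the functions $f^\alpha_n$ may legitimately be taken to be the already-constructed $f_n\in[\F[\beta_\alpha]]_I$ with $\beta_\alpha<\alpha$, as Definition \ref{ladder} demands.
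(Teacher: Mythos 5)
Your overall architecture matches the paper's: build a ladder family by a $\diamondsuit(E^\kappa_\omega)$-guided recursion, get compactness, connectedness, c.c.c., weight $\kappa$ and the absence of butterfly points from Proposition \ref{gleason} and Lemmas \ref{ladder-connected}, \ref{density-cardinality}, \ref{ladder2-nobutterfly}, and verify the asymmetry condition by catching each configuration at a stage $\alpha\in E^\kappa_\omega$. But there is a genuine gap at exactly the point you yourself call delicate: the survival of the non-separation through the addition of $g_\alpha$ itself. First, you cannot literally apply Lemma \ref{separating-countable} to ``the final $[\F]$'' at stage $\alpha$: the final $\F$ is not yet defined and depends on $M_\alpha$ (through $g_\alpha$) and on all later stages, so the lemma must be applied to the ladder family $\F[\alpha]$ constructed so far, giving a countable $A\subseteq\alpha$ with $\F[A]$ separating for the diagonal antichain only in $\F[\alpha]$. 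Second, your mechanism for propagating the non-separation onward --- Lemmas \ref{extending-d_alpha}, \ref{extending-generator}, \ref{separating-ordinal} together with ``every later ladder $(\eta^\gamma_n)_{n\in\N}$ converges to $\gamma>\alpha$ and so meets $A$ finitely often'' --- covers only the stages $\gamma>\alpha$. It does not cover the adjunction of $g_\alpha=\bigvee_{n\in M_\alpha}f^\alpha_n d_{\nu^\alpha_n,\eta^\alpha_n}$ at stage $\alpha$ itself: its ladder $(\eta^\alpha_n)_{n\in\N}$ converges to $\alpha$, while the diagonal antichain $(U^{\eta^\alpha_n}_n)_{n\in\N}$, and hence the countable set $A$, may involve coordinates cofinal in $\alpha$ --- typically the $\eta^\alpha_n$ themselves --- so the hypothesis of Lemma \ref{extending-generator} (only finitely many $\eta_n$ in the set on which the relevant family depends) fails for this one extension, which is the most dangerous one since $g_\alpha$ is built from the very set $M_\alpha$ whose non-separation must be preserved.

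The missing ingredient is Lemma \ref{adding-disjoint} together with clause (ii) of Definition \ref{paracomplicated}, i.e.\ $\supp(f_n)\cap U_m=\emptyset$ for all $n,m\in\N$: the constituent functions $f^\alpha_n d_{\nu^\alpha_n,\eta^\alpha_n}$ are supported inside $\supp(f^\alpha_n)$, which is disjoint from every $U_m(L)\supseteq U^{\eta_m}_m(L)$, so Lemma \ref{adding-disjoint} shows that adjoining their supremum $g_\alpha$ preserves the non-separation of $(U^{\eta^\alpha_n}_n)_{n\in\N}$ along $M_\alpha$; adjoining $d_\alpha$ is then handled by Lemma \ref{extending-d_alpha} (with Lemma \ref{supremum-dependence} for the dependence statement), and all stages beyond $\alpha$ by Lemma \ref{separating-ordinal}. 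This is exactly how the paper concludes, and it is the reason the support-disjointness clause appears in Definition \ref{paracomplicated} at all; without invoking it at this step the recursion could be defeated, since the extension by $g_\alpha$ could separate the diagonal antichain along $M_\alpha$ and destroy condition (b). With this repair (choose $A\subseteq\alpha$ separating in $\F[\alpha]$, pick $M_\alpha$ by Lemma \ref{countable-family}, then propagate via Lemmas \ref{adding-disjoint}, \ref{extending-d_alpha} and \ref{separating-ordinal}) your argument coincides with the paper's proof.
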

\begin{proof} We will construct a ladder  family $\F\subseteq C_I(L_\kappa)$ 
such that $K=\nabla\F$ satisfies the theorem.
Let $(S_\alpha)_{\alpha<\kappa}$ be a $\diamondsuit_\kappa(E^\kappa_\omega)$-sequence
as in \ref{diamond}.
Let $\Psi$ and $C_\Psi$ be as above.
For each
ordinal $\alpha\in E^\kappa_\omega$ choose a ladder  $(\eta_n^\alpha)_{n\in \N}$, that is 
  an increasing, cofinal in $\alpha$ sequence of type $\omega$.
The family $\mathcal F$ will depend of $S\subseteq \kappa$ and will be a ladder family
with the following parameters for $\alpha\in S$:
 \begin{itemize}
	\item $(\rho^\alpha_n)_{n\in\N} \subseteq \{-1,1\}$,
    \item  $(\eta^\alpha_n)_{n\in\N}$ ,
	\item  $(f^\alpha_n)_{n\in\N}\subseteq [\F]$, 
which depends on some $\beta_\alpha< \alpha$,
	\item  $M_\alpha\subseteq\N$.
  \end{itemize}
So we will use for it the terminology and
notation  as
in Definition \ref{ladder}. In fact the above parameters are build by
recursion  together with some additional objects which will witness the fact
that $\F$ has asymmetric distribution of separations. Namely, the recursive construction
involves:
\begin{enumerate}
\item $S=\{\alpha_\xi: \xi<\kappa\}\subseteq E^\kappa_\omega\cap C_\Psi$,
\item $\{\beta_{\alpha_\xi}: \xi<\kappa\}\subseteq \kappa$, $\beta_{\alpha_\xi}<\alpha_\xi$,
\item $(\rho^{\alpha_\xi}_n)_{n\in \N}\subseteq\{-1,1\}$,
\item $\{f_n^{\alpha_\xi}: n\in \N\}$ which is a pairwise disjoint sequence in the algebra $[\F[\beta_\xi]]$,
\item $\{V_n^{\xi}: n\in \N\}$ which is an antichain in  $\B(\F[\alpha_\xi])$ such that
 \[V_n^{\xi}(L)\cap \supp(f_m^{\alpha_\xi})=\emptyset,\ \hbox{for all $n, m\in\N$},\]
\item $A_\xi$ a countable subset of $\alpha_\xi$ such that $\F[A_\xi]$ is separating
in $\F[\alpha_\xi]$
for $\{V_n^{\xi}: n\in \N\}$ (see Definition \ref{separating}),
\item $M_{\alpha_\xi}\subseteq \N$  such that $\{V_n^{\xi}: n\in \N\}$ is
not separated in $[\F[A_\xi]]$ along $M_{\alpha_\xi}$,
\item $g_{\alpha_\xi}=\bigvee_{n\in M_{\alpha_\xi}}
(f_n^{\alpha_\xi}d_{\eta^{\alpha_\xi}_n, \rho^{\alpha_\xi}_n})$.
\end{enumerate}
Suppose that we have constructed all these objects for all $\xi<\gamma$ for
some $\gamma<\kappa$. This gives ladder families $\F[\alpha]$ 
for any $\sup\{\alpha_\xi:\xi<\gamma\}\leq\alpha\leq\kappa$, just consisting of 
the elements $\{g_{\alpha_\xi}:\xi<\gamma\}$  and $\{d_\beta: \beta<\alpha\}$ for $\alpha$ as above.
Let $\alpha_\gamma$
be the first ordinal in $E^\kappa_\omega\cap C_\Psi$ not smaller than
$\sup\{\alpha_\xi: \xi<\gamma\}$ such that 
\begin{enumerate}[(i)]
\item $\Psi[S_{\alpha_\gamma}]=
(\phi, \psi)$ is a function (identified with its graph)
from $(\{-2, -1\}\cup\alpha_\gamma)$ into $ \{-1, 1\}^\N
\times \big(C_I(L)^\N \cup \B(C_I(L))^\N\big)$.
\item $\psi(-2)$ is a a pairwise disjoint sequence $(f_n^{\alpha_\gamma})_{n\in \N}$ in $\F[\beta_\gamma]$
for some $\beta_\gamma<\alpha_\gamma$,
\item $\psi(-1)$ is an antichain 
$(V_n^\gamma)_{n\in \N}$ in $\B[\F[\alpha_\gamma]]$ 
such that for all $n, m\in \N$ we have 
$V^{\gamma}_n(L)\cap \supp(f_m^{\alpha_\gamma})=\emptyset$,
\item for $\alpha\in\alpha_\gamma$ the  value $\psi(\alpha)$ 
is an antichain $\{V_{\alpha, n}^\gamma: n\in \N\}$ in $\B[\F[\alpha_\gamma]]$ whose $n$-th 
element $V_{\alpha, n}^\gamma$ is 
below $V_n^\gamma$.
\end{enumerate}
 In this case we define 
\begin{enumerate}[(a)]
\item $\beta_\gamma$, $f_n^{\alpha_\gamma}$, $V_n^\gamma$ as above, 
 \item $\rho_n^{\alpha_\gamma}=\phi(\eta^{\alpha_\gamma}_n)(n)$,
\end{enumerate}
The existence of $\alpha_\gamma$ 
follows from $\diamondsuit(E^\kappa_\omega)$. 
To define $A_\gamma$ and $M_{\alpha_\gamma}$ we need to make some argument: 
The family $\F[\alpha_\gamma]$ is a ladder family and 
$\{V_{\eta^{\alpha_\gamma}_n, n}^{\gamma}: n\in \N\}$ is an antichain in $\B[\F[\alpha_\gamma]]$, hence
by Lemma \ref{separating-countable}, there is a countable $A_\gamma\subseteq \alpha_\gamma$
such that $\F[A_\gamma]$ is separating for$\{V_{\eta^{\alpha_\gamma}_n, n}^{\gamma}: n\in \N\}$ 
in $\F[\alpha_\gamma]$. Since $\F[A_\gamma]$ is a countable, by Lemma \ref{countable-family}
we can find an infinite $M_{\alpha_\gamma}\subseteq \N$ such that
\begin{enumerate}[(c)]
\item $\{V_{\eta^{\alpha_\gamma}_n, n}^{\gamma}: n\in \N\}$ is not separated in $[\F[A_\gamma]]$
along $M_{\alpha_\gamma}$.
\end{enumerate}
 Finally put
\begin{enumerate}[(d)]
\item $g_{\alpha_\gamma}=\bigvee_{n\in M_{\alpha_\gamma}}
(f_n^{\alpha_\gamma}d_{\eta^{\alpha_\gamma}_n, \rho^{\alpha_\gamma}_n})$
\end{enumerate}
This completes the inductive step in the construction of the ladder family $\mathcal F$.
Now let us prove that $C(\nabla\mathcal F)$ has asymmetric distribution of separations
in the direction of $\D=\{d_\alpha: \alpha<\kappa\}$. So, fix
  \begin{itemize}
	\item Pairwise disjoint $(f_n)_{n\in \N}\subseteq [\F]$ and and antichain $(U_n)_{n\in \N}\subseteq\B(\F)$
	  such that 
\[\supp(f_n)\cap {U_n(L)}=\emptyset \ \ \hbox{for all $n,m\in\N$},\]
	\item $(\nu_n^\xi)_{n\in\N} \subseteq \{-1, 1\}$ for all $\xi\in\kappa$,
	\item $\{\, (U_n^\xi)_{n \in \N} \mid \xi \in \kappa \,\}\subseteq \B(\F)$ 
	  such that $U_{n}^{\xi}(\F) \subseteq U_n(\F)$ for every $n\in\N$ and $\xi \in \kappa$.
  \end{itemize}
Let $X\subseteq \kappa$ be such that $\Psi[X]$ is 
a function 
$(\phi, \psi)$ from $\{-2, -1\}\cup\alpha_\gamma$ into $\{-1, 1\}^\N
\times \big(C_I(L)^\N \cup \B(C_I(L))^\N\big)$
(identified with its graph) such that:
\begin{itemize}
\item  $\phi(\alpha)(n)=\nu_n^\alpha$ for each $n\in \N$ and each $\alpha\in\kappa$,
\item $\psi(-2)$ is $(f_n)_{n\in \N}$,
\item $\psi(-1)$ is $(U_n)_{n\in \N}$,
\item $\psi(\alpha)$ is $(U_n^\alpha)_{n\in \N}$ for each $\alpha\in\kappa$.
\end{itemize}
By the properties of the  $\diamondsuit_\kappa(E^\kappa_\omega)$-sequence (Theorem \ref{diamond}), 
the facts that $E_\omega^\kappa$ is stationary and $C_\psi\setminus\beta$ is a club set, there is
$\alpha\in E^\kappa_\omega\cap [C_\psi\setminus(\beta+1)]$ such that $S_\alpha=X\cap\alpha$
where $\beta<\kappa$ is such that $f_n\in \F[\beta]$ and $U_n\in \B[\F[\beta]]$ for each $n\in\N$
which exists by the uncountable cofinality of $\kappa$. 
By the definition of $C_\Psi$ and the choice of $\alpha$ we have
\[\Psi[S_\alpha]=\Psi[X\cap\alpha]=\Psi[X]\cap \Psi[\alpha]=(\phi, \psi)\cap \Psi[\alpha]=(\phi,\psi)
\restriction(\{-2, -1\}\cup\alpha).\]
So, ({\it i})-({\it iv}) are satisfied, moreover, then $\alpha=\alpha_\gamma\in S$ for some $\gamma<\kappa$.
In particular, by the construction (1) - (8) we have
\begin{itemize}
\item $\rho^{\alpha_\gamma}_n=\nu_n^{\eta_n}$, where $\eta_n={\eta_n^{\alpha_\gamma}}$,
\item $f_n^{\alpha_\gamma}=f_n$ for each $n\in \N$,
\item $\beta_\gamma=\beta$,
\item $V_n^{\gamma}=U_{n}$  for each $n\in \N$.
\item $V_{\alpha, n}^{\gamma}=U_{n}^\alpha$ for all $\alpha<\alpha_\gamma$ and
each $n\in\N$.
\end{itemize} 
So  
\[g_{\alpha_\gamma}=\bigvee_{n\in M_{\alpha_\gamma}}
(f_n^{\alpha_\gamma}d_{\eta^{\alpha_\gamma}_n, \rho^{\alpha_\gamma}_n})=
\bigvee_{n\in M_{\alpha_\gamma}}(f_nd_{\eta^{\alpha_\gamma}_n, \nu^{\eta_n}_n})\]
as required in Definition \ref{paracomplicated}.
So it remains to prove
that 
 the antichain $(U_{n}^{\eta_n})_{n\in \N}$ is not separated along the set
 $M_\gamma$ in $[\F]$.
First note that $(U_{n}^{\eta_n})_{n\in \N}$ is $(V_{\eta_n, n}^{\gamma})_{n\in \N}$
so is  not separated along $M_\gamma$ 
in $[\F[\alpha_\gamma]]$ by (c).  Now, since 
$U_n\cap \supp(f_m)=\emptyset$ for all $n, m\in \N$,  and $U_n^\alpha\subseteq
U_n$ by Lemma \ref{adding-disjoint} and Lemma \ref{separating-ordinal}
we conclude that  $(U_{n}^{\eta_n})_{n\in \N}$ is not separated 
along $M_\gamma$ in $[\F]$. 
So,  $C(\nabla\F)$ has asymmetric distribution of separations in
the direction of $\{d_\alpha: \alpha<\kappa\}$.
$\nabla\Delta$ is connected and has no butterfly points by
Lemma \ref{ladder2-nobutterfly} and Lemma \ref{ladder-connected}.
It is c.c.c. as a continuous image of a c.c.c. space $L_\kappa$ by Proposition \ref{gleason},
and has weight $\kappa$ by Lemma \ref{density-cardinality},
so the proof is completed.
\end{proof}
\bibliographystyle{amsplain}

\end{document}